\newtheorem{theorem}{Theorem}
\newtheorem{example}{Example}
\newtheorem{lemma}[theorem]{Lemma}
\newtheorem{corollary}[theorem]{Corollary}
\newtheorem{defn}[theorem]{Definition}
\newtheorem{prop}[theorem]{Proposition}
\newcommand{\N}{\mathbb{N}}
\newcommand{\R}{\mathbb{R}}
\newcommand{\Z}{\mathbb{Z}}
\DeclareMathOperator*{\argmin}{argmin}
\DeclareMathOperator*{\sargmin}{sargmin}
\DeclareMathOperator{\an}{\mathrm{an}}
\DeclareMathOperator{\pa}{\mathrm{pa}}
\DeclareMathOperator{\ch}{\mathrm{ch}}
\DeclareMathOperator{\de}{\mathrm{de}}
\newcommand{\E}{\mathbb{E}}
\newcommand{\one}{\mathbbm{1}}
\newcommand{\Prob}{\mathbb{P}}
\newcommand{\Lebesgue}{\mathcal{L}_d}
\newcommand{\rejectionSet}{\mathcal{R}}
\newcommand{\closedEuclideanMetricBall}[2]{{B}_2(#1,#2)}
\newcommand{\closedSupNormMetricBall}[2]{{B}_\infty(#1,#2)}
\newcommand{\sample}{\mathcal{D}}
\newcommand{\sampleX}{\sample_X}
\newcommand{\regressionFunction}{\eta}
\newcommand{\powerSet}{\mathrm{Pow}}
\newcommand{\superLevelSet}[1]{\mathcal{X}_{#1}(\regressionFunction)}
\newcommand{\support}{\mathrm{supp}}
\renewcommand{\regressionFunction}{\eta}%{\eta}
\newcommand{\marginalDistribution}{\mu}%{\mu_P}
\newcommand{\etaIncreasingExponent}{\gamma}
\newcommand{\etaIncreasingConstant}{\lambda}
\newcommand{\densityConstant}{\theta}
\renewcommand{\superLevelSet}[2]{\mathcal{X}_{#1}(#2)}
\let\oldnu=\nu
\let\oldbeta=\beta
\let\oldxi=\xi
\newcommand{\marginConstant}{\oldnu}
\newcommand{\marginExponent}{\oldbeta}
\newcommand{\marginDummy}{\oldxi}
\newcommand\distributionClassNonDecreasingRegressionFunction[1][]{%
  \ifstrempty{#1}{%
    \mathcal{P}_{\mathrm{Mon}, d}(\sigma)
  }{%
    %\mathcal{P}_{\mathrm{M}, #1}(\sigma)
    \mathcal{P}_{\mathrm{Mon}, #1}(\sigma)
  }%
}
\newcommand\distributionClassUnivariateMarginCondition[1][]{%
  \ifstrempty{#1}{%
    \mathcal{P}_{\mathrm{Mar}, d}(\tau,\marginExponent,\marginConstant)
  }{%
    \mathcal{P}_{\mathrm{Mar}, #1}(\tau,\marginExponent,\marginConstant)
    %\mathcal{P}_{\mathrm{D}, #1}(\tau,\densityConstant)
  }%
}
\newcommand\distributionClassMultivariateCondition[1][]{%
  \ifstrempty{#1}{%
    \mathcal{P}_{\mathrm{Reg}, d}(\tau, \densityConstant, \etaIncreasingExponent,\etaIncreasingConstant)
  }{%
    \mathcal{P}_{\mathrm{Reg}, #1}(\tau, \densityConstant, \etaIncreasingExponent,\etaIncreasingConstant)
    %\mathcal{P}_{\mathrm{D}, #1}(\tau,\densityConstant)
  }%
}
\newcommand{\MGselectionset}{\hat{A}^{\mathrm{ISS}, \omega, \bm{v}}_{\sigma,\tau,\alpha,m}(\sample)}
\title{Isotonic subgroup selection}
\author{Manuel M. M\"uller$^\ast$, Henry W. J. Reeve$^\dagger$, Timothy I. Cannings$^\ddagger$ \\
and Richard J. Samworth$^\ast$\\ \\
$^\ast$Statistical Laboratory, University of Cambridge\\
$^\dagger$School of Mathematics, University of Bristol\\
$^\ddagger$School of Mathematics, University of Edinburgh
}
\date{\today}
\begin{document}

\maketitle

\begin{abstract}
    %Given a sample of covariate-response pairs, the problem of subgroup selection consists of identifying a subset of the covariate domain where the regression function takes values in a pre-specified set; for example, in a clinical trial setting, it is often of interest to determine the subset on which the regression function exceeds a given threshold. % in order to assess a subgroup of a population for which a treatment is deemed effective.  
    Given a sample of covariate-response pairs, we consider the subgroup selection problem of identifying a subset of the covariate domain where the regression function exceeds a pre-determined threshold.  We introduce a computationally-feasible approach for subgroup selection in the context of multivariate isotonic regression based on martingale tests and multiple testing procedures for logically-structured hypotheses.  Our proposed procedure satisfies a non-asymptotic, uniform Type I error rate guarantee with power that attains the minimax optimal rate up to poly-logarithmic factors.  Extensions cover classification, isotonic quantile regression and heterogeneous treatment effect settings.  Numerical studies on both simulated and real data confirm the practical effectiveness of our proposal, which is implemented in the \texttt{R} package \texttt{ISS}.
\end{abstract}

\section{Introduction}
\label{Sec:Introduction}

In regression settings, \emph{subgroup selection} refers to the challenge of identifying a subset of the covariate domain on which the regression function satisfies a particular property of interest.  This is a post-selection inference problem, since the region is to be selected after seeing the data, and yet we still wish to claim that with high probability, the regression function satisfies this property on the selected set.  Important applications can be found in precision medicine, for instance, where the chances of a desirable health outcome may be highly heterogeneous across a population, and hence the risk for a particular individual may be masked in a study representing the entire population. 

A natural strategy for identifying such group-specific effects is to divide a study into two stages, where the first stage is used to identify a potentially interesting subset of the covariate domain, and the second attempts to verify that it does indeed have the desired property \citep{stallard2014adaptive}.  However, such a two-stage process may often be both time-consuming and potentially expensive due to the inefficient use of the data, and moreover the binary second-stage verification may fail.  In such circumstances, we are unable to identify a further subset of the original selected set on which the property does hold.

In many applications, heterogeneity across populations may be characterised by monotonicity of a regression function in individual covariates.  For instance, age, smoking, hypertension and obesity are among known risk factors for coronary heart disease \citep{torpy2009coronary}, while for individuals with hypertrophic cardiomyopathy, risk factors for sudden cardiac death (SCD) include family history of SCD, maximal heart wall thickness and left atrial diameter \citep{omahony2014novel}.  It is frequently of interest to identify a subset of the population deemed to be at low or high risk, for instance to determine an appropriate course of treatment.  This amounts to identifying an appropriate  superlevel set of the regression function.

In this paper, we introduce a framework that allows the identification of the $\tau$-superlevel set of an isotonic regression function, for some pre-determined level $\tau$.  A key component of our formulation of the problem is to recognise that often there is an asymmetry to the two errors of including points that do not belong to the superlevel set, and failing to include points that do.  For instance, in the case of hypertrophic cardiomyopathy, a false conclusion that an individual is at low risk of sudden cardiac death within five years, and hence does not require an implantable cardioverter defibrillator \citep{omahony2014novel}, is more serious than the opposite form of error, which obliges a patient to undergo surgery and deal with the inconveniences of the implanted device.  

To introduce our isotonic subgroup selection setting, suppose that we are given $n$ independent copies of a covariate-response pair $(X,Y)$ having a distribution on $\R^d \times \R$ with coordinate-wise increasing regression function~$\regressionFunction$ given by $\eta(x) := \mathbb E(Y | X=x)$ for $x\in \R^d$.  Thus $Y = \eta(X) + \varepsilon$, where we additionally assume that $\varepsilon$ is sub-Gaussian conditional on $X$.  Given a threshold $\tau \in \R$, and writing $\superLevelSet{\tau}{\eta} := \{x \in \R^d:\eta(x) \geq \tau\}$ for the $\tau$-superlevel set of $\regressionFunction$, we seek to output an estimate $\hat{A}$ of $\superLevelSet{\tau}{\eta}$ with the first priority that it guards against the more serious of the two errors mentioned above.  Without loss of generality, we take this more serious error to be including points in $\hat{A}$ that do not belong to $\superLevelSet{\tau}{\eta}$, and we therefore require Type I error control in the sense that $\hat{A} \subseteq \superLevelSet{\tau}{\eta}$ with probability at least $1-\alpha$, for some pre-specified $\alpha \in (0,1)$.  Subject to this constraint, we would like $\marginalDistribution(\hat{A})$ to be as large as possible, where $\marginalDistribution$ denotes the marginal distribution of~$X$.

\begin{figure}[htbp]
    \center
    \includegraphics[trim={2cm 2cm 2cm 5cm}, clip, width=0.85\textwidth]{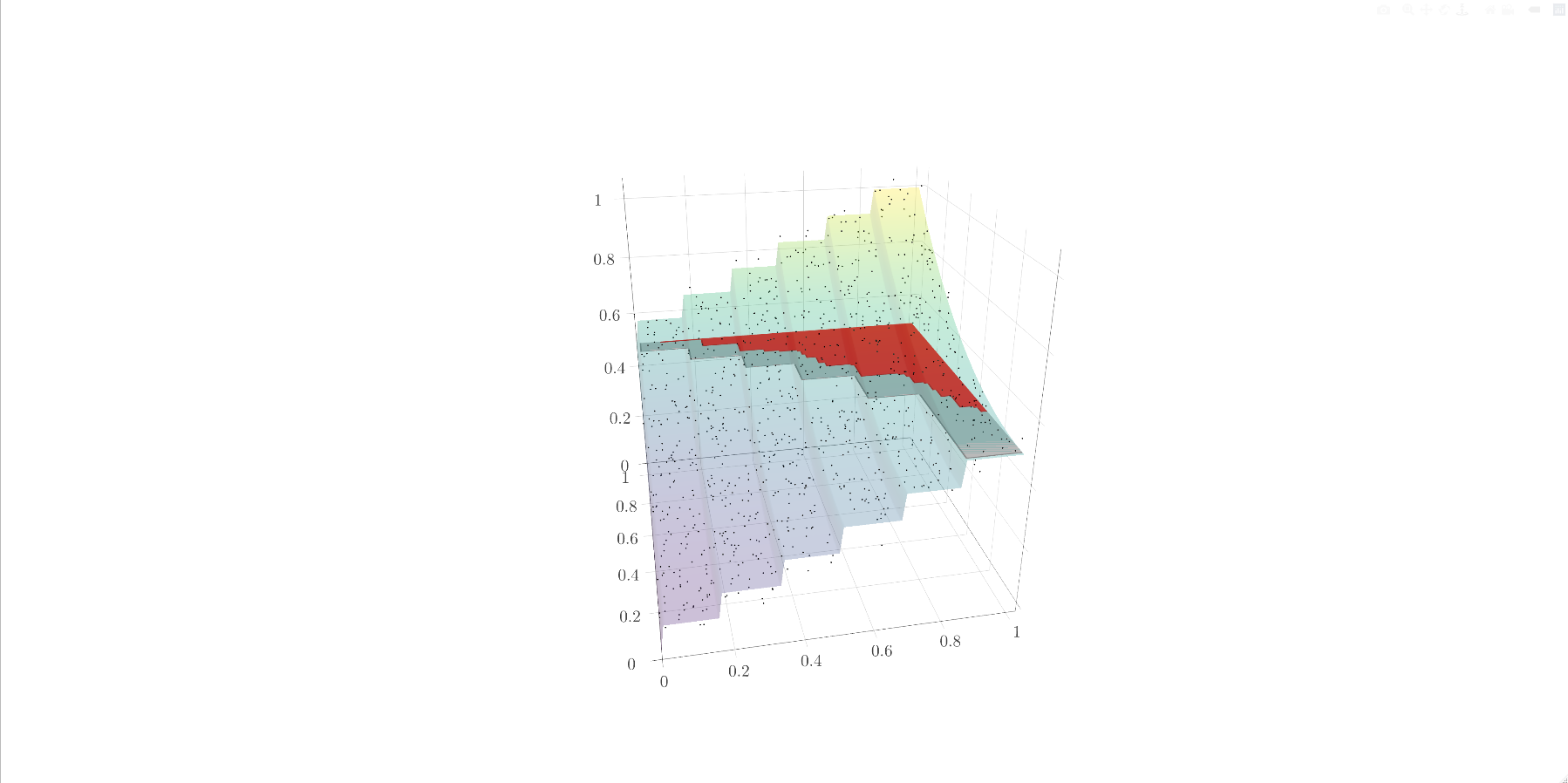}
    \caption{A visualisation with $d=2$ and $n = 1000$. The unknown regression function is $(x^{(1)}, x^{(2)})^\top \mapsto \lceil 6x^{(1)}\rceil/6 + \bigl(x^{(2)}\bigr)^2$ rescaled to the interval $[0,1]$, depicted by the multi-coloured surface. The unknown grey surface gives the $0.5$-superlevel set, of which the red area is selected by our proposed procedure $\hat{A}^\mathrm{ISS}$. }
    \label{fig:simple_illustration}
\end{figure}

One plausible strategy to achieve this goal is to construct a one-sided, uniform confidence band for $\regressionFunction$, and output the set on which the lower confidence limit is at least $\tau$.  Unfortunately, however, such an approach tends to have sub-optimal empirical performance (see Section~\ref{sec:simulations}), because the lower confidence bound is required to protect against exceeding $\regressionFunction(x)$ at all points~$x$ in the covariate domain, whereas it is only points close to the boundary of the $\tau$-superlevel set for which there is significant doubt about their inclusion.  We therefore adopt a different approach, and seek to compute at each observation a $p$-value for the null hypothesis that the regression function is below $\tau$ based on an anytime-valid martingale procedure \citep{duan2020interactive,howard2021uniform}.  The monotonicity of the regression function implies logical relationships between these hypotheses, but it is far from obvious how to combine the $p$-values effectively, particularly in the multivariate case, where we do not have a natural total ordering on $\R^d$.  Our strategy is to introduce a tailored multiple testing procedure with familywise error rate control, building on ideas of \cite{goeman2010sequential} and \cite{meijer2015multiple}.   This allows us to construct our final output set $\hat{A}^{\mathrm{ISS}}$ as the upper hull of the observations corresponding to the rejected hypotheses; see Section~\ref{Sec:Methodology} for a more formal description of our proposed procedure, which is both computationally feasible and does not require the choice of any smoothing parameters.  Our methodology is implemented in the \texttt{R} package \texttt{ISS} \citep{Mueller2023ISS}; an illustration in a bivariate example is given in Figure~\ref{fig:simple_illustration}.%, and the algorithm is implemented in the \texttt{R} package \texttt{ISS} \citep{mueller2023isotonic}.

Our first theoretical result, in Section~\ref{sec:theory_validity}, verifies that $\hat{A}^{\mathrm{ISS}}$ does indeed control Type~I error in the sense outlined above.  We then turn our attention to power in Section~\ref{sec:theory_power}, and provide both high-probability and expectation bounds on $\marginalDistribution\bigl(\superLevelSet{\tau}{\eta} \setminus \hat{A}^{\mathrm{ISS}}\bigr)$.   Our bound decomposes as a sum of two terms, where the first reflects the error incurred in determining whether each data point belongs to $\superLevelSet{\tau}{\eta}$, and depends on the growth rate of the regression function as we move further into the $\tau$-superlevel set from its boundary.  The second term represents the error arising from the uncertainty of whether or not regions between the data points belong to this superlevel set.  Our final theoretical contribution, in Section~\ref{sec:lowerBound}, reveals that $\hat{A}^{\mathrm{ISS}}$ attains the optimal power in the sense of minimising $\mathbb{E}\bigl\{\marginalDistribution\bigl(\superLevelSet{\tau}{\eta} \setminus \hat{A}^{\mathrm{ISS}}\bigr)\bigr\}$ up to poly-logarithmic factors, among all procedures that control the Type~I error.

In Section~\ref{Sec:Extensions}, we present various extensions that broaden the scope of our methodology.  First, in Section~\ref{SubSec:pvalues}, we describe alternative $p$-values that can be used, and that may yield more power for small and moderate sample sizes.  Section~\ref{Sec:Extension_AlternativeDistributions} introduces three variants of $\hat{A}^{\mathrm{ISS}}$ that are tailored to specific settings including Gaussian errors, classification and heavy-tailed errors. %In Section~\ref{sec:extensions_classification}, we show further that in bounded response settings (including classification), we can construct tailored $p$-values for additional power gains.  Section~\ref{sec:extensions_nonnegative_median} presents an alternative assumption on the monotonicity of a conditional quantile for which our methodology and theory can be applied; one advantage of this latter approach is that it allows heavy-tailed error distributions.  
Finally, in Section~\ref{SubSec:HTE}, we show how our proposal can be extended to cover heterogeneous treatment effect settings.  

Section~\ref{sec:simulations} is devoted to a study of the empirical performance of $\hat{A}^{\mathrm{ISS}}$ in a wide range of settings, with 14 regression functions chosen to illustrate different characteristics of interest, as well as different sample sizes and dimensions.  %As alternatives to $\hat{A}^{\mathrm{ISS}}$, we note that our $p$-values can be combined in different ways; for instance, one could apply a Bonferroni correction to construct simultaneous one-sided lower confidence limits at our observations, and output the upper hull of the observations for which these lower confidence bounds are at least at level $\tau$.  
The broad conclusion across these many scenarios is that, compared with various alternative approaches,~$\hat{A}^{\mathrm{ISS}}$ has the most power for isotonic subgroup selection.  In Section~\ref{sec:application}, we illustrate the performance of $\hat{A}^{\mathrm{ISS}}$ on two real datasets, the first of which is taken from the AIDS Clinical Trials Group Study 175 (ACTG 175) \citep{speff2trial}.  Here, we consider two problems: first, we seek to identify a low-risk subgroup and, second, in the context of heterogeneous treatment effects,  we aim to identify a subgroup of patients for whom a new therapy is at least as effective as the baseline medication.  The second dataset concerns fuel consumption \citep{quinlan1993combining}, where we seek to identify fuel-efficient cars based on their weight and engine displacement.  The appendix consists of proofs of all of our main results, as well as statements and proofs of auxiliary results, further simulations and a discussion of an alternative and general approach to combining the $p$-values due to \citet{meijer2015multiple}.  Although this strategy is often highly effective in multiple testing problems, we show that, surprisingly, it has sub-optimal worst-case performance in our isotonic subgroup selection setting.

Isotonic regression has a long history dating back to  \citet{ayer1955empirical}, \citet{brunk1955maximum} and \citet{vaneeden1956maximum}.  Much recent interest has focused on risk bounds and oracle inequalities, which have been derived by \citet{meyer2000degrees}, \citet{zhang2002risk}, \citet{chatterjee2014new}, \citet{chatterjee2015risk}, \citet{bellec2018sharp}, \citet{han2019isotonic}, \citet{deng2020isotonic}, \citet{fokianos2020integrated} and \citet{pananjady2022isotonic}.  Pointwise asymptotic confidence intervals in multivariate isotonic regression have been proposed by \citet{deng2021confidence}, while confidence bands in the univariate case have been studied by \citet{yang2019contraction}. 

In the clinical trials community, the dangers of the naive approach to subgroup selection that ignores the key post-selection inference issue have been well understood for many years \citep{senn1997wisdom,feinstein1998problem,rothwell2005subgroup,wang2007Statistics,kaufman2013these,altman2015subgroup,zhang2015subgroup,gabler2016no,lipkovich2017tutorial,watson2020machine}.  Valid approaches that control Type I error in the sense above have been proposed by \citet{ballarini2018subgroup} and \citet{wan2022confidence} in the context of linear regression, and \citet{reeve2021optimal} for a smoothly-varying regression function.

The asymmetry of the two losses in our framework has some similarities with that of Neyman--Pearson classification \citep{cannon2002learning,scott2005neyman,tong2016survey,xia2021intentional}.  There, covariate-response pairs $(X,Y)$ take values in $\mathbb{R}^d \times \{0,1\}$, and we seek a classifier $C:\mathbb{R}^d \rightarrow \{0,1\}$ that minimises $\mathbb{P}\bigl(C(X) = 0|Y=1\bigr)$ subject to an upper bound on $\mathbb{P}\bigl(C(X) = 1|Y=0\bigr)$.   In addition to allowing continuous responses, another key difference of our paradigm is that we incur a Type I error whenever our selected set $\hat{A}$ contains a single point that does not belong to the $\tau$-superlevel set of the regression function.  In other words, instead of controlling averages over sub-populations, our framework provides guarantees at an individual level, which is ethically advantageous, e.g.~in medical contexts.

%An alternative Type I error condition that we could have considered, would be $\mathbb{P}\bigl(X \in \hat{A}| X \notin \superLevelSet{\tau}{\eta}\bigr) \leq \alpha$. \\

%$\mathbb{E}\bigl\{\mu\bigl(\hat{A}\cap \superLevelSet{\tau}{\eta}^c\bigr)\bigr\}/\mu\bigl(\superLevelSet{\tau}{\eta}^c\bigr) \leq \alpha$ \\

%$\mathbb{P}\bigl\{\mu\bigl(\hat{A}\cap \superLevelSet{\tau}{\eta}^c\bigr)/\mu\bigl(\superLevelSet{\tau}{\eta}^c\bigr) >\alpha\bigr\} \leq \delta$ 

% The remainder of this paper is structured as follows. In Section~\red{\textbf{referencelabel?}}, useful definitions that will be needed later are collected. In Section~\red{\textbf{referencelabel?}}, the main procedure as well as different variants thereof are constructed and the main results are stated. Section~\ref{sec:simulations} provides the results of simulations that illustrate the empirical performance of the different variants of the procedure. At the end, in Section~\red{\textbf{referencelabel?}}, a discussion of the benefits, shortcomings and possible extensions is given. Proofs that are longer than a few lines can be found in the appendices. 

To conclude the introduction, we collect some notation used throughout the paper.

\paragraph{Notation.} 
% Basic Notation
For $n\in\N$, let $[n] := \{1,\ldots, n\}$ and let $[0] := \emptyset$.  Write $x \wedge y := \min(x, y)$ and $x \vee y := \max(x, y)$ for $x, y \in \R$. Further, let $x_+ := x \vee 0$ and $\log_+x := \log(x \vee e)$ for $x\in\R$.  Denote by  $\|\cdot\|_\infty$ the supremum norm on $\R^d$, and given $x \in \R^d$ and $r >0$, define the closed supremum norm ball by $\closedSupNormMetricBall{x}{r}:=\{z \in \R^d : \|z-x\|_\infty\leq r\}$. % Preceq and related
For $x_1 = (x_1^{(1)},\ldots, x_1^{(d)})^\top, x_2 = (x_2^{(1)},\ldots, x_2^{(d)})^\top \in \R^d$, we write $x_1 \preccurlyeq x_2$ (or, equivalently, $x_2 \succcurlyeq x_1$) if $x_1^{(j)} \leq x_2^{(j)}$ for all $j\in[d]$. %Similarly, we write $x_1 \prec x_2$ (or, equivalently, $x_2 \succ x_1$) if $x_1 \preccurlyeq x_2$ and $x_1 \neq x_2$. 
A function $f:\mathbb{R}^d \rightarrow \mathbb{R}$ is said to be (coordinate-wise) \emph{increasing} if $f(x_0)\leq f(x_1)$ whenever $x_0 \preccurlyeq x_1$.  A set $U \subseteq \mathbb{R}^d$ is called an \emph{upper set} if, whenever $x \in U$ and $x \preccurlyeq x'$, we have $x' \in U$.  Given $A \subseteq \R^d$, the \emph{upper hull} of $A$ is the intersection of all upper sets that contain~$A$.  %In addition, for $x_1, x_2 \in \R^d$ with $x_1 \preccurlyeq x_2$, we also use the notation $[x_1, x_2] := \{x \in \R^d: x_1 \preccurlyeq x \preccurlyeq x_2\}$, $[x_1, \infty) := \{x\in\R^d: x\succcurlyeq x_1\}$ and $(-\infty, x_1] := \{x\in\R^d: x\preccurlyeq x_1\}$ \red{$\leftarrow$ is this sentence still needed?}. % Measure and related
For a Borel probability measure $\mu$ on~$\R^d$, we let $\support(\mu)$ denote the \emph{support} of $\mu$, i.e., the intersection of all closed sets $C \subseteq \R^d$ with $\mu(C) = 1$.  For $\tau \in \mathbb{R}$ and $f:\R^d \rightarrow \R$, we let $\superLevelSet{\tau}{f}:= \{ x \in \R^d :  f(x) \geq\tau\}$.  

A \emph{graph} $G = (I, E)$ consists of a non-empty, finite set $I$ of \emph{vertices} and a set $E \subseteq I\times I$ of \emph{edges}.  We say $G$ is \emph{directed} if $(i,j)\in E$ does not imply $(j, i)\in E$. A \emph{directed path} from $i \in I$ to $j\in I$ is a collection of distinct vertices $i_0,i_1, \ldots, i_m \in I$ for some $m\in \N$ with $i_0 = i$ and $i_m = j$ such that $(i_{k-1}, i_k) \in E$ for all $k \in [m]$. A \emph{cycle} is a directed path from $i \in I$ to itself. A \emph{directed acyclic graph (DAG)} is a directed graph that does not contain any cycles. Given a DAG $G = (I, E)$, we write $L(G) := \{i\in I: (i, i') \notin E\text{ for all }i' \in I\}$ for the set of its \emph{leaf} nodes and $\{i\in I: (i', i) \notin E\text{ for all }i' \in I\}$ for its \emph{root} nodes.  For $i\in I$, let $\pa_G(i) := \{i' \in I: (i', i)\in E\}$ denote the set of \emph{parents} of node $i$ and, similarly, write $\ch_G(i) := \{i' \in I: i \in \pa_G(i')\}$ for the set of \emph{children} of node $i$.  Further, defining $\an^1_G(i) := \pa_G(i)$ and $\an^{k+1}_G(i) := \bigcup_{j \in \an^{k}_G(i)} \pa_G(j)$ for $k\in\N$, we can define $\an_G(i) := \bigcup_{k\in\N} \an^k_G(i)$ to be the set of \emph{ancestors} of node $i$. Similarly, let $\de_G(i) := \{i' \in I: i \in \an_G(i')\}$ denote the set of \emph{descendants} of node $i$. A \emph{reverse topological ordering} of a DAG $G = (I, E)$ with $I = [m]$ is a permutation $\pi_G: I \rightarrow I$ such that if $i \in I$ and $i' \in \an_G(i)$, then $\pi_G(i) < \pi_G(i')$.  Any directed graph is acyclic if and only if it has a reverse topological ordering. We remark that all of these definitions remain unchanged when applied to a \emph{weighted DAG} $G = (I, E, \bm{w})$, i.e.~a DAG $(I, E)$ equipped with edge weights $\bm{w} = (w_e \geq 0: e \in E)$, and that any unweighted DAG may implicitly be assumed to be a weighted DAG with unit weights.  A DAG $G = (I,E)$ is a \emph{polyforest} if $|\pa_G(i)| \leq 1$ for all $i \in I$, and a weighted DAG $G = (I, E, \bm{w})$ is a \emph{polyforest-weighted DAG} if $F := (I, \{e \in E: w_e > 0\})$ is a polyforest. 

\section{Methodology}
\label{Sec:Methodology}

Let $P$ denote a distribution on $\R^d \times \R$, and let $(X,Y) \sim P$.  Suppose that the regression function $\eta:\mathbb{R}^d \rightarrow \mathbb{R}$, defined by $\eta(x) := \mathbb{E}(Y|X=x)$, is increasing, and that the conditional distribution of $Y - \eta(X)$ given $X$ is sub-Gaussian\footnote{Recall that a random variable $Z$ is \emph{sub-Gaussian with variance parameter $\sigma^2$} if $\mathbb{E}(e^{tZ}) \leq e^{\sigma^2t^2/2}$ for every $t \in \mathbb{R}$.} with variance parameter~$\sigma^2$.  Given an independent and identically distributed sample $(X_1, Y_1),\ldots, (X_n,Y_n) \sim P$, a threshold $\tau \in \R$ and a nominal Type I error rate $\alpha \in (0, 1)$, we would like to identify a Borel measurable set $\hat{A} \subseteq \R^d$ such that with probability at least $1-\alpha$, we have $\eta(x) \geq \tau$ for all $x \in \hat{A}$.  Subject to this constraint, we would like $\mu(\hat{A})$ to be as large as possible, where $\mu$ denotes the marginal distribution of~$X$.  An important observation is that, since $\superLevelSet{\tau}{\eta}$ is an upper set, replacing~$\hat{A}$ with its upper hull does not increase the Type I error probability, and may increase its $\mu$-measure.

Our general strategy is initially to focus on a subset of observations, and seek to compute a $p$-value at each of these observations for the null hypothesis that the regression function is below $\tau$.  We can then carefully combine these $p$-values using a multiple testing procedure having familywise error rate control over our structured hypotheses, and finally output the upper hull of the covariate observations corresponding to the rejected hypotheses.  More precisely, for $m \in [n]$, and writing $\mathcal{D}_{X,m} := (X_1,\ldots,X_m)$ for our reduced sample with the shorthand $\mathcal{D}_X := \mathcal{D}_{X,n}$, we will construct $p$-values $\bm{p} := (p_i)_{i \in [m]}$ having the property that $\mathbb{P}(p_i \leq t|\mathcal{D}_X) \leq t$ for every $t \in [0,1]$ whenever $\eta(X_i) < \tau$.  Next, we show how to exploit these $p$-values to obtain a set $\mathcal{R}_\alpha(\mathcal{D}_{X,m},\bm{p}) \subseteq [m]$ of rejected hypotheses having the familywise error control property that with probability at least $1-\alpha$, we have $\eta(X_i) \geq \tau$ for every $i \in \mathcal{R}_\alpha(\mathcal{D}_{X,m},\bm{p})$.  Our final output set, $\hat{A}$, is the upper hull of $\{X_i:i \in \mathcal{R}_\alpha(\mathcal{D}_{X,m},\bm{p})\}$.

It remains to describe how we propose to construct the $p$-values, and to control the familywise error, and to this end, we first focus on the case $d=1$ for simplicity of exposition.  One difference between the univariate and multivariate cases is that we take $m=n$ in the former.  Consider the null hypothesis that $\regressionFunction(x) < \tau$ for some $x\in\R$, so that $\regressionFunction(X_i) < \tau$ whenever $X_i \leq x$.  Write $\mathcal{I}(x) := \{i\in[n]: X_i \leq x\}$ and $n(x) := |\mathcal{I}(x)|$, and, for $j \in [n(x)]$, let $X_{(j)}(x)$ denote the $j$th nearest neighbour of $x$ among $\{X_i:i \in \mathcal{I}(x)\}$, where for definiteness ties are broken by retaining the original ordering.  Writing $Y_{(1)}(x),\ldots, Y_{(n)}(x)$ for the concomitant responses, under the null and conditional on~$\sample_X$, the process 
\begin{equation}
    \label{Eq:Sk}
S_k \equiv S_k(x,\sigma, \tau, \sample) := \sum_{j=1}^k \frac{Y_{(j)}(x) - \tau}{\sigma}
\end{equation}
for $k\in[n(x)]$ is a supermartingale with negative mean. % whose recentered increments are sub-Gaussian with variance parameter 1. 
Thus, large values of $S_k$ for some $k \in [n(x)]$ provide evidence against the null.  One could also consider the alternative supermartingale $S_k' := \sum_{j=1}^k \bigl(Y_{(n(x) + 1 - j)}(x) - \tau\bigr)/\sigma$ for $k\in[n(x)]$, but our approach has the advantage that $S_k$ stochastically dominates $S_k'$, so will have at least the same power. %Intuitively, we are most likely to reject the null for some small value of $k$, since we sum the scaled differences between the responses and the threshold from \emph{right} to \emph{left} when considering responses with covariates close to $x$. 
Tests based on supermartingales such as $(S_k)$ are known as \emph{martingale tests} \citep{duan2020interactive} and time-uniform upper boundaries $(v_k)$ are known for a variety of families of increment distributions \citep{howard2021uniform}; thus, $v_k \equiv v_k(\alpha)$ has the property that $\mathbb{P}\bigl(\max_{k \in [n(x)]} (S_k - v_k) \geq 0\bigr) \leq \alpha$ under the null hypothesis $\regressionFunction(x) < \tau$.  These inequalities can be inverted to yield a $p$-value; see Figure~\ref{fig:martingale_test}.  Definition~\ref{defn:pValue} below extends these ideas to the general multivariate case.

\begin{figure}
    \centering

\tikzstyle{black dot}=[fill=black, draw=black, shape=circle, minimum size=1pt,inner sep=1pt]
\tikzstyle{red dot}=[fill=red, draw=red, shape=circle, minimum size=1pt,inner sep=1pt]
\tikzstyle{cyan dot}=[fill=Melon, draw=Melon, shape=circle, minimum size=1pt,inner sep=1pt]

% Edge styles
\tikzstyle{solid black line}=[-, draw=black, very thick]
\tikzstyle{solid red line}=[-, draw=red, very thick]
\tikzstyle{solid blue line}=[-, draw=blue, very thick]

\resizebox{\textwidth}{!}{\begin{tikzpicture}
    \def\yoffset{2.25}
    \def\xoffset{11}
    \def\tauValue{\yoffset}
    \def\xNull{8}
    \def\rightXRescale{10/6.75}

    % Clipping
    %\def\rectCanvas{(-1,-0.5) rectangle (21, 6)}
    %\clip \rectCanvas;
    
    % Axes
        % Left plot
        \draw[->,ultra thick] (-0.5,0)--(10,0) {};
        \draw[->,ultra thick] (0, -0.5)--(0,5.5) {};

        \draw[-,ultra thick] (\xNull, -0.15)--(\xNull,0.15) {};
        \node [label = 270:{$x$}] (x) at (\xNull, 0) {};

        % Right plot
        \draw[->,ultra thick] (-0.5 + \xoffset,\yoffset)--(10 + \xoffset,\yoffset) {};
        \draw[->,ultra thick] (\xoffset,-0.5)--(\xoffset, 5.5) {};
    
        \node [label = 270:{$k$}] (j) at (\rightXRescale * 6.75 + \xoffset - 0.1, \yoffset) {};

% Threshold (left plot)
        \draw[-, very thick, draw = OliveGreen] (-0.15, \tauValue)--(10, \tauValue) {};
        \node [label = {[text = OliveGreen] 180:$\tau$}] (j) at (0, \tauValue) {};

\begin{scope}
\clip (-0.5,-0.5) rectangle (10 + \xoffset, 3.25 + \yoffset); 
% Uniform boundary (right plot)
        \def\rescale{3.5}
        \foreach \i in {1,...,7}{
            \node [style = red dot] (ub\i) at (\rightXRescale * \i + \xoffset, {\yoffset + 1.7 * (\i*(ln(ln(2*\i)) + 0.72 * ln(5.2/0.05)))^0.5 / \rescale}) {};
        }

        \foreach \i in {2,3,4,5,6,7}{
            \pgfmathtruncatemacro{\iMinusOne}{\i - 1} %needed as tikz works with floats
            \draw[-, draw = red] (ub\i)--(ub\iMinusOne) {};
        }

    % p-value boundary (right plot)
        \foreach \i in {1,...,7}{
            \node [style = cyan dot] (pb\i) at (\rightXRescale * \i + \xoffset, {\yoffset + 1.7 * (\i*(ln(ln(2*\i)) + 0.72 * ln(5.2/0.0031)))^0.5 / \rescale}) {};
        }
        \foreach \i in {2,3,4,5,6,7}{
            \pgfmathtruncatemacro{\iMinusOne}{\i - 1} %needed as tikz works with floats
            \draw[-, draw = Melon] (pb\i)--(pb\iMinusOne) {};
        }
\end{scope}

% Regression line (left plot)
    \draw[-, color = gray] (-0.5, 0.75)--(3.5, 0.75) {};
    \draw[-, color = gray] (3.5, 2)--(5, 2.75) {};
    \draw[-, color = gray] (5, 2.75)--(9.25, 2.75) {};
    \draw[-, color = gray] (9.25, 2.75)--(10, 3.75) {};
    \node[label = {[text = gray] $\eta$}] at (10, 3.75) {};
    
    \draw[-, draw = gray] (-0.15, 2.75)--(0.15, 2.75) {};
    \node [label = {[text = gray] 180:$\eta(x)$}] at (0, 2.75) {};

% Data points (left plot) and martingale (right plot)
    \node [style = black dot, label = 135:$Z_0$] (rightMostPoint) at (9, \tauValue + 1.3) {};
    %\draw[-, draw = blue] (9, \tauValue)--(9, \tauValue + 1.3) {};
    
    \def\xAndDiffList{7.5/-0.75, 7/1.9, 6.25/-0.55, 4.2/1.8, 2.9/-0.8, 1/-2} % needs to be ordered decreasingly (in first coordinated)
    \edef\j{0}
    \edef\S{0}
    
    \foreach \x/\d in \xAndDiffList {
        % Store previous counter and martingale value
        \xdef\jPrev{\j}
        \xdef\SPrev{\S}

        % Update counter and martingale sequence
        \pgfmathparse{int(\j + 1)}
        \xdef\j{\pgfmathresult}
        \pgfmathparse{\S + \d}
        \xdef\S{\pgfmathresult}

        % draw and annotate data points in left plot
        \node [style = black dot, label = 180:$Z_{\j}$] (x\x) at (\x, \tauValue + \d) {};
        % draw and annotate differences to tau in left plot
        \draw[-, draw = blue] (\x, \tauValue)--(\x, \tauValue + \d) {};
        \node [label = {[text = blue] 0:$\Delta_\j$}] at (\x - 0.2, \tauValue + \d/2) {};

        % draw martingale sequence points
        \node [style = black dot] (j\x) at (\xoffset + \rightXRescale * \j, \yoffset + \S) {};

        % draw and annotate increments
        \draw[-, draw = blue] (\xoffset + \rightXRescale * \j, \yoffset + \S)--(\xoffset + \rightXRescale * \j, \yoffset + \S - \d) {};
        \node [label = {[text = blue] 0:$\Delta_\j$}] at (\xoffset + \rightXRescale * \j - 0.2, \yoffset + \S - \d/2 + 0.05) {};

        % connect martingale process
        \draw[-, draw = gray] (\xoffset + \rightXRescale * \jPrev, \yoffset + \SPrev)--(\xoffset + \rightXRescale * \j, \yoffset + \S) {};

        % axis ticks for right plot
        \draw[-,ultra thick] (\rightXRescale * \j + \xoffset, -0.15 + \yoffset)--(\rightXRescale * \j + \xoffset, 0.15 + \yoffset) {};
        \node [label = {135: $\j$}] (c\j) at (\rightXRescale * \j + \xoffset + 0.15, \yoffset - 0.1) {};
        
    }    

    % annotate martingale process
    \node [label = {[text = gray] 90:$S_k$}] at (0.1 + \xoffset + \rightXRescale * \jPrev/2 + \rightXRescale * \j/2, \yoffset + \SPrev/2 + \S/2) {};

    % annotate uniform boundary
    \node [label = {[text = red] 90:$v_k(\alpha)$}] at (\xoffset + \rightXRescale * 6.5, \yoffset + 1.8) {};
    %(\xoffset + \rightXRescale * 6.3, \yoffset + 1.7)
    
    % annotate p-value boundary
    \node [label = {[text = Melon] 90:$v_k(p)$}] at (\xoffset + \rightXRescale * 6.5, \yoffset + 3) {};
    %(\xoffset + \rightXRescale * 4.55, \yoffset + 2.52)
\end{tikzpicture}}

    \caption{A schematic illustration of the proposed martingale test with $d=1$ and $\sigma = 1$.  Left: Raw data, where we denote $Z_j := \bigl(X_{(j)}(x), Y_{(j)}(x)\bigr)$ and $\Delta_j := Y_{(j)}(x) - \tau$ for $j\in [n(x)]$; note that the illustrated point $Z_0$ does not enter the martingale because its first component exceeds $x$.  Right: The martingale $(S_k)$, where $\bigl(v_k(\alpha)\bigr)_{k \in [n(x)]}$ denotes a suitable time-uniform upper boundary; here, $S_4 \geq v_4(\alpha)$, so we would reject the null hypothesis $\eta(x) < \tau$ with a $p$-value satisfying $v_4(p) = S_4$.}
    \label{fig:martingale_test}
\end{figure}

%Lemma~\ref{lemma:howard_uniform_bound} below provides time-uniform upper boundaries for martingales with sub-Gaussian increments.  
%By Lemma~\ref{lemma:howard_uniform_bound}, a test that rejects the null whenever $S_k \geq u_\alpha(k)$ for any $k$, controls the Type I error rate at level $\alpha$. As illustrated in Definition~\ref{defn:pValue} below, we can invert these inequalities explicitly to give $p$-values (see also Lemma~\ref{lemma:validLocalPValStouffer}), which will naturally depend on the exceedance of $S_k$ beyond the chosen boundary at any point $k$. 

Turning now to familywise error rate (FWER) control, and working conditional on $\sampleX$ with $d=1$, consider $p$-values $\bm{p} := (p_i)_{i\in[n]}$ constructed as above for testing the null hypotheses $H_i: \regressionFunction(X_i) < \tau$ for $i \in [n]$.  %Arguably the simplest way to control the FWER at level $\alpha$ when testing these $n$ hypotheses, is to give them an a priori ordering depending on the data only through $\sample_{X}$ and test them sequentially at level $\alpha$ until a hypothesis is retained. 
One approach to controlling the FWER at level~$\alpha$ is to reject only hypotheses $H_i$ with $i \in \mathcal{R}^\mathrm{FS}_\alpha(\sample_{X}, \bm{p})$, where
\[
\mathcal{R}^\mathrm{FS}_\alpha(\sample_{X}, \bm{p}) := \{j\in [n] \!:\! p_k \leq \alpha \text{ for all } k \text{ with either } X_k > X_j \text{ or both } X_k = X_j \text{ and } k \leq j\}.
\]
Controlling the FWER by employing an a priori ordering is known as a \emph{fixed sequence procedure} \citep{westfall2001optimally, hsu1999stepwise}, and explains the superscript $\mathrm{FS}$ in the notation $\mathcal{R}^\mathrm{FS}_\alpha(\sample_{X}, \bm{p})$.  Writing $H_{(i)}: \regressionFunction\bigl(X_{(i)}(\max_{j \in [n]} X_j)\bigr) < \tau$ and $p_{(i)}$ for the corresponding $p$-value, this approach can be seen as a sequential procedure that, starting with $i = 1$, stops and does not reject $H_{(i)}$ if $p_{(i)} > \alpha$ and otherwise rejects $H_{(i)}$ before proceeding to $i+1$, where the step is repeated.  Here, the order in which we decide whether hypotheses should be rejected is motivated by the fact that  $H_{(i)} \subseteq H_{(i+1)}$ for $i\in [n-1]$. A computationally-efficient implementation of this procedure only needs to calculate $p_{(i+1)}$ if $H_{(i)}$ has been rejected. 
%Since $H_{(i)} \supseteq H_{(i+1)}$ for $i\in [n-1]$, it is reasonable to order the hypotheses as $H_{(n)}, H_{(n-1)}, \ldots, H_{(1)}$. In other words, whenever we first conclude that we have reached a point $X_{(i)}$ at which we cannot confidently say that $\regressionFunction$ is above the threshold $\tau$, we stop rejecting hypotheses as we would no longer be able to conclude with sufficient confidence for any point $x \leq X_{(i)}$ that the regression function exceeds the threshold there. Formally, we can write this choice of $\mathcal{R}$ as $\mathcal{R}^\mathrm{G}_\alpha(\sample_{X}, \bm{p}) := \{i\in [n]: p_k \leq \alpha \text{ for all } k \text{ for which } X_k \geq X_i\}$.  

We now extend the presented ideas to the general case $d \in \N$. The construction of the $p$-values follows a similar approach as above, but the order in which the responses enter the supermartingale sequence is now determined by the supremum norm distance of the corresponding covariates from $x$.
%\begin{defn}\label{defn:pValue} Given $x \in \R^d$, $\tau \in \R$, $\sigma, r>0$ and $\sample = \bigl((X_1,Y_1),\ldots,(X_n,Y_n)\bigr) \sim P^n$, we first define 
%\begin{align*}
%\mathcal{I}_r(x)&\equiv \mathcal{I}_r(x,\sample_X):= \bigl\{ i \in [n]: X_i \preccurlyeq x \text{ and } X_i \in \closedSupNormMetricBall{x}{r}\bigr\},\\ 
%\hat{p}_{\sigma,\tau}^r(x)&\equiv \hat{p}_{\sigma,\tau}^r(x,\sample):=5.2\exp\Biggl\{-\frac{\max\bigl(\sum_{i \in \mathcal{I}_r(x)}Y_i-\tau \cdot |\mathcal{I}_r(x)| ,0\bigr)^2}{2.0808 \sigma^2|\mathcal{I}_r(x)|} +\frac{ \log\log(2|\mathcal{I}_r(x)|)}{0.72}\Biggr\},
%\end{align*}
%whenever $|\mathcal{I}_r(x)| \neq 0$ and $\hat{p}^r_{\sigma, \tau}(x) := 1$ otherwise. Then set $r_i := \supNorm{X_i - x}$ for $i \in [n]$ with $X_i \preccurlyeq x$, and define $\hat{p}_{\sigma,\tau}(x) \equiv \hat{p}_{\sigma,\tau}(x,\sample):= 1 \wedge \min_{i:X_i \preccurlyeq x} \hat{p}_{\sigma,\tau}^{r_i}(x,\sample)$.
%\end{defn}
\begin{defn}\label{defn:pValue} Given $x \in \R^d$, $\tau \in \R$, $\sigma > 0$ and $\sample = \bigl((X_1,Y_1),\ldots,(X_n,Y_n)\bigr) \in (\R^d \times \R)^n$ write $\mathcal{I}(x) \equiv \mathcal{I}(x, \sample_X) := \{i \in [n]: X_i \preccurlyeq x\}$ and again $n(x) \equiv n(x, \sample_X) := |\mathcal{I}(x, \sample_X)|$.  Further, for $j \in [n(x)]$, let $X_{(j)}(x)$ denote the $j$th nearest neighbour in $\{X_i:i \in \mathcal{I}(x)\}$ of~$x$ in supremum norm, with ties broken by retaining the original ordering of the indices, and let $Y_{(1)}(x),\ldots,Y_{(n(x))}(x)$ denote the concomitant responses.  Defining $S_k \equiv S_k(x,\sigma,\tau, \sample)$ for $k \in [n(x)]$ as in \eqref{Eq:Sk}, we then set
\begin{align*}
    \hat{p}_{\sigma, \tau}(x) \equiv \hat{p}_{\sigma, \tau}(x, \sample) := 1 \wedge \min_{k \in [n(x)]} 5.2\exp\biggl\{-\frac{(S_k \vee 0)^2}{2.0808k} +\frac{ \log\log(2k)}{0.72}\biggr\},
\end{align*}
whenever $n(x) > 0$, and $\hat{p}_{\sigma, \tau}(x, \sample) := 1$ otherwise.
\end{defn}

Lemma~\ref{lemma:validLocalPValStouffer} below shows that $\hat{p}_{\sigma, \tau}(x, \sample)$ is indeed a $p$-value for the null hypothesis $\regressionFunction(x) < \tau$.  We now proceed to the issue of FWER control in the multivariate setting, where we again condition on $\sampleX$. Recall that we are interested in testing the hypotheses $H_i: \regressionFunction(X_i) < \tau$ for $i\in[m]$ and a pre-specified $m \in [n]$.  The fact that $\preccurlyeq$ induces only a partial order on $\mathbb{R}^d$ when $d > 1$ means that there is no natural generalisation of the univariate fixed sequence testing procedure.  Instead, we structure the hypotheses in a directed acyclic graph (DAG), with the edges in the graph representing logical relationships between hypotheses; such an approach has been studied in the literature to control both the FWER \citep{meijer2015multiple} and the false discovery rate \citep{ramdas2019sequential}\footnote{In a different but related approach, a graph structure can be used to encode a ranking of hypotheses beyond a strict logical ordering \citep{bretz2009graphical}.}.  The following definitions will be useful in the construction of an efficient multiple testing procedure. 
\begin{defn}[Induced DAGs and polyforests]\label{def:induced_graph}
Let $\bm{z}=(z_1,\ldots,z_m) \in (\R^d)^m$.
\begin{enumerate}[(i)]
\item The \emph{induced DAG} $\mathcal{G}(\bm{z})=\bigl([m],\mathcal{E}(\bm{z})\bigr)$ is the graph with nodes $[m]$ and edges
\begin{align*}
\mathcal{E}(\bm{z}):=\bigl\{(i_0,i_1) \in [m]^2: i_0 &\neq i_1 \text{ and } z_{i_1}\preccurlyeq z_{i_0} \text{, and if } z_{i_1}\preccurlyeq z_{i_2}\preccurlyeq z_{i_0}\text{ then either }\\z_{i_2} &= z_{i_0}\text{ and } i_0 \leq i_2\text{, or }z_{i_2} = z_{i_1}\text{ and }i_2 \leq i_1 \bigr\}.
% \mathcal{E}(\bm{z}):=\bigl\{(i_0,i_1) \in [m]^2: &z_{i_1} \neq z_{i_0}, z_{i_1}\preccurlyeq z_{i_0} \text{ and } z_{i_1}\preccurlyeq z_{i_2}\preccurlyeq z_{i_0}\text{ implies either }\\&z_{i_2} = z_{i_0}\text{ with } i_0 < i_2\text{ or }z_{i_2} = z_{i_1}\text{ with }i_2 < i_1 \bigr\}\\ 
% \cup \bigl\{(i_0,i_1) \in [m]^2: &z_{i_1} = z_{i_0}, i_1 < i_0\bigr\}.
\end{align*}
\item The \emph{induced polyforest} $\mathcal{G}_\mathrm{F}(\bm{z})=\bigl([m],\mathcal{E}_\mathrm{F}(\bm{z})\bigr)$ is the subgraph of $\mathcal{G}(\bm{z})$ with nodes $[m]$ and edges\footnote{Here, $\sargmin$ refers to the smallest element of the $\argmin$ set.}
\begin{align*}
\mathcal{E}_\mathrm{F}(\bm{z}) := \Bigl\{(i_0, i_1) \in \mathcal{E}(\bm{z}): i_0 = \sargmin_{i: (i, i_1) \in \mathcal{E}(\bm{z})} \| X_{i} - X_{i_1}\|_\infty \Bigr\}.
\end{align*}
\item The \emph{induced polyforest-weighted DAG} is $\mathcal{G}_\mathrm{W}(\bm{z}) := \bigl([m], \mathcal{E}(\bm{z}), \bm{w}(\bm{z})\bigr)$, where $\bm{w}(\bm{z}) = (w_e)_{e \in \mathcal{E}(\bm{z})}$ is given by $w_e := \mathbbm{1}_{\{e \in \mathcal{E}_\mathrm{F}(\bm{z})\}}$.
\end{enumerate}
\end{defn}
From the definition, we see that the induced polyforest-weighted DAG encodes the complete information of both $\mathcal{G}(\bm{z})$ and $\mathcal{G}_\mathrm{F}(\bm{z})$, as illustrated by Figure~\ref{fig:sparseMG_step1}, where $d = 2$ and each node represents the hypothesis corresponding to the observation at its location. %The notions of the set of leaf nodes, root nodes, parents and children extend to weighted DAGs. %Moreover, we write $\pa^*_G(i) := \pa_G(i)$ and define $\an^*$, $\ch^*$ and $\de^*$ accordingly. Finally, a \emph{topological ordering} of $G$ is a permutation $\pi_G: I \rightarrow I$ such that if $i \in I$ and $i' \in \an_G(i)$, then $\pi_G(i) < \pi_G(i')$.  Any directed graph is acyclic if and only if it has a topological ordering.

%Note that any induced polyforest is itself a DAG and, moreover, a subgraph of the respective induced DAG. Given a directed acyclic graph $G = (I, E)$ with $I = [m]$ for some $m\in \N$, let $L(G) := \{i\in I: (i, i') \notin E\text{ for all }i' \in I\}$ denote the set of its leaf nodes and $R(G) := \{i\in I: (i', i) \notin E\text{ for all }i' \in I\}$ that of its root nodes.  For $i\in I$, let $\pa_G(i) := \{i' \in I: (i', i)\in E\}$ denote the set of parents of node $i$ and, conversely, write $\ch_G(i) := \{i' \in I: i \in \pa_G(i')\}$ for the set of children of node $i$.  Further, defining $\an^1_G(i) := \pa_G(i)$ and $\an^{k+1}_G(i) := \bigcup_{j \in \an^{k}_G(i)} \pa_G(j)$ for $k\in\N$, we can define $\an_G(i) := \bigcup_{k\in\N} \an^k_G(i)$ to be the set of ancestors of node $i$. Similarly, let $\de_G(i) := \{i' \in I: i \in \an_G(i')\}$ denote the set of descendants of node $i$. Moreover, we write $\pa^*_G(i) := \pa_G(i)$ and define $\an^*$, $\ch^*$ and $\de^*$ accordingly. Finally, a \emph{topological ordering} of $G$ is a permutation $\pi_G: I \rightarrow I$ such that if $i \in I$ and $i' \in \an_G(i)$, then $\pi_G(i) < \pi_G(i')$.  Any directed graph is acyclic if and only if it has a topological ordering.

%We call a multiple testing method a \emph{DAG testing procedure} if it takes a structure of the hypotheses into account.  The following definition formalizes this, where we extend the symbol $\mathcal{R}$ to the case that the first argument is a DAG.  

\begin{defn}[DAG testing procedure]\label{def:graphMultipilictyProcedure}A \emph{DAG testing procedure} $\rejectionSet$ is a function that takes as input a significance level $\alpha \in (0,1]$, a weighted DAG $G = (I, E, \bm{w})$ and $\bm{p}=(p_i)_{i \in I} \in (0,1]^I$, and outputs a subset $\rejectionSet_\alpha(G,\bm{p})\subseteq I$.
\end{defn}
The fixed sequence procedure presented for $d=1$ is a DAG testing procedure since it only exploits the natural ordering information in $\sampleX$, though in that case we wrote the first argument of $\mathcal{R}_\alpha^{\mathrm{FS}}$ as the set of nodes in the DAG rather than the full DAG for simplicity.  In arbitrary dimensions, the methods proposed by \citet{bretz2009graphical}, \citet{meijer2015multiple} and \citet{ramdas2019sequential} are DAG testing procedures.  While the  \cite{meijer2015multiple} procedure both controls the FWER and accounts for logical relationships between the hypotheses, theoretical and empirical power considerations lead us to propose a new approach that can be regarded as a sparsified version of the \citet{meijer2015multiple} procedure or as an extension of the sequential rejection procedures of \cite{bretz2009graphical}.
%or as a sparsified version of the \cite{meijer2015multiple} procedures for what they call \emph{one-way logical relationships}. %For the sake of brevity, we only very briefly sketch the proposed procedure here. 

In order to describe our proposed iterative DAG testing procedure $\mathcal{R}^{\mathrm{ISS}}$, write $G := \mathcal{G}(\sample_{X,m})$ for the induced DAG and $F := \mathcal{G}_\mathrm{F}(\sample_{X,m})$ for the induced polyforest.  We begin by splitting our $\alpha$-budget across the root nodes, with each such node receiving budget proportional to its number of leaf node descendants in $F$ (including the node itself if it is a leaf node).  We reject each root node hypothesis whose $p$-value is at most its $\alpha$-budget, and whenever we do so, we also reject its ancestors in the original $G$ (which does not inflate the Type I error, due to the logical ordering of the hypotheses).  The rejected root nodes are then removed from $F$, and we repeat the process iteratively, stopping when either we have rejected all hypotheses, or if we fail to reject any additional hypotheses at a given iteration.   Formal pseudocode to compute $\mathcal{R}^{\mathrm{ISS}}$ is given in Algorithm~\ref{algo:R_ISS}; see also Figure~\ref{fig:sparseMG} for an illustration.

\begin{figure}

    \tikzstyle{hyp}=[draw=black, shape=circle, minimum size = 0.85cm, align = center, inner sep = 0cm, font=\fontsize{5pt}{5pt}\selectfont, outer sep = 1pt]
    \tikzstyle{rej hyp}=[fill=black!60, draw=black, shape=circle,  minimum size = 0.85cm, align = center, inner sep = 0cm, font=\fontsize{5pt}{5pt}\selectfont, outer sep = 1pt, text = white]
    
    \tikzstyle{arrowG}=[->, draw=black, dashed]
    \tikzstyle{arrowF}=[->, draw=black]

    \tikzstyle{boundary box}=[draw = white]
    
     \centering
     \begin{subfigure}[b]{0.45\textwidth}
         \centering
         \resizebox{\textwidth}{!}{\begin{tikzpicture}
    \draw[boundary box] (-1.5, 0.25) rectangle (7.75, 5.75) {};

    \def\nodeList{1/0/5/0.01/0, 
                  2/1/2.5/0.1/0, 
                  3/2.5/1/0.3/0, 
                  4/5.5/2/0.04/0, 
                  5/3/3/0.01/0, 
                  6/7/3.5/0.1/0, 
                  7/4/4.25/0.03/0} %id/x/y/p-value/rejected (0,1)
    \foreach \k/\x/\y/\p/\rej in \nodeList {
        \ifthenelse{\rej=0}{
            \node [hyp] (\k) at (\x, \y) {\normalsize $\k$\\ \tiny ($\p$)};
        }{
            \node [rej hyp] (\k) at (\x, \y) {\normalsize $\k$\\ \tiny ($\p$)};
        }
    }

    \def\edgeList{7/5/1, 5/2/1, 5/3/1, 6/4/1, 6/5/0, 4/3/0} %start/end/weight (if 1 then F, otherwise G)
    \foreach \s/\e/\w in \edgeList {
        \ifthenelse{\w=1}{
        \draw[arrowF] (\s)--(\e);
        }{
        \draw[arrowG] (\s)--(\e);
        }
    }

    \def\alphaList{1/0.0125, 7/0.025, 6/0.0125} %id/alpha-budget
    \foreach \k/\a in \alphaList {
        \node [text = purple, above left=0.01cm of \k] {\scriptsize $\a$};
    }
    
\end{tikzpicture}}
         \caption{In the first iteration, no hypothesis has been rejected yet and only root nodes are assigned positive $\alpha$-budget. Here, nodes $1, 6$ and $7$ are current rejection candidates, and $1$ will be rejected, as $p_1 = 0.01 \leq 0.0125$.}
         \label{fig:sparseMG_step1}
     \end{subfigure}
     \hspace{0.5cm}
     \begin{subfigure}[b]{0.45\textwidth}
         \centering
         \resizebox{\textwidth}{!}{\begin{tikzpicture}
    \draw[boundary box] (-1.5, 0.25) rectangle (7.75, 5.75) {};

    \def\nodeList{1/0/5/0.01/1, 
                  2/1/2.5/0.1/0, 
                  3/2.5/1/0.3/0, 
                  4/5.5/2/0.04/0, 
                  5/3/3/0.01/0, 
                  6/7/3.5/0.1/0, 
                  7/4/4.25/0.03/0} %id/x/y/p-value/rejected (0,1)
    \foreach \k/\x/\y/\p/\rej in \nodeList {
        \ifthenelse{\rej=0}{
            \node [hyp] (\k) at (\x, \y) {\normalsize $\k$\\ \tiny ($\p$)};
        }{
            \node [rej hyp] (\k) at (\x, \y) {\normalsize $\k$\\ \tiny ($\p$)};
        }
    }

    \def\edgeList{7/5/1, 5/2/1, 5/3/1, 6/4/1, 6/5/0, 4/3/0} %start/end/weight (if 1 then F, otherwise G)
    \foreach \s/\e/\w in \edgeList {
        \ifthenelse{\w=1}{
        \draw[arrowF] (\s)--(\e);
        }{
        \draw[arrowG] (\s)--(\e);
        }
    }

    \def\alphaList{7/0.0333, 6/0.0167} %id/alpha-budget
    \foreach \k/\a in \alphaList {
        \node [text = purple, above left=0.01cm of \k] {\scriptsize $\a$};
    }
    
\end{tikzpicture}}
         \caption{After rejection of node $1$ in the first step, we reallocate the $\alpha$-budget, which allows us to reject node $7$.\\
         \\}
         \label{fig:sparseMG_step2}
     \end{subfigure}%
     \\ \vspace{0.2cm}
     \begin{subfigure}[b]{0.45\textwidth}
         \centering
         \resizebox{\textwidth}{!}{\begin{tikzpicture}
    \draw[boundary box] (-1.5, 0.25) rectangle (7.75, 5.75) {};

    \def\nodeList{1/0/5/0.01/1, 
                  2/1/2.5/0.1/0, 
                  3/2.5/1/0.3/0, 
                  4/5.5/2/0.04/0, 
                  5/3/3/0.01/0, 
                  6/7/3.5/0.1/0, 
                  7/4/4.25/0.03/1} %id/x/y/p-value/rejected (0,1)
    \foreach \k/\x/\y/\p/\rej in \nodeList {
        \ifthenelse{\rej=0}{
            \node [hyp] (\k) at (\x, \y) {\normalsize $\k$\\ \tiny ($\p$)};
        }{
            \node [rej hyp] (\k) at (\x, \y) {\normalsize $\k$\\ \tiny ($\p$)};
        }
    }

    \def\edgeList{7/5/1, 5/2/1, 5/3/1, 6/4/1, 6/5/0, 4/3/0} %start/end/weight (if 1 then F, otherwise G)
    \foreach \s/\e/\w in \edgeList {
        \ifthenelse{\w=1}{
        \draw[arrowF] (\s)--(\e);
        }{
        \draw[arrowG] (\s)--(\e);
        }
    }

    \def\alphaList{5/0.0333, 6/0.0167} %id/alpha-budget
    \foreach \k/\a in \alphaList {
        \node [text = purple, above left=0.01cm of \k] {\scriptsize $\a$};
    }
    
\end{tikzpicture}}
         \caption{Now that node $7$ has been rejected, its child $5$ receives $\alpha$-budget sufficiently large for it to be rejected. Although $p_6$ is quite large, $6$ is an ancestor of $5$ in the induced DAG and will hence also be rejected.\\}
         \label{fig:sparseMG_step3}
     \end{subfigure}
     \hspace{0.5cm}
     \begin{subfigure}[b]{0.45\textwidth}
         \centering
         \resizebox{\textwidth}{!}{\begin{tikzpicture}
    \draw[boundary box] (-1.5, 0.25) rectangle (7.75, 5.75) {};

    \def\nodeList{1/0/5/0.01/1, 
                  2/1/2.5/0.1/0, 
                  3/2.5/1/0.3/0, 
                  4/5.5/2/0.04/0, 
                  5/3/3/0.01/1, 
                  6/7/3.5/0.1/1, 
                  7/4/4.25/0.03/1} %id/x/y/p-value/rejected (0,1)
    \foreach \k/\x/\y/\p/\rej in \nodeList {
        \ifthenelse{\rej=0}{
            \node [hyp] (\k) at (\x, \y) {\normalsize $\k$\\ \tiny ($\p$)};
        }{
            \node [rej hyp] (\k) at (\x, \y) {\normalsize $\k$\\ \tiny ($\p$)};
        }
    }

    \def\edgeList{7/5/1, 5/2/1, 5/3/1, 6/4/1, 6/5/0, 4/3/0} %start/end/weight (if 1 then F, otherwise G)
    \foreach \s/\e/\w in \edgeList {
        \ifthenelse{\w=1}{
        \draw[arrowF] (\s)--(\e);
        }{
        \draw[arrowG] (\s)--(\e);
        }
    }

    \def\alphaList{2/0.0167, 3/0.0167, 4/0.0167} %id/alpha-budget
    \foreach \k/\a in \alphaList {
        \node [text = purple, above left=0.01cm of \k] {\scriptsize $\a$};
    }
    
\end{tikzpicture}}
         \caption{None of the remaining three nodes, which happen to be the leaf nodes, have a $p$-value smaller than their respective $\alpha$-budgets. Hence, no further rejection is made and the procedure terminates. Nodes $1$, $5$, $6$ and $7$ have been rejected.}
         \label{fig:sparseMG_step4}
     \end{subfigure}
     
        \caption{Illustration of Algorithm~\ref{algo:R_ISS} with $\alpha = 0.05$.  Nodes are numbered according to one potential reverse topological ordering of the induced weighted DAG. The $p$-value for each hypothesis is given in round brackets. Solid arrows represent edges with weight $1$ in the induced polyforest-weighted DAG, whereas dashed arrows represent those with weight $0$. Each iteration of the procedure corresponds to one panel. A filled circle indicates that the hypothesis has been rejected in a previous step. If a node is assigned positive $\alpha$-budget at the current iteration, then its (rounded) budget is given in purple to the top left.}
        \label{fig:sparseMG}
\end{figure}

The DAG testing procedure $\rejectionSet^{\mathrm{ISS}}$ allows us to define the corresponding isotonic subgroup selection set
\begin{align*}
\hat{A}^{\mathrm{ISS}} \equiv
\hat{A}^{\mathrm{ISS}}_{\sigma,\tau,\alpha,m}(\sample):= \bigl\{ x \in \R^d : \! X_{i_0} \preccurlyeq x\text{ for some }i_0 \in \rejectionSet_{\alpha}^{\mathrm{ISS}}\bigl(\mathcal{G}_{\mathrm{W}}(\sample_{X,m}),\bigl(\hat{p}_{\sigma,\tau}(X_i,\sample)\bigr)_{i \in [m]}\bigr)\bigr\}.
\end{align*}
Pseudocode for computing $\hat{A}^{\mathrm{ISS}}$ is given in Algorithm~\ref{Alg:ISS}. In Section~\ref{sec:theory} below, we establish that $\hat{A}^{\mathrm{ISS}}$ controls the Type I error uniformly over appropriate distributional classes, and moreover has optimal worst-case power up to poly-logarithmic factors.

\newcommand\mycommfont[1]{\ttfamily\textcolor{blue}{#1}}
\SetCommentSty{mycommfont}
\DontPrintSemicolon

\begin{algorithm}[H]
    \SetAlgoLined

    \textbf{Input: } $\alpha \in (0, 1)$, polyforest-weighted DAG $(I,E,\bm{w})$, $\bm{p} = (p_i)_{i\in I} \in (0,1]^I$\;
	$G \leftarrow (I,E)$ \tcp{$G$ gives the logical structure of the hypotheses}
	$F \leftarrow (I,\{ e \in E: w_e = 1\})$ \tcp{$F$ determines the allocation of the $\alpha$-budget}
    $R_0 \leftarrow \emptyset$\;
    
        \For{$\ell \in [|I|]$}{
          $S_{\mathrm{L}} \leftarrow L(F) \setminus R_{\ell - 1}$ \tcp{currently unrejected $F$-leaf nodes} 
          $S_{\mathrm{C}} \leftarrow \{i \in I: i \notin R_{\ell - 1}, \pa_F(i) \subseteq R_{\ell - 1}\}$ \tcp{current rejection candidates}  
          $\alpha_i \leftarrow \one_{\{i\in S_{\mathrm{C}}\}} \cdot \bigl|\bigl(\{i\} \cup \de_F(i)\bigr) \cap S_{\mathrm{L}}\bigr| \cdot \alpha/|S_{\mathrm{L}}|$ for all $i\in I$\;  
          $I_\ell \leftarrow \{i \in I: p_i \leq \alpha_i\}$\;
            \uIf{$I_\ell = \emptyset$}{
            $R_{|I|} \leftarrow R_{\ell - 1}$ \tcp{will not make any further rejections}
            \textbf{break}
            } 
            $R_\ell \leftarrow R_{\ell - 1} \cup I_\ell \cup \bigcup_{i \in I_\ell} \an_G(i)$ \tcp{reject $I_\ell$ and its ancestors in $G$}
            \uIf{$R_\ell = I$}{
            $R_{|I|} \leftarrow R_{\ell}$ \tcp{everything has been rejected}
            \textbf{break}
            }
    }
    \KwResult{The set of rejected hypotheses $\mathcal{R}_\alpha^{\mathrm{ISS}}\bigl((I, E, \bm{w}), \bm{p}\bigr) := R_{|I|}$}

    \caption{The DAG testing procedure $\mathcal{R}^{\mathrm{ISS}}$.}
    \label{algo:R_ISS}
\end{algorithm}

\DontPrintSemicolon

\begin{algorithm}[H]
    \SetAlgoLined

    \textbf{Input:} $\sample = (X_i,Y_i)_{i \in [n]} \in (\R^d \times \R)^n$, $\tau \in \R$, $\alpha \in (0,1)$, $\sigma > 0$, $m\in [n]$\;
    $\sample_{X,m} \leftarrow (X_1, \ldots, X_m)$ \tcp{subsample the first $m$ covariate points in $\sample$}
    $p_i \leftarrow \hat{p}_{\sigma, \tau}(X_i)$ for $i \in [m]$ \tcp{calculate $p$-values as in Definition~\ref{defn:pValue}}
    $G \leftarrow \mathcal{G}_\mathrm{W}(\sample_{X,m})$ \tcp{construct induced polyforest-weighted DAG}% (Definition~\ref{def:induced_graph})}\;
    $R \leftarrow \mathcal{R}^{\mathrm{ISS}}_\alpha\bigl(G, (p_1,\ldots, p_m)\bigr)$  \tcp{combine $p$-values via Algorithm~\ref{algo:R_ISS}}
    $\hat{A}^{\mathrm{ISS}}_{\sigma,\tau,\alpha,m}(\sample) \leftarrow \{x \in \R^d: X_i \preccurlyeq x \text{ for some } i \in R\}$ \tcp{\hspace{-0.1cm}select upper hull of $(X_i)_{i \in R}$}
    \KwResult{The selected set $\hat{A}^{\mathrm{ISS}}_{\sigma,\tau,\alpha,m}(\sample)$.}
\caption{The subgroup selection algorithm $\hat{A}^{\mathrm{ISS}}$.}    
    \label{Alg:ISS}
\end{algorithm}

\section{Theory}\label{sec:theory}

%Our theory is divided into two parts.  First we show that $\hat{A}^{\mathrm{ISS}}$ has Type

\subsection{Type I error control}\label{sec:theory_validity}

We first introduce the class of distributions over which we prove the Type I error control of~$\hat{A}^{\mathrm{ISS}}$.
\begin{defn}\label{defn:classOfMonDistributions}
Given $\sigma > 0$, let $\distributionClassNonDecreasingRegressionFunction$ denote the class of all distributions $P$ on $\R^d\times \R$ with increasing regression function $\eta:\R^d \rightarrow \R$, and for which, when $(X,Y) \sim P$, the conditional distribution of $Y-\eta(X)$ given $X$ is sub-Gaussian with variance parameter~$\sigma^2$.
\end{defn}
Our Type I error control relies on showing first that $\hat{p}_{\sigma, \tau}(x,\mathcal{D})$ is indeed a $p$-value for testing the null hypothesis $\eta(x) < \tau$ when  $P\in\distributionClassNonDecreasingRegressionFunction$ and then that the DAG testing procedure $\mathcal{R}^{\mathrm{ISS}}$ controls the FWER in the sense defined in Definition~\ref{def:FWER_control_DAG} below.  The next lemma accomplishes the first of these tasks (in fact, it shows that $\hat{p}_{\sigma, \tau}(x,\mathcal{D})$ is a $p$-value even conditional on $\mathcal{D}_X$).  We write $P^n$ for the $n$-fold product measure corresponding to $P$. 
\begin{lemma}\label{lemma:validLocalPValStouffer} Given any  $x \in \R^d$, $\tau \in \R$, $\sigma >0$, $P \in \distributionClassNonDecreasingRegressionFunction$ such that $\eta(x) < \tau$ and $\sample = \bigl((X_1,Y_1),\ldots,(X_n,Y_n)\bigr) \sim P^n$, we have $\Prob_P\bigl\{\hat{p}_{\sigma,\tau}(x,\sample) \leq \alpha | \sample_X\bigr\} \leq \alpha$ for all $\alpha \in (0,1)$.
\end{lemma}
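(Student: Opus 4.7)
The plan is to interpret $\hat{p}_{\sigma,\tau}(x,\sample)$ as the $p$-value arising from an anytime-valid (time-uniform) martingale test with a stitched sub-Gaussian boundary, and to reduce the claim to a standard line-crossing inequality for sub-Gaussian processes.

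First, I would work conditional on $\sample_X$ throughout, which fixes $n(x)$ and the ordering of the points in $\mathcal{I}(x)$. The case $n(x) = 0$ is trivial since $\hat{p}_{\sigma,\tau}(x,\sample) = 1$ by definition. When $n(x) \geq 1$, the key monotonicity observation is that for each $j \in [n(x)]$ we have $X_{(j)}(x) \preccurlyeq x$, so the increasing-$\eta$ hypothesis and $\eta(x) < \tau$ yield $\eta(X_{(j)}(x)) - \tau \le \eta(x) - \tau < 0$. Setting $W_j := (Y_{(j)}(x) - \tau)/\sigma$, the $W_j$'s are therefore conditionally independent given $\sample_X$, each with non-positive conditional mean, and conditionally sub-Gaussian with variance parameter~$1$ (noise sub-Gaussianity is preserved after scaling by $\sigma$, and shifting the mean downward only decreases the MGF at positive arguments).

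Second, this gives that $(S_k)_{k \in [n(x)]}$ is conditionally sub-$\psi_N$ in the sense of \citet{howard2021uniform} with variance process $V_k = k$: for every $\lambda \geq 0$, the process $\bigl(\exp(\lambda S_k - \lambda^2 k/2)\bigr)_{k \in \{0\} \cup [n(x)]}$ is a non-negative conditional supermartingale (with the convention $S_0 = 0$). I would then invoke the polynomial-stitching line-crossing inequality for sub-Gaussian processes from \citet{howard2021uniform} (their Theorem~1 with the stitching calibration that yields the constants below) to obtain, for each $\alpha \in (0,1)$,
\begin{align*}
\Prob\!\biggl(\exists\, k \in [n(x)]:\, S_k \geq \sqrt{2.0808\,k\,\Bigl(\tfrac{\log\log(2k)}{0.72} + \log\tfrac{5.2}{\alpha}\Bigr)}\,\bigg|\,\sample_X\biggr) \leq \alpha.
\end{align*}

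Third, algebraic inversion identifies the displayed event with $\{\hat{p}_{\sigma,\tau}(x,\sample) \leq \alpha\}$: squaring, dividing by $2.0808\,k$, rearranging and exponentiating show that the inequality inside the probability is equivalent to $5.2\exp\bigl\{-(S_k \vee 0)^2/(2.0808\,k) + \log\log(2k)/0.72\bigr\} \leq \alpha$ for some $k \in [n(x)]$. The truncation $(\cdot) \vee 0$ is harmless because any $k$ for which $S_k < 0$ produces a ``$p$-value'' term exceeding $5.2 > 1 > \alpha$ and so cannot trigger rejection. The conditional bound above therefore yields $\Prob_P\bigl\{\hat{p}_{\sigma,\tau}(x,\sample) \leq \alpha\,\big|\,\sample_X\bigr\} \leq \alpha$, as required.

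The main obstacle is essentially bookkeeping: verifying that the specific polynomial-stitching calibration of \citet{howard2021uniform} reproduces the exact constants $2.0808$, $0.72$ and $5.2$ appearing in Definition~\ref{defn:pValue}. The remaining ingredients---monotonicity of $\eta$ together with $X_{(j)}(x) \preccurlyeq x$, conditional independence of the concomitant responses given $\sample_X$, and preservation of sub-Gaussianity under a non-positive mean shift---are routine.
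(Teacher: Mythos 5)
Your argument is correct and matches the paper's proof of this lemma (given via Lemma~\ref{lemma:validLocalPValStoufferGeneralised}) in all essentials: condition on $\sample_X$, use monotonicity together with $X_{(j)}(x) \preccurlyeq x$ to deduce that the summands $(Y_{(j)}(x)-\tau)/\sigma$ have non-positive conditional mean under the null, invoke the finite-LIL boundary of \citet{howard2021uniform} (the paper's Lemma~\ref{lemma:howard_uniform_bound}(a)), and invert the boundary to recover the $p$-value threshold. The one small technical divergence is that you apply the boundary directly to $(S_k)$ via its sub-$\psi_N$ property (as you note, $W_j$ satisfies only the one-sided MGF bound for $\lambda \geq 0$, not the two-sided sub-Gaussian condition of the paper's footnote), whereas the paper bounds the increments $(Y_{(k)}(x)-\tau)/\sigma$ above by the genuinely two-sided sub-Gaussian centred noise $Z_k := (Y_{(k)}(x) - \eta(X_{(k)}(x)))/\sigma$, applies Lemma~\ref{lemma:howard_uniform_bound}(a) to $\sum_j Z_j$, and uses $S_k \leq \sum_{j\le k} Z_j$; both routes are valid and rely on the same underlying mechanism.
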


We now direct our attention towards the DAG testing procedure $\mathcal{R}^{\mathrm{ISS}}$. Here, it is convenient to introduce the following terminology.
\begin{defn}
    Given a weighted, directed graph $G = (I, E, \bm{w})$, we say that a subset $I_0 \subseteq I$ is \emph{$G$-lower} if whenever $i_0 \in I_0$ we have $\de_G(i_0) \subseteq I_0$. Conversely, $I_0 \subseteq I$ is called \emph{$G$-upper} if $I\setminus I_0$ is $G$-lower.
\end{defn}

%Suppose we have a family of hypotheses $\mathcal{H} := (H_i)_{i\in I}$. We denote by $G($ \\

Given a finite set $I$, a family of distributions $\mathcal{Q}$ on $(0,1]^I$ and a finite collection of null hypotheses $H_i \subseteq \mathcal{Q}$ for $i\in I$, let $G_0 := (I, E_0)$ with $E_0 := \{(i_0, i_1) \in I^2: H_{i_0} \subseteq H_{i_1}\}$ be the directed graph that encodes all logical relationships between hypotheses. %\redsout{Suppose that $G = (I, E, \bm{w})$ is a weighted DAG that encodes the logical relationship of hypotheses $(H_i)_{i\in I}$ for a family of distributions in the sense that $j \in \de_G(i)$ if and only if $H_i \subseteq H_j$ for distinct $i, j \in I$.} 
Then for any $Q \in \mathcal{Q}$, the true null index set %distribution in our class, 
$I_0(Q) := \{i \in I: Q \in H_i\}$ %the set of true null hypotheses 
is necessarily a $G_0$-lower set. Conversely, the index set of false null hypotheses must be a $G_0$-upper set.  We say that a polyforest-weighted DAG $(I, E, \bm{w})$ is \emph{$G_0$-consistent} if $E \subseteq E_0$.  Multiple testing procedures that reject hypotheses corresponding to a $G_0$-upper set are called \emph{coherent} \citep[p.~229]{gabriel1969}, and by construction, $\mathcal{R}^{\mathrm{ISS}}$ is indeed coherent when applied to a $G_0$-consistent polyforest-weighted DAG.

We are now in a position to formalise the concept of FWER control for DAG testing procedures. 
\begin{defn}\label{def:FWER_control_DAG}
A DAG testing procedure $\mathcal{R}$ \emph{controls the FWER} if given any finite set $I$, a family of distributions $\mathcal{Q}$ on $(0,1]^I$, a collection of random variables $\bm{p} = (p_i)_{i \in I}$ taking values in $(0,1]^I$, as well as hypotheses $H_i \subseteq \{Q\in \mathcal{Q}: \Prob_Q(p_i \leq t) \leq t, \forall t \in (0, 1]\}$ for $i\in I$ and any $G_0$-consistent polyforest-weighted DAG $G' = (I,E,\bm{w})$, we have $\Prob_Q\bigl(\mathcal{R}_\alpha(G', \bm{p}) \cap I_0(Q) \neq \emptyset\bigr) \leq \alpha$ for all $\alpha \in (0,1)$ and $Q \in \mathcal{Q}$.
\end{defn}
\begin{lemma}\label{lemma:sparseMG_valid}
The DAG testing procedure $\mathcal{R}^{\mathrm{ISS}}$ defined by Algorithm~\ref{algo:R_ISS} controls the FWER. %over $\mathcal{P}_{\mathrm{M}}(\sigma)$.
\end{lemma}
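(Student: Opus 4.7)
My plan is to reduce the familywise error event to a union bound over a single iteration. Let $G' = (I, E, \bm{w})$ denote the given $G_0$-consistent polyforest-weighted DAG, let $F = (I, E_F)$ with $E_F := \{e \in E : w_e = 1\}$ denote its polyforest, and fix $Q \in \mathcal{Q}$ with true-null index set $I_0 := I_0(Q)$. Since $E \subseteq E_0$, the set $I_0$ is $G'$-lower, and hence also $F$-lower. The first step is to show that the FWER event $\{\mathcal{R}^{\mathrm{ISS}}_\alpha(G', \bm{p}) \cap I_0 \neq \emptyset\}$ coincides with $\{\tau^* < \infty\}$, where $\tau^* := \min\bigl\{\ell \in [|I|] : I_\ell \cap I_0 \neq \emptyset\bigr\}$ is the first iteration at which some true null is \emph{directly} rejected (with $\min\emptyset := \infty$). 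Indeed, if a true null $i'$ is instead rejected \emph{indirectly} at iteration $\ell$ as an element of $\an_G(i_0)$ for some directly rejected $i_0 \in I_\ell$, then the directed path in $G \subseteq G_0$ from $i'$ to $i_0$ forces $H_{i'} \subseteq H_{i_0}$, so $Q \in H_{i'}$ entails $Q \in H_{i_0}$, making $i_0$ itself a true null; thus $\tau^* \leq \ell$.

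I would then introduce the set $T := \{i \in I_0 : \pa_F(i) \cap I_0 = \emptyset\}$ of ``top'' true nulls in $F$ and establish two structural facts. First, any true null $i^*$ directly rejected at iteration $\tau^*$ must lie in $T$: the condition $i^* \in S_{\mathrm{C}}^{(\tau^*)}$ requires $\pa_F(i^*) \subseteq R_{\tau^* - 1}$, and the reduction above gives $R_{\tau^* - 1} \cap I_0 = \emptyset$, forcing $\pa_F(i^*) \subseteq I \setminus I_0$. Second, writing $T_{i^*} := \bigl(\{i^*\} \cup \de_F(i^*)\bigr) \cap L(F)$, the fact that $I_0$ is $F$-lower combined with the polyforest property that the $F$-ancestors of any node form a unique chain shows that $(T_{i^*})_{i^* \in T}$ partitions $L(F) \cap I_0$, yielding
\begin{align*}
\sum_{i^* \in T} |T_{i^*}| = |L(F) \cap I_0|.
\end{align*}

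The proof concludes via a union bound. For $i^* \in T$, on the event $\{i^* \in I_{\tau^*}, \tau^* < \infty\}$, the inclusion $\de_F(i^*) \subseteq I_0$ together with $R_{\tau^* - 1} \cap I_0 = \emptyset$ gives $\bigl(\{i^*\} \cup \de_F(i^*)\bigr) \cap S_{\mathrm{L}}^{(\tau^*)} = T_{i^*}$, while $S_{\mathrm{L}}^{(\tau^*)} \supseteq L(F) \cap I_0$; hence $\alpha_{i^*}^{(\tau^*)} \leq |T_{i^*}|\alpha / |L(F) \cap I_0|$ deterministically. Since $p_{i^*}$ is a valid $p$-value under $Q$ (as $Q \in H_{i^*}$) and the threshold depends only on the non-random quantities $T_{i^*}$, $L(F)$ and $I_0$, the union bound gives
\begin{align*}
\mathbb{P}_Q(\tau^* < \infty) \leq \sum_{i^* \in T} \mathbb{P}_Q\!\left(p_{i^*} \leq \frac{|T_{i^*}|\alpha}{|L(F) \cap I_0|}\right) \leq \sum_{i^* \in T} \frac{|T_{i^*}|\alpha}{|L(F) \cap I_0|} = \alpha,
\end{align*}
with the trivial case $I_0 = \emptyset$ handled separately. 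The main obstacle I anticipate is the first step --- cleanly separating direct from indirect rejections under the adaptive iterative structure --- but $G_0$-consistency is the decisive property that makes indirect rejection of a true null impossible without a prior direct rejection of one, collapsing the whole analysis to controlling the budget at the single iteration $\tau^*$.
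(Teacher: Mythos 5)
Your proof is correct and takes a somewhat different, more direct, route than the paper's. The paper structures the argument as a sequential-rejection-principle proof after Goeman and Solari: it establishes a monotonicity lemma ($\mathcal{N}(I_1) \subseteq \mathcal{N}(I_2) \cup I_2$ for $G$-upper proper subsets $I_1 \subseteq I_2$), proves the single-step bound $\mathbb{P}_{Q_0}\bigl(\mathcal{N}(I_0^c) \neq \emptyset\bigr) \leq \alpha$ via a union bound over the antichain of top true nulls, and then uses both via an induction over iterations to conclude that on $\Omega_0 := \{\mathcal{N}(I_0^c) \cap I_0 = \emptyset\}$ no true null is ever rejected. Your first-bad-iteration device $\tau^*$ replaces both the monotonicity lemma and the induction: since $R_{\tau^*-1} \cap I_0 = \emptyset$ by definition of $\tau^*$, the bound $\alpha_{i^*}^{(\tau^*)} \leq |T_{i^*}|\alpha / |L(F)\cap I_0|$ is deterministic on the event $\{\tau^* < \infty\}$, and one union bound finishes. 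Your reduction of the FWER event to $\{\tau^* < \infty\}$ exploits exactly the coherence fact the paper also relies on: $G_0$-consistency means edges encode nested hypotheses, so an indirect rejection of a true null as an $\an_G$-ancestor of a directly-rejected node $i_0$ forces $i_0$ itself to be a true null. Both proofs ultimately rest on the same two ingredients --- the union bound over the set $T$ (the paper's $I_*$) of true nulls whose $F$-parent lies outside $I_0$, and the polyforest property that makes $\{i^*\}\cup\de_F(i^*)$ pairwise disjoint across $T$ --- but your route reaches the conclusion without isolating a separate monotonicity lemma. One small refinement in your version: you observe (correctly) that the $T_{i^*}$ in fact partition $L(F)\cap I_0$, giving $\sum_{i^*\in T}|T_{i^*}| = |L(F)\cap I_0|$ exactly, whereas the paper proves and needs only the inequality.
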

The strategy of the proof of Lemma~\ref{lemma:sparseMG_valid} is based on ideas in the proof of \citet[Theorem~1]{goeman2010sequential}.  Combining Lemmas~\ref{lemma:validLocalPValStouffer} and~\ref{lemma:sparseMG_valid} yields our Type I error guarantee:
\begin{theorem}\label{thm:validSelectionSet} For any $d\in \N$, $n\in \N$, $m\in [n]$, $\alpha \in (0,1)$, $\tau \in \R$, $\sigma>0$, and $P \in\distributionClassNonDecreasingRegressionFunction$, along with $\sample = \bigl((X_1,Y_1),\ldots,(X_n,Y_n)\bigr) \sim P^n$, we have 
\[
\Prob_P\bigl( \hat{A}^{\mathrm{ISS}}_{\sigma,\tau,\alpha,m}(\sample)\subseteq \superLevelSet{\tau}{\regressionFunction} \bigm| \sample_X\bigr) \geq 1-\alpha.
\]
\end{theorem}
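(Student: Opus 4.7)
The plan is to combine Lemmas~\ref{lemma:validLocalPValStouffer} and~\ref{lemma:sparseMG_valid}, working throughout conditional on $\sample_X$. First I would argue by contraposition: writing
\[
R := \rejectionSet^{\mathrm{ISS}}_{\alpha}\bigl(\mathcal{G}_{\mathrm{W}}(\sample_{X,m}),\bigl(\hat{p}_{\sigma,\tau}(X_i,\sample)\bigr)_{i \in [m]}\bigr),
\]
if $\hat{A}^{\mathrm{ISS}}_{\sigma,\tau,\alpha,m}(\sample) \not\subseteq \superLevelSet{\tau}{\regressionFunction}$, then by the upper-hull construction there exist $x$ with $\regressionFunction(x) < \tau$ and $i_0 \in R$ with $X_{i_0} \preccurlyeq x$; monotonicity of $\regressionFunction$ then forces $\regressionFunction(X_{i_0}) \leq \regressionFunction(x) < \tau$, so $i_0$ lies in the true null index set $I_0 := \{i \in [m] : \regressionFunction(X_i) < \tau\}$. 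It therefore suffices to show that $\Prob_P(R \cap I_0 \neq \emptyset \mid \sample_X) \leq \alpha$.

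To accomplish this I would cast the (conditional) problem into the framework of Definition~\ref{def:FWER_control_DAG}. Regard $\mathcal{Q}$ as the collection of conditional distributions of $\bm{p} := \bigl(\hat{p}_{\sigma,\tau}(X_i,\sample)\bigr)_{i \in [m]}$ given $\sample_X$, indexed by $P \in \distributionClassNonDecreasingRegressionFunction$, and let $H_i$ be the subfamily of $\mathcal{Q}$ corresponding to those $P$ for which $\regressionFunction(X_i) < \tau$. Lemma~\ref{lemma:validLocalPValStouffer} then supplies that each $\hat{p}_{\sigma,\tau}(X_i,\sample)$ is a genuine $p$-value under $H_i$, and the true null set under the data-generating $P$ coincides with $I_0$ above.

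The one step requiring genuine verification is $G_0$-consistency of the induced polyforest-weighted DAG $\mathcal{G}_{\mathrm{W}}(\sample_{X,m})$, where $G_0$ encodes the logical containments $H_{i_0} \subseteq H_{i_1}$. Here I would observe that, by Definition~\ref{def:induced_graph}, every edge $(i_0, i_1) \in \mathcal{E}(\sample_{X,m})$ satisfies $X_{i_1} \preccurlyeq X_{i_0}$; combined with the monotonicity constraint built into $\distributionClassNonDecreasingRegressionFunction$, this yields the distributional inclusion $\{P : \regressionFunction(X_{i_0}) < \tau\} \subseteq \{P : \regressionFunction(X_{i_1}) < \tau\}$, hence $H_{i_0} \subseteq H_{i_1}$ in $\mathcal{Q}$ and $(i_0, i_1) \in E_0$. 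Lemma~\ref{lemma:sparseMG_valid}, applied conditionally on $\sample_X$, then delivers $\Prob_P(R \cap I_0 \neq \emptyset \mid \sample_X) \leq \alpha$, closing the argument. The only mildly delicate point is the $G_0$-consistency check; everything else amounts to bookkeeping once the two lemmas are in place.
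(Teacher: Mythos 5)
Your proposal is correct and follows essentially the same route as the paper's proof: condition on $\sample_X$, set up the families $\mathcal{Q}$ and $(H_i)$ so that Definition~\ref{def:FWER_control_DAG} applies, invoke Lemma~\ref{lemma:validLocalPValStouffer} for the $p$-value property and the monotonicity of $\regressionFunction$ for $G_0$-consistency of the induced polyforest-weighted DAG, then apply Lemma~\ref{lemma:sparseMG_valid} and translate back to the selection set via the upper-hull/upper-set observation. The paper spells out the pushforward bookkeeping (distinguishing $E_0^*$ for the hypotheses on $\R^d\times\R$ from $E_0$ for the induced hypotheses on $(0,1]^m$, and checking $E_0^*\subseteq E_0$) that you compress into the phrase ``the distributional inclusion yields $H_{i_0}\subseteq H_{i_1}$,'' but the substance is the same.
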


Let $\hat{\mathcal{A}}_n(\tau, \alpha, \mathcal{P})$ denote the family of \emph{data-dependent selection sets} $\hat{A}$ (i.e.~Borel measurable functions from $(\R^d \times \R)^n$ to the set of Borel subsets of $\R^d$) that control the Type~I error rate at level $\alpha \in (0, 1)$ over the family $\mathcal{P}$ of distributions on $\R^d \times \R$.  In other words, we write $\hat{A} \in \hat{\mathcal{A}}_n(\tau, \alpha, \mathcal{P})$ if 
\begin{equation}
\label{Eq:TypeIErrorControl}
\mathbb{P}_P\bigl(\hat{A}(\sample) \subseteq \superLevelSet{\tau}{\regressionFunction}\bigr) \geq 1 - \alpha
\end{equation}
for all $P \in \mathcal{P}$ with $\sample \sim P^n$.  An immediate consequence of Theorem~\ref{thm:validSelectionSet} is that $\hat{A}_{\sigma, \tau, \alpha, m}^{\mathrm{ISS}} \in \hat{\mathcal{A}}_n\bigl(\tau, \alpha, \distributionClassNonDecreasingRegressionFunction\bigr)$.  In fact, an inspection of the proof of Theorem~\ref{thm:validSelectionSet} (see also Lemma~\ref{lemma:validLocalPValStoufferGeneralised}) reveals that $\hat{A}_{\sigma, \tau, \alpha, m}^{\mathrm{ISS}}$ controls the Type I error over a larger class.  Indeed, writing $\mathcal{P}_{\mathrm{Upp}, d}(\tau,\sigma)$ for the class of distributions of pairs $(X, Y)$ such that the $\tau$-superlevel set of the regression function $\eta:\R^d\rightarrow \R$ is an upper set and, again, the conditional distribution of $Y - \eta(X)$ given $X$ is sub-Gaussian with variance parameter $\sigma^2$, it follows from the proof that  $\hat{A}^{\mathrm{ISS}}_{\sigma, \tau, \alpha, m} \in \hat{\mathcal{A}}_n\bigl(\tau, \alpha, \mathcal{P}_{\mathrm{Upp}, d}(\tau, \sigma)\bigr)$.  We have $\distributionClassNonDecreasingRegressionFunction = \bigcap_{\tau' \in \R} \mathcal{P}_{\mathrm{Upp}, d}(\tau',\sigma)$, but the regression functions of distributions in $\mathcal{P}_{\mathrm{Upp}, d}(\tau,\sigma)$ for a fixed $\tau \in \R$ may deviate from monotonicity as long as the $\tau$-superlevel set remains an upper set.  In this sense, our procedure $\hat{A}_{\sigma, \tau, \alpha, m}^{\mathrm{ISS}}$ is robust to misspecification of the monotonicity of the regression function.

\subsection{Power}\label{sec:theory_power}

%\hrnote{I think another option here is to mention a result that is in the Appendix. Really we don't need any theory from the paper to see that some additional assumptions are required. It follows from results about testing whether or not a Gaussian has a given mean (here we can just consider a marginal concentrated on an atom where the conditional distribution has mean very slightly higher than $\tau$). }

%\red{In this section we give guarantees on the power of Algorithm \ref{algo:R_ISS}. First, note that merely \ldots (mention a small result in appendix)}

Classical results on Gaussian testing reveal that merely asking for $P \in \distributionClassNonDecreasingRegressionFunction$ is insufficient to be able to provide non-trivial uniform power guarantees for data-dependent selection sets with Type I error control (see Proposition~\ref{prop:GaussianTestingMonoNotEnough} in Appendix~\ref{sec:appendix_theory_power} for details).  %More precisely, we can find $P \in \distributionClassNonDecreasingRegressionFunction$ for which $\mathbb{E}_P\bigl\{\marginalDistribution\bigl(\superLevelSet{\tau}{\regressionFunction}\setminus \hat{A}(\sample)\bigr)\bigr\} \geq 1 - \alpha$ whenever $\hat{A} \in \hat{\mathcal{A}}_n\bigl(\tau, \alpha, \distributionClassNonDecreasingRegressionFunction\bigr)$.  
The main issue here is that the marginal distribution $\mu$ may place a lot of mass in regions where $\regressionFunction$ is only slightly above $\tau$, and these regions will be hard for a data-dependent selection set to include if it has Type I error control.  In this section, therefore, we introduce a margin condition that controls the $\mu$-measure of these difficult~regions.

%We divide the results of this subsection into the univariate and multivariate cases. First, consider the former setting, for which we define a family of distributions satisfying a margin condition. We write this definition for general $d \in \N$, even though we are at first only interested in the case $d=1$. 
\begin{defn}
\label{def:univariateDistributionAssumption}
    For $d \in \N$, $\tau \in \R$, $\marginExponent > 0$ and $\marginConstant > 0$, let $\distributionClassUnivariateMarginCondition[d]$ denote the class of distributions $P$ on $\R^d \times \R$ for which the marginal $\marginalDistribution$ on $\R^d$ and the regression function $\regressionFunction: \mathbb{R}^d \rightarrow \R$ satisfy $\marginalDistribution\bigl(\regressionFunction^{-1}([\tau , \tau +\marginConstant \marginDummy^\marginExponent])\bigr) \leq \marginDummy$ for all $\marginDummy\in (0,1]$.
\end{defn}

%\red{(Clarifying remark and possibly some examples:)}\red{Distributions in $\distributionClassUnivariateMarginCondition[d]$ satisfy the margin condition that sufficiently little $\mu$-measure is assigned to the subspace of the covariate domain on which the regression function barely exceeds the threshold $\tau$. Such a restriction is reasonable, as no data-dependent selection set can be expected to achieve high power when the values of $\eta$ on $\superLevelSet{\tau}{\eta}$ are largely close to $\tau$.
\begin{example}
Let $d = 1$ and let $P \in \distributionClassNonDecreasingRegressionFunction[1]$ have uniform marginal distribution $\mu$ on $[0,1]$ and regression function $\eta$.  We then have $P \in \distributionClassNonDecreasingRegressionFunction[1] \cap \distributionClassUnivariateMarginCondition[1]$ if $\eta(x + \marginDummy) \geq \tau + \marginConstant \marginDummy^\marginExponent$ for all $\marginDummy \in (0, 1]$ and $x \in \superLevelSet{\tau}{\regressionFunction}$. %\red{MM: I think that for $d \geq 2$ we have by Lemma~\ref{lemma:geomLemmaForPowerBound} a similar statement with an additional dimensionality-factor $\eta(x + r\bm{1}_d) \geq \tau + c(d) \cdot (r/\nu)^{1/\beta}$, right? Worth looking into further in order to better link $\distributionSharplyIncreasingRegressionFunction$ to $\distributionClassUnivariateMarginCondition[d]$?}
\end{example}
We now divide our power analysis for $\hat{A}^{\mathrm{ISS}}$ into univariate and multivariate cases, since the natural total order on $\R$ means that our results simplify a little when $d=1$.  Theorem~\ref{thm:powerBound_1d} below provides  high-probability and expectation upper bounds on the \emph{regret} $\mu\bigl(\superLevelSet{\tau}{\regressionFunction}\setminus\hat{A}^{\mathrm{ISS}}\bigr)$ for $P \in \distributionClassNonDecreasingRegressionFunction[1] \cap \distributionClassUnivariateMarginCondition[1]$. 
\begin{theorem}\label{thm:powerBound_1d} Let $\sigma, \marginExponent, \marginConstant > 0$ and $\alpha \in (0,1)$. There exists a universal constant $C > 0$ such that for any distribution $P \in \distributionClassNonDecreasingRegressionFunction[1] \cap \distributionClassUnivariateMarginCondition[1]$ and $\delta \in (0,1)$, we have 
%\begin{align*}
%\Prob_P\biggl[ \marginalDistribution\bigl(\superLevelSet{\tau}{\regressionFunction}\setminus\hat{A}^{\mathrm{ISS}}_{\sigma,\tau,\alpha, n}(\sample)\bigr) > 1 \wedge C \biggl\{ \biggl(\frac{\sigma^2}{n\marginConstant^2}\biggl\{\log_+\Bigl(\frac{1}{\alpha \wedge\delta}\Bigr) + \log_+ &\log n\biggr\} \biggr)^{1/(2\marginExponent + 1)} \\
%& + \frac{\log_+(1/\delta)}{n} \biggr\} \biggr] \leq \delta
%\end{align*}
\begin{align*}
\Prob_P\biggl[ \marginalDistribution\bigl(\superLevelSet{\tau}{\regressionFunction}\setminus\hat{A}^{\mathrm{ISS}}_{\sigma,\tau,\alpha, n}(\sample)\bigr) > 1 \wedge C \biggl\{ \biggl(\frac{\sigma^2}{n\marginConstant^2}\log_+\Bigl(\frac{\log_+ n}{\alpha \wedge\delta}\Bigr) \biggr)^{1/(2\marginExponent + 1)} + \frac{\log_+(1/\delta)}{n} \biggr\} \biggr] \leq \delta,
\end{align*}
and
\[
\mathbb{E}_{P}\bigl\{ \marginalDistribution\bigl(\superLevelSet{\tau}{\regressionFunction}\setminus\hat{A}^{\mathrm{ISS}}_{\sigma,\tau,\alpha, n}(\sample)\bigr) \bigr\} \leq 1 \wedge C \biggl\{ \biggl(\frac{\sigma^2}{n\marginConstant^2}\log_+\Bigl(\frac{\log_+ n}{\alpha}\Bigr) \biggr)^{1/(2\marginExponent + 1)} + \frac{1}{n} \biggr\}.
\]
\end{theorem}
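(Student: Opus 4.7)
The plan is to reduce the analysis to a fixed-sequence problem in one dimension and then to use a single time-uniform martingale inequality, thereby avoiding a union bound over the $n$ candidate hypotheses. With distinct covariates and $d=1$, let $X_{(1)} < \cdots < X_{(n)}$ denote the order statistics with concomitants $Y_{[1]}, \ldots, Y_{[n]}$. The induced polyforest is then the chain on $[n]$ whose only edges are $j \to j-1$, so $\mathcal{R}^{\mathrm{ISS}}$ reduces to rejecting the hypotheses associated with $X_{(n)}, X_{(n-1)}, \ldots$ in order until the first $p$-value exceeds $\alpha$. Writing $J^* := \max(\{j \in [n] : \hat{p}_{\sigma,\tau}(X_{(j)}, \sample) > \alpha\} \cup \{0\})$ and $x^* := \inf \superLevelSet{\tau}{\eta}$, the regret equals $\mu([x^*, X_{(J^*+1)}))$, with the convention $X_{(n+1)} := +\infty$; the task is to exhibit a data-dependent $X_{(j^*)}$ that the procedure rejects with high probability and whose $\mu$-distance to $x^*$ is small.

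Set $\xi_0 := 1 \wedge c_0 \bigl(\tfrac{\sigma^2}{n\nu^2}\log_+(\tfrac{\log_+ n}{\alpha \wedge \delta})\bigr)^{1/(2\gamma+1)}$, $\Delta := \nu\xi_0^\gamma$ and $k^* := \lceil C_0 \sigma^2 \log_+(\log_+n/(\alpha\wedge\delta))/\Delta^2\rceil$, with constants $c_0 \leq 1$ and $C_0$ chosen so that $n\xi_0 \geq 2k^*$; we may assume $\xi_0 < 1$ throughout. Let $x_0 := \inf\{x : \eta(x) \geq \tau + \Delta\}$, so that Definition~\ref{def:univariateDistributionAssumption} gives $\mu([x^*, x_0)) \leq \xi_0$ and, by monotonicity, $\eta(X_i) \geq \tau + \Delta$ whenever $X_i \geq x_0$. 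Define $j_0 := \min\{j : X_{(j)} \geq x_0\}$ (with $j_0 := n+1$ if no such $j$ exists) and $N := n - j_0 + 1$. If $n\mu([x_0, \infty)) < 2k^*$ then $\mu(\superLevelSet{\tau}{\eta}) \leq 3\xi_0$ and the regret is trivially controlled; otherwise, a Chernoff bound yields $N \geq k^*$ on an event of probability at least $1 - \delta/3$.

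The crucial step relates all relevant $p$-values to a single cumulative process. For $s \in [N]$, define $T_s := \sum_{\ell=0}^{s-1}(Y_{[j_0+\ell]}-\tau)/\sigma$. In one dimension, the $\ell$-th nearest neighbour of $X_{(j)}$ from below is $X_{(j-\ell+1)}$, so \eqref{Eq:Sk} delivers the identity $S_{j-j_0+1}(X_{(j)},\sigma,\tau,\sample) = T_{j-j_0+1}$ for every $j \in [j_0, n]$. Since the increments of $T_s - \mathbb{E}(T_s \mid \sample_X)$ are conditionally independent and sub-Gaussian with parameter one, applying Lemma~\ref{lemma:validLocalPValStouffer} to $-Y$ (equivalently, using the same time-uniform sub-Gaussian boundary in the other direction) gives, with probability at least $1 - \delta/3$, that $T_s \geq s\Delta/\sigma - \sqrt{2.0808\, s\bigl(\log(15.6/\delta) + \log\log(2s)/0.72\bigr)}$ for every $s \in [N]$. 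For $C_0$ large enough and every $s \in [k^*, N]$, this lower bound exceeds the rejection threshold $v_s(\alpha) := \sqrt{2.0808\, s\bigl(\log(5.2/\alpha) + \log\log(2s)/0.72\bigr)}$ implicit in Definition~\ref{defn:pValue}; hence $\hat{p}_{\sigma,\tau}(X_{(j)},\sample) \leq \alpha$ simultaneously for every $j \in [j_0+k^*-1, n]$, and so $X_{(J^*+1)} \leq X_{(j_0+k^*-1)}$.

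Finally, a Bernstein-type bound for the $k^*$-th empirical quantile of the conditional distribution of $X$ given $X \geq x_0$ yields, with probability at least $1-\delta/3$, that $\mu\bigl([x_0, X_{(j_0+k^*-1)}]\bigr) \leq k^*/n + C\sqrt{(k^*/n)\log_+(1/\delta)/n} + C\log_+(1/\delta)/n \lesssim \xi_0 + \log_+(1/\delta)/n$. Combining with $\mu([x^*, x_0)) \leq \xi_0$ via a union bound over the three exceptional events establishes the tail inequality. The expectation bound then follows by integrating the tail inequality in $\delta$: writing $\mathbb{E}[\mathrm{regret}] = \int_0^1 \mathbb{P}(\mathrm{regret} > t)\,dt$ and substituting the tail bound shows that the contribution from small $\delta$ decays superpolynomially in $\delta$, so only the value at $\delta = \alpha$ survives together with an $O(1/n)$ contribution from the $\log_+(1/\delta)/n$ term. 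The principal obstacle is verifying the three-way balance among the drift $s\Delta/\sigma$, the sub-Gaussian concentration penalty and the boundary $v_s(\alpha)$ simultaneously for all $s \in [k^*, N]$; this balance, rather than a naive union bound over $j$, is what restricts the extra logarithmic factor to $\log_+(\log_+n)$ instead of $\log_+n$.
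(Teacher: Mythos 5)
Your overall strategy mirrors the paper's: reduce to the fixed-sequence structure in $d=1$, Chernoff to ensure a minimum number of relevant observations, time-uniform martingale concentration to make the $p$-values to the right of a threshold simultaneously small, then bound the residual measure and integrate over $\delta$. The difference is in how you anchor the argument. The paper chooses $x_0$ as the $\marginDummy$-quantile of $\mu$ restricted to $\superLevelSet{\tau+\marginConstant\marginDummy^\marginExponent}{\regressionFunction}$, so that $\mu\bigl(\superLevelSet{\tau+\marginConstant\marginDummy^\marginExponent}{\regressionFunction}\cap(-\infty,x_0)\bigr)\leq\marginDummy$ by construction, and then takes $X_{i_1}=\max(\sample_X\cap I_\marginDummy)\leq x_0$, so no quantile concentration step is needed. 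You instead take $x_0$ to be the boundary of the $(\tau+\Delta)$-superlevel set, which forces the extra Bernstein-type quantile bound for $\mu\bigl([x_0,X_{(j_0+k^*-1)}]\bigr)$. Both work, but the paper's anchoring is cleaner and avoids one concentration step.

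There are two concrete gaps in your write-up. First, the constraint ``$c_0\leq 1$ and $C_0$ chosen so that $n\xi_0\geq 2k^*$'' is infeasible. With $c_0\leq 1$ and $\xi_0<1$ one gets $k^*=\bigl\lceil C_0 n\xi_0/c_0^{2\marginExponent+1}\bigr\rceil\geq C_0 n\xi_0$, so $n\xi_0\geq 2k^*$ would force $C_0\leq 1/2$. But the time-uniform boundary from Lemma~\ref{lemma:howard_uniform_bound}(a) demands $k^*\Delta^2/\sigma^2=C_0\log_+(\cdot)$ to dominate $\bigl(u_\alpha(k^*)+u_{\delta/3}(k^*)\bigr)^2/k^*$, which already needs $C_0$ on the order of $10$. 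Consequently your bad-case assertion $\mu(\superLevelSet{\tau}{\regressionFunction})\leq 3\xi_0$ is wrong; it should read $\lesssim(1+2C_0)\xi_0+2/n$, which is still of the right order, so the fix is to drop the constraint and absorb $C_0$ into the final constant. Second, the step ``a Chernoff bound yields $N\geq k^*$ with probability at least $1-\delta/3$'' is not justified when $k^*$ is small: the multiplicative Chernoff bound gives $\Prob(N<k^*)\leq e^{-k^*/4}$, which is $\leq\delta/3$ only when $k^*\gtrsim\log(1/\delta)$. Since nothing prevents $k^*=1$ (e.g.\ when $\sigma^2/\marginConstant^2$ is tiny), you must either enlarge the bad case to also include $n\mu([x_0,\infty))<8\log(3/\delta)$, or, as the paper does, build a $\log_+(1/\delta)/n$ term into the defining threshold $\marginDummy$ from the outset so that $n\marginDummy\geq 16\log_+(1/\delta)$ holds automatically. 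Both gaps are patchable without changing your strategy, but as written the proof does not close.
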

From Theorem~\ref{thm:powerBound_1d}, we see that the regret of $\hat{A}^{\mathrm{ISS}}$ decomposes as a sum of two terms: the first reflects the error incurred in determining whether each data point belongs to $\superLevelSet{\tau}{\eta}$, while the second represents the error arising from the uncertainty of whether or not regions between the data points belong to this superlevel set.  The combination of Theorem~\ref{thm:powerBound_1d} with Proposition~\ref{Prop:Containment} below and Theorem~\ref{Thm:LowerBound} in Section~\ref{sec:lowerBound} reveals that the dependence of our bound on the parameters $n$, $\alpha$, $\sigma$, $\marginExponent$ and $\marginConstant$ is optimal up to an iterated logarithmic factor in~$n$. 

In the proof of Theorem~\ref{thm:powerBound_1d}, we exploit the fact that $\sample_{X}$ has a total order in the univariate case, so the corresponding induced polyforest-weighted DAG forms a directed path in which each edge has weight $1$.  Since in our algorithm, the $p$-values $\hat{p}_{\sigma,\tau}$ for coinciding hypotheses are equal, $\mathcal{R}^{\mathrm{ISS}}$ is equivalent to the fixed sequence procedure~$\mathcal{R}^{\mathrm{FS}}$. 

Turning now to the multivariate case, we begin with a negative result, which reveals that we can find distributions in our class for which no data-dependent selection set with Type~I error control performs better than the trivial procedure that ignores the data, and selects the entire domain with probability $\alpha$ and the empty set otherwise. 
\begin{prop}\label{prop:MaxMinNegativeResultMultivariateNoDensityBound}
Let $d \geq 2$, $\tau \in \R$, $\sigma, \marginExponent, \marginConstant > 0$ and $\alpha \in (0, 1)$. Then, writing $\mathcal{P}' := \distributionClassNonDecreasingRegressionFunction[d]\cap\distributionClassUnivariateMarginCondition[d]$, we have for any $n \in \N$ that 
\[
\sup_{P \in \mathcal{P}'}\inf_{\hat{A} \in \hat{\mathcal{A}}_n(\tau,\alpha,\mathcal{P}')}  \mathbb{E}_{P}\bigl\{ \marginalDistribution\bigl(\superLevelSet{\tau}{\regressionFunction}\setminus\hat{A}(\sample)\bigr) \bigr\} \geq 1-\alpha.
\]
\end{prop}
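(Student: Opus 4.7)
The plan is a Le Cam-style argument that exploits the freedom, when $d \geq 2$, to place the marginal mass on an arbitrarily large antichain. For each $N \in \N$, let $\{x_1, \ldots, x_N\} \subseteq [0,1]^d$ be an antichain---for concreteness, $x_i := \bigl((i-1)/(N-1), 1 - (i-1)/(N-1), 0, \ldots, 0\bigr)$ lies on the anti-diagonal of the first two coordinates---and let $\mu_N$ be uniform on these points. Define $\eta_{N,0}(x_i) := \tau + \marginConstant(i/N)^\marginExponent$ for $i \in [N]$, and extend to $\R^d$ by the monotone envelope $\eta_{N,0}(z) := \sup\bigl\{\eta_{N,0}(x_i) : x_i \preccurlyeq z\bigr\} \vee (\tau - 1)$. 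Taking Gaussian errors with variance $\sigma^2$, one verifies directly that $P_{N,0} \in \mathcal{P}'$: monotonicity and the sub-Gaussian condition are immediate, while the margin condition follows from $\mu_N\bigl(\eta_{N,0}^{-1}([\tau, \tau + \marginConstant\marginDummy^\marginExponent])\bigr) = |\{i : i \leq N\marginDummy\}|/N \leq \marginDummy$ for all $\marginDummy \in (0,1]$. Moreover $\mu_N\bigl(\superLevelSet{\tau}{\eta_{N,0}}\bigr) = 1$, since $\eta_{N,0}(x_i) > \tau$ for every $i$.

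The key step is to place $N$ indistinguishable perturbations inside $\mathcal{P}'$. For each $j \in [N]$ and $\delta > 0$ small, define $P_{N,j}$ with marginal $\mu_N$ and regression function $\eta_{N,j}$ equal to $\eta_{N,0}$ on $\{x_i : i \neq j\}$, with $\eta_{N,j}(x_j) := \tau - \delta$, extended via the same monotone envelope. This is coordinate-wise increasing \emph{precisely} because the $x_i$ form an antichain, so modifying $\eta$ at $x_j$ places no logical constraint on its values at the other support points; the margin condition and sub-Gaussian property are preserved, so $P_{N,j} \in \mathcal{P}'$. Since $x_j \notin \superLevelSet{\tau}{\eta_{N,j}}$, for any $\hat{A} \in \hat{\mathcal{A}}_n(\tau, \alpha, \mathcal{P}')$ the Type~I guarantee~\eqref{Eq:TypeIErrorControl} at $P_{N,j}$ forces $\mathbb{P}_{P_{N,j}}\bigl(x_j \in \hat{A}(\sample)\bigr) \leq \alpha$.

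The control is transported to $P_{N,0}$ by a total variation comparison. Since $P_{N,0}$ and $P_{N,j}$ differ only in the conditional law of $Y$ at the atom $x_j$ of mass $1/N$, with Gaussian means differing by $\marginConstant(j/N)^\marginExponent + \delta$, tensorisation and Pinsker's inequality yield
\begin{equation*}
\mathrm{TV}(P_{N,0}^n, P_{N,j}^n) \;\leq\; \bigl(\marginConstant(j/N)^\marginExponent + \delta\bigr)\sqrt{\frac{n}{4 N \sigma^2}}.
\end{equation*}
Averaging over $j \in [N]$ using $\frac{1}{N}\sum_{j=1}^N (j/N)^\marginExponent \leq 1$, together with Fubini's theorem, gives
\begin{equation*}
\mathbb{E}_{P_{N,0}}\bigl[\mu_N\bigl(\hat{A}(\sample)\bigr)\bigr] \;=\; \frac{1}{N}\sum_{j=1}^N \mathbb{P}_{P_{N,0}}\bigl(x_j \in \hat{A}(\sample)\bigr) \;\leq\; \alpha + (\marginConstant + \delta)\sqrt{\frac{n}{4 N \sigma^2}}.
\end{equation*}
Combined with $\mu_N\bigl(\superLevelSet{\tau}{\eta_{N,0}}\bigr) = 1$, this produces
\begin{equation*}
\mathbb{E}_{P_{N,0}}\bigl[\mu_N\bigl(\superLevelSet{\tau}{\eta_{N,0}} \setminus \hat{A}(\sample)\bigr)\bigr] \;\geq\; 1 - \alpha - (\marginConstant + \delta)\sqrt{n/(4 N \sigma^2)},
\end{equation*}
and taking the supremum over $\mathcal{P}'$ by sending $N \to \infty$ and $\delta \downarrow 0$ completes the argument.

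The main obstacle---and the step that forces the dimension restriction $d \geq 2$---is the joint preservation of monotonicity of $\eta$ and the margin condition on $\mu$. Perturbing $\eta$ at a single support point while keeping it coordinate-wise increasing demands that point be incomparable to every other point of the support; antichains of unbounded size exist in $\R^d$ if and only if $d \geq 2$, which is what allows $N$ to be driven to infinity inside $\mathcal{P}'$. The staggered levels $\tau + \marginConstant(i/N)^\marginExponent$ then do double duty: they secure the margin condition and simultaneously keep the averaged total variation to the $N$ perturbations of order $\sqrt{n/N} \to 0$, so that the data under $P_{N,0}$ provide essentially no information about which support point could be the sub-threshold "trap", forcing every valid $\hat{A}$ to shed essentially the entire superlevel set.
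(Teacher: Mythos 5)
Your proposal is correct, and it establishes the proposition by the same fundamental mechanism as the paper's proof: place the marginal on a large antichain in $\R^d$ (which requires $d \geq 2$), so that the regression value at one support point can be dropped below $\tau$ without violating monotonicity anywhere else, then transport the Type~I error guarantee from each perturbed distribution to the base distribution via a Pinsker/total-variation bound, integrate over the support, and let the antichain size grow so the TV term vanishes. The constructions differ in detail, however. The paper uses an absolutely continuous marginal $\mu_q$ uniform on a union of hypercubes indexed by an antichain $\mathbb{W}_{q,d} \subseteq [q]^d$, a two-valued base regression function $\eta_{q,\ast} \equiv \tau + \marginConstant$ for which the margin preimage $\eta^{-1}\bigl([\tau, \tau+\marginConstant\marginDummy^\marginExponent]\bigr)$ is actually empty (so the margin condition is vacuous for $\marginDummy < 1$), and perturbations that flip $\eta$ to $\tau - \marginConstant$ on an entire lower orthant; a single TV bound $\sqrt{n\marginConstant^2 d/(q^{d-1}\sigma^2)}$ works uniformly over the perturbations. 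You instead use atoms on the anti-diagonal, staggered base levels $\tau + \marginConstant(i/N)^\marginExponent$ that meet the margin bound $\lfloor N\marginDummy\rfloor/N \leq \marginDummy$ with equality at $\marginDummy = 1$, and single-atom perturbations to $\tau - \delta$; your TV bound depends on $j$, which is why you need the averaging step $\frac{1}{N}\sum_j(j/N)^\marginExponent \leq 1$. Your version is slightly more delicate to verify (the margin condition requires the counting argument, and the final bound requires the $j$-average) but it demonstrates that the margin constraint can be saturated rather than trivialised, and the atomic marginal is arguably a cleaner illustration of the bottleneck. Both are valid; one small caution is that your parametrisation of the anti-diagonal $x_i := \bigl((i-1)/(N-1), 1-(i-1)/(N-1), 0, \ldots, 0\bigr)$ needs $N \geq 2$, which is immaterial since you send $N\to\infty$.
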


An interesting feature of Proposition~\ref{prop:MaxMinNegativeResultMultivariateNoDensityBound} is the ordering of the supremum over distributions in our class and the infimum over data-dependent selection sets.  Usually, with minimax lower bounds, these would appear in the opposite order, but here we are able to establish the stronger conclusion, because it is the same subfamily of $\distributionClassNonDecreasingRegressionFunction[d]\cap\distributionClassUnivariateMarginCondition[d]$ that causes the poor performance of any data-dependent selection set with Type~I error control.  In fact, by examining the proof, we see that the issue is caused by constructing a marginal distribution $\mu$ that concentrates its mass around a large antichain\footnote{Recall that an \emph{antichain} in $\R^d$ is a set $\mathbb{W}$ such that we do not have $x \preccurlyeq x'$ for any $x,x' \in \mathbb{W}$.  It is the fact that antichains of arbitrary size exist in $[0,1]^d$ when $d \geq 2$ that is essential to this construction; when $d=1$, any antichain must be a singleton.} in $[0,1]^d$, which constitutes the boundary of $\superLevelSet{\tau}{\regressionFunction}$.  This motivates us to regulate the extent to which this is allowed to happen.
\begin{defn}\label{def:multivariateAssumption} Given $d \in \N$, $\tau \in \R$,  $\densityConstant > 1$, $\etaIncreasingExponent, \etaIncreasingConstant > 0$, we let $\distributionClassMultivariateCondition$ denote the class of all distributions $P$ on $\R^d \times \R$ with marginal $\marginalDistribution$ on $\R^d$ and associated regression function~$\regressionFunction$ such that 
\begin{enumerate}[(i)]
\item $\densityConstant^{-1} \cdot r^d \leq \marginalDistribution\bigl(\closedSupNormMetricBall{x}{r}\bigr) \leq \densityConstant \cdot (2r)^d$ for all $x \in \superLevelSet{\tau}{\regressionFunction} \cap \support(\marginalDistribution)$ and $r \in (0,1]$;
\item $\closedSupNormMetricBall{x}{r} \cap \superLevelSet{\tau+\etaIncreasingConstant \cdot r^{\etaIncreasingExponent}}{\regressionFunction} \neq \emptyset$ for all $x \in \superLevelSet{\tau}{\regressionFunction} \cap \support(\marginalDistribution)$ and $r \in (0,1]$.
\end{enumerate}
\end{defn}
For distributions in the $\distributionClassNonDecreasingRegressionFunction[d]$ class, Definition~\ref{def:multivariateAssumption} represents a slight strengthening of the margin condition used in our univariate analysis, as made precise by Proposition~\ref{Prop:Containment} below.
\begin{prop}
\label{Prop:Containment}
Let $d \in \N$, $\tau \in \R$, $\sigma, \etaIncreasingExponent, \etaIncreasingConstant > 0$ and $\densityConstant > 1$. There exists $C \geq 1$, depending only on $(d, \densityConstant)$, such that
\[
\distributionClassNonDecreasingRegressionFunction[d] \cap \distributionClassMultivariateCondition[d] \subseteq \distributionClassNonDecreasingRegressionFunction[d] \cap \mathcal{P}_{\mathrm{Mar}, d}(\tau, \etaIncreasingExponent, \etaIncreasingConstant/C^\etaIncreasingExponent).
\]
\end{prop}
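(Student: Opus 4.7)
The plan is to produce a constant $C = C(d, \densityConstant) \geq 2$ such that, writing $\mu$ and $\eta$ for the marginal distribution and regression function of the given $P \in \distributionClassNonDecreasingRegressionFunction[d] \cap \distributionClassMultivariateCondition[d]$, and $\Omega_s := \eta^{-1}([\tau, \tau + \etaIncreasingConstant s^\etaIncreasingExponent])$, we have $\mu(\Omega_s) \leq C s$ for every $s \in (0, 1/C]$. Writing $\marginDummy = Cs \in (0, 1]$ and reading this bound at $s = \marginDummy/C$ yields exactly $\mu(\eta^{-1}([\tau, \tau + (\etaIncreasingConstant/C^\etaIncreasingExponent)\marginDummy^\etaIncreasingExponent])) \leq \marginDummy$, so the membership $P \in \mathcal{P}_{\mathrm{Mar}, d}(\tau, \etaIncreasingExponent, \etaIncreasingConstant/C^\etaIncreasingExponent)$ follows.

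The first step is to use condition~(ii) of Definition~\ref{def:multivariateAssumption} to establish \emph{thinness} of $\Omega_s$ along the direction $\mathbf{1} := (1, \ldots, 1)^\top$. For any $x \in \Omega_s \cap \support(\mu) \subseteq \superLevelSet{\tau}{\eta} \cap \support(\mu)$, the choice $C \geq 2$ ensures $2s \leq 1$, so condition~(ii) at radius $r = 2s$ yields $y \in \closedSupNormMetricBall{x}{2s}$ with $\eta(y) \geq \tau + \etaIncreasingConstant (2s)^\etaIncreasingExponent$. Since $y \preccurlyeq x + 2s\mathbf{1}$ and $\eta$ is coordinate-wise increasing, $\eta(x + 2s\mathbf{1}) \geq \eta(y) > \tau + \etaIncreasingConstant s^\etaIncreasingExponent$ (strictly, because $\etaIncreasingExponent > 0$), so $x + 2s\mathbf{1} \notin \Omega_s$. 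Combined with monotonicity of $\eta$ along $\mathbf{1}$, this shows that every line $\{x_0 + t\mathbf{1} : t \in \R\}$ meets $\Omega_s \cap \support(\mu)$ in a subset of $t$-range length at most $2s$.

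I would then bound $\mu(\Omega_s)$ by a grid argument. Partition $\R^d$ into half-open axis-aligned cubes of side $s$, and group them into columns $\{Q_{k + j\mathbf{1}}\}_{j \in \Z}$ aligned with $\mathbf{1}$. If $Q_k$ contains a point $x \in \Omega_s \cap \support(\mu)$, then every $z \in Q_{k + M\mathbf{1}}$ satisfies $z^{(i)} > x^{(i)} + (M-1)s$ in each coordinate, so for $M \geq 3$ we have $z \succcurlyeq x + 2s\mathbf{1}$ strictly, and hence $\eta(z) \geq \eta(x + 2s\mathbf{1}) > \tau + \etaIncreasingConstant s^\etaIncreasingExponent$, placing $z \notin \Omega_s$. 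Each column therefore contains at most $3$ cubes intersecting $\Omega_s \cap \support(\mu)$, and condition~(i)'s upper bound gives $\mu(Q_k) \leq \mu(\closedSupNormMetricBall{x}{s}) \leq \densityConstant (2s)^d$ for each such cube. To bound the number $N_{\mathrm{col}}$ of intersecting columns, condition~(i)'s lower bound forces any family of points in $\support(\mu) \cap \superLevelSet{\tau}{\eta}$ pairwise separated by more than $2$ in sup-norm to have size at most $\densityConstant$ (each corresponding disjoint unit ball contributes $\mu$-mass at least $\densityConstant^{-1}$), so $\support(\mu) \cap \superLevelSet{\tau}{\eta}$ has sup-norm diameter at most some $R = R(d, \densityConstant)$. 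Its projection onto a hyperplane perpendicular to $\mathbf{1}$ therefore has $(d-1)$-dimensional Lebesgue measure $O_{d, \densityConstant}(1)$, which caps $N_{\mathrm{col}}$ at $C'(d, \densityConstant) \cdot s^{-(d-1)}$. Combining these pieces yields $\mu(\Omega_s) \leq 3 N_{\mathrm{col}} \densityConstant (2s)^d \leq C s$ for an appropriate $C = C(d, \densityConstant)$.

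The main obstacle is the column count $N_{\mathrm{col}}$: while the thinness conclusion and per-cube mass bound follow cleanly from conditions~(i) and~(ii) together with monotonicity, converting the $(d-1)$-dimensional structure of $\Omega_s \cap \support(\mu)$ into an explicit bound calls for some care about how the slanted columns in direction $\mathbf{1}$ project onto a transverse hyperplane and interact with the grid there.
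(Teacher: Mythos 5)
Your programme---establishing thinness of the near-boundary set along the diagonal $\bm{1}_d$ via Definition~\ref{def:multivariateAssumption}\emph{(ii)} and monotonicity, a per-cell mass bound from the upper half of \emph{(i)}, boundedness of $\support(\marginalDistribution) \cap \superLevelSet{\tau}{\regressionFunction}$ from the lower half, and a transverse covering count of order $s^{-(d-1)}$---is essentially the paper's. The paper's proof is a two-line reduction to Lemma~\ref{lemma:geomLemmaForPowerBound}, whose proof carries out exactly this plan. The one genuine difference is the choice of covering shape: you tile with an axis-aligned cube grid and group cubes into columns along $\bm{1}_d$, whereas the paper covers with Euclidean balls (later translated by integer multiples of $r\cdot\bm{1}_d$). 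This distinction is precisely where your flagged obstacle lives: the orthogonal projections onto $V_1^\perp$ of distinct axis-aligned columns overlap, so a Lebesgue-measure bound on the projection does not by itself cap $N_{\mathrm{col}}$. The fix is a discrete count rather than a measure-theoretic one: after translating so that $\support(\marginalDistribution) \cap \superLevelSet{\tau}{\regressionFunction} \subseteq \closedSupNormMetricBall{0}{R}$ with $R = R(d,\densityConstant)$, every column meeting this set has a canonical representative $k' \in \Z^d$ (unique up to translation by $\Z\bm{1}_d$, normalised so that $\min_i k'_i = 0$) satisfying $\max_i k'_i \leq 2(R/s + 1)$, and there are at most $d\bigl(2R/s + 3\bigr)^{d-1}$ such $k'$, which gives $N_{\mathrm{col}} = O_{d,\densityConstant}(s^{-(d-1)})$ as you need. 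The paper's Euclidean balls avoid the issue outright, because the orthogonal projection of a Euclidean ball onto $V_1^\perp$ is again a Euclidean ball of the same radius; that is the chief convenience bought by the paper's route, at no cost in the exponent.
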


%\begin{defn}\label{def:etaIncreasingAssumption} Given $d \in \N$, $\tau \in \R$, $\etaIncreasingExponent \in (0,\infty)$ and $\etaIncreasingConstant \in (0,1)$ we let $\distributionSharplyIncreasingRegressionFunction[d]$ denote the class of all distributions $P$ on $\R^d \times \R$ with marginal $\marginalDistribution$ on $\R^d$ and associated regression function~$\regressionFunction$ such that $\closedSupNormMetricBall{x}{r} \cap \superLevelSet{\tau+\etaIncreasingConstant \cdot r^{\etaIncreasingExponent}}{\regressionFunction} \neq \emptyset$ for any $x \in \superLevelSet{\tau}{\regressionFunction} \cap \support(\marginalDistribution)$ and any $r \in (0,1]$.
%\end{defn}
Thus, $(\etaIncreasingExponent,\etaIncreasingConstant)$ in the class $\distributionClassMultivariateCondition[d]$ play a similar but not identical role to $(\marginExponent,\marginConstant)$ in $\distributionClassUnivariateMarginCondition[d]$, in controlling the way in which the regression function is required to grow as we move away from the boundary of the $\tau$-superlevel set.  We are now in a position to state our main result concerning the power of our proposed procedure; the result holds in all dimensions  but our primary interest here is in the multivariate case.
\begin{theorem}\label{thm:powerBound} Let $d \in \N$, $\tau \in \R$, $\sigma,\etaIncreasingExponent,\etaIncreasingConstant > 0$ and $\densityConstant > 1$. There exists $C\geq 1$, depending only on $(d, \densityConstant)$, such that for any $P \in \distributionClassNonDecreasingRegressionFunction[d] \cap \distributionClassMultivariateCondition[d]$, $n \in \N$, $\alpha \in (0,1)$ and $\delta \in (0,1)$, along with $\sample = \bigl((X_1,Y_1),\ldots,(X_n,Y_n)\bigr) \sim P^n$, we have for $m \in [n]$ that
\begin{align*}
\Prob_P\biggl[ \marginalDistribution\bigl(\superLevelSet{\tau}{\regressionFunction}\setminus\hat{A}^{\mathrm{ISS}}_{\sigma,\tau,\alpha,m}(\sample)\bigr) > 1 \wedge C \biggl\{ \biggl(\frac{\sigma^2}{n \etaIncreasingConstant^2}\log_+\Bigl(\frac{m\log_+ n}{\alpha \wedge\delta}\Bigr)\biggr)^{\frac{1}{2\etaIncreasingExponent+d}} +\biggl(\frac{\log_+(m/\delta)}{m}\biggr)^{\frac{1}{d}} \biggr\} \biggr] \leq \delta,
\end{align*}
and 
\begin{align*}
\mathbb{E}_{P}\bigl\{ \marginalDistribution\bigl(\superLevelSet{\tau}{\regressionFunction}\setminus\hat{A}^{\mathrm{ISS}}_{\sigma,\tau,\alpha, m}(\sample)\bigr) \bigr\} &\leq 1 \wedge C \biggl\{ \biggl(\frac{\sigma^2}{n \etaIncreasingConstant^2}\log_+\Bigl(\frac{m\log_+ n}{\alpha}\Bigr) \biggr)^{1/(2\etaIncreasingExponent+d)} +\biggl(\frac{\log_+ m}{m}\biggr)^{1/d} \biggr\}.
\end{align*}
\end{theorem}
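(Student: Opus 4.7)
The plan combines three ingredients: the margin decomposition from Proposition~\ref{Prop:Containment}, a covering-plus-concentration argument, and an analysis of the DAG testing procedure. Set $r_1$ and $r_2$ equal to the two bracketed terms in the stated bound, let $r := r_1 + r_2$, and assume $r < 1$, since otherwise the conclusion is vacuous. Applying Proposition~\ref{Prop:Containment} with $\xi$ of order $r$ yields constants $c_0, c_1 > 0$ (depending only on $(d, \densityConstant)$) such that $\mu\bigl(\regressionFunction^{-1}([\tau, \tau + c_1 \etaIncreasingConstant r^\etaIncreasingExponent])\bigr) \leq c_0 r$. Writing $A := \superLevelSet{\tau + c_1 \etaIncreasingConstant r^\etaIncreasingExponent}{\regressionFunction}$, this gives
\[
\mu\bigl(\superLevelSet{\tau}{\regressionFunction} \setminus \hat{A}^{\mathrm{ISS}}\bigr) \leq c_0 r + \mu\bigl(A \setminus \hat{A}^{\mathrm{ISS}}\bigr),
\]
so it suffices to show that the second term is of order $r$ on an event of probability at least $1 - \delta$.

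By condition~(i) of Definition~\ref{def:multivariateAssumption}, I cover $A \cap \support(\mu)$ by $N \lesssim r^{-d}$ sup-norm cubes $C_\ell = x_\ell + [0, r/c_2]^d$ with $x_\ell \in A \cap \support(\mu)$ and $c_2$ a sufficiently large constant; by monotonicity each $C_\ell \subseteq A$, and condition~(i) yields $\mu(C_\ell) \gtrsim r^d/\densityConstant$. Setting $k^* := \lceil c_3 m r^d \rceil$ and applying a Bernstein-type bound followed by a union bound, the event $E_{\mathrm{cov}}$ that every $C_\ell$ contains at least $k^*$ points from $\sample_{X,m}$ has probability at least $1 - \delta/2$, provided $r \gtrsim r_2$. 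On $E_{\mathrm{cov}}$, for any $X_i \in C_\ell$, every $\preccurlyeq$-predecessor of $X_i$ within sup-norm $r/c_2$ lies in $[x_\ell - (r/c_2)\mathbf{1}, X_i]$, so (after a further application of Proposition~\ref{Prop:Containment} to refine $A$) has regression value at least $\tau + c_1' \etaIncreasingConstant r^\etaIncreasingExponent$ for some $c_1' > 0$. The martingale $S_{k^*}$ at $X_i$ from Definition~\ref{defn:pValue} therefore carries deterministic signal of order $k^* \etaIncreasingConstant r^\etaIncreasingExponent / \sigma$, and sub-Gaussian control of the noise combined with a union bound over the $N$ cubes produces an event $E_{\mathrm{test}}$ of probability at least $1 - \delta/2$ on which $\hat{p}_{\sigma, \tau}(X_i, \sample) \leq \alpha/m$ for every such $X_i$; inverting the closed-form $p$-value forces $r \gtrsim r_1$.

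On $E_{\mathrm{cov}} \cap E_{\mathrm{test}}$, I propagate rejections through Algorithm~\ref{algo:R_ISS} by induction on a reverse topological ordering of the induced polyforest $F := \mathcal{G}_{\mathrm{F}}(\sample_{X,m})$: every candidate receives budget at least $\alpha/|S_{\mathrm{L}}| \geq \alpha/m$ by direct inspection, and every $F$-ancestor of an $X_i \in A \cap \sample_{X,m}$ is $\succcurlyeq X_i$ and thus lies in $A$ by monotonicity, so the $p$-value bound propagates and the induction closes, showing that every $X_i \in A \cap \sample_{X,m}$ is rejected. An argument based on the grid structure of the cover then shows that for $\mu$-almost every $x \in A$, the cube $[x - (r/c_2)\mathbf{1}, x]$ contains some rejected $X_j \preccurlyeq x$, giving $\mu(A \setminus \hat{A}^{\mathrm{ISS}}) = 0$ on the good event and completing the high-probability bound. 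The expectation bound follows by integrating $\mathbb{E}_P[W] \leq \int_0^1 \Prob_P(W > t)\, dt$ with $\delta$ optimised over the threshold. The main technical obstacle is the interplay between the geometry of the cover and the DAG rejection procedure: one must ensure both that the martingale neighbourhoods of each $X_i$ remain inside the high-regression region (which requires an appropriately buffered cube construction so that no low-regression neighbour contaminates the drift) and that the upper hull of rejected observations genuinely covers $A$ (which requires the covering grid to interact correctly with the partial order~$\preccurlyeq$).
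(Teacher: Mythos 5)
Your overall plan---a margin decomposition, concentration over a covering, then propagation through the DAG procedure---broadly mirrors the structure of the paper's proof, but the central geometric construction has a gap that you yourself flag at the end as the ``main technical obstacle'' without actually supplying the ingredient needed to close it. The paper's proof rests on Lemma~\ref{lemma:geomLemmaForPowerBound}, which builds $q \lesssim r^{-(d-1)}$ \emph{pairs} of supremum-norm cubes $(S_j^0, S_j^1)$ of radius $r$, separated along the diagonal direction $V_1 = \{s\bm{1}_d : s\in\R\}$ by a buffer of width at least $2(1+d^{1/2})r$, with $S_j^0 \preccurlyeq S_j^1$, $S_j^0 \subseteq \superLevelSet{\tau+\etaIncreasingConstant r^\etaIncreasingExponent}{\regressionFunction}$ and $\mu(S_j^0), \mu(S_j^1) \gtrsim r^d$, together with a Borel set $T$ lying above the $S_j^1$'s satisfying $\mu\bigl(\superLevelSet{\tau}{\regressionFunction}\setminus T\bigr) \lesssim r$. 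The lower cube $S_j^0$ supplies the martingale drift, the upper cube $S_j^1$ supplies the rejection candidates, and the diagonal buffer is precisely the ``appropriately buffered cube construction'' that your closing paragraph names but does not build.

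Your proposal instead uses a \emph{single} cube $C_\ell = x_\ell + [0, r/c_2]^d$ anchored at $x_\ell \in A \cap \support(\mu)$ and asserts that, for any $X_i \in C_\ell$, every $\preccurlyeq$-predecessor of $X_i$ within supremum-norm distance $r/c_2$ has $\regressionFunction \geq \tau + c_1'\etaIncreasingConstant r^\etaIncreasingExponent$ ``after a further application of Proposition~\ref{Prop:Containment}''. This step fails: the set $\{x \preccurlyeq X_i : \|x - X_i\|_\infty \leq r/c_2\}$ extends strictly below $x_\ell$, and nothing in Definition~\ref{def:multivariateAssumption} bounds $\regressionFunction$ from below outside of $\superLevelSet{\tau}{\regressionFunction}$; the monotone $\regressionFunction$ may drop arbitrarily far beneath $\tau$ immediately below $x_\ell$. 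Proposition~\ref{Prop:Containment} cannot repair this because it controls only the $\mu$-measure of the transition band $\regressionFunction^{-1}\bigl([\tau,\tau+\etaIncreasingConstant \xi^\etaIncreasingExponent]\bigr)$, not pointwise values of $\regressionFunction$. Without a pointwise lower bound on the predecessors entering the martingale, $S_{k^*}(X_i)$ has no guaranteed positive drift and your claimed bound $\hat{p}_{\sigma,\tau}(X_i)\leq\alpha/m$ does not follow from sub-Gaussian concentration. Your coverage step has the matching gap: rejections inside every $C_\ell$ do not imply rejected observations lie $\preccurlyeq$-below $\mu$-almost every $x\in A$---take $x=x_\ell$ itself, for which $\{z\preccurlyeq x : \|z-x\|_\infty\leq r/c_2\}\cap C_\ell = \{x_\ell\}$. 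Both problems vanish simultaneously in the paper because the covered set $T$ lies strictly above the upper cubes $S_j^1$ while the martingale of a candidate in $S_j^1$ reaches back through the buffer into $S_j^0 \subseteq \superLevelSet{\tau+\etaIncreasingConstant r^\etaIncreasingExponent}{\regressionFunction}$. Note also that the paper never invokes Proposition~\ref{Prop:Containment} for this theorem, and obtains $O(r^{-(d-1)})$ rather than your $O(r^{-d})$ covering cubes by projecting onto $V_1^\perp$ and exploiting the antichain geometry of the boundary of an upper set, although this last difference affects only logarithmic factors.
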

The terms in the bound in Theorem~\ref{thm:powerBound} are similar to those in Theorem~\ref{thm:powerBound_1d}, and exhibit the trade-off in the choice of $m$: if we choose it to be small, then there are fewer data points in our subsample that belong to $\superLevelSet{\tau}{\eta}$ and moreover these are less likely to be excluded from $\hat{A}^{\mathrm{ISS}}$ because they are typically assigned greater budget in our DAG testing procedure.  On the other hand, we incur a greater loss in excluding regions between data points in our subsample that belong to this superlevel set.  By specialising Theorem~\ref{thm:powerBound} via a particular choice of $m$, we obtain the following almost immediate corollary.  It is this upper bound to which we will compare our minimax lower bound in Theorem~\ref{Thm:LowerBound}.
\begin{corollary}
\label{Cor:PowerBound}
Under the conditions of Theorem~\ref{thm:powerBound}, if we take $m_0:= n \wedge \lceil  n\etaIncreasingConstant^2/\sigma^2\rceil$, then
%\begin{align*}
%\Prob_P\biggl[ \marginalDistribution\bigl(\superLevelSet{\tau}{\regressionFunction}\setminus\hat{A}^{\mathrm{ISS}}_{\sigma,\tau,\alpha,m_0}(\sample)\bigr) &> 1 \wedge 4C \biggl\{ \biggl(\frac{\sigma^2}{n \etaIncreasingConstant^2}\biggl\{\log_+\Bigl(\frac{n\lambda^2}{\sigma^2(\alpha \wedge\delta)}\Bigr) + \log_+ \log n\biggr\} \biggr)^{1/(2\etaIncreasingExponent+d)} \\
%&\hspace{5cm}+\biggl(\frac{\log_+(n/\delta)}{n}\biggr)^{1/d} \biggr\} \biggr] \leq \delta,
%\end{align*}
%and
\begin{align*}
\mathbb{E}_{P}\bigl\{ \marginalDistribution\bigl(\superLevelSet{\tau}{\regressionFunction}\setminus\hat{A}^{\mathrm{ISS}}_{\sigma,\tau,\alpha, m_0}(\sample)\bigr) \bigr\} &\leq 1 \wedge 4C \biggl\{ \biggl(\frac{\sigma^2}{n \etaIncreasingConstant^2}\log_+\Bigl(\frac{n\lambda^2\log_+ n}{\sigma^2\alpha}\Bigr)\biggr)^{1/(2\etaIncreasingExponent+d)} + \biggl(\frac{\log_+ n}{n}\biggr)^{1/d} \biggr\}.    
\end{align*}
%In particular, 
%\begin{align*}
%    \Prob_P\biggl[ \marginalDistribution\bigl(\superLevelSet{\tau}{\regressionFunction}\setminus\hat{A}_{\sigma,\tau,\alpha,m_0}(\sample,\rejectionSet)\bigr) > C& \biggl\{ \biggl(\frac{\sigma^2}{n \etaIncreasingConstant^2}\biggl\{\log_+\Bigl(\frac{n\lambda^2}{\sigma^2(\alpha \wedge\delta)}\Bigr) + \log_+ \log n\biggr\} \biggr)^{1/(2\etaIncreasingExponent+d)} \\
%    &+\biggl(\frac{\log_+(1/\delta)+\mathbbm{1}_{\{d>1\}}\cdot \log_+n}{n}\biggr)^{1/d} \biggr\} \biggr] \leq \delta.
%\end{align*}
\end{corollary}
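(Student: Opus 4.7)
The result is stated as ``an almost immediate corollary'': the strategy is simply to substitute $m = m_0$ into the expected-regret bound of Theorem~\ref{thm:powerBound} and then tidy up.  Writing
\[
v := \Bigl(\frac{\sigma^2}{n\lambda^2}\Bigr) \log_+\!\Bigl(\frac{n\lambda^2 \log_+ n}{\sigma^2 \alpha}\Bigr)
\]
for the quantity appearing inside the first bracket on the right-hand side of the corollary, I would split on the size of $v$.

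If $v \geq 1$, then $4C v^{1/(2\gamma+d)} \geq 4C \geq 4$, so the right-hand side of the corollary equals $1$, which bounds the left-hand side trivially since $\mu$ is a probability measure.  In the remaining regime $v < 1$, the elementary inequality $v \geq \sigma^2/(n\lambda^2)$ (valid because $\log_+(\cdot) \geq 1$) forces $n\lambda^2/\sigma^2 \geq 1$, so that $m_0 \leq \lceil n\lambda^2/\sigma^2\rceil \leq 2n\lambda^2/\sigma^2$ and $m_0 \geq n\lambda^2/\sigma^2$.  I would then substitute $m = m_0$ into Theorem~\ref{thm:powerBound} and bound each summand.  For the first summand, replacing $m_0$ by $2n\lambda^2/\sigma^2$ inside the logarithm and using $\log_+(2x) \leq 2\log_+ x$ (valid because $\log_+ x \geq 1$) yields at most $2 v^{1/(2\gamma+d)}$.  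For the second summand $(\log_+ m_0/m_0)^{1/d}$, I would split on whether $\lambda^2 \geq \sigma^2$: if so, then $m_0 = n$ and the term equals $(\log_+ n/n)^{1/d}$; otherwise $m_0 \in [n\lambda^2/\sigma^2, 2n\lambda^2/\sigma^2]$ gives $\log_+ m_0/m_0 \leq 2v$ by the same manipulation, so the term is at most $(2v)^{1/d}$.  Summing the two bounds and absorbing the numerical constants $2^{1/d}$, $2^{1/(2\gamma+d)} \leq 2$ into the prefactor yields $4C\bigl(v^{1/(2\gamma+d)} + (\log_+ n/n)^{1/d}\bigr)$, as desired.

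\textbf{Main obstacle.}  The only non-routine step is the second summand in the case $\lambda^2 < \sigma^2$, where $m_0$ may be much smaller than $n$, so the crude estimate $(\log_+ m_0 / m_0)^{1/d} \leq (\log_+ n/n)^{1/d}$ fails.  The remedy is to absorb this term into the first bracket of the corollary via the elementary inequality $v^{1/d} \leq v^{1/(2\gamma+d)}$ valid for $v \in (0,1]$ (which holds because $\gamma > 0$, hence $1/d > 1/(2\gamma+d)$).  The initial reduction to the regime $v \leq 1$ is precisely what makes this absorption available.
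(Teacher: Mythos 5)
Your proof is correct and takes essentially the same approach as the paper. Both rely on the same two devices — the reduction to the regime where $v := (\sigma^2/n\lambda^2)\log_+\bigl(n\lambda^2\log_+ n/(\sigma^2\alpha)\bigr) \leq 1$ so that the boundary effect is trivial otherwise, and the absorption $v^{1/d} \leq v^{1/(2\gamma+d)}$ (valid when $v\leq 1$ because $\gamma>0$) to handle the $\bigl(\log_+ m_0/m_0\bigr)^{1/d}$ term when $m_0 < n$ — together with the bounds $n\lambda^2/\sigma^2 \leq m_0 \leq 2n\lambda^2/\sigma^2$ and $\log_+(2x)\leq 2\log_+x$. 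The only organizational difference is that you substitute $m=m_0$ directly into the expectation bound of Theorem~\ref{thm:powerBound}, whereas the paper applies the same manipulations to the high-probability bound and then integrates over $\delta$; the latter yields (at no extra cost) the high-probability analogue of the corollary, which the paper mentions in passing but omits for brevity.
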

As is apparent from the proof of Corollary~\ref{Cor:PowerBound}, a high-probability bound analogous to that in Theorem~\ref{thm:powerBound} also holds, but this is omitted for brevity.  In practice, one can take $m = n$ (as we do in our simulations in Section~\ref{sec:simulations}), with a corresponding power bound obtained as a special case of Theorem~\ref{thm:powerBound}.   Corollary~\ref{Cor:PowerBound} suggests a curse of dimensionality effect in isotonic subgroup selection; this is confirmed as an essential price to pay by Theorem~\ref{Thm:LowerBound} below.

\subsection{Main lower bound}\label{sec:lowerBound}

In order to discuss the optimality of our data-dependent selection set $\hat{A}^{\mathrm{ISS}}$, we present a minimax lower bound that provides a benchmark on the regret that is achievable by any data-dependent selection set with Type I error control.
\begin{theorem}\label{Thm:LowerBound}
    Let $d \in \N$, $\tau \in \R$, $\sigma, \etaIncreasingExponent, \etaIncreasingConstant > 0$ and $\densityConstant > 1$. Then, writing  $\mathcal{P}' := \distributionClassNonDecreasingRegressionFunction[d] \cap \distributionClassMultivariateCondition[d]$, there exists $c \in (0, 1)$, depending only on $(d, \etaIncreasingExponent)$, such that for any $n\in\N$ and $\alpha \in (0, 1/4]$, we have 
    \begin{equation}
    \label{Eq:MLB}
    \inf_{\hat{A} \in \hat{\mathcal{A}}_n(\tau, \alpha, \mathcal{P}')} \sup_{P \in \mathcal{P}'} \mathbb{E}_P \bigl\{\marginalDistribution\bigl(\superLevelSet{\tau}{\regressionFunction}\setminus \hat{A}(\sample)\bigr)\bigr\} \geq c\biggl[1 \wedge \biggl\{\biggl(\frac{\sigma^2}{n\etaIncreasingConstant^2}\log_+\Bigl(\frac{1}{5\alpha}\Bigr)\biggr)^{1/(2\etaIncreasingExponent + d)} + \frac{1}{n^{1/d}}\biggr\}\biggr].
    \end{equation}
\end{theorem}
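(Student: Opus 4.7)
The bound in~\eqref{Eq:MLB} decomposes as a sum of two terms: $r_{\mathrm{det}} := \bigl(\frac{\sigma^2}{n\lambda^2}\log_+(\frac{1}{5\alpha})\bigr)^{1/(2\gamma+d)}$, a \emph{detection} rate reflecting the noise-limited difficulty of testing $\eta \geq \tau$, and $r_{\mathrm{cov}} := n^{-1/d}$, a \emph{coverage} rate arising from the geometric difficulty of approximating a superlevel set by the upper hull of finitely many samples. Since $r_{\mathrm{det}} + r_{\mathrm{cov}} \leq 2\max(r_{\mathrm{det}}, r_{\mathrm{cov}})$, it suffices to establish each lower bound separately by exhibiting a suitable adversary in $\mathcal{P}'$.

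For the detection term, my plan is a packing-based multiple-testing argument. Set $r := c_0 \cdot r_{\mathrm{det}}$ for a small constant $c_0 = c_0(d, \gamma)$; take $\mu$ to be Lebesgue on $[0,1]^d$ and construct a monotone $\eta_0$ whose $\tau$-superlevel set is $S_0 = \{x\in[0,1]^d : x^{(1)} \geq 1/2\}$. Partition the shelf $[1/2-r, 1/2) \times [0,1]^{d-1}$ into $N := r^{-(d-1)}$ disjoint cubes $B_1, \ldots, B_N$ of side $r$, and take $\eta_0 \equiv \tau - \lambda r^\gamma$ on $\bigcup_j B_j$. Define $\eta_1 := \eta_0 + 2\lambda r^\gamma \mathbf{1}_{\bigcup_j B_j}$, so that $\superLevelSet{\tau}{\eta_1} = S_0 \cup \bigcup_j B_j$. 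After smoothing $\eta_0, \eta_1$ at cube boundaries to preserve monotonicity and enforce conditions (i), (ii) of Definition~\ref{def:multivariateAssumption}, the induced Gaussian-noise distributions $P_0, P_1$ lie in $\mathcal{P}'$. A direct computation shows that the per-cube signal-to-noise $\Delta := 2\lambda r^\gamma \sqrt{n\mu(B_j)/\sigma^2} = 2\sqrt{n\lambda^2 r^{2\gamma+d}/\sigma^2}$ satisfies $\Delta^2 \asymp \log(1/(5\alpha))$ by the choice of $r$.

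The decisive step is a per-cube Neyman--Pearson argument. Because $\bigcup_j B_j \cap S_0 = \emptyset$, Type I error control of $\hat{A}$ at $P_0$ gives $\sum_{j=1}^N \mathbb{P}_{P_0}(\hat{A} \cap B_j \neq \emptyset) \leq N\alpha$ by linearity applied to $\mathbf{1}_{\hat{A} \not\subseteq S_0}$. Markov's inequality then guarantees that at least $9N/10$ ``good'' cubes have $\mathbb{P}_{P_0}(\hat{A} \cap B_j \neq \emptyset) \leq 10\alpha$. For each such cube, $\mathbf{1}_{\hat{A} \cap B_j \neq \emptyset}$ acts as a test of $P_0$ versus $P_1$ at level $10\alpha$ in a Gaussian shift model with signal $\Delta$; the Neyman--Pearson lemma yields Type II error $\mathbb{P}_{P_1}(\hat{A} \cap B_j = \emptyset) \geq \Phi(\Phi^{-1}(1-10\alpha) - \Delta) \geq 1/2$ once $c_0$ is chosen small enough. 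Summing the contributions from good cubes gives $\mathbb{E}_{P_1}[\mu(S_1 \setminus \hat{A})] \geq \frac{9N}{10}\cdot\frac{1}{2} \cdot r^d \gtrsim r \asymp r_{\mathrm{det}}$, as required.

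For the coverage term $r_{\mathrm{cov}} = n^{-1/d}$, my plan is to construct $P_2 \in \mathcal{P}'$ whose superlevel set is a carefully shaped region of $\mu$-mass bounded below by $c_1 n^{-1/d}$, arranged so that its $(d-1)$-dimensional boundary is an antichain-like structure and $\eta$ takes values well above $\tau$ on it. A Poisson-type calculation then shows that with probability bounded away from zero, a subregion of $\superLevelSet{\tau}{\eta}$ of $\mu$-mass of order $n^{-1/d}$ is dominated from below by no data point, and so cannot appear in any data-dependent selection set whose Type I error constraint essentially confines it to the upper hull of a subset of the covariates. The main obstacle, across both parts, is the joint verification of the monotonicity of $\eta$ and both parts of Definition~\ref{def:multivariateAssumption}: the local density condition (i) forbids concentration of $\mu$ on lower-dimensional sets, while the growth condition (ii) tightly couples the geometric scale of each perturbation to the magnitude of the lift $\lambda r^\gamma$, and balancing these constraints is what fixes the admissible value of $c_0$ and the constant $c$ in~\eqref{Eq:MLB}.
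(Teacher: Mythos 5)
Your decomposition into a detection rate $r_{\mathrm{det}}$ and a coverage rate $r_{\mathrm{cov}}$ mirrors the paper's strategy, which combines Propositions~\ref{prop:lowerBound_alpha_multivariate} and~\ref{prop:lowerBoundParametric}. However, both halves of your argument have genuine gaps.

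\textbf{Detection term.} The Neyman--Pearson step is misapplied. You compare $P_0$ (all shelf cubes at $\tau - \lambda r^\gamma$) against $P_1$ (all cubes at $\tau + \lambda r^\gamma$), and assert that $\mathbf{1}_{\{\hat A \cap B_j \neq \emptyset\}}$ has Type~II error bounded below by $\Phi\bigl(\Phi^{-1}(1 - 10\alpha) - \Delta\bigr)$ with the \emph{per-cube} signal $\Delta$. But $P_0^n$ and $P_1^n$ differ on all $N = r^{-(d-1)}$ cubes simultaneously, so the total discrepancy entering any Neyman--Pearson lower bound is of order $\Delta\sqrt{N}$, not $\Delta$; for small $r$ the resulting bound is vacuous, and nothing forces the Type~II error of a per-cube test to track the per-cube signal when the two product measures are nearly mutually singular. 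The correct pairing — the one the paper uses — is between $P_1 \equiv P_{S^*}$ and $P_{S^*_{-\bm{j}}}$, which differ on the \emph{single} cube $\bm{j}$, so that the per-cube and global discrepancies coincide, and Corollary~\ref{Cor:GaussianTesting2} applies. Crucially, your shelf $[1/2 - r, 1/2)\times[0,1]^{d-1}$ cannot support this pairing: two shelf cubes contain comparable points (e.g.\ in $d=2$, $x = (1/2 - r, 0) \preccurlyeq y = (1/2 - r/2, r)$ with $x\in B_1$, $y\in B_2$), so toggling one cube down while leaving a neighbour up destroys monotonicity of $\eta$. The paper places its cubes on a genuine diagonal antichain $\mathbb{W}_{q,d}\subseteq[q]^d$, which is what makes all $2^{|\mathbb{W}_{q,d}|}$ toggled regression functions monotone (Lemma~\ref{lemma:antichain_distributions_properties}).

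\textbf{Coverage term.} This part of your proposal is not an argument so much as a description of the intended conclusion, and its central heuristic is false: the Type~I error constraint does \emph{not} confine $\hat A \in \hat{\mathcal{A}}_n(\tau, \alpha, \mathcal{P}')$ to the upper hull of the observed covariates. A valid $\hat A$ may include an unobserved cube; indeed, against a single fixed $P_2$, a Poisson-type ``you probably missed some mass'' argument cannot rule out estimators that exploit structural knowledge of $P_2$, and the minimax lower bound must apply to arbitrary measurable data-dependent sets. The paper handles this by an Assouad-type two-distribution construction (Proposition~\ref{prop:lowerBoundParametric}): it perturbs the marginal $\mu_S$ itself, translating the mass of sub-level hypercubes off to a disjoint region so that pairs $(P_{S_0}, P_{S_1})$ with $|S_0 \triangle S_1| = 1$ have $\mathrm{TV}(P_{S_0}^n, P_{S_1}^n) \leq n/q^d$. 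That $\mathrm{TV}$-closeness, combined with the Type~I constraint, is what forces any valid $\hat A$ to miss a constant fraction of the antichain cubes. Some construction of this flavour is needed to make your coverage term rigorous; the paper also proves a separate bound (Proposition~\ref{prop:lowerBound_m}) under the additional restriction that $\hat A$ is the upper hull of a subset of $\sample_{X,m}$, which is closer to what you had in mind, but that restricted bound is not enough on its own for Theorem~\ref{Thm:LowerBound}.
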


By comparing the rate in Theorem~\ref{Thm:LowerBound} with those in Theorem~\ref{thm:powerBound_1d} and Corollary~\ref{Cor:PowerBound}, we see that  $\hat{A}^{\mathrm{ISS}}$ attains the optimal regret among procedures with Type~I error control, up to poly-logarithmic factors.  In particular, up to such factors, these results reveal the optimal dependence of the regret not only on $n$, but also on $\sigma$, $\etaIncreasingConstant$ and $\alpha$.  It is interesting to note that Theorem~\ref{Thm:LowerBound} incorporates procedures that are only required to control Type~I error over $\distributionClassNonDecreasingRegressionFunction[d] \cap \distributionClassMultivariateCondition[d]$, whereas $\hat{A}^{\mathrm{ISS}}$ has Type~I error control over the larger class $\distributionClassNonDecreasingRegressionFunction[d]$, by Theorem~\ref{thm:validSelectionSet}.  Thus, $\hat{A}^{\mathrm{ISS}}$ suffers no deterioration in performance for this stronger validity guarantee, at least up to poly-logarithmic factors.

The proof of Theorem~\ref{Thm:LowerBound} combines two minimax lower bounds, given in Propositions~\ref{prop:lowerBound_alpha_multivariate} and~\ref{prop:lowerBoundParametric}, which provide the different terms in the sum in~\eqref{Eq:MLB}.  The main idea in both cases is to divide $[0,1]^d$ into a hypercube lattice, and to construct pairs of distributions where either the regression function (Proposition~\ref{prop:lowerBound_alpha_multivariate}) or the marginal distribution (Proposition~\ref{prop:lowerBoundParametric}) only differ in a single hypercube among a collection whose centres lie on a large antichain in $\R^d$.  Observations outside these critical hypercubes therefore do not help to distinguish between the distributions in a pair, so by choosing the number of hypercubes and the difference in the regression function levels appropriately, we obtain a non-trivial probability of failing to include them in a data-dependent selection set.  Our formal constructions, together with illustrations, are given in Section~\ref{sec:lowerBound_proofs}.

\section{Extensions}
\label{Sec:Extensions}

\newcommand\distributionClassBounded{\mathcal{P}_{\mathrm{Bdd}, d}}
\newcommand\distributionClassMedian{\mathcal{P}_{\mathrm{Med}, d}(\tau)}

\subsection{Choice of \texorpdfstring{$p$}{p}-value construction}
\label{SubSec:pvalues}

Recall that in our isotonic subgroup selection procedure $\hat{A}^{\mathrm{ISS}}$, we propose a $p$-value based on a martingale test in combination with a \emph{finite law of the iterated logarithm (LIL) bound} (Lemma~\ref{lemma:howard_uniform_bound}\emph{(a)}).  The following definition gives an alternative $p$-value construction that uses a different bound and includes a hyperparameter $\rho > 0$. 
\begin{defn}\label{defn:pValue_normalmixture}
    In the setting of Definition~\ref{defn:pValue}, for $\rho > 0$, we define
        \begin{align*}
        \tilde{p}^\rho_{\sigma, \tau}(x) \equiv \tilde{p}^\rho_{\sigma, \tau}(x, \sample) := 1 \wedge \min_{k \in [n(x)]} \sqrt{\frac{k + \rho}{4\rho}} \biggl\{\exp\biggl(\frac{(S_k \vee 0)^2}{2(k+\rho)}\biggr) - 1\biggr\}^{-1}
    \end{align*}
    whenever $n(x) > 0$, and $\tilde{p}^\rho_{\sigma, \tau}(x, \sample) := 1$ otherwise.
\end{defn}
By Lemma~\ref{lemma:howard_uniform_bound}\emph{(b)}, which is due to \citet{howard2021uniform}, in combination with the proof technique of Lemma~\ref{lemma:validLocalPValStouffer}, $\tilde{p}^\rho_{\sigma, \tau}(x)$ is indeed a $p$-value for the null hypothesis $\eta(x) < \tau$ (even conditional on $\sampleX$).  It follows that if we modify our procedure to use these $p$-values instead of those in Definition~\ref{defn:pValue}, then the Type I error guarantee in Theorem~\ref{thm:validSelectionSet} is unaffected.  The objective function being minimised over $k \in [n(x)]$ is a little smaller for the original $p$-value definition when $k$ is sufficiently large, and this therefore leads to a stronger power bound in Theorem~\ref{thm:powerBound}.  Nevertheless, for appropriate values of $\rho > 0$, Definition~\ref{defn:pValue_normalmixture} may yield a slightly smaller objective for small and moderate values of $k$, and hence may be preferable in practice.  Based on some preliminary simulations, we found that the power of our approach varied very little over choices of $\rho \in (0,1]$, and that $\rho=1/2$ was a robust choice that we used throughout our experiments in Section~\ref{sec:simulations} and recommend for practical use.

\newcommand\distributionClassNormal{\mathcal{P}_{\mathrm{N}, d}(\sigma)}

\subsection{Alternative distributional assumptions}\label{Sec:Extension_AlternativeDistributions}

In this subsection, we introduce three variants of $\hat{A}^{\mathrm{ISS}}$, each of which is able to control Type~I error over appropriate classes without knowledge of any nuisance parameter and without the need for sample splitting.  In each case, we retain the same multiple testing component $\mathcal{R}^{\mathrm{ISS}}$ to our procedure, but construct the $p$-values in different ways.  Power results analogous to Theorem~\ref{thm:powerBound} also hold for these versions of $\hat{A}^{\mathrm{ISS}}$, but are omitted for brevity.

\subsubsection{Gaussian noise with unknown variance}

Since Algorithm~\ref{Alg:ISS} takes the sub-Gaussian variance parameter $\sigma^2$ as an input, we present here an adaptive approach for a Gaussian error setting.  More precisely, for $\sigma > 0$, let $\distributionClassNormal$ denote the subset of $\distributionClassNonDecreasingRegressionFunction$ with $Y - \eta(X) | X \sim \mathcal{N}(0, \sigma^2)$. 
\begin{defn}\label{def:pValue_normal}
    In the setting of Definition~\ref{defn:pValue}, let %$Y_{0,k} := \bigl(Y_{(1)}(x) \wedge \tau, \ldots, Y_{(k)}(x) \wedge \tau\bigr)$, 
    $\hat{\sigma}^2_{0,k} := k^{-1}\sum_{j=1}^k \bigl(Y_{(j)}(x) - \tau\bigr)_+^2$ and $\Bar{Y}_{1, k} :=  k^{-1}\sum_{j=1}^k Y_{(j)}(x)$ for $k\in [n(x)]$ and $\hat{\sigma}^2_{1,k} := k^{-1}\sum_{j=1}^k \bigl(Y_{(j)}(x) - \Bar{Y}_{1,k}\bigr)^2$ for $k \in \{2,\ldots, n(x)\}$. Moreover, we denote $\Bar{Y}_{1,0} := 0$, and $\hat{\sigma}^2_{1,k} :=1$ for $k \in \{0, 1\}$. %Writing $\varphi(\cdot; \mu, \sigma^2)$ for the probability density function of $\mathcal{N}(t, \sigma^2)$ with $t \in\R$, $\sigma^2 > 0$, 
    For $k\in [n(x)]$, define
    \[
    \Bar{p}^k_{\tau}(x) \equiv \Bar{p}^k_\tau(x,\sample) := \frac{1}{\hat{\sigma}^{k}_{0,k}e^{k/2}}\cdot \prod_{j=1}^k \hat{\sigma}_{1, j-1}\exp\biggl\{\frac{\bigl(Y_{(j)}(x) - \Bar{Y}_{1, j-1}\bigr)^2}{2\hat{\sigma}^2_{1,j-1}} \biggr\},
    \]
    where for definiteness $\Bar{p}^k_\tau(x) := 1$ if $\hat{\sigma}_{0,k} = 0$, and $\Bar{p}_{\tau}(x) \equiv  \Bar{p}_{\tau}(x, \sample) := 1 \wedge \min_{k\in [n(x)]}  \Bar{p}^k_{\tau}(x)$.
\end{defn}
The idea here is that $\Bar{p}^k_{\tau}(x)$ exploits the sequential likelihood ratio test principle developed by \cite{wasserman2020universal}, applied to a notion of a $t$-test for a stream of independent normal random variables with varying means.  In particular, $\hat{\sigma}_{0,k}^2$ and $\hat{\sigma}_{1,k}^2$ are maximum likelihood estimators of $\sigma^2$ under the null hypothesis that $\max_{j \in [k]} \eta\bigl(X_{(j)}(x)\bigr) < \tau$ and without this constraint respectively.  The next lemma is analogous to Lemma~\ref{lemma:validLocalPValStouffer} and guarantees that $\Bar{p}_\tau(x)$ is a $p$-value.  
\begin{lemma}\label{lemma:validLocalPValNormality}
    \sloppy Let $x \in \R^d$, $\tau \in [0,1)$, $P \in \cup_{\sigma \in (0,\infty)} \distributionClassNormal$ with $\eta(x) < \tau$ and $\sample = \bigl((X_1,Y_1),\ldots,(X_n,Y_n)\bigr) \sim P^n$.  Then $\Prob_P\bigl\{\Bar{p}_{\tau}(x, \sample) \leq \alpha | \sample_X\bigr\} \leq \alpha$ for all $\alpha \in (0,1)$.
\end{lemma}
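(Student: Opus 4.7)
The plan is to mimic the sequential likelihood-ratio construction of \citet{wasserman2020universal}, working throughout conditional on $\sample_X$. Write $\eta_j := \eta(X_{(j)}(x))$ and let $f_{\mu,\varsigma^2}$ denote the $\mathcal{N}(\mu,\varsigma^2)$ density. Because $X_{(j)}(x) \preccurlyeq x$ by construction and $\eta$ is increasing, the null hypothesis $\eta(x) < \tau$ yields $\eta_j \leq \eta(x) < \tau$ for all $j \in [n(x)]$, and the $\distributionClassNormal$ assumption gives that, conditional on $\sample_X$, the concomitants $Y_{(1)}(x),\ldots,Y_{(n(x))}(x)$ are independent with $Y_{(j)}(x) \sim \mathcal{N}(\eta_j, \sigma^2)$.

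I would next set up a test martingale driven by the predictable alternative in the definition of $\bar{p}^k_\tau(x)$. With $\mathcal{F}_k := \sigma\bigl(\sample_X, Y_{(1)}(x),\ldots, Y_{(k)}(x)\bigr)$ and $g_{j-1}(y) := f_{\bar{Y}_{1,j-1},\,\hat{\sigma}^2_{1,j-1}}(y)$, which is $\mathcal{F}_{j-1}$-measurable, define
\[
M_k := \prod_{j=1}^k \frac{g_{j-1}\bigl(Y_{(j)}(x)\bigr)}{f_{\eta_j,\sigma^2}\bigl(Y_{(j)}(x)\bigr)}, \qquad M_0 := 1.
\]
Since $g_{j-1}$ integrates to one and $Y_{(j)}(x)\mid \mathcal{F}_{j-1}\sim \mathcal{N}(\eta_j,\sigma^2)$, a one-line change-of-measure calculation gives $\mathbb{E}[M_k / M_{k-1}\mid \mathcal{F}_{k-1}]=1$, so that $(M_k)$ is a non-negative $(\mathcal{F}_k)$-martingale with initial value $1$.

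The core step is the pathwise inequality $1/\bar{p}^k_\tau(x) \leq M_k$. Direct manipulation of Definition~\ref{def:pValue_normal} yields
\[
\frac{1}{\bar{p}^k_\tau(x)} = \frac{\prod_{j=1}^k g_{j-1}\bigl(Y_{(j)}(x)\bigr)}{(2\pi)^{-k/2}\,\hat{\sigma}_{0,k}^{-k}\,e^{-k/2}},
\]
and I would recognise the denominator as the maximised likelihood $\sup\bigl\{\prod_{j=1}^k f_{\eta'_j,\varsigma^2}\bigl(Y_{(j)}(x)\bigr): \eta'_j \leq \tau,\ \varsigma^2 > 0\bigr\}$: profiling each $\eta'_j$ over $(-\infty,\tau]$ gives $\eta'_j = Y_{(j)}(x)\wedge\tau$ with residual $(Y_{(j)}(x)-\tau)_+$, and a further maximisation in $\varsigma^2$ returns $\hat{\sigma}^2_{0,k} = k^{-1}\sum_{j=1}^k (Y_{(j)}(x)-\tau)_+^2$ together with the stated supremum. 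Because the true parameter vector $(\eta_1,\ldots,\eta_k,\sigma)$ lies in this null parameter space, the supremum dominates $\prod_{j=1}^k f_{\eta_j,\sigma^2}\bigl(Y_{(j)}(x)\bigr)$, and the inequality follows. On the event $\{\hat{\sigma}_{0,k}=0\}$, Definition~\ref{def:pValue_normal} forces $\bar{p}^k_\tau(x)=1$, so the bound $1/\bar{p}^k_\tau(x)\leq M_0 \vee M_k$ is trivial there.

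Finally, Ville's inequality applied to $(M_k)$ conditional on $\sample_X$ gives $\Prob_P\bigl(\sup_{k\geq 0} M_k \geq 1/\alpha \mid \sample_X\bigr) \leq \alpha$. Combining this with $1/\bar{p}_\tau(x) = 1 \vee \max_{k\in[n(x)]} 1/\bar{p}^k_\tau(x) \leq \sup_{k\geq 0} M_k$ (which uses $M_0 = 1$) yields the advertised conditional bound. The main obstacle is the profile-likelihood identification of the denominator of $\bar{p}^k_\tau(x)$ as the supremum of the null Gaussian likelihood over both the unknown mean and the unknown variance; once this algebraic fact is in hand, the rest of the proof is the standard Ville argument used throughout universal inference.
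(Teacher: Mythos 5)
Your proof is correct and follows essentially the same path as the paper's: both arguments identify the denominator of $1/\bar{p}^k_\tau(x)$ as the profiled null Gaussian likelihood over $(-\infty,\tau]^k\times[0,\infty)$, bound it below by the true likelihood at $(\eta_1,\dots,\eta_k,\sigma^2)$ to obtain the predictable likelihood-ratio quantity (the paper's $\Lambda_k$ is precisely your $M_k$), verify the martingale property by the one-line change of measure, and finish with Ville's inequality. The only cosmetic difference is how the degenerate case $\hat{\sigma}_{0,k}=0$ is handled --- the paper intersects with $\{\hat{\sigma}^2_{0,k}>0\}$ inside the probability rather than appealing to $M_0=1$ --- which is the same idea expressed slightly differently.
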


It is worth highlighting that the proof of Lemma~\ref{lemma:validLocalPValNormality} relies on the fact that $\{x \in \R^d: \eta(x) < \tau\}$ is a lower set, but otherwise does not use the monotonicity of $\eta$. From this, we can deduce that an analogous robustness to misspecification holds to that presented at the end of Section~\ref{sec:theory_validity}.  

\subsubsection{Classification}\label{sec:extensions_classification}

Our second variant is tailored to the case of bounded responses, which in particular includes classification settings.  Suppose that $(X, Y) \sim P$ for some distribution $P$ on $\R^d \times [0, 1]$ with increasing regression function $\regressionFunction$ on $\R^d$. By Hoeffding's lemma, the sub-Gaussianity condition of Definition~\ref{defn:classOfMonDistributions} is satisfied with $\sigma = 1/2$, so that $\hat{A}^{\mathrm{ISS}}$ may be used to control the Type~I error over $\mathcal{P}_{\mathrm{Mon}, d}(1/2)$.  However, in this context, it suffices to control the Type~I error over the subclass $\distributionClassBounded$ of $\mathcal{P}_{\mathrm{Mon}, d}(1/2)$ consisting of distributions $P$ on $\R^d \times [0, 1]$ with increasing regression function.  In such a setting, we may combine our procedure with the following modified $p$-value construction.  Recall that for $z \in (0,1)$ and $a,b > 0$, the \emph{incomplete beta function} is defined by $\mathrm{B}(z; a, b) := \int_0^z t^{a - 1}(1-t)^{b - 1}\, dt$.
\begin{defn}\label{defn:pValue_binary}
    In the setting of Definition~\ref{defn:pValue}, let $\Check{S}_k \equiv \check{S}_k(x, \sample) := \sum_{j=1}^k Y_{(j)}(x)$ and define
    \begin{align*}
        \check{p}_{\tau}(x) \equiv \check{p}_{\tau}(x, \sample) := 1 \wedge \min_{k \in [n(x)]} \frac{\tau^{\check{S}_k}(1-\tau)^{n-\check{S}_k + 1}}{\mathrm{B}(1-\tau; n - \check{S}_k + 1, \check{S}_k + 1)}.
    \end{align*}
\end{defn}
The following lemma confirms that this indeed defines a $p$-value (even conditional on~$\sampleX$).  Its proof proceeds via a one-sided version of the time-uniform confidence sequence construction of \cite{robbins1970statistical}, which is itself based on earlier work by \cite{ville1939etude} and \cite{wald1947sequential}. 

\begin{lemma}\label{lemma:validLocalPValClassification}\sloppy Let $x \in \R^d$, $\tau \in [0,1)$, $P \in \distributionClassBounded$ with $\eta(x) < \tau$ and $\sample = \bigl((X_1,Y_1),\ldots,(X_n,Y_n)\bigr) \sim P^n$.  Then $\Prob_P\bigl\{\check{p}_{\tau}(x, \sample) \leq \alpha | \sample_X\bigr\} \leq \alpha$ for all $\alpha \in (0,1)$.
\end{lemma}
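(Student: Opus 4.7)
The plan is to prove the lemma via Ville's maximal inequality applied to a nonnegative mixture supermartingale, in the spirit of the Ville--Wald--Robbins method of mixtures. Work conditional on $\sampleX$ and define $\mathcal{F}_k := \sigma\bigl(\sampleX, Y_{(1)}(x),\ldots,Y_{(k)}(x)\bigr)$. The null hypothesis $\eta(x) < \tau$, combined with the monotonicity of $\eta$ and the fact that $X_{(j)}(x) \preccurlyeq x$, gives $\mathbb{E}\bigl[Y_{(j)}(x) \bigm| \mathcal{F}_{j-1}\bigr] = \eta\bigl(X_{(j)}(x)\bigr) \leq \eta(x) < \tau$ for every $j \in [n(x)]$.

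I would first introduce the mixture process
\[
M_k := \int_\tau^1 \Bigl(\frac{u}{\tau}\Bigr)^{\check{S}_k}\Bigl(\frac{1-u}{1-\tau}\Bigr)^{k - \check{S}_k}\,\frac{du}{1-\tau}
\]
for $k \in \{0,1,\ldots,n(x)\}$, which satisfies $M_0 = 1$, and verify that $(M_k)$ is a nonnegative supermartingale under the null. For each fixed $u \in [\tau, 1]$, the map $y \mapsto (u/\tau)^y\bigl((1-u)/(1-\tau)\bigr)^{1-y}$ is convex on $[0, 1]$; the secant-line bound combined with $Y_{(k)}(x) \in [0,1]$ therefore yields
\[
\mathbb{E}\biggl[\Bigl(\frac{u}{\tau}\Bigr)^{Y_{(k)}(x)}\Bigl(\frac{1-u}{1-\tau}\Bigr)^{1-Y_{(k)}(x)} \biggm| \mathcal{F}_{k-1}\biggr] \leq (1-p_k)\frac{1-u}{1-\tau} + p_k \cdot \frac{u}{\tau},
\]
where $p_k := \eta\bigl(X_{(k)}(x)\bigr)$. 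The right-hand side equals $1$ at $p_k = \tau$, and differentiating in $p_k$ gives slope $(u-\tau)/[\tau(1-\tau)] \geq 0$ for $u \geq \tau$, so under the null it is at most $1$. Multiplying by the $\mathcal{F}_{k-1}$-measurable factor $(u/\tau)^{\check{S}_{k-1}}\bigl((1-u)/(1-\tau)\bigr)^{(k-1) - \check{S}_{k-1}}$ and integrating over $u$ via Fubini then gives $\mathbb{E}[M_k \mid \mathcal{F}_{k-1}] \leq M_{k-1}$.

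Ville's inequality now gives $\mathbb{P}\bigl(\sup_{k \in [n(x)]} M_k \geq 1/\alpha \bigm| \sampleX\bigr) \leq \alpha$. The substitution $v = 1 - u$ identifies
\[
M_k = \frac{\mathrm{B}(1-\tau;\, k - \check{S}_k + 1,\, \check{S}_k + 1)}{\tau^{\check{S}_k}(1-\tau)^{k - \check{S}_k + 1}},
\]
so $1/M_k$ is the $k$-dependent Robbins analogue of the lemma's $p$-value. Extracting the factor $v^{n-k} \leq (1-\tau)^{n-k}$ inside the integrand that defines the incomplete beta function gives the elementary bound $\mathrm{B}(1-\tau;\, n - \check{S}_k + 1,\, \check{S}_k + 1) \leq (1-\tau)^{n-k}\,\mathrm{B}(1-\tau;\, k - \check{S}_k + 1,\, \check{S}_k + 1)$, and a short rearrangement shows that the $k$-th term inside the minimum defining $\check{p}_\tau(x,\sample)$ dominates $1/M_k$. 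Hence $\check{p}_\tau(x,\sample) \geq 1/\sup_{k \in [n(x)]} M_k$, and the conclusion follows.

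The main obstacle is the supermartingale verification for responses merely bounded in $[0,1]$, rather than Bernoulli: the convexity-based secant linearisation together with the null-mean constraint $\eta\bigl(X_{(k)}(x)\bigr) \leq \tau$ is precisely what yields the crucial pointwise-in-$u$ bound at most $1$, and this is the step that converts a classical Robbins argument for Bernoulli streams into one valid in the general classification setting.
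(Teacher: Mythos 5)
Your proof is correct and follows essentially the same route as the paper: the uniform mixture of Bernoulli likelihood ratios, the convexity/secant-line bound to handle general $[0,1]$-valued responses (the paper cites Lemma~9 of Garivier and Capp\'e (2011) for exactly this step, while you spell it out), and Ville's inequality. A minor point in your favour: the paper's final display asserts $\Prob_P(\check{p}_{\tau,\nu}(x,\sample) \leq \alpha \mid \sampleX) = \Prob_P(\max_k \Lambda_k \geq 1/\alpha \mid \sampleX)$, but because the $p$-value's definition uses $n$ rather than $k$ in the exponents the correct relation is only an inequality $\leq$, which is what is needed; your incomplete-beta bound $\mathrm{B}(1-\tau; n - \check{S}_k + 1, \check{S}_k + 1) \leq (1-\tau)^{n-k}\,\mathrm{B}(1-\tau; k - \check{S}_k + 1, \check{S}_k + 1)$ makes this explicit.
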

As a consequence of Lemma~\ref{lemma:validLocalPValClassification}, combining $\hat{A}^{\mathrm{ISS}}$ with the $p$-values $\check{p}_\tau$ still controls the Type~I error over $\distributionClassBounded$.  In a similar spirit to the discussion at the end of Section~\ref{sec:theory_validity}, both this conclusion and Lemma~\ref{lemma:validLocalPValClassification} hold over the even larger class $\mathcal{P}_{\mathrm{BddUpp}, d}(\tau) \supseteq \mathcal{P}_{\mathrm{Bdd}, d}$ of distributions $P$ on $\R^d \times [0,1]$ with regression function $\eta$ such that $\superLevelSet{\tau}{\eta}$ is an upper set (see Lemma~\ref{lemma:validLocalPValClassificationGeneralisation}).

\subsubsection{Increasing conditional quantiles}\label{sec:extensions_nonnegative_median}

\newcommand{\distributionClassQuantileIncreasing}{\mathcal{P}_{\mathrm{Q},d}(\theta)}

Finally in this subsection, we present an alternative assumption on the conditional response distribution that motivates a version of $\hat{A}^{\mathrm{ISS}}$ that is robust to heavy tails.
\begin{defn}
\label{Def:CondQuant}
Given $\theta \in (0,1)$, a distribution $P$ on $\R^d \times \R$ and $(X,Y) \sim P$, let $\zeta_{\theta}:\R^d \rightarrow \R$ denote the conditional $\theta$-quantile  given by $\zeta_{\theta}(x):= \inf\bigl\{ y \in \R : \mathbb{P}_P\bigl(Y \leq y | X = x\bigr)\geq \theta\bigr\}$ for $x \in \R^d$. Now let $\distributionClassQuantileIncreasing$  denote the class of all such distributions $P$ for which $\zeta_{\theta}$ is increasing.
\end{defn}
\begin{lemma}\label{lemma:validLocalPValConditionalQuantile}\sloppy Given $\alpha \in (0,1)$, $\theta \in (0,1)$, $\tau \in \R$, $P \in \distributionClassQuantileIncreasing$, $\sample = \bigl((X_1,Y_1),\ldots,(X_n,Y_n)\bigr) \sim P^n$ and writing $\sample^{\tau} = \bigl((X_1,\one_{\{Y_1 > \tau\}}),\ldots,(X_n,\one_{\{Y_n > \tau\}})\bigr)$, we have whenever $x \notin \mathcal{X}_\tau(\zeta_{\theta})$ that $\Prob_P\bigl\{\hat{p}_{1/2,1-\theta}(x, \sample^{\tau}) \leq \alpha | \sample_X\bigr\} \leq \alpha$ and $\Prob_P\bigl\{\check{p}_{1-\theta}(x, \sample^{\tau}) \leq \alpha | \sample_X\bigr\} \leq \alpha$ for all $\alpha \in (0,1)$. 
\end{lemma}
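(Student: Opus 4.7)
The plan is to reduce Lemma~\ref{lemma:validLocalPValConditionalQuantile} to the already-established Lemmas~\ref{lemma:validLocalPValStouffer} and~\ref{lemma:validLocalPValClassification} applied to the binarized sample $\sample^\tau$. The key insight is that the conditional-quantile hypothesis $\zeta_\theta(x) < \tau$ translates, via monotonicity of $\zeta_\theta$ and of conditional cumulative distribution functions, into a conditional-mean bound on the indicator $\one_{\{Y > \tau\}}$ at every covariate value below~$x$ in the coordinatewise order.

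First I would establish the following quantile-to-mean conversion: for every $x' \preccurlyeq x$, we have $\mathbb{P}_P(Y > \tau \mid X = x') \leq 1 - \theta$. Monotonicity of $\zeta_\theta$ yields $\zeta_\theta(x') \leq \zeta_\theta(x) < \tau$; by right-continuity of the conditional distribution function $y \mapsto \mathbb{P}_P(Y \leq y \mid X = x')$ (valid since $Y$ is real-valued, so a regular conditional distribution exists), the infimum in Definition~\ref{Def:CondQuant} is attained, giving $\mathbb{P}_P(Y \leq \zeta_\theta(x') \mid X = x') \geq \theta$, and monotonicity of this distribution function then produces $\mathbb{P}_P(Y \leq \tau \mid X = x') \geq \theta$, which is the desired claim.

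Next I would view $\sample^\tau$ as an i.i.d.\ sample from the pushforward of $P$ under $(X, Y) \mapsto \bigl(X, \one_{\{Y > \tau\}}\bigr)$, whose regression function is $\tilde{\eta}(x') := \mathbb{P}_P(Y > \tau \mid X = x')$. Since $\one_{\{Y > \tau\}} \in \{0, 1\}$, Hoeffding's lemma shows that, conditional on $X$, the centred variable $\one_{\{Y > \tau\}} - \tilde{\eta}(X)$ is sub-Gaussian with variance parameter $1/4$. The first step guarantees that $\tilde{\eta}(X_i) \leq 1 - \theta$ for every index $i$ with $X_i \preccurlyeq x$, which is precisely the condition ensuring that the supermartingale underlying the construction of $\hat{p}_{1/2, 1 - \theta}(x, \sample^\tau)$ (respectively $\check{p}_{1 - \theta}(x, \sample^\tau)$) has non-positive conditional drift. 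Invoking the upper-set robustness noted after Theorem~\ref{thm:validSelectionSet} and after Lemma~\ref{lemma:validLocalPValClassification}, which relax the monotonicity hypothesis on the regression function to the requirement that every sampled covariate entering the $p$-value lies outside the relevant superlevel set, the conclusions of Lemmas~\ref{lemma:validLocalPValStouffer} and~\ref{lemma:validLocalPValClassification} transfer directly to the binarized setting and yield both bounds in the lemma.

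The main subtlety to navigate is that $\zeta_\theta(x) < \tau$ delivers only the weak inequality $\tilde{\eta}(x') \leq 1 - \theta$, whereas Lemmas~\ref{lemma:validLocalPValStouffer} and~\ref{lemma:validLocalPValClassification} are stated under a strict null. This is not a genuine obstruction, since the supermartingale arguments in those proofs use only non-positive drift, but the weak-inequality version must be invoked explicitly (or reached via a routine limiting argument in which the null conditional mean is perturbed by $\varepsilon > 0$ and $\varepsilon \downarrow 0$ is taken). I expect this, together with the careful handling of right-continuity in the quantile-to-mean conversion, to be the only non-mechanical part of the argument.
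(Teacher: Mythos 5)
Your proposal is correct and follows essentially the same route as the paper: binarize via $\tilde Y_i := \one_{\{Y_i > \tau\}}$, use monotonicity of $\zeta_\theta$ together with the definition of the conditional quantile to show $\mathbb{E}_P(\tilde Y_i \mid X_i = x') \leq 1-\theta$ for all $x' \preccurlyeq x$, note sub-Gaussianity with variance parameter $1/4$ by Hoeffding's lemma, and then invoke the two $p$-value validity results. The only point where you add unnecessary machinery is the final paragraph on strict versus weak nulls: the appropriate references are Lemma~\ref{lemma:validLocalPValStoufferGeneralised} and Lemma~\ref{lemma:validLocalPValClassificationGeneralisation}, which are stated under the hypothesis $\eta(x') \leq \tau$ for all $x' \preccurlyeq x$ (already a weak inequality), so no perturbation or limiting argument is needed once you cite those generalised lemmas rather than Lemmas~\ref{lemma:validLocalPValStouffer} and~\ref{lemma:validLocalPValClassification} themselves.
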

In particular, if $\eta$ is increasing and the conditional distribution of $Y - \eta(X)$ given $X$ is symmetric about zero, then the distribution of $(X,Y)$ belongs to $\mathcal{P}_{\mathrm{Q},d}(1/2)$.  As such, the modification of $\hat{A}^{\mathrm{ISS}}$ with the $p$-values $\check{p}_{1/2}(x, \sample^{\tau})$ in place of $\hat{p}_{\sigma, \tau}(x, \sample)$ controls the Type~I error at the nominal level.

\subsection{Application to heterogeneous treatment effects}
\label{SubSec:HTE}

We now describe how our proposed procedure can be used to identify subsets of the covariate domain with high treatment effects in randomised controlled trials.  As a model for such a setting, we assume that we observe independent copies $(X_1, T_1, \Tilde{Y}_1), \ldots, (X_n, T_n, \Tilde{Y}_n)$ of the triple $(X, T, \Tilde{Y})$, where $X$ is the covariate vector, $T$ takes values in $\{0, 1\}$ and encodes the assignment to one of two treatment arms, and $\Tilde{Y}$ gives the corresponding response. For $\ell \in \{0, 1\}$, denote by $\Tilde{P}^\ell$ the conditional distribution of $(X, \tilde{Y})$ given that $T = \ell$ and define corresponding regression functions $\Tilde{\eta}^\ell$ by $\Tilde{\eta}^\ell(x) := \mathbb{E}(\Tilde{Y}|X = x, T = \ell)$ for $x\in\R^d$.  We are interested in identifying the $\tau$-superlevel set of the \emph{heterogeneous treatment effect} $\eta$, where $\eta(x) := \Tilde{\eta}^1(x) - \Tilde{\eta}^0(x)$ for $x\in \R^d$.  To this end, observe that writing $\pi(x) := \mathbb{P}(T = 1 | X = x)$ for the \emph{propensity score}, and considering the \emph{inverse propensity weighted response}
\begin{align}
Y := \frac{T - \pi(X)}{\pi(X)\bigl(1-\pi(X)\bigr)}\cdot \Tilde{Y}, \label{eq:inverse_propensity_weighting}    
\end{align}
we have $\mathbb{E}(Y | X = x) = \eta(x)$ for all $x\in \R^d$. Hence, writing $P$ for the distribution of $(X, Y)$ and with $\sample = \bigl((X_1, Y_1), \ldots, (X_n, Y_n)\bigr) \sim P^n$, where $Y_1, \ldots, Y_n$ are the inverse propensity weighted responses obtained from $(X_1, T_1, \Tilde{Y}_1), \ldots, (X_n, T_n, \Tilde{Y}_n)$, we have for any $\alpha \in (0, 1)$ and any $m \in [n]$ that $\mathbb{P}_P\bigl(\hat{A}^\mathrm{ISS}_{\sigma, \tau, \alpha, m}(\sample) \subseteq \superLevelSet{\tau}{\eta}\bigr) \geq 1 - \alpha$ whenever $P \in \distributionClassNonDecreasingRegressionFunction$, by Theorem~\ref{thm:validSelectionSet}.

\subsubsection{Conditional treatment ranking}\label{sec:extensions_noninferiority}

Under assumptions that are in spirit similar to those in Section~\ref{sec:extensions_nonnegative_median}, we can use $\hat{A}^{\mathrm{ISS}}$ to establish non-inferiority of a treatment on a subgroup. Suppose that for $x\in \R^d$, $y \in \R$ and $\ell \in \{0, 1\}$, we have $\mathbb{P}\bigl(\tilde{Y}  - \Tilde{\eta}^\ell(x) \leq y| X = x, T = \ell\bigr) = F(y)$ for some continuous distribution function $F$ with the symmetry property $F(t) = 1 - F(-t)$ for all $t \in \R$. In particular, this includes the case where $\eta$ is increasing and we have homoscedastic Gaussian errors with unknown variance. Let $Y$ be as in \eqref{eq:inverse_propensity_weighting} and observe that whenever $\pi(x) = 1/2$,
\begin{align*}
    \mathbb{P}(Y \geq 0\mid X =x) &= \frac{1}{2} \mathbb{P}\bigl(Y \geq 0 \mid T = 1, X = x\bigr) + \frac{1}{2}\mathbb{P}\bigl(Y \geq 0 \mid T = 0, X = x\bigr) \\
    &= \frac{1}{2} \mathbb{P}\bigl(\Tilde{Y} \geq 0 \mid T = 1, X = x\bigr) + \frac{1}{2} \mathbb{P}\bigl(\Tilde{Y} \leq 0 \mid T = 0, X = x\bigr) \\
    &= \frac{1}{2}\bigl\{1 -  F\bigl(-\Tilde{\eta}^1(x)\bigr)\bigr\} + \frac{1}{2} F\bigl(-\Tilde{\eta}^0(x)\bigr) \\
    &= \frac{1}{2}F\bigl(\Tilde{\eta}^1(x)\bigr) + \frac{1}{2}F\bigl(-\Tilde{\eta}^0(x)\bigr).
\end{align*}
Writing $Y^* := \one_{\{Y \geq 0\}}$ and $\eta^*(x) := \mathbb{E}(Y^* | X = x) = \mathbb{P}(Y \geq 0| X =x)$ for $x \in \mathbb{R}^d$, we have $\eta^*(x) \geq 1/2$ if and only if $\eta(x) = \Tilde{\eta}^1(x) - \Tilde{\eta}^0(x) \geq 0$, so $\superLevelSet{1/2}{\eta^*} = \superLevelSet{0}{\eta}$. Moreover, when $\eta$ is increasing on $\R^d$, the distribution of $(X, Y^*)$ belongs to $\mathcal{P}_{\mathrm{BddUpp}, d}(1/2)$. Hence, in order to estimate $\superLevelSet{0}{\eta}$, we may use $\hat{A}^{\mathrm{ISS}}$ with either $\hat{p}_{1/2, 1/2}$ or $\check{p}_{1/2}$ and retain Type~I error control (see the discussions at the end of Section~\ref{sec:theory_validity} and~\ref{sec:extensions_classification} respectively). An application of this procedure is presented in Section~\ref{sec:application_hte}.

\section{Simulations}\label{sec:simulations}

%\cite{cox1975note,angelopoulos2021learn}

The aim of this section is to explore the empirical performance of $\hat{A}^{\mathrm{ISS}}$ in a wide range of settings.  Throughout, we took independent pairs $(X_1,Y_1),\ldots,(X_n,Y_n)$, where $X_1,\ldots, X_n \sim \mathrm{Unif}\bigl([0,1]^d\bigr) =: \mu$ with $d \in \{2,3,4\}$ and $n \in \{500,1000,2000,5000\}$.  Rescaled versions of the six functions $f$ in Table~\ref{table:sim_regression_functions} serve as our main regression functions; see Appendix~\ref{Sec:FurtherSimulations} for eight further examples.  More specifically, we let $\regressionFunction(x):= \{f(x) - f(0)\}/\{f(\bm 1_d) - f(0)\}$ for each choice of~$f$, as illustrated in Figure~\ref{fig:sim_regression_functions_2D}.  Further, we let $Y_i | X_i \sim N\bigl(\eta(X_i) , \sigma^2\bigr)$ for $i \in [n]$ with $\sigma = 1/4$ when $d=2$, $\sigma = 1/16$ when $d=3$ and $\sigma=1/64$ when $d=4$.  We set $\alpha = 0.05$ and the thresholds $\tau \equiv \tau(\regressionFunction)$ were chosen such that $\marginalDistribution\bigl(\superLevelSet{\tau}{\regressionFunction}\bigr) = 1/2$; see Table~\ref{table:sim_regression_functions}.  Finally, in this table we also provide 
\[
\etaIncreasingExponent(P) := \inf\biggl\{\etaIncreasingExponent > 0: P \in \bigcup_{\etaIncreasingConstant > 0} \distributionClassMultivariateCondition\biggr\}
\]
for the distribution $P$ associated with each choice of $\mu$ and $\eta$. Since for $\etaIncreasingExponent, \etaIncreasingExponent' > 0$ and $\etaIncreasingConstant, \etaIncreasingConstant' > 0$ such that $\etaIncreasingExponent \leq \etaIncreasingExponent'$ and $\etaIncreasingConstant \geq \etaIncreasingConstant'$, we have $\distributionClassMultivariateCondition \subseteq \mathcal{P}_{\mathrm{Reg}, d}(\tau, \densityConstant, \etaIncreasingExponent', \etaIncreasingConstant')$, this is a natural choice to illustrate the effect of the exponent in Definition~\ref{def:multivariateAssumption}\emph{(ii)} on the rate of convergence.
% \[
% \frac{|\{i\in[n]: X_i \in \superLevelSet{\tau}{\regressionFunction} \setminus \hat{A}_{\sigma, \tau, \alpha, m} (\sample, \mathcal{R})\}|}{|\{i\in[n]: X_i \in \superLevelSet{\tau}{\regressionFunction}\}|},\]
% i.e., the proportion of covariates that are not contained in the selected subset, even though they lie in the superlevel set. \red{The preliminary results can be found here:}\\
%\href{https://www.dropbox.com/sh/jwudmcjlqrw8sbz/AAAnZb13tz5RDeaTlgHSOtCya?dl=0}{https://www.dropbox.com/sh/jwudmcjlqrw8sbz/AAAnZb13tz5RDeaTlgHSOtCya?dl=0}

\begin{table}[htbp]
    \centering
    \begin{tabular}{c|c| c| c}
        Label & Function $f$ & $\tau$ & $\gamma(P)$\\ \hline
        
        (a) & $\sum_{j=1}^d x^{(j)}$ & $1/2$ & $1$\\
        (b) & $\max_{1\leq j\leq d} x^{(j)}$ & $1/2^{1/d}$ & $1$\\
        (c) & $\min_{1\leq j\leq d} x^{(j)}$  & $1 - 1/2^{1/d}$ & $1$\\
        (d) & $\one_{(0.5, 1]}\bigl(x^{(1)}\bigr)$ & $1/2$ & $0$\\
        (e) & $\sum_{j=1}^d \bigl(x^{(j)} - 0.5\bigr)^3$  & $1/2$ & $3$ \\
        (f) & $x^{(1)}$ & $1/2$ & $1$\\
    \end{tabular}
    \caption{Definition of the functions used in the simulations. Here, $x = (x^{(1)}, \ldots, x^{(d)})^\top\in [0,1]^d$.}
    \label{table:sim_regression_functions}
\end{table}

\begin{figure}[hbtp]
    \centering
    \includegraphics[width = 0.85\textwidth]{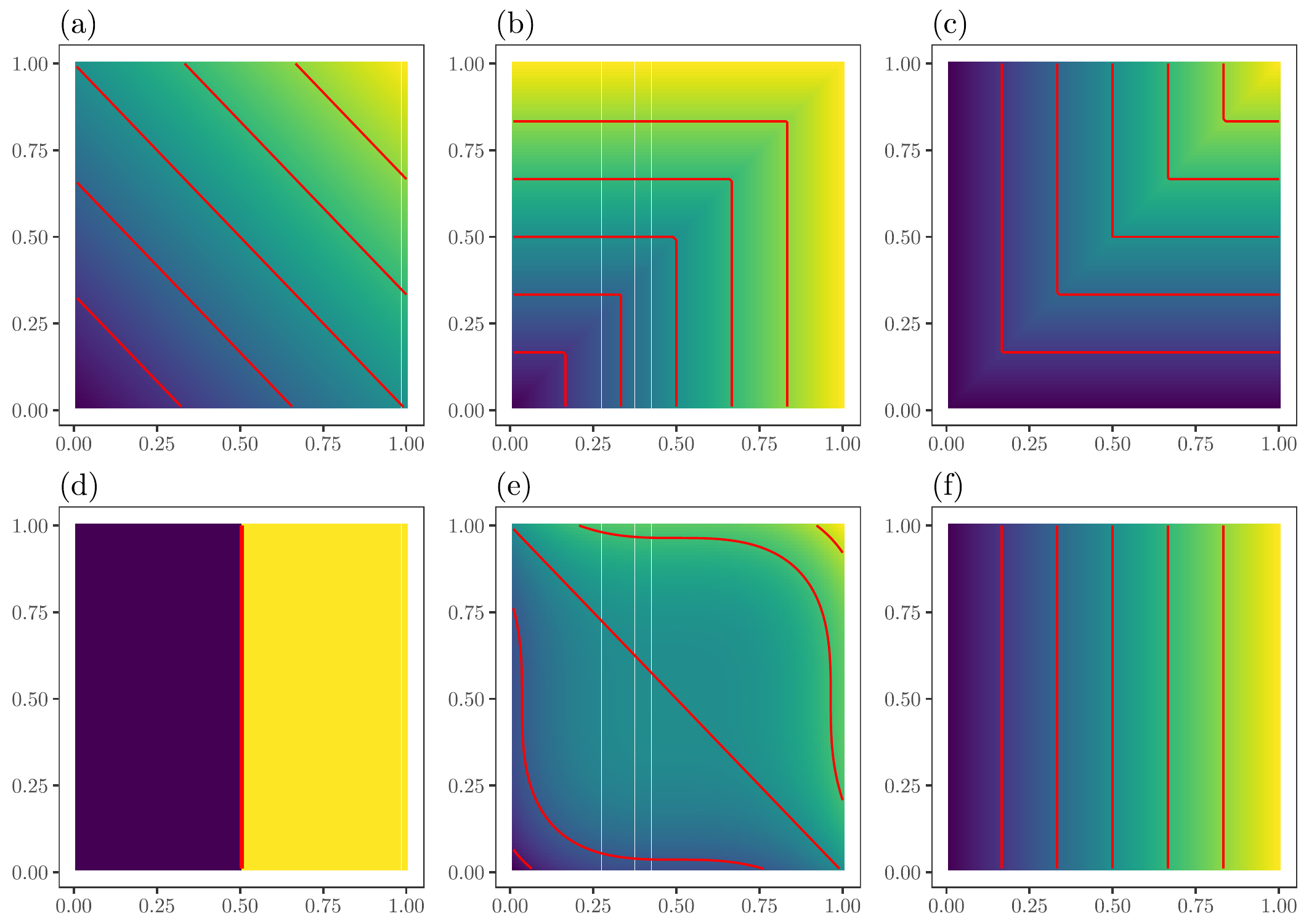}
    \caption{For $d= 2$, the contour lines (red) of the regression functions corresponding to the functions $f$ in Table~\ref{table:sim_regression_functions} at the levels $k/6$ for $k\in [5]$ are shown. The fill colour indicates the function value at the respective position from $0$ (purple) to $1$ (yellow).}
    \label{fig:sim_regression_functions_2D}
\end{figure}

Although we are not aware of other proposed methods for isotonic subgroup selection, there are alternative ways in which we could combine our $p$-values with different DAG testing procedures to form a data-dependent selection set.  For instance, one could apply Holm's procedure \citep{holm1979simple} to ensure FWER control on the data points, and then take our data-dependent selection set to be the upper hull of the set of points in $\sample_{X,m}$ corresponding to rejected hypotheses.  This simple procedure is already a uniform improvement (in terms of the size of the selected set) on the Bonferroni approach to constructing a one-sided confidence band for $\regressionFunction$ that was mentioned in the introduction.  Alternatively, one could combine our $p$-values with either the all-parent or any-parent version of the DAG testing procedure due to \citet{meijer2015multiple}, which is described in detail in Section~\ref{sec:MG_procedure}, and which we applied with uniform weights on the leaf nodes.  Although we are able to prove in Section~\ref{sec:negativeMG_theory} that these latter procedures have sub-optimal worst-case performance, they remain a natural approach to controlling the FWER for DAG-structured hypotheses.  We refer to these three alternative versions of our procedure as $\hat{A}^{\mathrm{ISS},\mathrm{H}}$, $\hat{A}^{\mathrm{ISS},\mathrm{All}}$ and $\hat{A}^{\mathrm{ISS},\mathrm{Any}}$ respectively.  In all cases, we took $m = n$, used $\Tilde{p}^{1/2}$ given in Definition~\ref{defn:pValue_normalmixture} as $p$-values and for each data-dependent selection set $\hat{A}$ we estimated $\mathbb{E}\bigl\{\mu\bigl(\superLevelSet{\tau}{\eta} \setminus \hat{A}\bigr)\bigr\}$ using a Monte Carlo approximation based on $10^5$ independent draws from~$\mu$ for each data realisation, averaged over 100 repetitions of each experiment.  A comparison of the running times given in Appendix~\ref{Sec:FurtherSimulations_runtime} shows that $\hat{A}^{\mathrm{ISS}}$ can be as much as 10 times faster to compute than $\hat{A}^{\mathrm{ISS},\mathrm{All}}$ and $\hat{A}^{\mathrm{ISS},\mathrm{Any}}$, though it is not as fast as the more naive $\hat{A}^{\mathrm{ISS},\mathrm{H}}$.

The results for regression functions (a)--(f) are presented in Figures~\ref{Fig:atofd2},~\ref{Fig:atofd3} and~\ref{Fig:atofd4} respectively.  Corresponding results for the other eight regression functions defined in Appendix~\ref{Sec:FurtherSimulations}, which are qualitatively similar, are given in Figures~\ref{Fig:gtond2},~\ref{Fig:gtond3} and~\ref{Fig:gtond4}.  Moreover, in Figures~\ref{Fig:gtond2Split} and~\ref{Fig:gtond4Split}, we compare $\hat{A}^{\mathrm{ISS}}$ with two different possible approaches based on sample splitting that are of a similar flavour to the two-stage approaches mentioned in the introduction.  These were omitted from our earlier comparisons for visual clarity, and because their performance turns out not to be competitive.  From all of these figures, we see that $\hat{A}^{\mathrm{ISS}}$ is the most effective of these approaches for combining our $p$-values with a DAG testing procedure.  The differences between $\hat{A}^{\mathrm{ISS}}$ and the other approaches are more marked when $d=2$ than in higher dimensions.  It is also notable that regression functions with smaller values of $\etaIncreasingExponent(P)$ such as (d) yield much smaller estimates of $\mathbb{E}\bigl\{\mu\bigl(\superLevelSet{\tau}{\eta} \setminus \hat{A}\bigr)\bigr\}$ that decay more rapidly with the sample size.  Conversely, for settings with larger values of $\etaIncreasingExponent(P)$, such as (e), the decay of our estimates of $\mathbb{E}\bigl\{\mu\bigl(\superLevelSet{\tau}{\eta} \setminus \hat{A}\bigr)\bigr\}$ is much slower.  These observations are in agreement with our theory in Section~\ref{sec:theory_power}.  Finally, we remark that our procedures appear to adapt well to settings where the regression function depends only on a subset of the $d$ variables, as can be seen for instance by comparing the results in~(a) and~(f). 

\begin{figure}
    \centering
    \includegraphics[width = 0.98\textwidth]{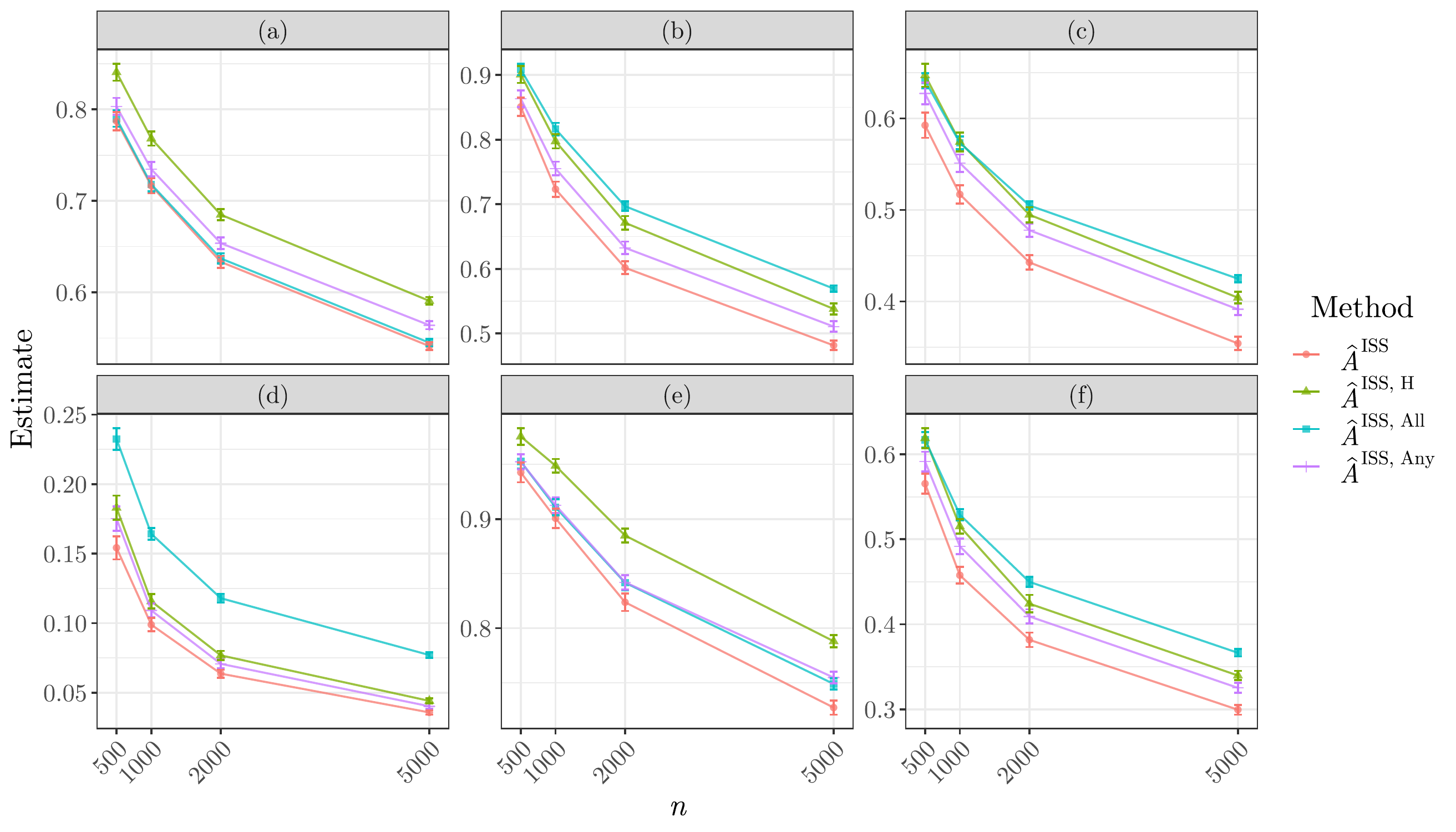}
    \caption{Estimates of $\mathbb{E}\bigl\{\mu\bigl(\superLevelSet{\tau}{\eta} \setminus \hat{A}\bigr)\bigr\}$ for $d = 2$ and $\sigma = 1/4$.}
    \label{Fig:atofd2}
\end{figure}

\begin{figure}
    \centering
    \includegraphics[width = 0.98\textwidth]{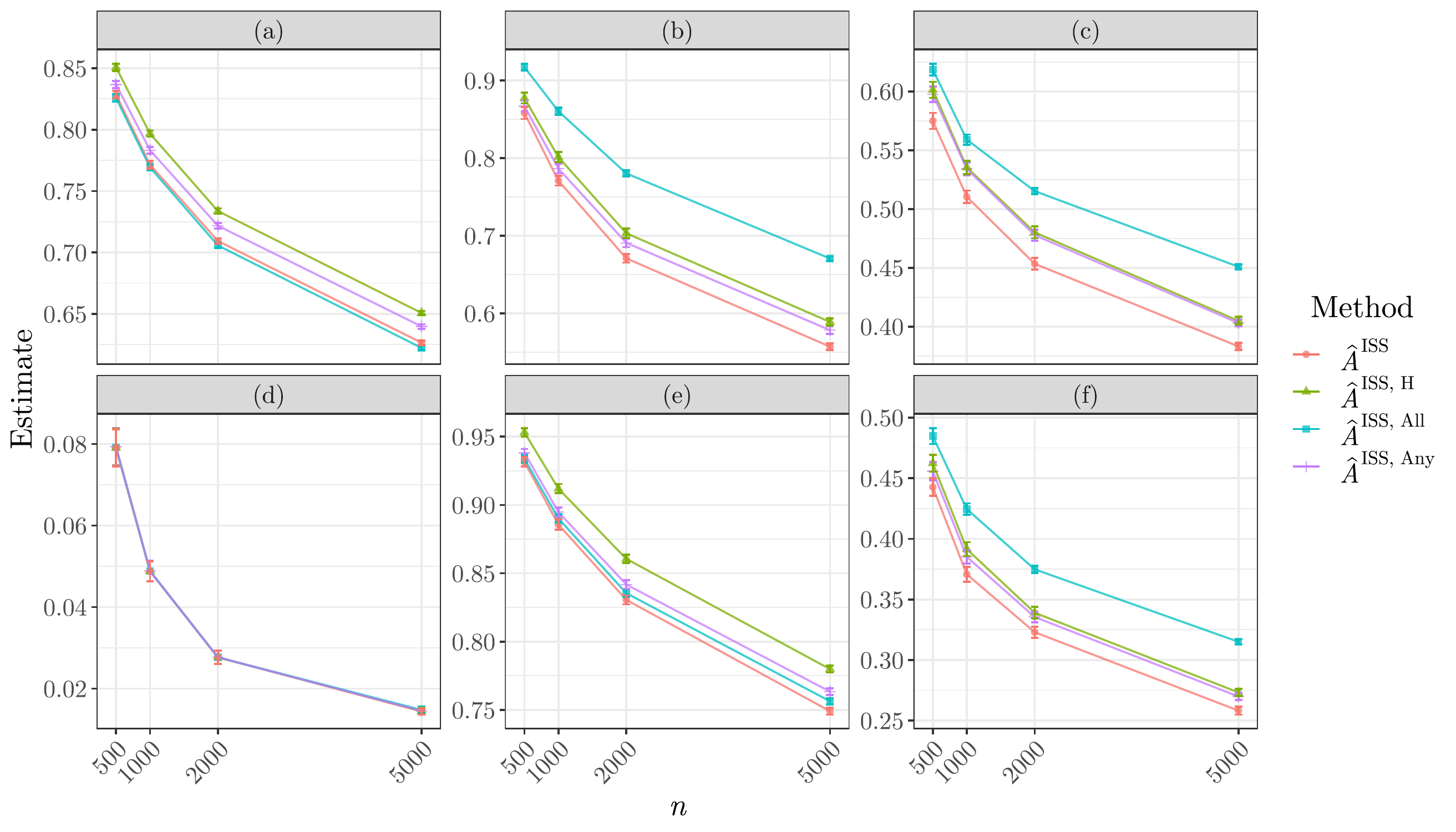}
    \caption{Estimates of $\mathbb{E}\bigl\{\mu\bigl(\superLevelSet{\tau}{\eta} \setminus \hat{A}\bigr)\bigr\}$ for $d = 3$ and $\sigma = 1/16$.}
    \label{Fig:atofd3}
\end{figure}

\begin{figure}
    \centering
    \includegraphics[width = 0.98\textwidth]{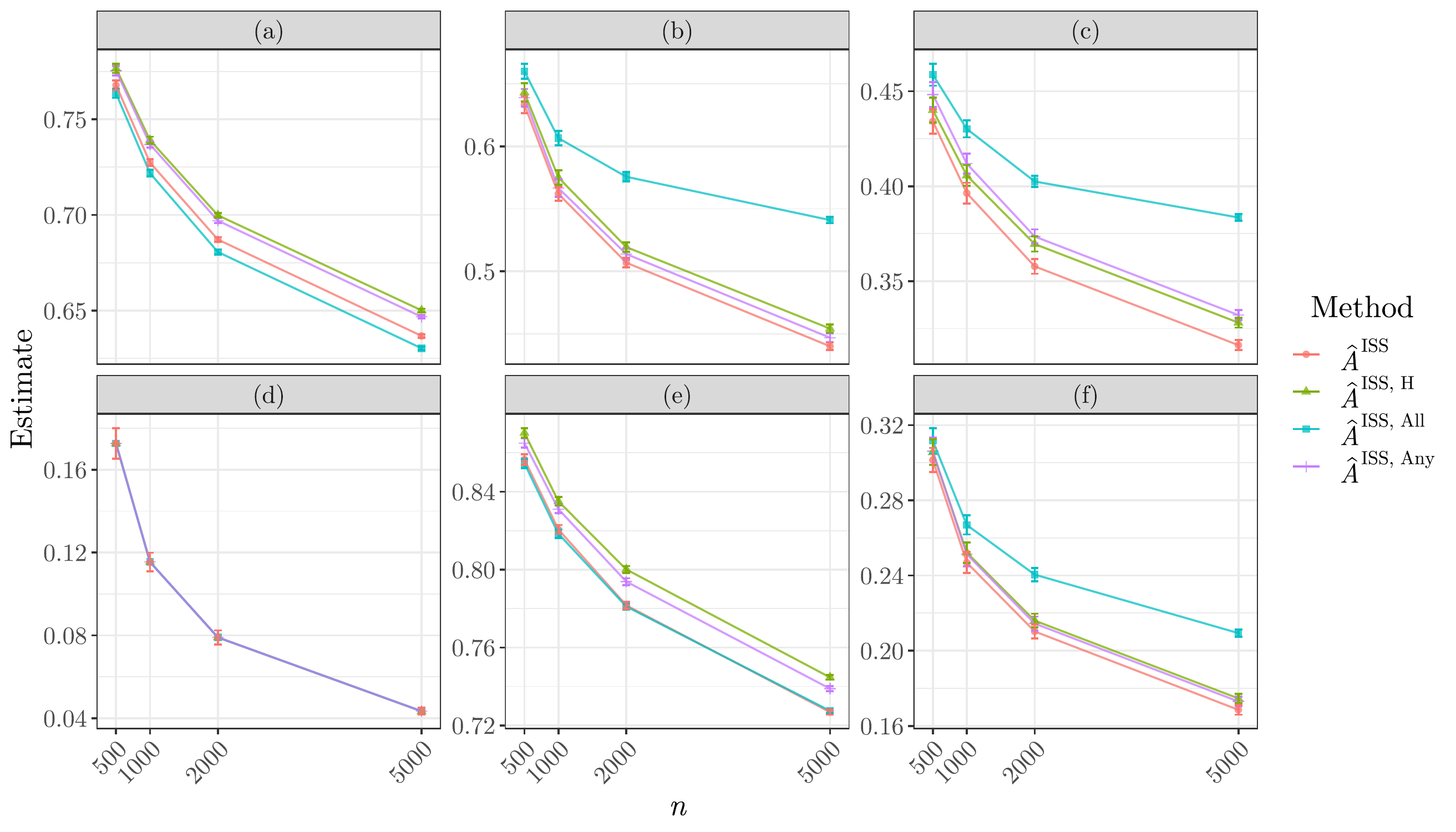}
    \caption{Estimates of $\mathbb{E}\bigl\{\mu\bigl(\superLevelSet{\tau}{\eta} \setminus \hat{A}\bigr)\bigr\}$ for $d = 4$ and $\sigma = 1/64$.}
    \label{Fig:atofd4}
\end{figure}

%TeX(r'(Mean proportion of ... $\sigma$ )')
% \begin{figure}
%     \centering
%     \includegraphics[width = \textwidth]{Simulations/performance_sigma-1-over-4_and_2-dim_10Apr.pdf}
%     \caption{\red{(Example Result)}}
% \end{figure}

% \begin{figure}
%     \centering
%     \includegraphics[width = \textwidth]{Simulations/performance_sigma-1-over-4_and_2-dim_rho.pdf}
%     \caption{\red{(Example Result ($\rho$))}}
% \end{figure}

% \begin{figure}
%     \centering
%     \includegraphics[width = \textwidth]{Simulations/performance_sigma-1-over-64_and_5-dim_rho.pdf}
%     \caption{\red{(Example Result ($\rho$)}}
% \end{figure}

\section{Real data applications}\label{sec:application}

\subsection{ACTG 175}

As a first illustration of our procedure on real data, we consider the AIDS Clinical Trials Group Study 175 (ACTG 175) data.  This was a randomised controlled trial in which HIV-1 patients whose CD4 cell counts at screening were between 200 to 500 cells per cubic millimetre and who had no history of AIDS-defining events were randomly assigned to one of four treatment groups \citep{hammer1996trial}.  We restrict our attention to two of these four treatment arms, comparing the effects of monotherapy through the antiretroviral medication zidovudine against the effects of a combination therapy of zidovudine together with zalcitabine.  At the time, patient heterogeneity with respect to the response to these treatments was not well understood \citep{burger1994pharmacokinetic}. Moreover, prior studies suggested that the beneficial effects of zidovudine fade with time and that this could be remedied with multitherapy \citep{hammer1996trial}.  ACTG 175 aimed to investigate treatment effect heterogeneity, in particular with respect to prior drug exposure, among patients with less advanced HIV disease. Besides relevant parts of their medical records, covariates including age, weight and ethnicity were recorded.  The primary end point of the study was defined as a reduction of the CD4 cell count by at least 50\%, development of AIDS, or death, with a median follow-up duration of 143 weeks \citep{hammer1996trial}.  The data for 2139 patients are freely available in the \texttt{R} package  \texttt{speff2trial} \citep{speff2trial}.  % \citep{hammer1996trial}. 

\subsubsection{Risk group estimation}

We first consider the task of identifying the patient subgroup whose probability of not reaching the primary endpoint when receiving zidovudine alone (532 patients in total) is at least $\tau = 0.5$ based on their age, which the study's eligibility criteria required to be at least $12$ years.  To that end, for $i\in [n]$, let $Y_i = 1$ if the $i$th patient did not reach the primary endpoint and $Y_i = 0$ otherwise. Furthermore, let $X_i$ denote the $i$th patient's age (multiplied by $-1$), since a decrease in age is expected to correspond to an increased probability of avoiding the primary end point across the eligible age range.  Under the assumption that $\sample = \bigl((X_1, Y_1), \ldots, (X_n, Y_n)\bigr) \sim P^n$, we then have that $P \in \mathcal{P}_{\mathrm{Mon}, 1}(1/2)$, so Type~I error control for our procedure $\hat{A}^{\mathrm{ISS}}$ is guaranteed by Theorem~\ref{thm:validSelectionSet}.  The left panel of Figure~\ref{Fig:RiskGroup} illustrates the data-dependent selection set that we output with $\alpha = 0.05$, indicating that not reaching the primary endpoint is the more likely outcome for patients aged 39 and under. 
% \begin{figure}
%     \centering
%     \includegraphics[width = 0.5\textwidth]{application/drafts/draft_age.png}
%     \caption{\red{Draft! Here, increasing $\alpha$ to $0.1$ or using the $p$-values based on the normal mixture confidence sequence each knocks roughly $10$ years off the selected boundary.} \red{Here and always: $m = n$.}}
%     \label{fig:univariate_superlevelset}
% \end{figure}

For a bivariate illustration, we use age multiplied by $-1$ and CD4 cell count at the trial onset as covariates.  A high initial CD4 cell count is expected to be associated with a lower risk of reaching the primary endpoint.  Thus, we assume that $\sample \sim P^n$ for some $P \in \mathcal{P}_{\mathrm{Mon}, 2}(1/2)$, and the right panel of Figure~\ref{Fig:RiskGroup} illustrates the output $\hat{A}^{\mathrm{ISS}}$ for $\tau = 0.5$ and $\alpha = 0.05$.  The fact that the left-hand extreme of this selected set is slightly below 39 years is a reflection of the stronger form of Type~I error control sought in the larger dimension.
% \begin{figure}
%     \centering
%     \includegraphics[width = 0.5\textwidth]{application/drafts/draft_age_cd40.png}
%     \caption{\red{Draft!}}
%     \label{fig:bivariate_superlevelset}
% \end{figure}

% Again, we might be more interested in $\Tilde{A}^{\mathrm{ISS}}_{\sigma, \tau, \alpha, m}(\sample)$, which is illustrated in Figure~\ref{fig:bivariate_sublevelset} for $\tau = 0.5$ and $\alpha = 0.05$.
\begin{figure}
    \begin{subfigure}[b]{0.45\textwidth}
        \centering
        \includegraphics[width = 0.9\textwidth]{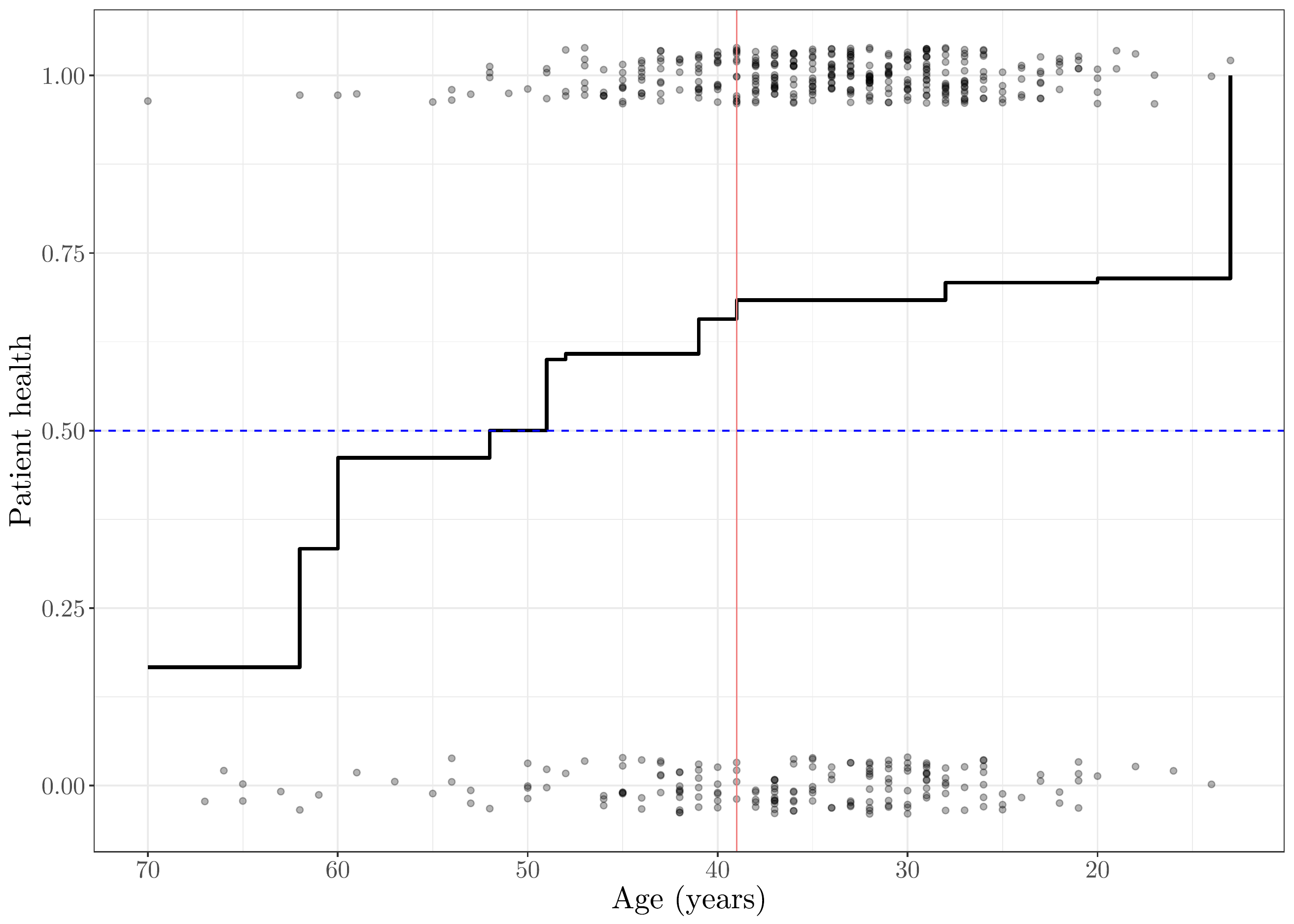}
        \label{fig:univariate_riskgroup}
     \end{subfigure}
     \hspace{0.5cm}
     \begin{subfigure}[b]{0.45\textwidth}
        \centering
        \includegraphics[width = 0.9\textwidth]{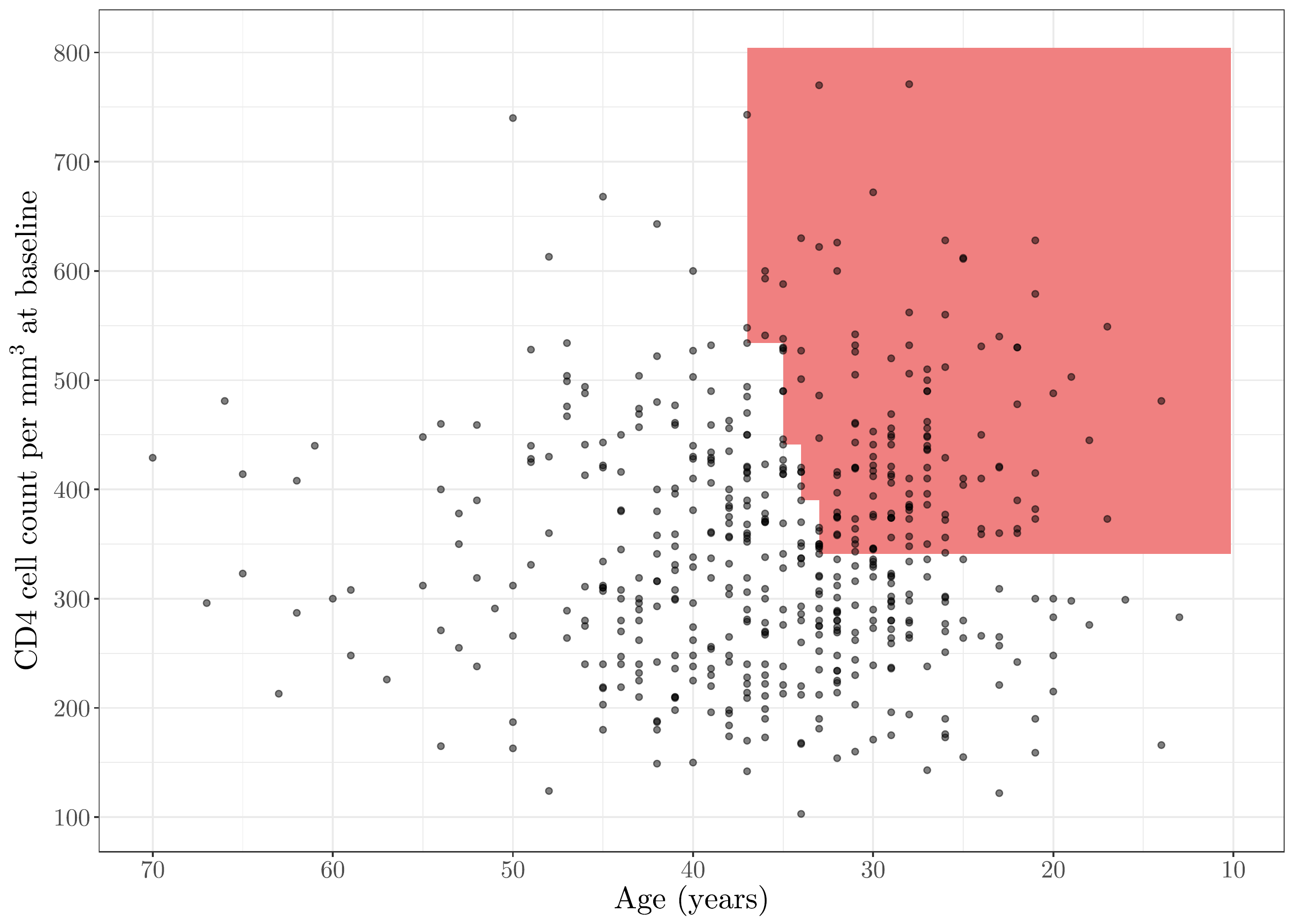}
        \label{fig:bivariate_riskgroup}
     \end{subfigure}
     \caption{\label{Fig:RiskGroup}Left: Each black dot represents one patient in $\sample$ (with the response jittered vertically for clarity) and the red line gives the lower bound of $\hat{A}^{\mathrm{ISS}}_{\sigma, \tau, \alpha, m}(\sample)$ when applied with the $p$-values $\Tilde{p}^{1/2}$, for $\sigma = 1/2$, $\tau = 1/2$, $\alpha = 0.05$, $m = n = 532$.  The black line shows the isotonic least squares regression estimator and the blue dashed line indicates the level $\tau$.  Right: corresponding bivariate illustration, where the red region indicates $\hat{A}^{\mathrm{ISS}}$.}
\end{figure}

\subsubsection{Heterogeneous treatment effects}\label{sec:application_hte}

To illustrate an application of the methodology of Section~\ref{sec:extensions_noninferiority}, we take the change in CD4 cell count from trial onset to week 20 ($\pm$5 weeks) as the measured response $\Tilde{Y}_i$ for the $i$th patient.  We further set $T_i = 0$ if the $i$th patient was in the control group receiving monotherapy with zidovudine (532 patients) and $T_i = 1$ if they were assigned to receive multitherapy with zidovudine and zalcitabine (524 patients).  We are interested in identifying the subgroup for which multitherapy is at least as good as monotherapy, in the sense that the CD4 cell count is decreased by less, based on the patient's age (again multiplied by $-1$), denoted by $X_i$ for the $i$th patient. This means that, conditional on treatment $T_i$ and age $X_i$, $\Tilde{\eta}^{T_i}(X_i)$ is the expected change in CD4 cell count in the first 20 weeks, and we assume that the observed response~$\Tilde{Y}_i$ is conditionally symmetrically distributed around $\Tilde{\eta}^{T_i}(X_i)$.  Thus $\eta(x) := \Tilde{\eta}^1(x) - \tilde{\eta}^0(x)$ gives the heterogeneous treatment effect on the change in CD4 cell count for patients of~$x$ years of age, and we are interested in identifying $\superLevelSet{0}{\eta}$ under the assumption that this is an upper set. Here, $\pi(x) = 1/2$ for all $x$, so that from~\eqref{eq:inverse_propensity_weighting}, $Y_i = (4T_i - 2)\cdot \Tilde{Y}_i$ for all $i$. Defining now $Y^*_i := \one_{\{Y_i \geq 0\}}$ and assuming $\sample^0 = \bigl((X_1, Y^*_1), \ldots, (X_n, Y^*_n)\bigr)\sim P^n$, we have $P \in \mathcal{P}_{\mathrm{Upp}, 1}(1/2, 1/2)$, so that $\hat{A}^{\mathrm{ISS}}$ when applied with $\check{p}_{1/2}$ controls the Type I error by the discussion in Section~\ref{sec:extensions_noninferiority}. See Figure~\ref{fig:actg175-hte} for a visualisation of the result. We conclude that among patients aged 25 or younger, replacing monotherapy by multitherapy is uniformly associated with a neutral or beneficial effect on the stability of CD4 cell count.

\begin{figure}
     \centering
        \includegraphics[width = 0.65\textwidth]{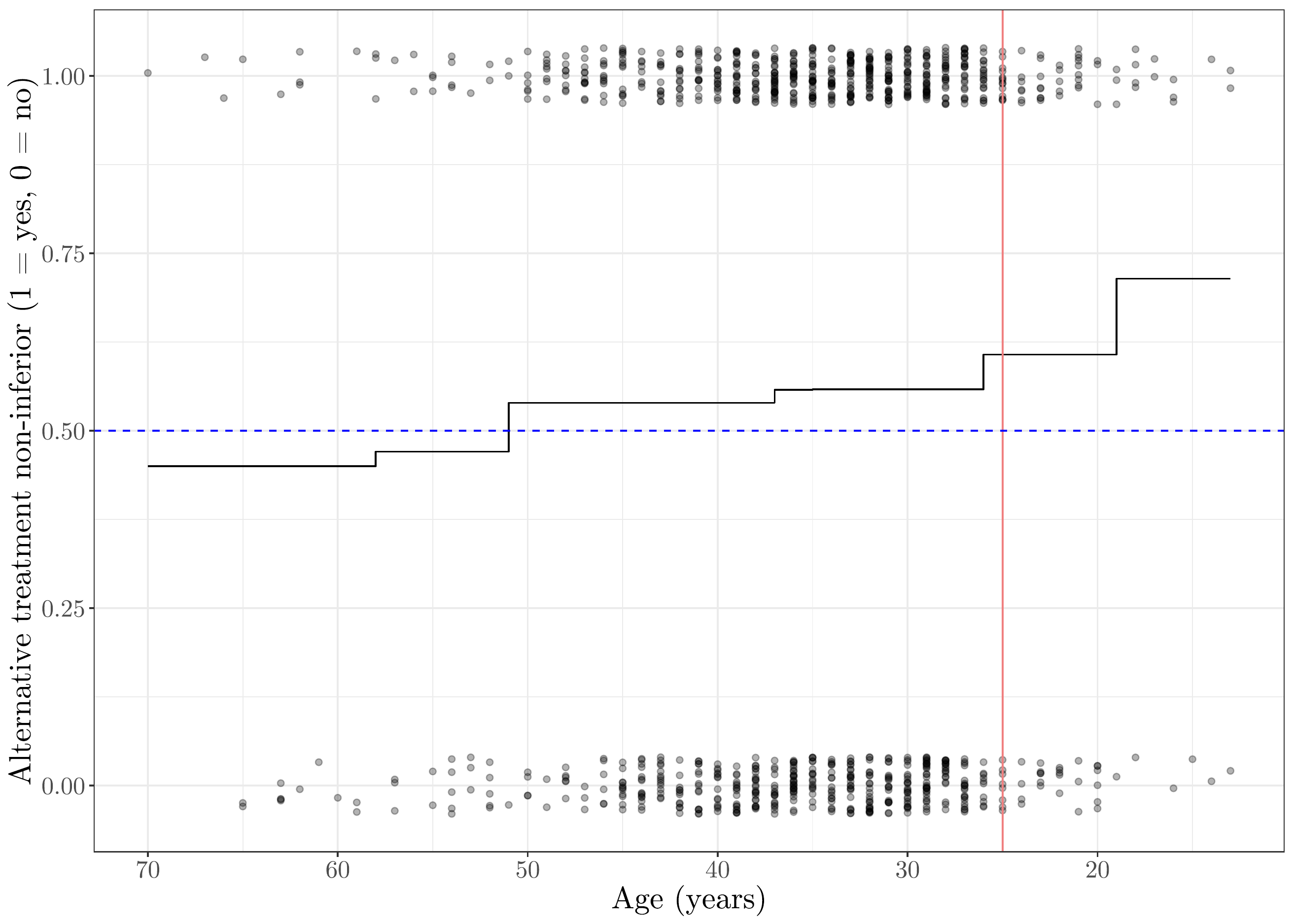}
     \caption{Each black dot represents one patient in $\sample$ (with $Y_i^*$ jittered vertically for clarity) and the red line gives the lower bound of $\hat{A}^{\mathrm{ISS}}_{\sigma, \tau, \alpha, m}(\sample)$ when applied with  $\Check{p}_\tau$, $\tau = 1/2$, $\alpha = 0.05$, $m = n = 1056$. Further, the black line shows the isotonic least squares regression estimator. The blue dashed line indicates the level $\tau$.}
     \label{fig:actg175-hte}
\end{figure}

\subsection{Fuel consumption dataset} 

Here we consider the \texttt{Auto MPG} dataset\footnote{See \href{https://archive.ics.uci.edu/ml/datasets/auto+mpg}{https://archive.ics.uci.edu/ml/datasets/auto+mpg}.} that was popularised by \cite{quinlan1993combining} and that is available through the UCI Machine Learning Repository \citep{dua2019UCI}. This dataset contains information on $n = 398$ cars, including their urban fuel consumption, weight and engine displacement.  We would like to identify the combinations of car weight and engine displacement for which the probability of fuel efficiency being at least 15mpg is at least $\tau = 0.5$.  To this end, we set $Y_i = 1$ if the $i$th car's fuel efficiency is at least 15mpg and $Y_i = 0$ otherwise.  Since increases in weight and engine displacement can be assumed to decrease the conditional probability of high fuel efficiency, we let the two components of $X_i \in (-\infty, 0)^2$ give the $i$th car's weight and engine displacement (multiplied by $-1$). We then have that $\sample = \bigl((X_1, Y_1), \ldots (X_n, Y_n)\bigr) \sim P^n$ with $P\in \mathcal{P}_{\mathrm{Mon, 2}}(1/2)$, so Type~I error control for our procedure $\hat{A}^{\mathrm{ISS}}$ is guaranteed by Theorem~\ref{thm:validSelectionSet}.  The output set from the $\hat{A}^{\mathrm{ISS}}$ algorithm is shown in Figure~\ref{fig:autompg}.  The strong sample correlation of 0.93 between weight and engine displacement contributes to the data-dependent selection set being almost rectangular, with a weight of under 3400lbs and an engine displacement of under 250 cubic inches being sufficient to be fairly confident that the fuel consumption is more likely than not to be at least 15mpg. 

\begin{figure}
     \centering
        \includegraphics[width = 0.65\textwidth]{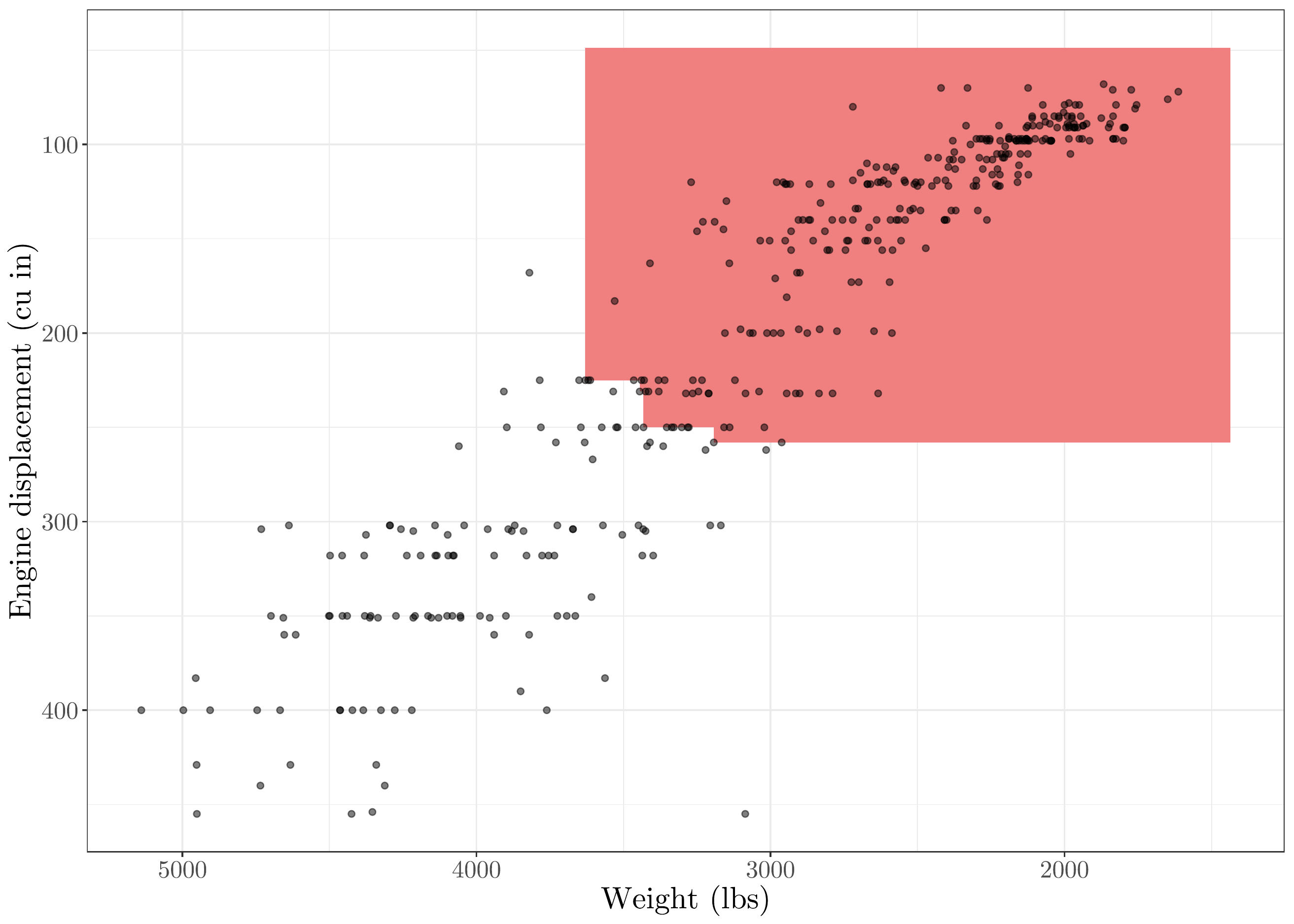}
     \caption{Each black dot represents one car in $\sample_X$ and the red area gives the output of $\hat{A}^{\mathrm{ISS}}_{\sigma, \tau, \alpha, m}(\sample)$ applied with $\tilde{p}_{\sigma,\tau}^{1/2}$ for $\sigma = 1/2$, $\tau = 1/2$, $\alpha = 0.05$, $m = n = 398$.}
     \label{fig:autompg}
\end{figure}

\appendix

\section*{Appendix}

Section~\ref{Sec:Proofs} of this appendix consists of proofs of all of our main results, as well as statements and proofs of intermediate results.  Section~\ref{Sec:FurtherSimulations} presents further simulations, while Section~\ref{sec:MG_appendix} contains a discussion of an alternative and general approach to combining the $p$-values due to \citet{meijer2015multiple}.  Finally, in Section~\ref{Sec:Auxiliary}, we give a few auxiliary results.

\section{Proofs}
\label{Sec:Proofs}

We begin with some additional notation used in the appendix.  For a set $A \subseteq \R^d$, let $\powerSet(A)$ denote the power set of $A$.  Denote by $\|\cdot\|_2$ the Euclidean norm on $\R^d$ and for $x \in \R^d$ and $r >0$, define the closed Euclidean norm ball by $\closedEuclideanMetricBall{x}{r}:=\{z \in \R^d : \|z-x\|_2\leq r\}$.  Given $A_0$, $A_1 \subseteq \R^d$, we write $A_0 \preccurlyeq A_1$ if $x_0 \preccurlyeq x_1$ for every pair $(x_0,x_1) \in A_0 \times A_1$.  We write $\Lebesgue$ for Lebesgue measure on $\R^d$. Finally, for $r\geq 0$ and $x\in \R^d$, we denote $\mathcal{I}_{r}(x) \equiv \mathcal{I}_{r}(x, \sampleX) := \{i \in [n]: X_i \preccurlyeq x, \|X_i - x\|_\infty \leq r\}$.

\subsection{Proofs from  Section~\ref{sec:theory_validity}}

In order to verify Lemma~\ref{lemma:validLocalPValStouffer} it will be convenient to prove the following small generalisation.
\begin{lemma}\label{lemma:validLocalPValStoufferGeneralised} Let $\sigma >0$, $\tau \in \R$ and let $P$ be a distribution on $\R^d \times \R$ with regression function $\eta$ such that if $(X,Y) \sim P$, then $Y-\eta(X)$ is conditionally sub-Gaussian with variance parameter $\sigma^2$ given~$X$.  Fix $x \in \R^d$ and suppose that $\eta(x') \leq \tau$ for all $x' \preccurlyeq x$.  Given $\sample = \bigl((X_1,Y_1),\ldots,(X_n,Y_n)\bigr) \sim P^n$, we have $\Prob_P\bigl\{\hat{p}_{\sigma,\tau}(x,\sample) \leq \alpha | \sample_X\bigr\} \leq \alpha$ for all $\alpha \in (0,1)$.
\end{lemma}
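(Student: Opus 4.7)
The plan is to condition on $\sample_X$ throughout, reducing the proof to a statement about the conditional distribution of the responses only. If $n(x) = 0$, then $\hat{p}_{\sigma,\tau}(x,\sample) = 1$ by definition, so there is nothing to prove. Assume henceforth $n(x) \geq 1$. Since conditioning on $\sample_X$ determines both $\mathcal{I}(x)$ and the ordering that defines $X_{(1)}(x), \ldots, X_{(n(x))}(x)$, the concomitants $Y_{(1)}(x), \ldots, Y_{(n(x))}(x)$ are, conditional on $\sample_X$, mutually independent with $Y_{(j)}(x) - \eta(X_{(j)}(x))$ sub-Gaussian with variance parameter $\sigma^2$.

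Next, I would use the hypothesis $\eta(x') \leq \tau$ for all $x' \preccurlyeq x$ to conclude, for every $j \in [n(x)]$, that $\eta(X_{(j)}(x)) \leq \tau$ and hence
\[
\mathbb{E}_P\bigl[Y_{(j)}(x) - \tau \bigm| \sample_X\bigr] = \eta(X_{(j)}(x)) - \tau \leq 0.
\]
Setting $Z_j := (Y_{(j)}(x) - \tau)/\sigma$ and $\mathcal{F}_k := \sigma(\sample_X, Z_1, \ldots, Z_k)$, the increments $Z_j$ are therefore $\mathcal{F}_k$-conditionally sub-Gaussian with variance parameter $1$ and non-positive conditional mean. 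It follows that $(S_k)_{k \in [n(x)]}$, defined in~\eqref{Eq:Sk}, is an $(\mathcal{F}_k)$-supermartingale with sub-Gaussian increments under $P_{\,\cdot\,|\,\sample_X}$.

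I would then rewrite the event $\{\hat{p}_{\sigma,\tau}(x,\sample) \leq \alpha\}$ as a time-uniform boundary crossing event. By inspection of the formula in Definition~\ref{defn:pValue}, $\hat{p}_{\sigma,\tau}(x,\sample) \leq \alpha$ if and only if there exists $k \in [n(x)]$ with
\[
(S_k \vee 0)^2 \geq 2.0808\, k \biggl\{\log_+\Bigl(\frac{5.2}{\alpha}\Bigr) + \frac{\log\log(2k)}{0.72}\biggr\},
\]
i.e.\ $S_k \geq v_k(\alpha)$ for the boundary $v_k(\alpha)$ obtained by taking the non-negative square root. This boundary is precisely the one for which the finite law of the iterated logarithm result of \citet{howard2021uniform} (quoted later as Lemma~\ref{lemma:howard_uniform_bound}(a)) supplies the time-uniform inequality
\[
\mathbb{P}_P\Bigl(\max_{k \in [n(x)]} \bigl(S_k - v_k(\alpha)\bigr) \geq 0 \,\Bigm|\, \sample_X\Bigr) \leq \alpha,
\]
for any sub-Gaussian supermartingale. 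Applying this inequality to $(S_k)$ yields the claim.

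The main obstacle I anticipate is purely bookkeeping rather than conceptual: verifying that the filtration $(\mathcal{F}_k)$ is well-defined and carries the supermartingale/sub-Gaussian-increment properties \emph{conditionally} on $\sample_X$ (so that the tie-breaking rule used to define $X_{(j)}(x)$ does not introduce extraneous randomness), and ensuring that the constants $5.2$, $2.0808$ and $0.72$ in Definition~\ref{defn:pValue} match exactly the corresponding constants appearing in Lemma~\ref{lemma:howard_uniform_bound}(a). Once these identifications are checked, the proof reduces to direct algebraic inversion and a single application of the cited time-uniform bound.
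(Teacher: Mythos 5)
Your proposal takes essentially the same route as the paper's proof: condition on $\sample_X$, dispose of the $n(x)=0$ case, verify that $(S_k)$ is a supermartingale with respect to the filtration generated by the ordered responses, invert the $p$-value formula into a boundary-crossing event, and invoke Lemma~\ref{lemma:howard_uniform_bound}(a). The inversion step, the filtration bookkeeping and the constant-matching are all carried out correctly, and the paper's proof is indeed just this with one more step.

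The one genuine gap is in the claim that the increments $Z_j = (Y_{(j)}(x)-\tau)/\sigma$ are ``$\mathcal{F}_k$-conditionally sub-Gaussian with variance parameter $1$''. They are not: under the null, their conditional means $(\eta(X_{(j)}(x))-\tau)/\sigma$ are strictly negative, and any random variable satisfying $\mathbb{E}(e^{tZ})\leq e^{t^2/2}$ for all $t\in\R$ must have mean zero (take $t\to 0^{\pm}$), so the two-sided sub-Gaussian condition required by Lemma~\ref{lemma:howard_uniform_bound}(a) as stated fails. Consequently you cannot apply that lemma directly to $S_k=\sum_{j\leq k}Z_j$. The paper closes this by a stochastic-dominance step: define the centred increments $Z_j^{\ast}:=\bigl(Y_{(j)}(x)-\eta(X_{(j)}(x))\bigr)/\sigma$, which are genuinely zero-mean, conditionally independent and sub-Gaussian with variance parameter $1$; since $Z_j\leq Z_j^{\ast}$ pointwise (because $\eta(X_{(j)}(x))\leq\tau$), the event $\bigl\{\max_k (S_k - u_\alpha(k))\geq 0\bigr\}$ is contained in $\bigl\{\max_k \bigl(\sum_{j\leq k}Z_j^{\ast} - u_\alpha(k)\bigr)\geq 0\bigr\}$, and Lemma~\ref{lemma:howard_uniform_bound}(a) applies to the latter. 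Your plan would also go through if you invoked the more general Howard et al.\ result for sub-$\psi$ supermartingales rather than the particular statement given in Lemma~\ref{lemma:howard_uniform_bound}(a), but as written it leans on a version of the lemma that the paper does not supply.
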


\begin{proof}[Proof of Lemma~\ref{lemma:validLocalPValStoufferGeneralised} (and hence Lemma~\ref{lemma:validLocalPValStouffer})] 
Throughout the proof, we operate conditional on $\sample_X$ and consider the setting of Definition~\ref{defn:pValue}. If $\mathcal{I}(x) = \emptyset$, then $\hat{p}_{\sigma, \tau}(x, \sample) = 1$ and the result follows, so suppose henceforth that $n(x)\geq 1$. Define the $\sigma$-algebra $\mathcal{F}_0$ generated by $\{(X_i)_{i \in \mathcal{I}(x)}\}$ and the $\sigma$-algebras $\mathcal{F}_k$ generated by $\bigl\{\bigl(Y_{(j)}(x)\bigr)_{j \in [k]} \cup \mathcal{F}_0\bigr\}$ for $k\in [n(x)]$. Similarly to the proof of \citet[Theorem~3]{duan2020interactive}, we first show that $(S_k)_{k \in \{0\}\cup [n(x)]}$,where $S_0 := 0$, is a supermartingale with respect to the filtration $(\mathcal{F}_k)_{k \in \{0\} \cup [n(x)]}$. Since for $k \in [n(x)]$, $S_{k-1}$ is measurable with respect to $\mathcal F_{k-1}$ and the ordering $Y_{(1)}, \ldots, Y_{(n(x))}$ is fixed conditional on $\sample_X$, we have for any $k\in[n(x)]$ that
\begin{align}
    \E(S_k \mid \mathcal F_{k-1}) &= S_{k-1} + \frac{1}{\sigma}\bigl\{\E\bigl(Y_{(k)}(x) \mid \mathcal F_{k-1}\bigr) - \tau\bigr\} \nonumber\\
    &= S_{k-1} + \frac{1}{\sigma}\bigl\{\eta\bigl(X_{(k)}(x)\bigr) - \tau\bigr\} \leq S_{k-1}, \nonumber
\end{align}
where in the last step we used the fact that  $\eta(X_{i}) \leq \eta(x) \leq \tau$ for $i \in \mathcal{I}(x)$.  Since the integrability of $S_k$ follows from the sub-Gaussianity of the increments, the sequence $(S_k)_{k \in \{0\}\cup [n(x)]}$ is a supermartingale.  Moreover, its increments satisfy $Z'_k := S_k - S_{k-1} = \bigl(Y_{(k)}(x) - \tau\bigr)/\sigma  \leq \bigl\{Y_{(k)}(x) - \eta\bigl(X_{(k)}(x)\bigr)\bigr\}/\sigma =: Z_k$ for $k\in[n(x)]$ and the random variables $(Z'_k)_{k\in [n(x)]}$ are independent conditional on $\sampleX$, as are $(Z_k)_{k \in [n(x)]}$.  Thus, we have by Lemma~\ref{lemma:howard_uniform_bound}\emph{(a)} and with $u_\alpha(\cdot)$ as defined there that
\begin{align*}
\Prob\biggl(\bigcup_{k=1}^{n(x)} \bigl\{S_k \geq u_\alpha(k)\} \, \biggm|\, \sample_X\biggr) &= \Prob\biggl(\bigcup_{k=1}^{n(x)} \biggl\{\sum_{j=1}^k Z'_j \geq u_\alpha(k)\biggr\} \,\biggm|\, \sample_X\biggr) \nonumber\\
&\leq \Prob\biggl(\bigcup_{k=1}^{n(x)} \biggl\{\sum_{j=1}^k Z_j \geq u_\alpha(k)\biggr\} \,\biggm|\, \sample_X\biggr) \leq \alpha.
\end{align*}
%Now, observe that the function $a \mapsto \max_{k \in [n(x)]} S_k/u_a(k)$ is continuous and strictly increasing on $(0,1)$.
Hence, for $\alpha \in (0,1)$,
\begin{align*}
\Prob\bigl(\hat{p}_{\sigma, \tau}(x,\sample) \leq \alpha \mid \sample_X\bigr) &= \Prob\bigl(\hat{p}_{\sigma, \tau}(x,\sample) \leq \alpha,n(x) > 0 \mid \sample_X\bigr) \\
%&= \Prob\biggl(\sup\biggl\{a \in (0,1): \max_{k \in [n(x)]} \frac{S_k}{u_a(k)} \leq 1\biggr\} \leq \alpha, n(x) > 0 \Bigm| \sample_X\biggr) \\
&= \Prob\biggl(\max_{k \in [n(x)]} \frac{S_k}{u_\alpha(k)} \geq 1,n(x) > 0 \Bigm| \sample_X \biggr) \leq \alpha,
\end{align*}
as required. 
\end{proof}

\begin{proof}[Proof of Lemma~\ref{lemma:sparseMG_valid}]
Fix a finite set $I$, a family of distributions $\mathcal{Q}$ on $(0,1]^I$, a collection of random variables $\bm{p} = (p_i)_{i \in I}$ taking values in $(0,1]^I$, as well as hypotheses $H_i \subseteq \{Q\in \mathcal{Q}: \Prob_Q(p_i \leq t) \leq t, \forall t \in (0, 1]\}$ for $i\in I$ and any $G_0$-consistent polyforest-weighted DAG $G' = (I,E,\bm{w})$.  Throughout this proof, define $G := (I, E)$ and $F := (I, \{e \in E: w_e = 1\})$ as in Algorithm~\ref{algo:R_ISS}, and write $J^c := I \setminus J$ for any $J \subseteq I$.  Fix $Q_0 \in \mathcal{Q}$, so that $I_0 \equiv I_0(Q_0) \subseteq I$ is a $G$-lower set giving the indices of true null hypotheses. If $I_0 = \emptyset$, then no Type I error can be made and the proof is complete.  We therefore suppose henceforth that $|I_0| > 0$.  % and write ``$H$-congruent'' as a shorthand for ``congruent with respect to $H$'' for $H \in \{G, F\}$. 
For a proper subset $J$ of $I$, it is convenient to define
\begin{align}
\label{Eq:alpha}
    \alpha(i, J) \equiv \alpha(i, J, F) := \begin{cases} %1 &\text{if $i \in J \cup \bigcup_{j \in J} \an_G(j)$}\\
    \frac{|(\{i\}\cup \de_F(i)) \cap L(F) \cap J^c|}{|L(F) \cap J^c|}\cdot \alpha\quad &\text{if $i \notin J$, $\pa_F(i) \subseteq J$}\\
    0 &\text{otherwise.}\end{cases}
\end{align}
Thus, given a set of rejected hypotheses $J$ at a particular iteration of Algorithm~\ref{algo:R_ISS} and writing $\mathcal{N}_0(J) := \{i \in J^c: p_i \leq \alpha(i, J)\}$, the hypotheses in $\mathcal{N}(J) := \mathcal{N}_0(J) \cup \bigcup_{j \in \mathcal{N}_0(J)} \an_G(j)$ will be rejected at the next iteration.  Hence, the set of rejected hypotheses at the $\ell$-th iteration in Algorithm~\ref{algo:R_ISS} can be written as $R_\ell = R_{\ell - 1} \cup \mathcal{N}(R_{\ell - 1})$ with $R_0 = \emptyset$.  We first claim that 
\begin{align}
    \mathcal{N}(I_1) \subseteq \mathcal{N}(I_2) \cup I_2 \label{eq:srp_monotonicity}
\end{align}
for all $G$-upper proper subsets $I_1 \subseteq I_2$ of $I$.  To see this, fix such $I_1, I_2$ and any $i \in \mathcal{N}(I_1)$. The result is immediate if $i \in I_2$, so suppose that $i \in I_2^c$.  If $i \in \mathcal{N}_0(I_1)$, then $\alpha(i, I_1) > 0$, so $\pa_F(i) \subseteq I_1$. Since $I_1$ and $I_2$ are $G$-upper, they are also $F$-upper. Hence $\{i\} \cup \de_F(i) \subseteq I^c_2 \subseteq I^c_1$, so that
\begin{equation}
\label{Eq:alphainequality}
\alpha(i, I_1) = \frac{\bigl|\bigl(\{i\}\cup \de_F(i)\bigr) \cap L(F)\bigr|}{|L(F) \cap I_1^c|}\cdot \alpha \leq \frac{\bigl|\bigl(\{i\}\cup \de_F(i)\bigr) \cap L(F)\bigr|}{|L(F) \cap I_2^c|}\cdot \alpha = \alpha(i, I_2),
\end{equation}
and we deduce that $i \in \mathcal{N}_0(I_2)\subseteq \mathcal{N}(I_2)$. If instead $i \in \an_G(j_0)$ for some $j_0 \in \mathcal{N}_0(I_1)$, then  since $I_2$ is $G$-upper and $i \in I_2^c$, we have $j_0 \in I_2^c$. Following the same line of reasoning as in~\eqref{Eq:alphainequality}, we see that $0 < \alpha(j_0, I_1) \leq \alpha(j_0, I_2)$, so that $j_0 \in \mathcal{N}_0(I_2)$ and consequently $i \in \an_G(j_0) \subseteq \mathcal{N}(I_2)$.  This establishes the claim in \eqref{eq:srp_monotonicity}. 

Our second claim is that
\begin{align}
\mathbb{P}_{Q_0}\bigl(\mathcal{N}(I^c_0) = \emptyset\bigr) \geq 1 - \alpha. \label{eq:srp_singlestep}    
\end{align}
To see this, first note that since $I_0^c$ is $G$-upper, it is $F$-upper. Moreover, for any $i \in I_0$, we have $\alpha(i, I_0^c) > 0$ only if $\pa_F(i) \subseteq I_0^c$, so the ancestors of any element of $I_* := \{i \in I_0: \alpha(i, I_0^c) > 0\}$ belong to $I_0^c$, and we deduce that $I_*$ is an antichain in $F$.  Combining this with the fact that $F$ is a polyforest in which each node has at most one parent, we see that if $i_1,i_2 \in I_*$ are distinct, then $\{i_1\}\cup\de_F(i_1)$ and $\{i_2\}\cup\de_F(i_2)$ are disjoint.  Hence,
\begin{align*}
    \mathbb{P}_{Q_0}\bigl(\mathcal{N}(I^c_0) \neq \emptyset\bigr) &= \mathbb{P}_{Q_0}\biggl(\bigcup_{i \in I_0} \bigl\{p_i \leq \alpha(i, I_0^c)\bigr\}\biggr) \leq \sum_{i\in I_0} \mathbb{P}_{Q_0}\bigl(p_i \leq \alpha(i, I_0^c)\bigr) \\
    &\leq \sum_{i\in I_0} \alpha(i, I_0^c) = \sum_{i\in I_*} \frac{\bigl|\bigl(\{i\}\cup \de_F(i)\bigr) \cap L(F)\bigr|}{|L(F) \cap I_0|} \cdot \alpha \leq \alpha,
\end{align*}
as required.  Writing $\Omega_0 := \bigl\{\mathcal{N}(I_0^c)\cap I_0 = \emptyset\bigr\}$ and using \eqref{eq:srp_monotonicity}, we see that $\mathcal{N}(R_{\ell-1}) \subseteq \mathcal{N}(I_0^c) \cup I_0^c$, so on $\Omega_0$, we have $\mathcal{N}(R_{\ell-1}) \subseteq I_0^c$.  We deduce that  
% \[
% \{\mathcal{N}(I_0^c) \cap I_0 = \emptyset \} \subseteq \{\mathcal{N}(R_{\ell - 1}) \cap I_0 = \emptyset, \mathcal{N}(I_0^c) \cap I_0 = \emptyset \} = 
% \{R_{\ell} \cap I_0 = \emptyset, \mathcal{N}(I_0^c) \cap I_0 = \emptyset\}, 
% \]
\[
\Omega_0 \cap \{R_{\ell - 1} \cap I_0 = \emptyset \} = \Omega_0 \cap \{\mathcal{N}(R_{\ell - 1}) \cap I_0 = \emptyset\} \cap \{R_{\ell - 1} \cap I_0 = \emptyset  \} = 
\Omega_0 \cap \{R_{\ell} \cap I_0 = \emptyset\}. 
\]
Since $R_0 = \emptyset$, we have $\Omega_0 = \Omega_0 \cap \{R_0 \cap I_0 = \emptyset \}$, which yields by induction that $\Omega_0 = \Omega_0 \cap \{R_{|I|} \cap I_0 = \emptyset\} \subseteq \{R_{|I|} \cap I_0 = \emptyset\}$.
% \[
% \Omega_0 = \Omega_0 \cap \{R_0 \cap I_0 = \emptyset \} \subseteq \Omega_0 \cap \{R_1 \cap I_0 = \emptyset \} \subseteq \ldots \subseteq \Omega_0 \cap \{R_{|I|} \cap I_0 = \emptyset\} \subseteq \{R_{|I|} \cap I_0 = \emptyset\}.
% \]
% proves $\{\mathcal{N}(I_0^c) \cap I_0 = \emptyset \} \subseteq \{R_{|I|} \cap I_0 = \emptyset\}$ by induction on $\ell$.
Combining this with \eqref{eq:srp_singlestep} we conclude that
\[
\mathbb{P}_{Q_0}\bigl(R_{|I|} \cap I_0 = \emptyset \bigr) \geq \mathbb{P}_{Q_0}(\Omega_0) = \mathbb{P}_{Q_0}\bigl(\mathcal{N}(I_0^c) = \emptyset\bigr) \geq 1 - \alpha,
\]
as required.
\end{proof}

\begin{proof}[Proof of Theorem~\ref{thm:validSelectionSet}]
If $(\mathcal{X},\mathcal{A})$ and $(\mathcal{Y},\mathcal{B})$ are measurable spaces, $f:\mathcal{X} \rightarrow \mathcal{Y}$ is measurable and $\pi$ is a distribution on $\mathcal{X}$, let $f\sharp\pi$ denote the pushforward measure on $\mathcal{Y}$ of $\pi$ under $f$; i.e., if $Z \sim \pi$ then $f(Z) \sim f\sharp\pi$. We condition on $\sample_X$ throughout this proof and denote $\hat{\bm{p}}^*\equiv \hat{\bm{p}}^*(\cdot) = \bigl(\hat{p}_i^*(\cdot)\bigr)_{i\in [m]} := \bigl(\hat{p}_{\sigma, \tau}(X_i, \cdot)\bigr)_{i\in [m]}$. Write $\mathcal{Q} := \{\hat{\bm{p}}^*\sharp \tilde{P}^n: \tilde{P} \in \distributionClassNonDecreasingRegressionFunction\}$ for a family of distributions over $(0, 1]^m$ induced by $\distributionClassNonDecreasingRegressionFunction$.  Further, for $i \in [m]$, let $H^*_i := \{\tilde{P} \in \distributionClassNonDecreasingRegressionFunction: \mathbb{E}_{\tilde{P}}(Y_i|X_i) < \tau\}$ and $H_i := \{\hat{\bm{p}}^*\sharp \tilde{P}^n: \tilde{P} \in H_i^*\} \subseteq \mathcal{Q}$, so that for $Q := \hat{\bm{p}}^*\sharp P^n$ we have $I_0(P) := \{i \in [m]: P \in H^*_i\} \subseteq \{i \in [m]: Q \in H_i\} =: I_0(Q)$. Lemma~\ref{lemma:validLocalPValStouffer} then shows that $H_i \subseteq \{\tilde{Q}\in \mathcal{Q}: \Prob_{\tilde{Q}}(\hat{p}^*_i \leq t | \sample_X) \leq t \ \forall t \in (0, 1]\}$.  Now define $G_0^* := ([m],E_0^*)$, where $E_0^* := \{(i_0,i_1) \in [m]^2: H_{i_0}^* \subseteq H_{i_1}^*\}$ and $G_0 := ([m],E_0)$, where $E_0 := \{(i_0,i_1) \in [m]^2: H_{i_0} \subseteq H_{i_1}\}$.  We claim that $E_0^* \subseteq E_0$.  To see this, fix $(i_0,i_1) \in E_0^*$, so that $H_{i_0}^* \subseteq H_{i_1}^*$, and suppose that $Q_0 \in H_{i_0}$.  Then we can find $P_0 \in H_{i_0}^*$ such that $Q_0 = \hat{\bm{p}}^*\sharp P_0^n$.  But since $P_0 \in H_{i_1}^*$, we must have that $Q_0 \in H_{i_1}$, and this establishes our claim.  By construction, $\mathcal{G}_{\mathrm{W}}(\sample_{X,m})$ is a $G_0^*$-consistent polyforest-weighted DAG, and we can therefore deduce from our claim that it is also a $G_0$-consistent polyforest-weighted DAG.  Hence, by Lemma~\ref{lemma:sparseMG_valid}, 
\begin{align*}
\mathbb{P}_P\bigl\{ \rejectionSet_{\alpha}^{\mathrm{ISS}}\bigl(\mathcal{G}_{\mathrm{W}}(\sample_{X,m}),&\hat{\bm{p}}^*(\sample)\bigr)\cap I_0(P) = \emptyset\bigm|\sampleX\bigr\} \\
&\geq \mathbb{P}_Q\bigl\{ \rejectionSet_{\alpha}^{\mathrm{ISS}}\bigl(\mathcal{G}_{\mathrm{W}}(\sample_{X,m}),\hat{\bm{p}}^*\bigr)\cap I_0(Q) = \emptyset\bigm|\sampleX\bigr\} \geq 1 - \alpha.
%\rejectionSet_{\alpha}^{\mathrm{ISS}}\bigl(\mathcal{G}_{\mathrm{W}}(\sample_{X,m}),\hat{\bm{p}}^*\bigr) \subseteq I_0^c(Q) \subseteq I_0^c(P)
\end{align*}
Moreover, $\superLevelSet{\tau}{\regressionFunction}$ is an upper set because $P\in\distributionClassNonDecreasingRegressionFunction$, and we conclude that
    \[
        \Prob_P\bigl( \hat{A}^{\mathrm{ISS}}_{\sigma,\tau,\alpha,m}(\sample) \subseteq \superLevelSet{\tau}{\regressionFunction} \bigm| \sample_X\bigr) \geq \mathbb{P}_P\bigl\{ \rejectionSet_{\alpha}^{\mathrm{ISS}}\bigl(\mathcal{G}_{\mathrm{W}}(\sample_{X,m}),\hat{\bm{p}}^*(\sample)\bigr)\cap I_0(P) = \emptyset\bigm|\sampleX\bigr\} \geq 1-\alpha,
    \]
    as required.    
\end{proof}

\subsection{Proofs from  Section~\ref{sec:theory_power}}\label{sec:appendix_theory_power}

The following proposition shows that if we only know that $P \in \distributionClassNonDecreasingRegressionFunction$, then it is impossible to provide non-trivial uniform power guarantees for data-dependent selection sets with Type I error control.

\begin{prop}\label{prop:GaussianTestingMonoNotEnough} Let $d \in \N$, $\tau \in \R$, $\sigma > 0$ and $\alpha \in (0, 1)$. Then, for any $n \in \N$, 
\[
\sup_{P \in \distributionClassNonDecreasingRegressionFunction[d]}\inf_{\hat{A} \in \hat{\mathcal{A}}_n(\tau,\alpha,\distributionClassNonDecreasingRegressionFunction[d])}  \mathbb{E}_{P}\bigl\{ \marginalDistribution\bigl(\superLevelSet{\tau}{\regressionFunction}\setminus\hat{A}(\sample)\bigr) \bigr\} \geq 1-\alpha.
\]
\end{prop}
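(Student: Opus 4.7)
The plan is a classical Le Cam two-point argument, tailored so that the same two distributions serve for \emph{every} valid $\hat{A}$ simultaneously. Fix the marginal $\mu := \mathrm{Unif}([0,1]^d)$ and Gaussian noise $N(0, \sigma^2)$, and for each $\delta > 0$ define $P_0^\delta, P_1^\delta \in \distributionClassNonDecreasingRegressionFunction[d]$ having this marginal and conditional distribution $Y \mid X \sim N(\eta(X), \sigma^2)$, with constant regression functions $\eta_0 \equiv \tau - \delta$ and $\eta_1 \equiv \tau + \delta$ respectively. Constants are (weakly) coordinate-wise increasing by the definition in the paper's notation section, and $N(0,\sigma^2)$ is exactly sub-Gaussian with variance parameter $\sigma^2$, so both distributions do lie in $\distributionClassNonDecreasingRegressionFunction[d]$. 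Under $P_0^\delta$ the $\tau$-superlevel set is empty, while under $P_1^\delta$ it equals $\R^d$.

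The Type~I error constraint at $P_0^\delta$ does the heavy lifting. For any $\hat{A} \in \hat{\mathcal{A}}_n\bigl(\tau, \alpha, \distributionClassNonDecreasingRegressionFunction[d]\bigr)$ we must have $\Prob_{P_0^\delta}\bigl(\hat{A}(\sample) \subseteq \emptyset\bigr) \geq 1-\alpha$, i.e.\ $\Prob_{P_0^\delta}\bigl(\hat{A}(\sample) = \emptyset\bigr) \geq 1-\alpha$. Since $\mu\bigl(\hat{A}(\sample)\bigr) \in [0,1]$ and vanishes on the event $\{\hat{A}(\sample) = \emptyset\}$, this immediately yields $\mathbb{E}_{P_0^\delta}\bigl[\mu(\hat{A}(\sample))\bigr] \leq \alpha$.

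The key step is to transfer this bound to $P_1^\delta$ via total variation. Because $\sample \mapsto \mu(\hat{A}(\sample))$ is a $[0,1]$-valued statistic,
\[
\bigl|\mathbb{E}_{P_1^\delta}[\mu(\hat{A})] - \mathbb{E}_{P_0^\delta}[\mu(\hat{A})]\bigr| \leq \mathrm{TV}\bigl((P_0^\delta)^n, (P_1^\delta)^n\bigr).
\]
Since $P_0^\delta$ and $P_1^\delta$ share the marginal of $X$, a direct calculation gives $\mathrm{KL}(P_0^\delta \| P_1^\delta) = \mathrm{KL}\bigl(N(\tau - \delta, \sigma^2)\,\|\,N(\tau + \delta, \sigma^2)\bigr) = 2\delta^2/\sigma^2$, so by tensorisation and Pinsker's inequality $\mathrm{TV}\bigl((P_0^\delta)^n, (P_1^\delta)^n\bigr) \leq \delta\sqrt{n}/\sigma$. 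Combining, $\mathbb{E}_{P_1^\delta}[\mu(\hat{A})] \leq \alpha + \delta\sqrt{n}/\sigma$.

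To conclude, under $P_1^\delta$ the regret is $\mu\bigl(\R^d \setminus \hat{A}(\sample)\bigr) = 1 - \mu(\hat{A}(\sample))$, so $\mathbb{E}_{P_1^\delta}\bigl[\mu(\superLevelSet{\tau}{\eta} \setminus \hat{A})\bigr] \geq 1 - \alpha - \delta\sqrt{n}/\sigma$. Taking the infimum over $\hat{A} \in \hat{\mathcal{A}}_n(\tau, \alpha, \distributionClassNonDecreasingRegressionFunction[d])$ preserves the bound, and then
\[
\sup_{P \in \distributionClassNonDecreasingRegressionFunction[d]} \inf_{\hat{A}} \mathbb{E}_P\bigl[\mu(\superLevelSet{\tau}{\eta} \setminus \hat{A})\bigr] \geq \sup_{\delta > 0}\Bigl(1 - \alpha - \delta\sqrt{n}/\sigma\Bigr) = 1 - \alpha.
\]
The only real subtlety — and arguably the conceptual point of the result — is the ordering of the sup and inf: the argument works here because the bound on $\mathbb{E}_{P_1^\delta}[\mu(\hat{A})]$ is uniform over all valid $\hat{A}$, which in turn is forced by the fact that the \emph{same} distribution $P_0^\delta$ belongs to the class and therefore constrains every candidate procedure.
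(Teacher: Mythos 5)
Your proof is correct and follows essentially the same two-point Le Cam argument as the paper: both construct a pair of constant-regression-function distributions (one with superlevel set empty, one with superlevel set $\R^d$), use the Type~I error constraint under the former to bound $\mathbb{E}[\mu(\hat A)]$, transfer to the latter by Pinsker/total variation, and then let the gap shrink to zero. The only differences are cosmetic — you symmetrise the two means around $\tau$ and bound $\mathbb{E}[\mu(\hat A)]$ directly as a bounded statistic, whereas the paper uses $\tau-\Delta$ and $\tau$ and argues pointwise via Fubini — but the underlying mechanism, and in particular your correct observation that a single bad $P_0^\delta$ constrains every valid $\hat A$ uniformly and hence licenses the $\sup\inf$ ordering, is identical.
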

\begin{proof}[Proof of Proposition \ref{prop:GaussianTestingMonoNotEnough}] Fix a Borel probability measure $\mu$ on $\R^d$. For $\Delta \in \R$, let $\eta_\Delta:\R^d \rightarrow \R$ denote the constant function satisfying $\eta_\Delta(x) := \tau+\Delta$ for all $x \in \R^d$, and let $P_\Delta$ denote the distribution on $\R^d \times \R$ of $(X,Y)$, where $X \sim \mu$ and $Y|X \sim \mathcal{N}\bigl(\eta_{\Delta}(X),\sigma^2\bigr)$.  Thus $\{P_\Delta:\Delta \in \R\} \subseteq \distributionClassNonDecreasingRegressionFunction[d]$. Moreover, for any $\Delta >0$, we have by Pinsker's inequality that 
\begin{align*}
\mathrm{TV}( P_{-\Delta}^n, P_{0}^n) &\leq \sqrt{\frac{n}{2} \cdot \mathrm{KL}(P_{-\Delta},P_{0})} = \sqrt{\frac{n }{2} \cdot \mathrm{KL}\bigr(\mathcal{N}(\tau-\Delta,\sigma^2),\mathcal{N}(\tau,\sigma^2)\bigr)} \leq \frac{\sqrt{n}\Delta}{2\sigma}.  
\end{align*}
Now fix $\Delta>0$, and suppose that $\hat{A} \in \hat{\mathcal{A}}_n\bigl(\tau,\alpha,\distributionClassNonDecreasingRegressionFunction[d]\bigr)$.  Since $\superLevelSet{\tau}{\eta_{-\Delta}} = \emptyset$, we have for every $x \in \R^d$ that
\begin{align*}
\Prob_{P_{0}}\bigl(x \in \hat{A}(\sample)\bigr) &\leq \Prob_{P_{-\Delta}}\bigl(x \in \hat{A}(\sample)\bigr) + \mathrm{TV}\bigl(P_{-\Delta}^n,P_{0}^n\bigr) \\ &\leq \Prob_{P_{-\Delta}}\bigl( \hat{A}(\sample) \nsubseteq \superLevelSet{\tau}{\eta_{-\Delta}}\bigr)+\frac{\sqrt{n}\Delta}{2\sigma}\leq \alpha+\frac{\sqrt{n}\Delta}{2\sigma}.
\end{align*}
Hence, by Fubini's theorem,
\begin{align*}
\E_{P_{0}}\bigl\{ \mu\bigl(\hat{A}(\sample)\bigr) \bigr\} = \E_{P_{0}}\biggl( \int_{\R^d} \one_{\{x \in \hat{A}(\sample)\}} \, d\mu(x)\biggr) =\int_{\R^d}
\Prob_{P_{0}}\bigl(x \in \hat{A}(\sample)\bigr) \, d\mu(x) \leq \alpha+\frac{\sqrt{n}\Delta}{2\sigma}.
\end{align*}
Moreover, by our choice of $\eta_{0}$, we have $\superLevelSet{\tau}{\eta_{0}}=\R^d$, and hence
\begin{align*}
\E_{P_{0}}\bigl\{ \mu\bigl(\superLevelSet{\tau}{\eta_0}\setminus\hat{A}(\sample)\bigr) \bigr\}&= 1-  \E_{P_{0}}\bigl\{ \mu\bigl(\hat{A}(\sample)\bigr) \bigr\} \geq 1-\alpha - \frac{\sqrt{n}\Delta}{2\sigma} .
\end{align*}
The result follows by taking an infimum over $\hat{A} \in \hat{\mathcal{A}}_n\bigl(\tau,\alpha, \distributionClassNonDecreasingRegressionFunction[d]\bigr)$, and then letting $\Delta \rightarrow 0$.
\end{proof}

\begin{proof}[Proof of Theorem~\ref{thm:powerBound_1d}]
Let us define $C_0 := 962 \geq 2 \{16 \vee 481^{1/(2\marginExponent + 1)}\}$ and $C := 3C_0$.  Further, let
\[
\marginDummy := \frac{C_0}{2} \biggl\{\biggl(\frac{\sigma^2}{n\marginConstant^2} \log_+\Bigl(\frac{\log_+ n}{\alpha\wedge\delta}\Bigr)\biggr)^{1/(2\marginExponent + 1)} + \frac{\log_+(1/\delta)}{n}\biggr\}.
\]
By the choice of $\marginDummy$, the result holds if $\marginalDistribution\bigl(\superLevelSet{\tau}{\regressionFunction}\bigr) \leq 2\marginDummy$.  We therefore suppose henceforth that~$\marginDummy$ is such that $\marginDummy < 1/2$ and $\marginalDistribution\bigl(\superLevelSet{\tau}{\regressionFunction}\bigr) > 2\marginDummy$. Then, since $P\in\distributionClassUnivariateMarginCondition$, we have that $\marginalDistribution\bigl(\superLevelSet{\tau+\marginConstant\marginDummy^\marginExponent}{\regressionFunction}\bigr) > \marginDummy$, so $x_0 := \inf\{x\in\superLevelSet{\tau+\marginConstant\marginDummy^\marginExponent}{\regressionFunction}: \marginalDistribution\bigl(\superLevelSet{\tau+\marginConstant\marginDummy^\marginExponent}{\regressionFunction} \cap (-\infty, x]\bigr) \geq \marginDummy\}$ is finite. For $I_\marginDummy := \superLevelSet{\tau+\marginConstant\marginDummy^\marginExponent}{\regressionFunction} \cap (-\infty, x_0]$, it then holds that $\marginalDistribution(I_\marginDummy) \geq \marginDummy$. Further, by Lemma~\ref{lemma:log_logplus_facts}\emph{(i)} and the fact that $C_0 \geq 32$,
\[
n\marginDummy \geq 16 \log_+\Bigl(\frac{1}{\delta}\Bigr)\geq 8\log\Bigl(\frac{2}{\delta}\Bigr).
\]
Writing $\Omega_0 := \bigl\{n^{-1}\sum_{i=1}^n\one_{\{X_i \in I_\marginDummy\}} \geq \marginDummy/2\bigr\}$, it follows by a multiplicative Chernoff bound \citep[][Theorem~2.3(c)]{mcdiarmid1998concentration} that 
\[
\Prob_P\bigl(\Omega_0^c \bigr) \leq e^{-n\marginDummy/8} \leq \frac{\delta}{2}.
\]
By the choice of $\marginDummy$, it holds on $\Omega_0$ that
\[
\sum_{i=1}^n\one_{\{X_i \in I_\marginDummy\}} \geq \frac{n\marginDummy}{2} \geq \frac{1}{2}\cdot\Bigl(\frac{C_0}{2}\Bigr)^{2\marginExponent + 1} \cdot \frac{\sigma^2}{\marginConstant^2 \marginDummy^{2\marginExponent}} \log_+\Bigl(\frac{\log_+ n}{\alpha\wedge\delta}\Bigr);
\]
in particular, since $n\marginDummy \geq 8\log(2/\delta)$, it holds on this event that $\sum_{i=1}^n\one_{\{X_i \in I_\marginDummy\}} \geq 1$. Thus, we can fix any $i_1 \in [n]$ such that $X_{i_1} = \max(\sample_X \cap I_\marginDummy)$. Furthermore, let $i_1,\ldots, i_K \in [n]$ with $K := |\{i\in[n]: X_i \geq X_{i_1}\}|$ be the maximal set of indices such that $X_{i_k} \geq X_{i_1}$ for all $k \in [K]$. Writing $r_k := X_{i_k} - \min (\sample_X \cap I_\marginDummy)$ and noting that $\mathcal{I}_{r}(x) = \{i \in [n]: x-r \leq X_i \leq x\}$ for $r\geq 0$ and $x\in \R$, we have on $\Omega_0$ that for all $k \in [K]$,
\[
n \geq |\mathcal{I}_{r_k}(X_{i_k})| \geq |\mathcal{I}_{r_1}(X_{i_1})| \geq \frac{1}{2}\cdot\Bigl(\frac{C_0}{2}\Bigr)^{2\marginExponent + 1} \cdot \frac{\sigma^2}{\marginConstant^2\marginDummy^{2\marginExponent}} \log_+\Bigl(\frac{\log_+ n}{\alpha\wedge\delta}\Bigr).
\]
Hence, writing $u_{\delta'}(\ell) := 1.7\sqrt{\ell \bigl\{0.72 \log(5.2/\delta')+ \log \log (2\ell)\bigr\}}$ for $\delta'\in (0,1)$ and $\ell \in \mathbb{N}$ as in Lemma~\ref{lemma:howard_uniform_bound}\emph{(a)}, we have for all $k\in [K]$ that 
\begin{align*}
    \hspace{0.6cm}&\hspace{-0.6cm}\biggl(\frac{u_\alpha(|\mathcal{I}_{r_k}(X_{i_k})|) + u_{\delta/2}(|\mathcal{I}_{r_k}(X_{i_k})|)}{|\mathcal{I}_{r_k}(X_{i_k})|}\biggr)^2 \\
    &\leq \frac{2\cdot 1.7^2 \cdot 0.72 \cdot \bigl\{\log(5.2/\alpha) + \log(10.4/\delta)\bigr\} + 2\cdot 2\cdot 1.7^2 \cdot \log\log\bigl(2\cdot |\mathcal{I}_{r_k}(X_{i_k})|\bigr)}{|\mathcal{I}_{r_k}(X_{i_k})|} \\
    &\leq \frac{2\cdot 1.7^2 \cdot 0.72 \cdot 2\cdot \log\bigl(\frac{10.4}{\alpha \wedge \delta}\bigr) + 4\cdot 1.7^2 \cdot \log \bigl\{2\log_+\bigl(|\mathcal{I}_{r_k}(X_{i_k})|\bigr)\bigr\}}{|\mathcal{I}_{r_k}(X_{i_k})|} \\
    &\leq 11.56 \cdot \frac{\log\bigl(\frac{20.8\log_+ n}{\alpha \wedge \delta}\bigr)}{|\mathcal{I}_{r_1}(X_{i_1})|} \leq 11.56 \cdot 20.8 \cdot \frac{\log_+\bigl(\frac{\log_+ n}{\alpha \wedge \delta}\bigr)}{|\mathcal{I}_{r_1}(X_{i_1})|} \\
    &\leq 481 \cdot \Bigl(\frac{2}{C_0}\Bigr)^{2\marginExponent + 1} \cdot \frac{\marginConstant^2\marginDummy^{2\marginExponent}}{\sigma^2} \leq \frac{\marginConstant^2\marginDummy^{2\marginExponent}}{\sigma^2},
\end{align*}
where we used the inequality $(a+b)^2 \leq 2(a^2 +b^2)$ for $a,b \geq 0$, Lemma~\ref{lemma:log_logplus_facts}\emph{(i)} and the fact that $C_0 \geq 2 \cdot 481^{1/(2\marginExponent+1)}$.  For
\[
\Omega_1\bigl(X_{i_1}\bigr) := \bigcap_{k=1}^K \biggl\{\sum_{i \in \mathcal{I}_{r_k}(X_{i_k})} \frac{Y_i - \tau}{\sigma}\geq u_\alpha\bigl(|\mathcal{I}_{r_k}(X_{i_k})|\bigr)\biggr\},
\]
we therefore have on $\Omega_0$ that
\begin{align*}
    \mathbb P_P\Bigl(\Omega_1\bigl(X_{i_1}\bigr)^c  &\bigm| \sample_X \Bigr) \\
    % &= \mathbb P_P\biggl(\bigcup_{k=1}^K \biggl\{\sum_{i \in \mathcal{I}_{r_k}(X_{i_k})} \frac{Y_{i} - \tau}{\sigma}< u_\alpha\bigl(|\mathcal{I}_{r_k}(X_{i_k})|\bigr)\biggr\} \Bigm| \sample_X \biggr)\\
    &= \mathbb P_P\biggl(\bigcup_{k=1}^K \biggl\{\sum_{i \in \mathcal{I}_{r_k}(X_{i_k})} \frac{Y_{i} - (\tau + \marginConstant\marginDummy^\marginExponent)}{\sigma}< u_\alpha\bigl(|\mathcal{I}_{r_k}(X_{i_k})|\bigr) - |\mathcal{I}_{r_k}(X_{i_k})|\frac{\marginConstant\marginDummy^\marginExponent}{\sigma}\biggr\} \biggm| \sample_X \biggr)\\
    &\leq \mathbb P_P\biggl(\bigcup_{k=1}^K \biggl\{\sum_{i \in \mathcal{I}_{r_k}(X_{i_k})} \frac{Y_{i} - (\tau + \marginConstant\marginDummy^\marginExponent)}{\sigma} <  -u_{\delta/2}\bigl(|\mathcal{I}_{r_k}(X_{i_k})|\bigr) \biggr\} \biggm| \sample_X \biggr) \leq \frac{\delta}{2},
  %  &= \mathbb P_P\biggl(\bigcup_{k=1}^K \biggl\{\sum_{i \in \mathcal{I}_{r_k}(X_{i_k})} \Bigl(-\frac{Y_{i} - (\tau + \xi)}{\sigma}\Bigr) \geq  u_{\delta/2}\bigl(|\mathcal{I}_{r_k}(X_{i_k})|\bigr) \Bigm| \sample_X \biggr) \leq \frac{\delta}{2},
\end{align*}
where the last inequality follows from Lemma~\ref{lemma:howard_uniform_bound}\emph{(a)}.  Let $n(x)$ and $\bigl(Y_{(j)}(x)\bigr)_{j \in [n(x)]}$ be as in Definition~\ref{defn:pValue}.  For $k \in [K]$, write $n_k \equiv n_k(x, \sampleX) := |\mathcal{I}_{r_k}(x)|$ and
\[
\hat{p}^{r_k}_{\sigma, \tau}(x,\sample) := 5.2\exp\biggl\{-\frac{\max\bigl(\sum_{j =1}^{n_k} Y_{(j)}(x)-\tau n_k,0\bigr)^2}{2.0808 \sigma^2 n_k} +\frac{ \log\log(2n_k)}{0.72}\biggr\}.
\]
We have on $\Omega_0 \cap \Omega_1\bigl(X_{i_1}\bigr)$ that
\[
\max_{k \in [K]} \hat{p}_{\sigma,\tau}(X_{i_k},\sample) \leq \max_{k \in [K]} \hat{p}_{\sigma,\tau}^{r_k}(X_{i_k},\sample) \leq \alpha,
\]
so that $\{i_k: k \in [K]\} \subseteq \mathcal{R}^{\mathrm{ISS}}_\alpha\bigl(\mathcal{G}_{\mathrm{W}}(\sample_X), \bigl(\hat{p}_{\sigma,\tau}(X_{i},\sample)\bigr)_{i \in [n]}\bigr)$.  Since $X_{i_1} \in I_\marginDummy$, we have on $\Omega_0 \cap \Omega_1\bigl(X_{i_1}\bigr)$ that
\[
     [x_0, \infty) \subseteq \bigl[X_{i_1}, \infty\bigr) \subseteq \hat{A}^{\mathrm{ISS}}_{\sigma,\tau,\alpha, n}(\sample).
\]
It follows that
\begin{align*}
    \marginalDistribution\bigl(\superLevelSet{\tau}{\regressionFunction}\setminus \hat{A}^{\mathrm{ISS}}_{\sigma,\tau,\alpha, n}(\sample)\bigr) &\leq \marginalDistribution\bigl(\superLevelSet{\tau}{\regressionFunction}\setminus \superLevelSet{\tau + \marginConstant\marginDummy^\marginExponent}{\regressionFunction}\bigr) + \marginalDistribution\bigl(\superLevelSet{\tau + \marginConstant\marginDummy^\marginExponent}{\regressionFunction}\setminus \hat{A}^{\mathrm{ISS}}_{\sigma,\tau,\alpha, n}(\sample)\bigr) \\
    &\leq \marginDummy + \marginalDistribution\bigl(\superLevelSet{\tau + \marginConstant\marginDummy^\marginExponent}{\regressionFunction}\cap (-\infty, X_{i_1})\bigr) \\
    &\leq \marginDummy + \marginalDistribution\bigl(\superLevelSet{\tau + \marginConstant\marginDummy^\marginExponent}{\regressionFunction}\cap (-\infty, x_0)\bigr) \leq 2\marginDummy,
\end{align*}
since $P\in\distributionClassUnivariateMarginCondition[1]$. %Now
%\[
%\marginalDistribution\bigl(\superLevelSet{\tau + \xi}{\regressionFunction}\cap (-\infty, X_{i_1})\bigr) \leq \marginalDistribution\bigl(\superLevelSet{\tau + \xi}{\regressionFunction}\cap (-\infty, x_0)\bigr) \leq \nu\xi^\beta.
%\]
%\blue{To see this, note that if it were true that $\marginalDistribution\bigl(\superLevelSet{\tau + \xi}{\regressionFunction}\cap (-\infty, x_0)\bigr) > \nu\xi^\beta$, this would imply by continuity from below that $\lim_{\epsilon\downarrow 0} \marginalDistribution\bigl(\superLevelSet{\tau + \xi}{\regressionFunction}\cap (-\infty, x_0 - \epsilon)\bigr) > \nu\xi^\beta$, implying that there exists $\epsilon_0 > 0$ such that $\marginalDistribution\bigl(\superLevelSet{\tau + \xi}{\regressionFunction}\cap (-\infty, x_0 - \epsilon_0)\bigr) > \nu\xi^\beta$ thus contradicting the definition of $x_0$.}
We conclude that 
\begin{align*}
\Prob_P\biggl[ \marginalDistribution\bigl(\superLevelSet{\tau}{\regressionFunction}\setminus\hat{A}^{\mathrm{ISS}}_{\sigma,\tau,\alpha, n}(\sample)\bigr) &> 1 \wedge C_0 \biggl\{\biggl(\frac{\sigma^2}{n\marginConstant^2} \log_+\Bigl(\frac{\log_+ n}{\alpha\wedge\delta}\Bigr)\biggr)^{1/(2\marginExponent + 1)} +\frac{\log_+(1/\delta)}{n}\biggr\}\biggr] \nonumber\\
&= \Prob_P\Bigl( \marginalDistribution\bigl(\superLevelSet{\tau}{\regressionFunction}\setminus \hat{A}^{\mathrm{ISS}}_{\sigma,\tau,\alpha, n}(\sample)\bigr) > 2\marginDummy\Bigr) \leq \Prob_P\bigl(\Omega_0^c \cup \Omega_{1}(X_{i_1})^c\bigr) \leq \delta.
\end{align*}
This proves the first statement in the theorem, and we deduce the second result by integrating our tail bound over $\delta \in (0, 1)$.  Since this part of the calculation is an identical argument to that in the multivariate case, we refer the reader to~\eqref{Eq:Integrate1},~\eqref{Eq:Integrate2} and~\eqref{Eq:Integrate3} in the proof of Theorem~\ref{thm:powerBound} for details.  Since $C = 3C_0$, the result follows.
\end{proof}

\begin{proof}[Proof of Proposition~\ref{prop:MaxMinNegativeResultMultivariateNoDensityBound}] Take $q \in \N$ and let $\mathbb{W}_{q,d} \subseteq [q]^d$ and $(\mathcal{H}_{\bm{j}}^q: \bm{j}\in \mathbb{W}_{q,d})$ be as in Section~\ref{sec:lowerBound}.  Let $\mu_q$ denote the uniform distribution on $\bigcup_{\bm{j}\in \mathbb{W}_{q,d}}\mathcal{H}_{\bm{j}}^q$. For each $\bm{j}=(j_1,\ldots,j_d) \in \mathbb{W}_{q,d}$, we define $\eta_{q,\bm{j}}:\R^d\rightarrow \R$ by
\begin{align*}
\eta_{q,\bm{j}}(x) \equiv \eta_{q,\bm{j}}(x_1,\ldots,x_d) :=\begin{cases} \tau - \marginConstant &\text{ if } x_\ell < j_\ell/q \text{ for all } \ell \in [d]\\
\tau + \marginConstant &\text{ otherwise. }
\end{cases}
\end{align*}
We also define $\eta_{q,\ast}:\R^d\rightarrow \R$ to be the constant function $\eta_{q,\ast}(x) := \tau + \marginConstant$.  For $\bm{j} \in \mathbb{W}_{q,d} \cup \{\ast\}$, let $P_{q,\bm{j}} \in \distributionClassNonDecreasingRegressionFunction[d]$ denote the distribution on $[0,1]^d \times \R$ of $(X,Y)$, where $X \sim \mu_q$ and $Y|X \sim \mathcal{N}\bigl(\eta_{q,\bm{j}}(X),\sigma^2\bigr)$.  Moreover, $\mu_q\bigl(\eta_{q,\bm{j}}^{-1}([\tau,\tau+\marginConstant\marginDummy^\marginExponent])\bigr) = \mu_q(\emptyset) = 0 \leq \marginDummy$ for all $\marginDummy < 1$.  On the other hand, if $\marginDummy = 1$ then $\mu_q\bigl(\eta_{q,\bm{j}}^{-1}([\tau,\tau+\marginConstant\marginDummy^\marginExponent])\bigr) \leq 1 = \marginDummy$. Thus,  $P_{q,\bm{j}} \in \distributionClassNonDecreasingRegressionFunction[d]\cap \distributionClassUnivariateMarginCondition[d]$ for all $\bm{j} \in \mathbb{W}_{q,d} \cup \{\ast\}$. In addition, given any $\bm{j}=(j_1,\ldots,j_d) \in \mathbb{W}_{q,d}$, and $\bm{j}'=(j'_1,\ldots,j'_d) \in \mathbb{W}_{q,d}\setminus \{\bm{j}\}$ we must have $j'_{\ell'}>j_{\ell'}$ for some $\ell' \in [d]$ by the antichain property.  Hence $x_{\ell'} \geq (j'_{\ell'}-1)/q \geq j_{\ell'}/q$ for all $x = (x_1,\ldots,x_d)^\top \in \mathcal{H}_{\bm{j}'}^q$, so $\eta_{q,\bm{j}}(x)=\tau+\marginConstant=\eta_{q,\ast}(x)$ for such $x$. Consequently, for each $\bm{j} \in \mathbb{W}_{q,d}$, we have
\begin{align*}
\mathrm{KL}(P_{q,\ast},P_{q,\bm{j}}) &= \int_{\R^d}\mathrm{KL}\bigl\{\mathcal{N}\bigl(\eta_{q,\ast}(x),\sigma^2\bigr),\mathcal{N}\bigl(\eta_{q,\bm{j}}(x),\sigma^2\bigr)\bigr\} \, d\mu_q(x)\\
&=\mu_q(\mathcal{H}_{\bm{j}}^q)\cdot \frac{2\marginConstant^2}{\sigma^2} = \frac{1}{|\mathbb{W}_{q,d}|} \cdot \frac{2\marginConstant^2}{\sigma^2} \leq \frac{2\marginConstant^2 d}{q^{d-1}\sigma^2}.
\end{align*}
Hence, by Pinsker's inequality, %$\mathrm{TV}\bigl(P_{m,\ast},P_{m,\bm{j}}\bigr) \leq n \cdot\mathrm{TV}\bigl(Q_{S_0},Q_{S_1}\bigr) \leq \frac{2n}{m^d}.$
%Pinsker's inequality \red{\textbf{reference?} (MM: to where? Pinsker's own weaker version or this version which was proved independently by Kullback, Csisz{\'a}r and Kemperman? Or textbook-reference, e.g., Tsybakov?)}, 
\begin{align*}
\mathrm{TV}\bigl(P_{q,\ast}^n,P_{q,\bm{j}}^n\bigr) \leq \sqrt{\frac{1}{2}\cdot \mathrm{KL}\bigl(P_{q,\ast}^n,P_{q,\bm{j}}^n\bigr)} 
= \sqrt{\frac{n}{2}\cdot \mathrm{KL}\bigl(P_{q,\ast},P_{q,\bm{j}}\bigr)}
\leq \sqrt{\frac{n\marginConstant^2 d}{q^{d-1}\sigma^2} }.
\end{align*}
To complete the proof, consider $\hat{A} \in \hat{\mathcal{A}}_n(\tau,\alpha,\mathcal{P}')$. %Now for each $x \in \mathrm{supp}(\mu_m) = \bigcup_{\bm{j}\in \mathbb{W}_{m,d}}\mathcal{H}_{\bm{j}}^m$ \red{support is closed so this is not quite true} we may choose $\bm{j}_x$ so that $x \in \mathcal{H}_{\bm{j}_x}^m$.
Let $x \in \bigcup_{\bm{j}\in \mathbb{W}_{q,d}}\mathcal{H}_{\bm{j}}^q$, so we can find $\bm{j}_x \in \mathbb{W}_{q,d}$ such that $x \in \mathcal{H}_{\bm{j}_x}^q$. Then $x \notin \superLevelSet{\tau}{\eta_{q,\bm{j}_x}}$, so 
\begin{align*}
\Prob_{P_{q,\ast}}\bigl(x \in \hat{A}(\sample)\bigr) &\leq \Prob_{P_{q,\bm{j}_x}}\bigl(x \in \hat{A}(\sample)\bigr) + \mathrm{TV}\bigl(P_{q,\ast}^n,P_{q,\bm{j}_x}^n\bigr)\\ 
&\leq \Prob_{P_{q,\bm{j}_x}}\bigl( \hat{A} \nsubseteq \superLevelSet{\tau}{\eta_{q,\bm{j}_x}}\bigr)+\sqrt{\frac{n\marginConstant^2 d}{q^{d-1}\sigma^2} } \leq \alpha+\sqrt{\frac{n\marginConstant^2 d}{q^{d-1}\sigma^2} }.
\end{align*}
Hence, by Fubini's theorem,
\begin{align*}
\E_{P_{q,\ast}}\bigl\{ \mu_q\bigl(\hat{A}(\sample)\bigr) \bigr\}&= \E_{P_{q,\ast}}\biggl( \int_{\R^d} \one_{\{x \in \hat{A}(\sample)\}} \, d\mu_q(x)\biggr)\\ &=\int_{\R^d}
\Prob_{P_{q,\ast}}\bigl(x \in \hat{A}(\sample)\bigr) \, d\mu_q(x) \leq \alpha+\sqrt{\frac{n\marginConstant^2 d}{q^{d-1}\sigma^2} }.
\end{align*}
By our choice of $\eta_{q,\ast}$, we have $\superLevelSet{\tau}{\eta_{q,\ast}}=\R^d$, and hence
\begin{align*}
\E_{P_{q,\ast}}\bigl\{ \mu_q\bigl(\superLevelSet{\tau}{\eta_\ast}\setminus\hat{A}(\sample)\bigr) \bigr\}&= 1-  \E_{P_{q,\ast}}\bigl\{ \mu_q\bigl(\hat{A}(\sample)\bigr) \bigr\} \geq 1-\alpha - \sqrt{\frac{n\marginConstant^2 d}{q^{d-1}\sigma^2} }.
\end{align*}
The result follows by taking an infimum over $\hat{A} \in \hat{\mathcal{A}}_n(\tau,\alpha, \mathcal{P}')$, and then letting $q \rightarrow \infty$. 
\end{proof}

\begin{proof}[Proof of Proposition~\ref{Prop:Containment}]
Fix $P \in \distributionClassNonDecreasingRegressionFunction[d] \cap \distributionClassMultivariateCondition[d]$ with marginal distribution $\mu$ on $\R^d$ and regression function $\regressionFunction$, and let $C := 2\cdot 3^{3d + 1} \cdot (1 + d^{1/2})^{d + 1} \cdot \densityConstant^2 \geq 1$.  %If $\xi \geq \etaIncreasingConstant/C^\etaIncreasingExponent$, then $\marginalDistribution\bigl(\regressionFunction^{-1}([\tau, \tau + \etaIncreasingConstant \xi^\etaIncreasingExponent/C^\etaIncreasingExponent])\bigr) \leq 1 \leq (\xi C^\etaIncreasingExponent /\etaIncreasingConstant)^{1/\etaIncreasingExponent}$. 
%We need to show that $\marginalDistribution\bigl(\regressionFunction^{-1}([\tau, \tau + \etaIncreasingConstant \marginDummy^\etaIncreasingExponent/C^\etaIncreasingExponent])\bigr) \leq \marginDummy$ for all $\marginDummy \in (0,1]$. 
Fix $\marginDummy \in (0,1]$, and let $r := 2\marginDummy/C \in (0,1]$. Let $T \subseteq \superLevelSet{\tau + \etaIncreasingConstant r^\etaIncreasingExponent}{\regressionFunction}$ be as in Lemma~\ref{lemma:geomLemmaForPowerBound}, so that, by that same lemma, 
\begin{align*}
\marginalDistribution\bigl(\regressionFunction^{-1}([\tau, \tau + \etaIncreasingConstant \marginDummy^\etaIncreasingExponent/C^\etaIncreasingExponent])\bigr) &= \marginalDistribution\bigl(\regressionFunction^{-1}([\tau, \tau + \etaIncreasingConstant (r/2)^\etaIncreasingExponent])\bigr) \leq \marginalDistribution\bigl(\regressionFunction^{-1}\bigl([\tau, \tau + \etaIncreasingConstant r^\etaIncreasingExponent)\bigr)\bigr) \\&= \marginalDistribution\bigl(\superLevelSet{\tau}{\regressionFunction}\setminus \superLevelSet{\tau + \etaIncreasingConstant r^\etaIncreasingExponent}{\regressionFunction}\bigr) \leq \marginalDistribution\bigl(\superLevelSet{\tau}{\regressionFunction}\setminus T\bigr) \\
&\leq 3^{3d + 1} \cdot (1 + d^{1/2})^{d + 1} \cdot \densityConstant^2 \cdot r = \marginDummy,
\end{align*}
as required. 
\end{proof}

\begin{proof}[Proof of Theorem~\ref{thm:powerBound}] 
Let us define $C:= 3^{3d+1 + 1/d} \cdot (1+d^{1/2})^{d+1} \cdot \densityConstant^2 \cdot C_\circ$, where
\begin{align*}
C_\circ &:=  \bigl(2^6 \densityConstant \log(4 \cdot 9^d \cdot \densityConstant)\bigr)^{1/d} 
%&\phantom{:}\geq \bigl(2^5  \densityConstant \log( 9^d \cdot \densityConstant)\bigr)^{1/(2\etaIncreasingExponent + d)} \vee  
\geq \bigl(2^4 \densityConstant \cdot \log(4 \cdot 9^d \cdot \densityConstant)\bigr)^{1/d} \vee (175\cdot\densityConstant)^{1/(2\etaIncreasingExponent + d)}.
\end{align*}
Further, let 
\[
r := C_\circ \biggl\{ \biggl(\frac{\sigma^2}{n \etaIncreasingConstant^2}\log_+\Bigl(\frac{m\log_+ n}{\alpha \wedge\delta}\Bigr)\biggr)^{1/(2\etaIncreasingExponent+d)}+\biggl(\frac{\log_+(m/\delta)}{m}\biggr)^{1/d} \biggr\}.
\]
%and 
%\[
%r_{**} := C_\circ \biggl\{ \biggl(\frac{\sigma^2}{n \etaIncreasingConstant^2}\biggl\{\log_+\Bigl(\frac{m}{\alpha \wedge\delta}\Bigr) + \log_+ \log n\biggr\} \biggr)^{1/(2\etaIncreasingExponent+d)}+\biggl(\frac{\log_+(n/\delta)}{n}\biggr)^{1/d} \biggr\}.
%\]
Since $C \geq C_\circ$, the first result is immediate if $r > 1$.  We therefore assume henceforth that $r \leq  1$ (so in particular, $n\etaIncreasingConstant^2 \geq \sigma^2$).  Observe that 
\begin{align*}
m\cdot r^d \geq C_\circ^d \cdot \log_+(m/\delta) &\geq 2^4 \densityConstant \cdot \log(4\cdot 9^d \cdot \densityConstant) \cdot \log_+(m^{(d-1)/d}/\delta) \\
&\geq 8\densityConstant\bigl\{\log(4\cdot 9^d \cdot \densityConstant) + \log(r^{-(d-1)} /\delta)\bigr\} = 8\densityConstant \log (4 \cdot 9^d \cdot \densityConstant \cdot r^{-(d-1)}/\delta),
\end{align*}
since $r \geq m^{-1/d}$ and so $m^{(d-1)/d} \geq r^{-(d-1)}$.  Now, let $\bigl((S^0_j,S^1_j)\bigr)_{j \in [q]}$ be the hypercubes in Lemma~\ref{lemma:geomLemmaForPowerBound} and let $(\Omega_{0,j,k})_{j\in[q], k\in\{0,1\}}$ be the events in Lemma \ref{lemma:ChernoffLemmaForPowerBound}.
Then on $\bigcap_{j \in [q]} \Omega_{0,j,1}$, we have for each $j \in [q]$ that there exists $i_j \in [m]$ with $X_{i_j} \in S_j^1$.  We extend $\{X_{i_1},\ldots,X_{i_q}\}$ to a maximal set $\{X_{i_1},\ldots,X_{i_\ell}\}$ with $\ell \in \{q,q+1,\ldots,m\}$ such that $X_{i_{q+1}}, \ldots, X_{i_\ell} \in \bigcup_{j=1}^q \bigl\{x \in \R^d: X_{i_j} \preccurlyeq x\bigr\}$. 
%With $\ell \equiv \ell(\sampleX) := |\{i\in[m]: X_i \succcurlyeq X_{i_j} \text{ for some } j\in[q]\}|$, let $i_{q+1},\ldots, i_\ell \in [m]\setminus\{i_1, \ldots, i_q\}$ be the (possibly empty) list of distinct indices for which $X_{i_{q+1}}, \ldots, X_{i_\ell} \in \bigcup_{j=1}^q \bigl\{x \in \R^d: X_{i_j} \preccurlyeq x\bigr\}$. 
Note that since $S_j^1 \succcurlyeq S_j^0$ for every $j\in[q]$, for every $s\in[\ell]$ there exists $j_s \in [q]$ such that $\{X_{i_s}\} \succcurlyeq S_{j_s}^0$.  For $s\in [\ell]$ and $x\in\R$, write $r_s := \sup_{x' \in S_{j_s}^0} \|x' - X_{i_s}\|_\infty$, $\mathcal{I}_{r_s}(x) := \{i \in [n]: X_i \preccurlyeq x, \|X_i - x\|_\infty \leq r_s\}$, $n_s := |\mathcal{I}_{r_s}(x)|$ and 
\[
\hat{p}^{r_s}_{\sigma, \tau}(x, \sample) := 5.2\exp\biggl\{-\frac{\max\bigl(\sum_{i \in \mathcal{I}_{r_s}(x)} Y_{i}-\tau n_s,0\bigr)^2}{2.0808 \sigma^2 n_s} +\frac{ \log\log(2n_s)}{0.72}\biggr\}.
\]
By the choice of~$r$, we have on $\bigcap_{j\in[q], k \in \{0, 1\}} \Omega_{0,j,k}$ that 
\begin{align*}
\min_{s \in [\ell]} |\mathcal{I}_{r_s}(X_{i_s})| = \min_{j \in [q]} |\mathcal{I}_{r_j}(X_{i_j})| &\geq \min_{j \in [q]} \sum_{i=1}^n \mathbbm{1}_{\{X_i \in S_j^0\}} \geq \frac{nr^d}{2\densityConstant}\\
&\geq \frac{C_\circ^{2\etaIncreasingExponent+d} \cdot \sigma^2}{ 2 \cdot \densityConstant \cdot  \etaIncreasingConstant^2 \cdot r^{2\gamma}}\cdot \log_+\Bigl(\frac{m\log_+ n}{\alpha \wedge\delta}\Bigr) \\
&\geq \frac{ 2\cdot\sigma^2}{\etaIncreasingConstant^2 \cdot r^{2\gamma}}\cdot 4.2\cdot 5.2\cdot 2 \log_+\Bigl(\frac{m\log_+ n}{\alpha \wedge\delta}\Bigr) \\
&\geq \frac{2\sigma^2}{\etaIncreasingConstant^2 \cdot r^{2\gamma}} \biggl\{4.2 \log\Bigl(\frac{5.2m}{\alpha \wedge\delta}\Bigr)+ 3\log (2\log_+ n)\biggr\} \\
&\geq \frac{2\sigma^2}{\etaIncreasingConstant^2 \cdot r^{2\etaIncreasingExponent}}\biggl\{2\log\Bigl(\frac{2m}{\delta}\Bigr) + 2.0808\log \Bigl(\frac{5.2m}{\alpha}\Bigr)+3\log\log (2n)\biggr\},
\end{align*}
where the last two inequalities follow from Lemma~\ref{lemma:log_logplus_facts}\emph{(i)}.  Let $\bigl(\Omega_{1,s}(\cdot)\bigr)_{s\in[\ell]}$ be as in Lemma~\ref{lemma:HoeffdingLemmaForPowerBound_v2}.
Then, on $\Omega_* := \bigcap_{j \in [q],k \in \{0,1\}} \Omega_{0,j,k} \cap \bigcap_{s \in [\ell]} \Omega_{1,s}(X_{i_s})$, we have
\begin{equation}
\label{Eq:OnEventBound}
\max_{s \in [\ell]} \hat{p}_{\sigma,\tau}(X_{i_s},\sample) \leq \max_{s \in [\ell]} \hat{p}_{\sigma,\tau}^{r_s}(X_{i_s},\sample) \leq \frac{\alpha}{m}.
\end{equation}
We claim that on $\Omega_*$, we have $I_1 := \{i_1,\ldots,i_\ell\} \subseteq  \mathcal{R}^{\mathrm{ISS}}_\alpha\bigl(\mathcal{G}_{\mathrm{W}}(\sample_{X,m}),\bigl(\hat{p}_{\sigma,\tau}(X_i,\sample)\bigr)_{i \in [m]}\bigr)$, and prove this by contradiction.  First, for $([m], E, \bm{w}) := \mathcal{G}_{\mathrm{W}}(\sample_{X,m})$, denote for brevity $G := ([m], E) = \mathcal{G}(\sample_{X,m})$ and $F := ([m], \{e \in E: w_e = 1\}) = \mathcal{G}_{\mathrm{F}}(\sample_{X,m})$ as in Algorithm~\ref{algo:R_ISS}.  Moreover, define $\alpha(\cdot, \cdot)$ as in~\eqref{Eq:alpha} in the proof of Lemma~\ref{lemma:sparseMG_valid} and write $R := \mathcal{R}^{\mathrm{ISS}}_\alpha\bigl(\mathcal{G}_{\mathrm{W}}(\sample_{X,m}),\bigl(\hat{p}_{\sigma,\tau}(X_i,\sample)\bigr)_{i \in [m]}\bigr)$. Suppose now for a contradiction that there exists $s \in [\ell]$ such that $i_s \notin R$ and write $I_* := \{i \in [m]: \alpha(i, R) > 0\} \subseteq [m]\setminus R$.  Now $I_1$ is $G$-upper by construction and hence also $F$-upper.  Consequently, there exists $s' \in [\ell]$ with $i_{s'} \in I_* \cap \bigl(\{i_s\}\cup\an_F(i_s)\bigr) \subseteq I_1\setminus R$ which in turn necessitates by Algorithm~\ref{algo:R_ISS} that $\hat{p}_{\sigma,\tau}(X_{i_{s'}},\sample) > \alpha(i_{s'}, R)$. Moreover, as $R$ is also $G$-upper by construction and therefore $F$-upper, we deduce that $\bigl(\{i_{s'}\}\cup\de_F(i_{s'})\bigr) \cap R = \emptyset$ while $\bigl(\{i_{s'}\}\cup \de_F(i_{s'})\bigr) \cap L(F) \neq \emptyset$, so that $\bigl|\bigl(\{i_{s'}\}\cup\de_F(i_{s'})\bigr)\cap L(F)\setminus R\bigr| \geq 1$.  Thus, by~\eqref{Eq:OnEventBound},
\[
\alpha(i_{s'}, R) = \frac{\bigl|\bigl(\{i_{s'}\}\cup \de_F(i_{s'})\bigr) \cap L(F) \setminus R\bigr|}{|L(F) \setminus R|}\cdot \alpha \geq \frac{\alpha}{m} \geq \hat{p}_{\sigma,\tau}(X_{i_{s'}},\sample),
\]
which establishes our contradiction and therefore proves the claim.  It follows that on $\Omega_*$, we have $\{i_1,\ldots,i_q\} \subseteq \mathcal{R}^{\mathrm{ISS}}_\alpha\bigl(\mathcal{G}_{\mathrm{W}}(\sample_{X,m}),\bigl(\hat{p}_{\sigma,\tau}(X_i,\sample)\bigr)_{i \in [m]}\bigr)$, so taking the Borel measurable $T \subseteq \superLevelSet{\tau}{\regressionFunction} \cap \support(\marginalDistribution)$ from Lemma~\ref{lemma:geomLemmaForPowerBound}, we have
\[
T \subseteq \bigcup_{j=1}^q \bigl\{x \in \R^d: X_{i_j} \preccurlyeq x\bigr\} \subseteq \hat{A}^{\mathrm{ISS}}_{\sigma, \tau, \alpha, m}(\sample).
\]
Hence, on $\Omega_*$,
\begin{align*}
    \marginalDistribution\bigl(\superLevelSet{\tau}{\regressionFunction}\setminus \hat{A}^{\mathrm{ISS}}_{\sigma, \tau, \alpha, m}(\sample)\bigr) \leq \marginalDistribution\bigl(\superLevelSet{\tau}{\regressionFunction} \setminus T\bigr) \leq 3^{3d+1} \cdot (1+d^{1/2})^{d+1} \cdot \densityConstant^2 \cdot r, 
\end{align*}
where the second inequality follows from Lemma~\ref{lemma:geomLemmaForPowerBound}. Thus, for any $m\in[n]$, we conclude  by Lemmas~\ref{lemma:ChernoffLemmaForPowerBound} and~\ref{lemma:HoeffdingLemmaForPowerBound_v2} that 
\begin{align*}
\Prob_P\biggl[ \marginalDistribution\bigl(\superLevelSet{\tau}{\regressionFunction}&\setminus \hat{A}^{\mathrm{ISS}}_{\sigma, \tau, \alpha, m}(\sample)\bigr)\! > 1 \wedge \frac{C}{3^{1/d}}  \biggl\{\biggl(\frac{\sigma^2}{n \etaIncreasingConstant^2}\log_+\Bigl(\frac{m\log_+ n}{\alpha \wedge\delta}\Bigr) \biggr)^{1/(2\etaIncreasingExponent+d)}\! \! \! +\biggl(\frac{\log_+(m/\delta)}{m}\biggr)^{1/d} \biggr\} \biggr] \nonumber\\
&= \Prob_P\Bigl\{ \marginalDistribution\bigl(\superLevelSet{\tau}{\regressionFunction} \setminus \hat{A}^{\mathrm{ISS}}_{\sigma, \tau, \alpha, m}(\sample)\bigr) > 3^{3d+1} \cdot (1+d^{1/2})^{d+1} \cdot \densityConstant^2 \cdot r\Bigr\} \leq \Prob_P(\Omega_*^c) \leq \delta.
\end{align*}
This proves the tail bound in the first part of the theorem. We deduce the bound in expectation by integrating over $\delta \in (0, 1)$.  First observe that
\begin{align}
\label{Eq:Integrate1}
   \int_0^1 \log_+\Bigl(\frac{m\log_+ n}{\alpha \wedge\delta}\Bigr)  \, d\delta \leq \int_0^1 \log\biggl(\frac{(m\log_+ n) \vee e}{\alpha \wedge\delta}\biggr)  \, d\delta 
   %&= \log(m\vee e) + \int_0^1 \log\Bigl(\frac{1}{\alpha} \vee \frac{1}{\delta}\Bigr)  \, d\delta \\
   %&= \log(m\vee e) + \int_0^\alpha \log \Bigl(\frac{1}{\delta}\Bigr) \, d\delta + (1 - \alpha) \log \Bigl(\frac{1}{\alpha}\Bigr) \\
&= \log\biggl(\frac{(m\log_+ n) \vee e}{\alpha}\biggr) + \alpha \nonumber \\
&\leq 3\log_+\Bigl(\frac{m\log_+ n}{\alpha}\Bigr).
\end{align}
Hence, by Jensen's inequality,
\begin{align}
\label{Eq:Integrate2}
\int_0^1 \biggl\{\log_+\Bigl(\frac{m\log_+ n}{\alpha \wedge\delta}\Bigr) \biggr\}^{1/(2\etaIncreasingExponent + d)} \, d\delta \leq 3^{1/(2\etaIncreasingExponent + d)}\biggl\{\log_+\Bigl(\frac{m\log_+ n}{\alpha}\Bigr)\biggr\}^{1/(2\etaIncreasingExponent + d)}. 
\end{align}
At the same time, by Jensen's inequality again, 
\begin{align}
\label{Eq:Integrate3}
    \int_0^1 \log^{1/d}_+\Bigl(\frac{m}{\delta}\Bigr)\, d\delta &\leq \biggl\{\int_0^1 \log_+\Bigl(\frac{m}{\delta}\Bigr)\, d\delta\biggr\}^{1/d} \leq \biggl\{\int_0^1 \log\Bigl(\frac{m \vee e}{\delta}\Bigr)\, d\delta\biggr\}^{1/d} \nonumber \\ 
    &= \bigl(\log_+ m + 1\bigr)^{1/d} \leq 2^{1/d} \log^{1/d}_+ m, 
\end{align}
whence the result follows as $3^{1/(2\etaIncreasingExponent + d)} \vee 2^{1/d} \leq 3^{1/d}$.  
\end{proof}

\begin{proof}[Proof of Corollary~\ref{Cor:PowerBound}]
If $\lambda \geq \sigma$, then $m_0=n$, and the result follows from Theorem~\ref{thm:powerBound}.  On the other hand, if $\lambda < \sigma$, then $m_0 = \lceil n\lambda^2/\sigma^2 \rceil \leq n$.  As in the proof of Theorem~\ref{thm:powerBound}, we may assume that $n\lambda^2 \geq \sigma^2$, so that
\[
\biggl(\frac{\log_+(m_0/\delta)}{m_0}\biggr)^{1/d} \leq \biggl(\frac{\sigma^2}{n\lambda^2} \log_+\Bigl(\frac{2n\lambda^2\log_+ n}{\sigma^2(\alpha \wedge \delta)}\Bigr)\biggr)^{1/d} \leq 2\biggl(\frac{\sigma^2}{n \etaIncreasingConstant^2}\log_+\Bigl(\frac{n\lambda^2\log_+ n}{\sigma^2(\alpha \wedge\delta)}\Bigr)\biggr)^{1/d}.
\]
Since the result is clear if $\frac{\sigma^2}{n \etaIncreasingConstant^2}\log_+\bigl(\frac{n\lambda^2\log_+ n}{\sigma^2(\alpha \wedge\delta)}\bigr) > 1$, we may further assume that this quantity is at most 1.  But then
\[
\biggl(\frac{\log_+(m_0/\delta)}{m_0}\biggr)^{1/d} \leq 2\biggl(\frac{\sigma^2}{n\lambda^2} \log_+\Bigl(\frac{n\lambda^2\log_+ n}{\sigma^2(\alpha \wedge \delta)}\Bigr)\biggr)^{1/(2\gamma+d)}.
\]
The $\log_+\bigl(m_0/(\alpha \wedge \delta)\bigr)$ term can be handled similarly (in fact, in a slightly simpler way), so
\begin{align*}
\Prob_P\biggl[ \marginalDistribution\bigl(\superLevelSet{\tau}{\regressionFunction}\setminus\hat{A}^{\mathrm{ISS}}_{\sigma,\tau,\alpha,m_0}(\sample)\bigr) &> 1 \wedge \frac{4C}{3^{1/d}} \biggl\{ \biggl(\frac{\sigma^2}{n \etaIncreasingConstant^2}\log_+\Bigl(\frac{n\lambda^2\log_+ n}{\sigma^2(\alpha \wedge\delta)}\Bigr)\biggr)^{1/(2\etaIncreasingExponent+d)} \\
&\hspace{5cm}+\biggl(\frac{\log_+(n/\delta)}{n}\biggr)^{1/d} \biggr\} \biggr] \leq \delta.
\end{align*}
We can then deduce the expectation bound using the same techniques as in the proof of Theorem~\ref{thm:powerBound}, and the result follows.
\end{proof}

\begin{lemma}\label{lemma:geomLemmaForPowerBound}  \sloppy Let $d \in \mathbb{N}$, $\tau \in \R$, $\sigma, \etaIncreasingExponent, \etaIncreasingConstant > 0$,  $\densityConstant \in (1,\infty)$ and take $P \in \distributionClassNonDecreasingRegressionFunction \cap \distributionClassMultivariateCondition$.
Given $r \leq 1$, there exist $q \leq \lfloor 9^d \cdot \densityConstant \cdot r^{-(d-1)}\rfloor \in \mathbb{N}$ and pairs of hypercubes $\bigl((S^0_j,S^1_j)\bigr)_{j \in [q]} \in \bigl(\powerSet(\R^d) \times \powerSet(\R^d)\bigr)^q$ such that $S^0_j \preccurlyeq S^1_j$, $S^0_j \subseteq \superLevelSet{\tau+\etaIncreasingConstant \cdot r^{\etaIncreasingExponent}}{\regressionFunction}$, $ \marginalDistribution(S^0_j)\wedge\marginalDistribution(S^1_j) \geq \densityConstant^{-1} \cdot r^d$, along with a Borel measurable set $T \subseteq \superLevelSet{\tau + \etaIncreasingConstant r^\etaIncreasingExponent}{\regressionFunction} \cap \support(\marginalDistribution) \subseteq \superLevelSet{\tau}{\regressionFunction} \cap \support(\marginalDistribution)$ such that for every $x \in T$ there exists $j_x \in [q]$ with $S^1_{j_x} \preccurlyeq \{x\}$, and 
\begin{align*}
\marginalDistribution\bigl(\superLevelSet{\tau}{\regressionFunction} \setminus T\bigr) \leq 3^{3d+1} \cdot (1+d^{1/2})^{d+1} \cdot \densityConstant^2 \cdot r.
\end{align*}
\end{lemma}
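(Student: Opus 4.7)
My plan is to select anchor points $\{x_j\}_{j \in [q]} \subseteq \superLevelSet{\tau}{\regressionFunction} \cap \support(\marginalDistribution)$ forming an approximate antichain along the lower boundary of the superlevel set via a grid-based greedy procedure, then to set $S_j^0 := [x_j + r\bm{1}_d, x_j + 3r\bm{1}_d]$ and $S_j^1 := [x_j + 3r\bm{1}_d, x_j + 5r\bm{1}_d]$, both axis-aligned hypercubes of side $2r$ stacked above $x_j + r\bm{1}_d$.  The set $T$ will then be the intersection of $\superLevelSet{\tau + \etaIncreasingConstant r^\etaIncreasingExponent}{\regressionFunction} \cap \support(\marginalDistribution)$ with the union of $\preccurlyeq$-upper sets of $\sup S_j^1 = x_j + 5r\bm{1}_d$.

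The key geometric observation enabling the construction is that, by Definition~\ref{def:multivariateAssumption}(ii) combined with monotonicity of $\regressionFunction$, for any $x \in \superLevelSet{\tau}{\regressionFunction} \cap \support(\marginalDistribution)$ we have $\regressionFunction(x + r\bm{1}_d) \geq \tau + \etaIncreasingConstant r^\etaIncreasingExponent$: the point $y \in \closedSupNormMetricBall{x}{r}$ with $\regressionFunction(y) \geq \tau + \etaIncreasingConstant r^\etaIncreasingExponent$ supplied by (ii) satisfies $y \preccurlyeq x + r\bm{1}_d$, so monotonicity finishes the argument.  Hence the minimum corner $x_j + r\bm{1}_d$ of $S_j^0$ lies in $\superLevelSet{\tau + \etaIncreasingConstant r^\etaIncreasingExponent}{\regressionFunction}$, and monotonicity gives $S_j^0 \subseteq \superLevelSet{\tau + \etaIncreasingConstant r^\etaIncreasingExponent}{\regressionFunction}$ and $S_j^0 \preccurlyeq S_j^1$.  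The mass lower bound $\marginalDistribution(S_j^0) \wedge \marginalDistribution(S_j^1) \geq \densityConstant^{-1} r^d$ then follows from Definition~\ref{def:multivariateAssumption}(i) applied at the respective centres $x_j + 2r\bm{1}_d$ and $x_j + 4r\bm{1}_d$ (both in $\superLevelSet{\tau}{\regressionFunction}$ by monotonicity); the technicality that these centres need not a priori lie in $\support(\marginalDistribution)$ is handled by a local refinement of the choice of $x_j$ using the non-triviality of the mass in nearby $r$-balls.

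For the cardinality bound $q \leq 9^d \densityConstant r^{-(d-1)}$, I would impose a grid of mesh $r/3$, select one $x_j$ per grid cell that intersects $\superLevelSet{\tau}{\regressionFunction} \cap \support(\marginalDistribution)$ and is minimal in $\preccurlyeq$ among such cells, and invoke the classical $O(r^{-(d-1)})$ bound on antichains in $(r/3)\Z^d \cap [0,1]^d$, converted from antichain count to an explicit cell count via the upper mass bound $\marginalDistribution(\closedSupNormMetricBall{x}{r}) \leq \densityConstant(2r)^d$ from (i).

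Finally, the bad set $\superLevelSet{\tau}{\regressionFunction} \setminus T$ lies in the union of (a) the thin layer $\superLevelSet{\tau}{\regressionFunction} \setminus \superLevelSet{\tau + \etaIncreasingConstant r^\etaIncreasingExponent}{\regressionFunction}$ and (b) the points of $\superLevelSet{\tau + \etaIncreasingConstant r^\etaIncreasingExponent}{\regressionFunction} \cap \support(\marginalDistribution)$ not above $x_j + 5r\bm{1}_d$ for any $j$; both pieces lie within $\ell_\infty$-distance $O(r)$ of the $q$ anchors and hence are covered by $q$ cubes of side $O(r)$, each of $\marginalDistribution$-mass at most $\densityConstant \cdot O(r^d)$ by (i).  Summing yields $\marginalDistribution(\superLevelSet{\tau}{\regressionFunction} \setminus T) \lesssim q \cdot \densityConstant r^d \sim 9^d \densityConstant^2 r$, with the remaining constants in $3^{3d+1}(1+d^{1/2})^{d+1}$ arising from overlap accounting between the anchor cubes and the covering cubes, and from an $\ell_2$-to-$\ell_\infty$ conversion when bounding the number of grid cells within $\ell_2$-distance $r\sqrt{d}$ of a point on the staircase boundary.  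The main obstacle will be precisely this combinatorial-geometric bookkeeping---ensuring the shift-based construction of the $S_j^k$ remains compatible with the $\support(\marginalDistribution)$ requirement in the mass bound while still producing the stated explicit constants rather than dimension-exponential slack.
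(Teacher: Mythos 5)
Your high-level strategy—anchors along the boundary of the superlevel set, hypercubes stacked in the $\bm{1}_d$ direction, a tail set $T$ of deep points, and a final covering/density argument—matches the paper's in spirit, and your observation that $\regressionFunction(x + r\bm{1}_d) \geq \tau + \etaIncreasingConstant r^\etaIncreasingExponent$ whenever $x \in \superLevelSet{\tau}{\regressionFunction}\cap\support(\mu)$ is exactly the right ingredient.  However, there is a genuine gap where you set $S_j^0 = \closedSupNormMetricBall{x_j+2r\bm{1}_d}{r}$ and $S_j^1 = \closedSupNormMetricBall{x_j+4r\bm{1}_d}{r}$ and invoke Definition~\ref{def:multivariateAssumption}\emph{(i)} at their centres: that condition gives a mass lower bound only for balls centred at points of $\superLevelSet{\tau}{\regressionFunction}\cap\support(\mu)$, and nothing forces $x_j + 2r\bm{1}_d$ or $x_j + 4r\bm{1}_d$ to lie in $\support(\mu)$.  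Consider, e.g., $\support(\mu) = [0,1/2]\times[0,1]$ with $x_j$ on the right-hand edge $\{1/2\}\times[0,1]$: then $\closedSupNormMetricBall{x_j + 2r\bm{1}_d}{r}$ can have $\mu$-mass zero, and no ``local refinement of the choice of $x_j$'' can fix this, since perturbing $x_j$ within the support cannot force the support to extend along the diagonal.

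This is precisely what the paper's more elaborate construction addresses: rather than taking fixed shifts of the anchor, it walks along the diagonal direction $\bm{1}_d$ in increments of $r$ and lets $\ell_{1,j}, \ell_{2,j}$ be the \emph{first} levels beyond $\ell_{0,j}+2(1+d^{1/2})$ and $\ell_{1,j}+2(1+d^{1/2})$, respectively, at which the corresponding ball again meets $\support(\mu)$.  The walk length is unbounded in general (so a fixed shift cannot work), but the skipped levels contribute zero $\mu$-mass, so in the final estimate of $\mu(\superLevelSet{\tau}{\regressionFunction}\setminus T)$ only $O(1+d^{1/2})$ ``support-hitting'' levels per anchor remain, each controlled by the upper density bound.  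In other words, the support-compatibility issue you flag at the end as ``bookkeeping'' is in fact the central difficulty, and resolving it requires replacing the fixed-shift cubes with support-anchored cubes found by the diagonal search; your argument as stated does not supply the $\mu(S_j^0)\wedge\mu(S_j^1) \geq \densityConstant^{-1}r^d$ bound.
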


\begin{proof}
Without loss of generality, assume that $\support(\marginalDistribution)\cap \superLevelSet{\tau}{\regressionFunction} \neq \emptyset$.  Write $V_1:=\{ s \cdot \bm{1}_{d}:s \in \R\}$ with orthogonal complement $V_1^\perp$, and write $\Pi_{V_1}:\R^d \rightarrow V_1$ and~$\Pi_{V_1^\perp}:\R^d \rightarrow V_1^\perp$ for the orthogonal projections onto $V_1$ and $V_1^\perp$ respectively.  Fix $r \leq 1$.  We begin by showing that $\Pi_{V_1^\perp}\bigl(\support(\marginalDistribution)\cap \superLevelSet{\tau}{\regressionFunction}\bigr)$ can be covered by $q \leq \lfloor 3^{d-1} \cdot 2^{3d/2} \cdot \densityConstant \cdot r^{-(d-1)}\rfloor$ closed Euclidean balls of radius $r$. To see this, first let $(z_\ell)_{\ell \in [p]}$ with $p \in \N \cup \{\infty\}$ be a maximal sequence in $\support(\marginalDistribution)\cap \superLevelSet{\tau}{\regressionFunction}$ with $\|z_{j}-z_{j'}\|_{\infty} > 2^{-1/2}$ for $j \neq j'$. Then
\[
\support(\marginalDistribution)\cap \superLevelSet{\tau}{\regressionFunction} \subseteq \bigcup_{j=1}^p\closedSupNormMetricBall{z_j}{2^{-1/2}} \subseteq \bigcup_{j=1}^p \closedEuclideanMetricBall{z_j}{(d/2)^{1/2}}.
\] 
Moreover, since $\bigl(\closedSupNormMetricBall{z_j}{2^{-3/2}}\bigr)_{j \in [p]}$ are disjoint and $P \in \distributionClassMultivariateCondition$, we have 
\[
1 \geq \sum_{j=1}^p \marginalDistribution\bigl(\closedSupNormMetricBall{z_j}{2^{-3/2}}\bigr) \geq \frac{p}{2^{3d/2}\theta},
\]
so $p \leq 2^{3d/2}\theta$. Each projected Euclidean ball $\Pi_{V_1^\perp}\bigl(\closedEuclideanMetricBall{z_j}{(d/2)^{1/2}}\bigr)$ can in turn be covered by $(3/r)^{d-1}$
closed Euclidean balls\footnote{Here we use the fact that given any $d' \in \N$ and $\epsilon \in (0,1]$, the closed Euclidean unit ball in $\R^{d'}$ may be covered by at most $(3/\epsilon)^{d'}$ closed Euclidean balls of radius $\epsilon$.  Indeed, if $w_1,\ldots,w_M \in B_{2,d'}(0,1)$ satisfy $\|w_j - w_{j'}\|_2 > \epsilon$ for $j \neq j'$, then $\cup_{j \in M} B_{2,d'}(w_j,\epsilon/2) \subseteq B_{2,d'}(0,1+\epsilon/2) \subseteq B_{2,d'}(0,3/2)$, so $M (\epsilon/2)^{d'} \leq (3/2)^{d'}$, and the result follows.} of radius $(d/2)^{1/2}r \leq (d/2)^{1/2}$.  It follows that we can find a sequence $x_1,\ldots,x_q \in V_1^\perp$ with $q \leq 3^{d-1} \cdot 2^{3d/2} \cdot \densityConstant \cdot r^{-(d-1)}$ and $\Pi_{V_1^\perp}\bigl(\support(\marginalDistribution)\cap \superLevelSet{\tau}{\regressionFunction}\bigr)\subseteq \bigcup_{j \in [q]}\Pi_{V_1^\perp}\bigl(\closedEuclideanMetricBall{x_j}{(d/2)^{1/2}r}\bigr)$.  We deduce that 
\begin{align*}
\support(\marginalDistribution) \cap \superLevelSet{\tau}{\regressionFunction} &\subseteq \bigcup_{j \in [q]} \Pi_{V_1^\perp}\bigl(\closedEuclideanMetricBall{x_j}{(d/2)^{1/2}r}\bigr) \oplus \bigcup_{\ell \in \mathbb{Z}} \Pi_{V_1}\bigl(\closedEuclideanMetricBall{\ell r \cdot \bm{1}_d}{(d/2)^{1/2}r}\bigr) \\
&\subseteq \bigcup_{(j,\ell) \in [q]\times \Z}\closedEuclideanMetricBall{x_j+\ell r \cdot \bm{1}_d}{d^{1/2}r} \\
&\subseteq \bigcup_{(j,\ell) \in [q]\times \Z}\closedSupNormMetricBall{x_j+\ell r \cdot \bm{1}_d}{d^{1/2}r}.
\end{align*}
Now, for each $j \in [q]$, choose 
\begin{align*}
\ell_{0,j}&:=\min\bigl\{ \ell \in \Z : \closedSupNormMetricBall{x_j+\ell r\cdot \bm{1}_d}{d^{1/2}r} \cap \superLevelSet{\tau}{\regressionFunction} \cap \support(\marginalDistribution) \neq \emptyset \bigr\},
\end{align*}
with the convention that $\min \emptyset := \infty$, and the minimum of a set with no lower bound is $-\infty$. Note that since $\support(\marginalDistribution) \cap \superLevelSet{\tau}{\regressionFunction} \subseteq \bigcup_{j' \in [p]}\closedSupNormMetricBall{z_{j'}}{2^{-1/2}}$, we must have $\ell_{0,j} \in \Z \cup \{\infty\}$.  Let $\mathcal{J}_0 := \{j \in [q]:\ell_{0,j} \in \mathbb{Z}\}$.  By construction, for each $j \in \mathcal{J}_0$ there exists $z_{0,j} \in \closedSupNormMetricBall{x_j+\ell_{0,j}r\cdot \bm{1}_d}{d^{1/2}r} \cap \superLevelSet{\tau}{\regressionFunction} \cap \support(\marginalDistribution)$. Hence $z_{0,j} + r \cdot \bm{1}_d \in  \closedSupNormMetricBall{z_{0,j}}{r} \cap \superLevelSet{\tau+\etaIncreasingConstant \cdot r^{\etaIncreasingExponent}}{\regressionFunction} \subseteq \closedSupNormMetricBall{x_j+\ell_{0,j}r\cdot \bm{1}_d}{(1+d^{1/2})r} \cap \superLevelSet{\tau+\etaIncreasingConstant \cdot r^{\etaIncreasingExponent}}{\regressionFunction}$ since $P \in \distributionClassMultivariateCondition$ and $r \leq 1$.  But for all $x' \in \closedSupNormMetricBall{x_j+\ell \cdot r\cdot \bm{1}_d}{(1+d^{1/2}) \cdot r}$ with $\ell \in \Z \cap [\ell_{0,j}+2(1+d^{1/2}),\infty)$, we have $z_{0,j} + r \cdot \bm{1}_d \preccurlyeq x_j+(\ell_{0,j}+1+d^{1/2}) \cdot r \cdot \bm{1}_d \preccurlyeq  x_j+(\ell-1-d^{1/2}) \cdot r\cdot \bm{1}_d \preccurlyeq x'$.   Since $\eta$ is increasing, we deduce that $\bigcup_{\ell \in \Z \cap [\ell_{0,j}+2(1+d^{1/2}),\infty)} \closedSupNormMetricBall{x_j+\ell r\cdot \bm{1}_d}{(1+d^{1/2}) \cdot r} \subseteq \superLevelSet{\tau+\etaIncreasingConstant \cdot r^{\etaIncreasingExponent}}{\regressionFunction}$.  Next, define 
\begin{align*}
\ell_{1,j}&:=\min\bigl\{ \ell \in \Z \cap [\ell_{0,j}+2(1 + d^{1/2}),\infty) : \closedSupNormMetricBall{x_j+\ell r\cdot \bm{1}_d}{d^{1/2}r} \cap  \support(\marginalDistribution) \neq \emptyset \bigr\},\\
\ell_{2,j}&:=\min\bigl\{ \ell \in \Z \cap [\ell_{1,j}+2(1+ d^{1/2}),\infty) : \closedSupNormMetricBall{x_j+\ell r\cdot \bm{1}_d}{d^{1/2}r} \cap  \support(\marginalDistribution) \neq \emptyset \bigr\},
\end{align*}
and set $\mathcal{J}_k := \{j \in [q]:\ell_{k,j} \in \mathbb{Z}\}$ for $k \in \{1,2\}$.  Then for $j \in \mathcal{J}_1$, there exists $z_{1,j} \in \closedSupNormMetricBall{x_j+\ell_{1,j}r\cdot \bm{1}_d}{d^{1/2} r} \cap  \support(\marginalDistribution)$ with $S^0_j:=\closedSupNormMetricBall{z_{1,j}}{r}\subseteq \closedSupNormMetricBall{x_j+\ell_{1,j} r\cdot \bm{1}_d}{(1+d^{1/2}) \cdot r} \subseteq  \superLevelSet{\tau+\etaIncreasingConstant \cdot r^{\etaIncreasingExponent}}{\regressionFunction}$ and $\marginalDistribution(S_0^j) \geq \densityConstant^{-1} \cdot r^d$, since $P \in \distributionClassMultivariateCondition$ and $r \leq 1$. Similarly, for $j \in \mathcal{J}_2$ there exists $z_{2,j} \in \closedSupNormMetricBall{x_j+\ell_{2,j} r\cdot \bm{1}_d}{d^{1/2} r} \cap  \support(\marginalDistribution)$ with $S^1_j:=\closedSupNormMetricBall{z_{2,j}}{r}\subseteq \closedSupNormMetricBall{x_j+\ell_{2,j}r\cdot \bm{1}_d}{(1+d^{1/2}) \cdot r} \subseteq  \superLevelSet{\tau+\etaIncreasingConstant \cdot r^{\etaIncreasingExponent}}{\regressionFunction}$ and $\marginalDistribution(S_1^j) \geq \densityConstant^{-1} \cdot r^d$. We claim moreover that $S^0_j \preccurlyeq S^1_j$ for each $j \in \mathcal{J}_2$. Indeed, given $a_0 \in S^0_j \subseteq \closedSupNormMetricBall{x_j+\ell_{1,j} r\cdot \bm{1}_d}{(1+d^{1/2})\cdot r}$ and $a_1 \in S^1_j \subseteq \closedSupNormMetricBall{x_j+\ell_{2,j} r\cdot \bm{1}_d}{(1+d^{1/2}) \cdot r}$ we have  $a_0 \preccurlyeq x_j +(\ell_{1,j}+1+d^{1/2}) \cdot r \cdot \bm{1}_d \preccurlyeq x_j+(\ell_{2,j}-1-d^{1/2}) \cdot r \cdot \bm{1}_d \preccurlyeq a_1$, as required. Similarly, $S^1_j \preccurlyeq \bigcup_{\ell \in \Z \cap [\ell_{2,j}+2(1 + d^{1/2}),\infty)}\closedSupNormMetricBall{x_j+\ell \cdot r \cdot \bm{1}_d}{d^{1/2} r}$. Thus, letting 
\begin{align*}
T:= \biggl(\bigcup_{j \in [q], \ell \in \Z \cap [\ell_{2,j}+2(1 + d^{1/2}),\infty)}\closedSupNormMetricBall{x_j+\ell \cdot r \cdot \bm{1}_d}{d^{1/2} r}\biggr) \cap \support(\marginalDistribution) \subseteq \superLevelSet{\tau}{\regressionFunction},
\end{align*}
there exists $j_x \in [q]$ with $S_{j_x}^1 \preccurlyeq \{x\}$ for every $x \in T$. Moreover, since $P \in \distributionClassMultivariateCondition$ we have 
\[
\marginalDistribution\bigl(\superLevelSet{\tau}{\regressionFunction} \setminus T\bigr) \leq  q \cdot 6(1+d^{1/2}) \cdot \lceil d^{1/2} \rceil^d \cdot \densityConstant \cdot (2r)^d \leq 3^{3d+1} \cdot (1+d^{1/2})^{d+1} \cdot \densityConstant^2 \cdot r,%2 \cdot 24^d \cdot d^{(d+1)/2} \cdot \densityConstant^3\cdot r,
\]
as required.
\end{proof}

%\begin{lemma}[Multiplicative Chernoff --- Theorem~2.3(c) of \citet{mcdiarmid1998concentration}]\label{lemma:multChernoff} Let $(Z_j)_{j\in [m]}$ be a sequence of independent random variables taking values in $[0,1]$. Then given any $\theta > 0$, \begin{align*}
%\Prob\biggl(\sum_{j=1}^m Z_j\leq (1-\theta)\cdot \sum_{j=1}^m \E(Z_j)\biggr) &\leq \exp\biggl( - \frac{\theta^2}{2} \cdot \sum_{j=1}^m\E(Z_j)\biggr).
%\Prob\biggl(\sum_{j=1}^m Z_j\geq (1+\theta)\cdot \sum_{j=1}^m \E(Z_j)\biggr) &\leq \exp\biggl( - \frac{\theta^2}{2(1+\theta/3)} \cdot \sum_{j=1}^m\E(Z_j)\biggr).
%\end{align*}
%\end{lemma}

\begin{lemma}\label{lemma:ChernoffLemmaForPowerBound}
Fix $\delta \in (0,1]$, $n \in \N$, $\tau \in \R$, $\sigma, \etaIncreasingExponent, \etaIncreasingConstant > 0$, $\densityConstant \in (1,\infty)$, $m \in [n]$ and take $P \in \distributionClassNonDecreasingRegressionFunction \cap \distributionClassMultivariateCondition$.   Fix $r \leq 1$, and let $\bigl((S^0_j,S^1_j)\bigr)_{j \in [q]} \in \bigl(\powerSet(\R^d) \times \powerSet(\R^d)\bigr)^q$ denote the hypercubes in Lemma~\ref{lemma:geomLemmaForPowerBound}.  Let $\sample = \bigl((X_1,Y_1),\ldots,(X_n,Y_n)\bigr) \sim P^n$, and for $j \in [q]$, let
\[
\Omega_{0,j,0} := \biggl\{\frac{1}{n}\sum_{i=1}^n \mathbbm{1}_{\{X_i \in S_j^0\}} \geq \frac{r^d}{2\theta}\biggr\} \quad \text{and} \quad \Omega_{0,j,1} := \biggl\{\frac{1}{m}\sum_{i=1}^m \mathbbm{1}_{\{X_i \in S_j^1\}} \geq \frac{r^d}{2\theta}\biggr\}.
\]
Then, for $m \cdot r^d \geq 8\theta \cdot \log(4 \cdot 9^d \cdot \densityConstant \cdot r^{-(d-1)} \cdot \delta^{-1})$ that
\[
\Prob_P\biggl(\bigcup_{j \in [q],k \in \{0,1\}} \Omega_{0,j,k}^c \biggr) \leq \frac{\delta}{2}.
\]
\end{lemma}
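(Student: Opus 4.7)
The strategy is a straightforward multiplicative Chernoff bound applied to each of the $2q$ indicator sums, combined with a union bound. Let $p_j^k := \marginalDistribution(S_j^k)$ for $j \in [q]$ and $k \in \{0,1\}$. By Lemma~\ref{lemma:geomLemmaForPowerBound}, we have $p_j^k \geq \densityConstant^{-1} r^d$, so in particular $r^d/(2\densityConstant) \leq p_j^k/2$. Thus
\[
\Omega_{0,j,0}^c \subseteq \biggl\{\frac{1}{n}\sum_{i=1}^n \mathbbm{1}_{\{X_i \in S_j^0\}} < \frac{p_j^0}{2}\biggr\}, \qquad \Omega_{0,j,1}^c \subseteq \biggl\{\frac{1}{m}\sum_{i=1}^m \mathbbm{1}_{\{X_i \in S_j^1\}} < \frac{p_j^1}{2}\biggr\}.
\]

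First, I would apply the multiplicative Chernoff lower-tail bound \citep[][Theorem~2.3(c)]{mcdiarmid1998concentration} with parameter $1/2$ to each of these Binomial sums. This yields $\Prob_P(\Omega_{0,j,0}^c) \leq \exp(-np_j^0/8) \leq \exp(-nr^d/(8\densityConstant))$ and analogously $\Prob_P(\Omega_{0,j,1}^c) \leq \exp(-mr^d/(8\densityConstant))$. Since $m \leq n$, the $k=1$ bound is the binding one.

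Next, a union bound over the $2q$ events, together with the bound $q \leq \lfloor 9^d \cdot \densityConstant \cdot r^{-(d-1)}\rfloor$ from Lemma~\ref{lemma:geomLemmaForPowerBound}, gives
\[
\Prob_P\biggl(\bigcup_{j \in [q],k \in \{0,1\}} \Omega_{0,j,k}^c \biggr) \leq 2q \exp\biggl(-\frac{mr^d}{8\densityConstant}\biggr) \leq 2 \cdot 9^d \cdot \densityConstant \cdot r^{-(d-1)} \cdot \exp\biggl(-\frac{mr^d}{8\densityConstant}\biggr).
\]
Finally, the hypothesis $mr^d \geq 8\densityConstant \cdot \log(4 \cdot 9^d \cdot \densityConstant \cdot r^{-(d-1)} \cdot \delta^{-1})$ rearranges to $\exp(-mr^d/(8\densityConstant)) \leq \delta/(4 \cdot 9^d \cdot \densityConstant \cdot r^{-(d-1)})$, which after substitution yields the desired bound of $\delta/2$.

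There is no genuine obstacle here; the only mild subtlety is ensuring that the hypothesis is stated in exactly the form needed to absorb both the union bound constant and the Chernoff exponent, which it is.
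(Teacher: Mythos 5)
Your proof is correct and follows essentially the same route as the paper: the paper likewise applies the multiplicative Chernoff bound from \cite{mcdiarmid1998concentration} together with the bounds $\marginalDistribution(S_j^k) \geq r^d/\densityConstant$ and $q \leq 9^d \cdot \densityConstant \cdot r^{-(d-1)}$ from Lemma~\ref{lemma:geomLemmaForPowerBound}, then a union bound over the $2q$ events and the hypothesis on $m r^d$. The paper's version is more terse but the argument is identical.
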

\begin{proof}
By Lemma~\ref{lemma:geomLemmaForPowerBound}, we have $\marginalDistribution(S_j^k) \geq r^d/\theta$ for every $j \in [q]$ and $k \in \{0,1\}$, and moreover, $q \leq  9^d \cdot \densityConstant \cdot r^{-(d-1)}$.  Hence, by the multiplicative Chernoff bound \citep[][Theorem~2.3(c)]{mcdiarmid1998concentration}, we have
\[
\Prob_P\biggl(\bigcup_{j \in [q],k \in \{0,1\}} \Omega_{0,j,k}^c \biggr) \leq 2q \cdot \exp\biggl(-\frac{m\cdot r^d}{8\theta}\biggr) \leq \frac{\delta}{2},
\]
as required.
\end{proof}

\begin{lemma}\label{lemma:HoeffdingLemmaForPowerBound_v2} 
Fix $\delta \in (0,1]$, $n \in \N$, $\tau \in \R$, $\sigma, \etaIncreasingExponent, \etaIncreasingConstant > 0$, $\densityConstant \in (1,\infty)$, $\alpha \in (0,1)$, $m \in [n]$, $\ell \in [m]$ and take $P \in \distributionClassNonDecreasingRegressionFunction \cap \distributionClassMultivariateCondition$.   Fix $r \leq 1$, and let $(S^0_j)_{j \in [q]} \in \bigl(\powerSet(\R^d)\bigr)^q$ be as in Lemma~\ref{lemma:geomLemmaForPowerBound}.  Let $\sample = \bigl((X_1,Y_1),\ldots,(X_n,Y_n)\bigr) \sim P^n$, and for each $s \in [\ell]$, find $(x_s,j_s) \in \R^d \times [q]$ such that $\{x_s\} \succcurlyeq S_{j_s}^0$.  Now let $r_s := \sup_{x \in S_{j_s}^0} \|x - x_s\|_\infty$, and let
\[
\Omega_{1,s}(x_s) := \biggl\{\frac{1}{|\mathcal{I}_{r_s}(x_s)|}\sum_{i \in \mathcal{I}_{r_s}(x_s)} Y_i \geq \tau + \sigma\sqrt{\frac{2.0808\log (5.2m/\alpha)+3\log\log (2n)}{|\mathcal{I}_{r_s}(x_s)|}}\biggr\}.
\]
If $\min_{s \in [\ell]} |\mathcal{I}_{r_s}(x_s)| \geq \frac{2\sigma^2}{\etaIncreasingConstant^2 \cdot r^{2\etaIncreasingExponent}}\bigl\{2\log(2m/\delta) + 2.0808\log (5.2m/\alpha)+3\log\log (2n)\bigr\}$, then
\[
\Prob_P\biggl(\bigcup_{s=1}^\ell \Omega_{1,s}(x_s)^c  \biggm| \sampleX\biggr) \leq \frac{\delta}{2}.
\]
\end{lemma}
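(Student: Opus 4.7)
The plan is to condition on $\sample_X$ throughout and, for each $s \in [\ell]$, bound $\Prob_P\bigl(\Omega_{1,s}(x_s)^c \bigm| \sample_X\bigr)$ by $\delta/(2\ell)$ via a sub-Gaussian concentration inequality for the centred response sum, before taking a union bound over $s \in [\ell]$.

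First I would observe that, since $P \in \distributionClassNonDecreasingRegressionFunction$, conditional on $\sample_X$ the residuals $\varepsilon_i := Y_i - \eta(X_i)$ for $i \in [n]$ are independent and each sub-Gaussian with variance parameter $\sigma^2$. Writing $n_s := |\mathcal{I}_{r_s}(x_s)|$, the partial sum $\sum_{i \in \mathcal{I}_{r_s}(x_s)} \varepsilon_i$ is therefore sub-Gaussian with variance $n_s \sigma^2$ conditional on $\sample_X$, and a standard one-sided Chernoff bound gives
\[
\Prob_P\!\biggl(\sum_{i \in \mathcal{I}_{r_s}(x_s)} \varepsilon_i \leq -\sigma\sqrt{2 n_s \log(2\ell/\delta)} \biggm| \sample_X\biggr) \leq \frac{\delta}{2\ell}.
\]

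The main technical step is then to establish the deterministic lower bound $\sum_{i \in \mathcal{I}_{r_s}(x_s)} \eta(X_i) \geq n_s(\tau + \lambda r^\gamma)$.  For this, I would exploit coordinate-wise monotonicity of $\eta$ together with the fact that $S_{j_s}^0 \subseteq \superLevelSet{\tau + \lambda r^\gamma}{\eta}$ from Lemma~\ref{lemma:geomLemmaForPowerBound}: concretely, one shows that for each $X_i \in \mathcal{I}_{r_s}(x_s)$ there exists a point of $S_{j_s}^0$ dominated by $X_i$ in the coordinate-wise partial order, whence $\eta(X_i) \geq \tau + \lambda r^\gamma$. Combining this with the concentration inequality yields, on a conditional event of probability at least $1 - \delta/(2\ell)$,
\[
\sum_{i \in \mathcal{I}_{r_s}(x_s)} Y_i \geq \tau n_s + n_s \lambda r^\gamma - \sigma\sqrt{2 n_s \log(2\ell/\delta)}.
\]
Applying the inequality $\sqrt{a} + \sqrt{b} \leq \sqrt{2(a+b)}$ and using $\ell \leq m$, the hypothesis $n_s \geq \tfrac{2\sigma^2}{\lambda^2 r^{2\gamma}}\bigl\{2 \log(2m/\delta) + 2.0808 \log(5.2m/\alpha) + 3 \log\log(2n)\bigr\}$ is exactly equivalent to $n_s \lambda r^\gamma - \sigma\sqrt{2 n_s \log(2\ell/\delta)} \geq \sigma\sqrt{n_s(2.0808 \log(5.2m/\alpha) + 3\log\log(2n))}$, placing the event inside $\Omega_{1,s}(x_s)$. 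A union bound over $s \in [\ell]$ then yields $\Prob_P\bigl(\bigcup_s \Omega_{1,s}(x_s)^c \bigm| \sample_X\bigr) \leq \delta/2$, as required.

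The main obstacle is the geometric step producing the uniform pointwise lower bound $\eta(X_i) \geq \tau + \lambda r^\gamma$ for $i \in \mathcal{I}_{r_s}(x_s)$: the box $\{x \preccurlyeq x_s : \|x - x_s\|_\infty \leq r_s\}$ is in general not contained in the upper hull of $S_{j_s}^0$, so a naive choice of the ``lower corner'' of $S_{j_s}^0$ need not be dominated by $X_i$, and one must exploit that $r_s$ is the coordinate-wise supremum gap between $x_s$ and $S_{j_s}^0$ to select, for each $X_i$, a suitable point of $S_{j_s}^0$ in its lower set.
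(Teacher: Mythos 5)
Your probabilistic and arithmetic steps coincide with the paper's proof: a per-index Hoeffding bound conditional on $\sample_X$, a union bound over $s \in [\ell]$, and the elementary inequality $(a+b)^{1/2} \geq (a/2)^{1/2} + (b/2)^{1/2}$ to split the cardinality hypothesis between the Hoeffding radius and the centering gap.

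The weak link is the geometric step you flag, and the fix you sketch does not work for $d\geq 2$. You assert that each $X_i$ with $i \in \mathcal{I}_{r_s}(x_s)$ dominates some point of $S_{j_s}^0$, whence $\eta(X_i) \geq \tau + \etaIncreasingConstant r^{\etaIncreasingExponent}$ by monotonicity. Write $S_{j_s}^0 = \closedSupNormMetricBall{z}{r}$, so its coordinate-wise smallest point is $z - r\bm{1}_d$; since $\{x_s\} \succcurlyeq S_{j_s}^0$, one has $r_s = \sup_{x \in S_{j_s}^0}\|x - x_s\|_\infty = \max_{j \in [d]}\bigl(x_s^{(j)} - z^{(j)} + r\bigr)$. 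The set $\mathcal{I}_{r_s}(x_s)$ contains every sampled $X_i$ with $X_i^{(j)} \in [x_s^{(j)} - r_s,\ x_s^{(j)}]$ for all $j$. If the maximum defining $r_s$ is achieved at some $j'$ but not at another coordinate $j$, then $x_s^{(j)} - r_s < z^{(j)} - r$, and an observation sitting in that part of the box has $X_i^{(j)} < y^{(j)}$ for every $y \in S_{j_s}^0$; such an $X_i$ dominates no point of $S_{j_s}^0$, so coordinate-wise monotonicity together with $S_{j_s}^0 \subseteq \superLevelSet{\tau + \etaIncreasingConstant r^{\etaIncreasingExponent}}{\eta}$ yields no lower bound on $\eta(X_i)$. (In one dimension the maximum is trivially attained at the unique coordinate, which is why the corresponding step in the proof of Theorem~\ref{thm:powerBound_1d} is unproblematic.) The inequality $\frac{1}{|\mathcal{I}_{r_s}(x_s)|}\sum_{i \in \mathcal{I}_{r_s}(x_s)} \eta(X_i) \geq \tau + \etaIncreasingConstant r^{\etaIncreasingExponent}$ is precisely what is needed to justify the Hoeffding exponent used in the paper's one-line proof as well, so this step genuinely requires either a more refined geometric argument or an additional hypothesis restricting the allowed $x_s$; it should not be presented as an immediate consequence of $\{x_s\}\succcurlyeq S_{j_s}^0$ and monotonicity.
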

\begin{proof}
As shorthand, write $w_{n,m,\alpha} := 2.0808\log (5.2m/\alpha)+3\log\log (2n)$.  Then, by Hoeffding's inequality,
\[
\Prob_P\biggl(\bigcup_{s=1}^\ell \Omega_{1,s}(x_s)^c  \biggm| \sampleX\biggr) \leq \sum_{s=1}^\ell \exp\biggl\{-\frac{|\mathcal{I}_{r_s}(x_s)|}{2\sigma^2}\biggl(\lambda \cdot r^\gamma - \sigma\sqrt{\frac{w_{n,m,\alpha}}{|\mathcal{I}_{r_s}(x_s)|}}\biggr)^2\biggr\} \leq \frac{\delta}{2},
\]
where we have used the fact that $(a+b)^{1/2} \geq (a/2)^{1/2} + (b/2)^{1/2}$ for $a,b \geq 0$.
\end{proof}

\subsection{Proofs from Section~\ref{sec:lowerBound}}\label{sec:lowerBound_proofs}

The proof of Theorem~\ref{Thm:LowerBound} involves combining three different lower bounds that emphasise different aspects of the challenge in isotonic subgroup selection.  However, there are some commonalities to these three lower bound constructions, so we explain the key ideas here.  Fix $q \in \mathbb{N}$, and note that if $x=(x_1,\ldots,x_d), x'=(x_1',\ldots,x_d') \in [q]^d$ and $x \succcurlyeq x'$, then the length of any chain from $x$ to $x'$ is at most $\sum_{j=1}^d (x_j - x_j') \leq (q-1) \cdot d$, because successive elements within the chain must decrease at least one coordinate by at least 1.  Now let $\mathbb{W}_{q,d} \subseteq [q]^d$ be an antichain  of maximal cardinality.  Dilworth's theorem \citep{dilworth1950decomposition} states that we can partition $[q]^d$ into $|\mathbb{W}_{q,d}|$ chains, so\footnote{In fact, this bound is fairly sharp.  Indeed, define the \emph{width} of a partially ordered set $R$, denoted $\mathrm{wd}(R)$, to be the maximum cardinality of an antichain in $R$.  Then, for any two finite partially ordered sets $(R_1,\preccurlyeq_1), (R_2,\preccurlyeq_2)$, we have $\mathrm{wd}(R_1 \times R_2) \leq \min\{ |R_1|\cdot \mathrm{wd}(R_2), |R_2|\cdot \mathrm{wd}(R_1)\}$, where the Cartesian product $R_1 \times R_2$ is equipped with the order relation $\preccurlyeq_{1 \times 2}$, where for $r_1,r_1' \in R_1$ and $r_2,r_2' \in R_2$, we define $(r_1,r_2) \preccurlyeq_{1 \times 2} (r_1',r_2')$ if and only if $r_1 \preccurlyeq_1 r_1'$ and $r_2 \preccurlyeq_2 r_2'$.  It therefore follows by induction that $|\mathbb{W}_{q,d}| \leq q^{d-1}$.}
%In fact, denoting $N(s) := |\{(v_1, \ldots, v_d)\in [q]^d: \sum_{j=1}^d v_j = s\}|$ for $s \in \{d, d+1, \ldots, dq\}$, it turns out that $|\mathbb{W}_{q,d}| = \max_{s\in\{d,\ldots, dq\}} N(s) = N\bigl(\lfloor d(q+1)/2\rfloor\bigr)$ \citep{griggs1984maximum}. By the use of standard tools in combinatorics, one can show that $N(s) = \sum_{k=0}^{\min(d, \lfloor (s-d)/q\rfloor)} (-1)^k \binom{d}{k}\binom{s-qk-1}{d-1}$ for $s\in\{d,\ldots,dq\}$.}  
$|\mathbb{W}_{q,d}| \geq q^d/\{(q-1) \cdot d\} \geq q^{d-1}/d$.  For each $\bm{j}=(j_1,\ldots,j_d) \in \mathbb{W}_{q,d}$, define a hypercube
\begin{align*}
\mathcal{H}_{\bm{j}}^q:= \prod_{\ell \in [d]}\biggl[\frac{{j_\ell}-1}{q},\frac{{j_\ell}}{q}\biggr). 
\end{align*}
We also set
\begin{align*}
\mathcal{H}_{\infty}^q:=\bigcup_{\bm{j}\in \mathbb{W}_{q,d}} \bigl\{ x \in \R^d \setminus \mathcal{H}_{\bm{j}}^q : x' \preccurlyeq x \text{ for some }x' \in \mathcal{H}_{\bm{j}}^q \bigr\},  \end{align*}
and let $\mathcal{H}_{-\infty}^q:=\R^d \setminus \bigcup_{\bm{j} \in \mathbb{W}_{q,d} \cup \{\infty\}}\mathcal{H}_{\bm{j}}^q$.  By Lemma~\ref{lemma:partition}, the sets $\bigl\{\mathcal{H}^q_{\bm{j}}:\bm{j} \in \mathbb{W}_{q,d} \cup \{-\infty,\infty\}\bigr\}$ form a partition of $\R^d$.  For each $S \subseteq \mathbb{W}_{q,d}$ and for $\tau \in \R$, $\etaIncreasingExponent, \etaIncreasingConstant > 0$, define $\eta_S:\mathbb{R}^d \rightarrow \mathbb{R}$ by
\begin{align}\label{def:regFuncDef}
\eta_S(x) \equiv \eta_{S, \mathbb{W}_{q,d}, q, \tau, \etaIncreasingExponent, \etaIncreasingConstant}(x) :=\begin{cases} \tau -\frac{\etaIncreasingConstant}{q^{\etaIncreasingExponent}}  & \text{ if }x \in \bigcup_{\bm{j} \in \left(\mathbb{W}_{q,d} \cup \{-\infty\}\right) \setminus S}\mathcal{H}_{\bm{j}}^q\\
\tau+\frac{\etaIncreasingConstant}{q^{\etaIncreasingExponent}} &\text{ if }x \in \bigcup_{\bm{j} \in  S}\mathcal{H}_{\bm{j}}^q\\
\tau + \etaIncreasingConstant &\text{ if $x \in \mathcal{H}_{\infty}^q$.}\end{cases}  
\end{align}
%We combine this with Lebesgue measure on $[0,1]^d$ to construct corresponding distributions $\left\lbrace P_S\right\rbrace_{S \subseteq  \mathbb{W}_{m,d}}$. That is, $(X,Y) \sim P_S$ is equivalent to $X \sim \mathrm{Unif}([0,1]^d)$ and $Y|X \sim \mathcal{N}(\eta_S(X),\sigma^2)$.
The intuition is that if $\bm{j}, \bm{j}'$ are distinct elements of $\mathbb{W}_{q,d}$, then the response at any $x \in \mathcal{H}_{\bm{j}}^q$ provides no information on whether or not $x' \in \mathcal{H}_{\bm{j}'}^q$ belongs to $\superLevelSet{\tau}{\eta}$.  Any data-dependent selection set will therefore struggle to identify $S$ from the data, and since $\mathbb{W}_{q,d}$ is a large antichain, the $\mu$-measure of this difficult set may be quite large. %with Type I error control is likely to omit some hypercubes indexed by $\mathbb{W}_{q,d}$ that belong to $\superLevelSet{\tau}{\eta}$.

\begin{lemma}\label{lemma:partition}
For any $d, q \in \N$ and antichain $\mathbb{W}_{q,d} \subseteq [q]^d$, the sets $\bigl\{\mathcal{H}^q_{\bm{j}}:\bm{j} \in \mathbb{W}_{q,d} \cup \{-\infty,\infty\}\bigr\}$ form a partition of $\R^d$.
\end{lemma}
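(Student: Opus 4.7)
The plan is to verify two things: that $\mathcal{H}_\infty^q$ is disjoint from every $\mathcal{H}_{\bm{j}}^q$ with $\bm{j}\in\mathbb{W}_{q,d}$, and that the sets $\{\mathcal{H}_{\bm{j}}^q:\bm{j}\in\mathbb{W}_{q,d}\}$ are pairwise disjoint. Together with the definition $\mathcal{H}_{-\infty}^q := \R^d\setminus \bigcup_{\bm{j}\in\mathbb{W}_{q,d}\cup\{\infty\}}\mathcal{H}_{\bm{j}}^q$, this immediately yields the partition property, since by construction $\bigcup_{\bm{j}\in\mathbb{W}_{q,d}\cup\{-\infty,\infty\}}\mathcal{H}_{\bm{j}}^q = \R^d$.

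The pairwise disjointness of the $\mathcal{H}_{\bm{j}}^q$ for $\bm{j}\in\mathbb{W}_{q,d}$ is elementary: if $\bm{j}\neq\bm{j}'$, then some coordinate $\ell$ has $j_\ell\neq j'_\ell$, and since both are integers in $[q]$, the half-open intervals $[(j_\ell-1)/q,j_\ell/q)$ and $[(j'_\ell-1)/q,j'_\ell/q)$ are disjoint, so the product sets are too. Note that the antichain property is not needed here; it is only required in the second step.

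For the key step, suppose for contradiction that there exist $\bm{j}'\in\mathbb{W}_{q,d}$ and $x\in\mathcal{H}_\infty^q\cap \mathcal{H}_{\bm{j}'}^q$. Unpacking the definition of $\mathcal{H}_\infty^q$, there exists $\bm{j}\in\mathbb{W}_{q,d}$ and $x'\in\mathcal{H}_{\bm{j}}^q$ with $x\notin \mathcal{H}_{\bm{j}}^q$ and $x'\preccurlyeq x$. I then compare coordinate-wise: for each $\ell\in[d]$, the inequalities $(j_\ell-1)/q\leq x'_\ell\leq x_\ell < j'_\ell/q$ force $j_\ell-1<j'_\ell$, hence $j_\ell\leq j'_\ell$ as both are integers. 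This gives $\bm{j}\preccurlyeq\bm{j}'$ in $\mathbb{W}_{q,d}$, so by the antichain hypothesis $\bm{j}=\bm{j}'$, which contradicts $x\in \mathcal{H}_{\bm{j}'}^q$ and $x\notin\mathcal{H}_{\bm{j}}^q$.

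There is no real obstacle here; the argument is essentially a bookkeeping exercise. The only point that needs care is ensuring the half-open convention in the definition of $\mathcal{H}_{\bm{j}}^q$ is used consistently, so that strict and non-strict inequalities line up to give integer comparisons; this is what lets the antichain property fire in the second step.
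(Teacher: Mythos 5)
Your proof is correct and follows essentially the same route as the paper: show the $\mathcal{H}_{\bm{j}}^q$ for $\bm{j} \in \mathbb{W}_{q,d}$ are pairwise disjoint, show $\mathcal{H}_\infty^q$ is disjoint from each of them via the antichain property, and note that $\mathcal{H}_{-\infty}^q$ covers the rest by definition. The only cosmetic difference is in the key step: you read off $j_\ell - 1 < j'_\ell$ coordinate-wise and pass directly to the integer inequality $j_\ell \leq j'_\ell$, whereas the paper phrases the same conclusion via a vector inequality $\bm{j}' - (1-\delta)\bm{1}_d \preccurlyeq \bm{j}$ for a small $\delta > 0$; and you first invoke the antichain to force $\bm{j} = \bm{j}'$ and then contradict $x \in \mathcal{H}_{\bm{j}'}^q \setminus \mathcal{H}_{\bm{j}}^q$, whereas the paper notes $\bm{j}' \neq \bm{j}$ up front and contradicts the antichain directly — both are logically equivalent.
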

\begin{proof}
The fact that these sets cover $\R^d$ follows by definition of $\mathcal{H}_{-\infty}^q$.  Since the sets $\bigl\{\mathcal{H}_{\bm{j}}^q:\bm{j} \in \mathbb{W}_{q,d} \cup \{-\infty\}\bigr\}$ are disjoint, and $\mathcal{H}_{-\infty}^q \cap \mathcal{H}_{\infty}^q = \emptyset$, we need only check that $\mathcal{H}_{\bm{j}}^q \cap \mathcal{H}_{\infty}^q = \emptyset$ when $\bm{j} \in \mathbb{W}_{q,d}$.  To this end, suppose for a contradiction that $x \in \mathcal{H}_{\bm{j}}^q \cap \mathcal{H}_{\infty}^q$ for some $\bm{j} \in \mathbb{W}_{q,d}$, and take $\bm{j}' \in \mathbb{W}_{q,d} \setminus \{\bm{j}\}$ and $x \in \R^d \setminus \mathcal{H}_{\bm{j}'}^q$ but $x' \preccurlyeq x$ for some $x' \in \mathcal{H}_{\bm{j}'}^q$.  Then there exists $\delta \in (0,1)$ such that $\bm{j}'-\bm{1}_d \preccurlyeq q \cdot x' \preccurlyeq q\cdot x \preccurlyeq \bm{j} - \delta \cdot \bm{1}_d$ so that $\bm{j}' - (1-\delta)\cdot \bm{1}_d \preccurlyeq \bm{j}$.  But the coordinates of $\bm{j}$ and $\bm{j}'$ are positive integers, so we must have $\bm{j}' \preccurlyeq \bm{j}$, which contradicts $\mathbb{W}_{q,d}$ being an antichain.
\end{proof}

\begin{figure}

    \tikzdeclarepattern{
      name=mylines,
      parameters={
          \pgfkeysvalueof{/pgf/pattern keys/size},
          \pgfkeysvalueof{/pgf/pattern keys/angle},
          \pgfkeysvalueof{/pgf/pattern keys/line width},
      },
      bounding box={
        (0,-0.5*\pgfkeysvalueof{/pgf/pattern keys/line width}) and
        (\pgfkeysvalueof{/pgf/pattern keys/size},
         0.5*\pgfkeysvalueof{/pgf/pattern keys/line width})},
      tile size={(\pgfkeysvalueof{/pgf/pattern keys/size},
                  \pgfkeysvalueof{/pgf/pattern keys/size})},
      tile transformation={rotate=\pgfkeysvalueof{/pgf/pattern keys/angle}},
      defaults={
        size/.initial=5pt,
        angle/.initial=45,
        line width/.initial=.4pt,
    }, code={
    \draw [line width=\pgfkeysvalueof{/pgf/pattern keys/line width}] (0,0) -- (\pgfkeysvalueof{/pgf/pattern keys/size},0);
    }, }
    
    \definecolor{superlevelsetcolor}{rgb}{0.1523438, 0.5039062, 0.8984375} 
    \definecolor{sublevelsetcolor}{rgb}{0.7, 0.7, 0.7}  
    %\definecolor{sublevelsetcolor}{rgb}{0.8750000, 0.5351562, 0.5585938}  
    
    % \usepackage{amsmath, amssymb, amsfonts, amsthm}
    % \newcommand{\superLevelSet}[2]{\mathcal{X}_{#1}(#2)}
    % \newcommand{\support}{\mathrm{supp}}

    \tikzstyle{superlevel}=[draw = none, fill = superlevelsetcolor]
    \tikzstyle{superlevel boundary}=[draw = none, fill = superlevelsetcolor!50]
    \tikzstyle{sublevel}=[draw = none, fill = sublevelsetcolor]
    
    %\tikzstyle{superlevel no support} = [draw = none, fill = Cerulean!20]
    \tikzstyle{superlevel no support} = [draw = none]%, pattern={mylines[size=1.2pt,line width=1pt,angle=45]}, pattern color = superlevelsetcolor]
    \tikzstyle{sublevel no support} = [draw = none]%, pattern={mylines[size=1.2pt,line width=1pt,angle=45]}, pattern color = sublevelsetcolor]
    
    \tikzstyle{black line}=[line cap = rect, line width = 0.2pt]
    \tikzstyle{dashed line}=[line cap = rect, line width = 0.2pt, dashed]
    
    \tikzstyle{axis} = [->, thick]
     \centering
     \begin{subfigure}[b]{0.45\textwidth}
         \centering
         \resizebox{\textwidth}{!}{\begin{tikzpicture}

    % Clipping
    \clip (-1,-1) rectangle (6, 6);

    \draw[draw = none] (-6, -6) rectangle (6, 6) {};

    \draw[superlevel] (1, 5) -- (1, 4) -- (2, 4) -- (2, 3) -- (3, 3) -- (3, 2) -- (4, 2) -- (4, 1) -- (5, 1) -- (5, 5) -- cycle;
    \draw[superlevel no support] (0, 15) -- (0, 5) -- (5, 5) -- (5, 0) -- (15, 0) -- (15, 15) -- cycle;

    \draw[superlevel boundary] (0, 5) rectangle (1, 4) {};
    \draw[superlevel boundary] (3, 2) rectangle (4, 1) {};
    \draw[superlevel boundary] (4, 1) rectangle (5, 0) {};
    \draw[sublevel] (4, 0) -- (4, 1) -- (3, 1) -- (3, 2) -- (2, 2) -- (2, 3) -- (1, 3) -- (1, 4) -- (0, 4) -- (0, 0) -- cycle;
    \draw[sublevel] (1, 4) rectangle (2, 3) {} ;
    \draw[sublevel] (2, 3) rectangle (3, 2) {} ;
    \draw[sublevel no support] (-15, 15) -- (-15, -15) -- (15, -15) -- (15, 0) -- (0,0) -- (0, 15) -- cycle;

    \draw[axis] (0, -6) -> (0, 6) {};
    \draw[axis] (-6, 0) -> (6, 0) {};

    \node at (0.5, 4.5) {\small $\mathcal{H}_{(1,5)}^5$};
    \node at (1.5, 3.5) {\small $\mathcal{H}_{(2,4)}^5$};
    \node at (2.5, 2.5) {\small $\mathcal{H}_{(3,3)}^5$};
    \node at (3.5, 1.5) {\small $\mathcal{H}_{(4,2)}^5$};
    \node at (4.5, 0.5) {\small $\mathcal{H}_{(5,1)}^5$};

    %\node at (4.5, 4.5) {\small $\mathcal{X}_{\tau}(\eta_S) \cap \mathrm{supp}(\mu)$}
    % \node at (5.5, 5.5) {\small $\etaSuperLevelSet{\tau}{\eta_S} \cap \support{\mu}^c$}

    \draw[black line] (0, 0) rectangle (5, 5) {};

    \draw[dashed line] (0, 4) -- (2, 4) {};
    \draw[dashed line] (2, 4) -- (2, 2);
    \draw[dashed line] (2, 2) -- (4, 2);
    \draw[dashed line] (4, 2) -- (4, 0.0);

    \draw[dashed line] (1, 5) -- (1, 3) {};
    \draw[dashed line] (1, 3) -- (3, 3);
    \draw[dashed line] (3, 3) -- (3, 1);
    \draw[dashed line] (3, 1) -- (5, 1);

\end{tikzpicture}}
         %\caption{\red{Draft}}
     \end{subfigure}
     \hspace{0.5cm}
     \begin{subfigure}[b]{0.45\textwidth}
         \centering
         \resizebox{\textwidth}{!}{\begin{tikzpicture}

    % Clipping
    \clip (-5,-4) rectangle (6, 6);

    \draw[draw = none] (-6, -6) rectangle (6, 6) {};

    \draw[superlevel] (1, 5) -- (1, 4) -- (2, 4) -- (2, 3) -- (3, 3) -- (3, 2) -- (4, 2) -- (4, 1) -- (5, 1) -- (5, 5) -- cycle;
    \draw[superlevel no support] (0, 15) -- (0, 5) -- (5, 5) -- (5, 0) -- (15, 0) -- (15, 15) -- cycle;

    \draw[superlevel boundary] (0, 5) rectangle (1, 4) {};
    \draw[superlevel boundary] (3, 2) rectangle (4, 1) {};
    \draw[superlevel boundary] (4, 1) rectangle (5, 0) {};
    \draw[sublevel] (4, 0) -- (4, 1) -- (3, 1) -- (3, 2) -- (2, 2) -- (2, 3) -- (1, 3) -- (1, 4) -- (0, 4) -- (0, 0) -- cycle;
    \draw[sublevel no support] (1, 4) rectangle (2, 3) {} ;
    \draw[sublevel no support] (2, 3) rectangle (3, 2) {} ;
    \draw[sublevel no support] (-15, 15) -- (-15, -15) -- (15, -15) -- (15, 0) -- (0,0) -- (0, 15) -- cycle;
    \draw[sublevel] (-4, -1) rectangle (-3, -2) {};
    \draw[sublevel] (-3, -2) rectangle (-2, -3) {};
    \draw[dashed line] (-4, -1) rectangle (-3, -2) {};
    \draw[dashed line] (-3, -2) rectangle (-2, -3) {};

    \draw[axis] (0, -6) -> (0, 6) {};
    \draw[axis] (-6, 0) -> (6, 0) {};

    \node at (0.5, 4.5) {\small $\mathcal{H}_{(1,5)}^5$};
    \node at (1.5, 3.5) {\small $\mathcal{H}_{(2,4)}^5$};
    \node at (2.5, 2.5) {\small $\mathcal{H}_{(3,3)}^5$};
    \node at (3.5, 1.5) {\small $\mathcal{H}_{(4,2)}^5$};
    \node at (4.5, 0.5) {\small $\mathcal{H}_{(5,1)}^5$};

    \node at (-2.3, -0.7) {\small $\mathcal{H}_{(2,4)}^5 - \bm{1}_2$};
    \draw [->] (-2.25, -1.05) to [out = 270, in = 0] (-2.85, -1.5);
    %\centerarc[-latex](-2.5,-2.5)(180-2*26.565:170:1.118cm);
    
    \node at (-1.3, -1.7) {\small $\mathcal{H}_{(3,3)}^5 - \bm{1}_2$};
    %\draw [->] (-1.25, -2.05) to [out = 270, in = 0] (-2.5, -2.5);
    \draw [->] (-1.25, -2.05) to [out = 270, in = 0] (-1.85, -2.5);
    %\centerarc[-latex](-1.5, -1.5)(240:190:1.118cm);
    %\node at (4.5, 4.5) {\small $\mathcal{X}_{\tau}(\eta_S) \cap \mathrm{supp}(\mu)$}
    % \node at (5.5, 5.5) {\small $\etaSuperLevelSet{\tau}{\eta_S} \cap \support{\mu}^c$}

    \draw[black line] (0, 0) rectangle (5, 5) {};

    \draw[dashed line] (0, 4) -- (2, 4) {};
    \draw[dashed line] (2, 4) -- (2, 2);
    \draw[dashed line] (2, 2) -- (4, 2);
    \draw[dashed line] (4, 2) -- (4, 0.0);

    \draw[dashed line] (1, 5) -- (1, 3) {};
    \draw[dashed line] (1, 3) -- (3, 3);
    \draw[dashed line] (3, 3) -- (3, 1);
    \draw[dashed line] (3, 1) -- (5, 1);

\end{tikzpicture}}
         %\caption{\red{Draft}}
     \end{subfigure}
     
        \caption{Lower bound constructions in the proofs of Propositions~\ref{prop:lowerBound_alpha_multivariate} (left) and~\ref{prop:lowerBoundParametric} (right).  The grey regions do not belong to $\superLevelSet{\tau}{\eta}$, while light blue and dark blue regions correspond to areas where $\regressionFunction$ is slightly above and comfortably above $\tau$ respectively.  White regions have no marginal mass.  In both panels, $q=5$, $d=2$, $\mathbb{W}_{q,d} = \{(1,5),(2,4),(3,3),(4,2),(5,1)\}$ and $S = \{(1,5),(4,2),(5,1)\}$.}
        \label{fig:lowerBoundConstructions}
\end{figure}

\begin{prop}\label{prop:lowerBound_alpha_multivariate}
Fix $d\in\N$, $\alpha \in (0, 1/4]$, $\tau \in \R$, $\sigma, \etaIncreasingExponent, \etaIncreasingConstant > 0$ and $\densityConstant > 1$. For any $n \geq (8\cdot 2^d)^{(2\etaIncreasingExponent +d)/(2\etaIncreasingExponent)}\bigl(\frac{13\etaIncreasingConstant^2}{\sigma^2 \log_+\{1/(5\alpha)\}}\bigr)^{d/(2\etaIncreasingExponent)}$, we have
\[
\sup_{P \in \mathcal{P}'} \inf_{\hat{A}\in \hat{\mathcal{A}}_n(\tau,\alpha,\mathcal{P}')} \E_P\bigl\{\mu\bigl(\superLevelSet{\tau}{\eta}\setminus \hat{A}(\sample)\bigr)\bigr\} \geq \frac{1}{40d}\biggl\{\frac{\sigma^2}{13n\etaIncreasingConstant^2}\log_+\Bigl(\frac{1}{5\alpha}\Bigr) \wedge 1\biggr\}^{1/(2\etaIncreasingExponent + d)},
\]
where $\mathcal{P}' := \distributionClassNonDecreasingRegressionFunction[d] \cap \distributionClassMultivariateCondition[d]$. %and the infimum is taken over all data-dependent selection sets satisfying the Type I error guarantee at level $\alpha$. 
\end{prop}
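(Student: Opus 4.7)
The strategy is Le Cam's two-point method, built on the lattice construction introduced immediately before the proposition. Fix an integer $q$ to be optimised later, and let $\mu_q$ denote the uniform distribution on $[0,1]^d$. For each $S \subseteq \mathbb{W}_{q,d}$, define $P_S$ to be the joint distribution of $(X,Y)$ with $X \sim \mu_q$ and $Y \mid X \sim \mathcal{N}(\eta_S(X), \sigma^2)$, where $\eta_S$ is as in~\eqref{def:regFuncDef}. My first step is to verify that $P_S \in \mathcal{P}'$ for every $S$: the monotonicity of $\eta_S$ is immediate from the antichain property of $\mathbb{W}_{q,d}$; the upper bound in Definition~\ref{def:multivariateAssumption}\emph{(i)} follows since $\mu_q$ has density~$1$; the lower bound follows from $\mu_q(\closedSupNormMetricBall{x}{r} \cap [0,1]^d) \geq r^d$ for $x \in [0,1]^d$ and $r \in (0,1]$; and condition~\emph{(ii)} holds because a shift by $(1/q)\bm{1}_d$ from any point in an antichain cube lies in $\mathcal{H}^q_\infty$, where $\eta_S$ equals $\tau + \etaIncreasingConstant$, while for $r \leq 1/q$ the point $x$ itself works since $\eta_S(x) \geq \tau + \etaIncreasingConstant/q^\etaIncreasingExponent \geq \tau + \etaIncreasingConstant r^\etaIncreasingExponent$.

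Next, fix $\hat A \in \hat{\mathcal{A}}_n(\tau,\alpha,\mathcal{P}')$, and take the ``target'' distribution $P_\ast := P_{\mathbb{W}_{q,d}}$, so that every $\mathcal{H}^q_{\bm{j}}$ lies in $\superLevelSet{\tau}{\eta_{P_\ast}}$. For each $\bm{j} \in \mathbb{W}_{q,d}$, introduce the alternative $P^{\bm{j}} := P_{\mathbb{W}_{q,d} \setminus \{\bm{j}\}}$, which agrees with $P_\ast$ off $\mathcal{H}^q_{\bm{j}}$ and under which $\mathcal{H}^q_{\bm{j}} \cap \superLevelSet{\tau}{\eta_{P^{\bm{j}}}} = \emptyset$; since $P^{\bm{j}} \in \mathcal{P}'$, Type~I control forces $\mathbb{P}_{P^{\bm{j}}}\bigl(\hat A \cap \mathcal{H}^q_{\bm{j}} \neq \emptyset\bigr) \leq \alpha$. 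The key step is to transfer this into a constant lower bound on $\mathbb{P}_{P_\ast}\bigl(\hat A \cap \mathcal{H}^q_{\bm{j}} = \emptyset\bigr)$. Conditionally on $\sample_X$ with $N_{\bm{j}} := |\{i : X_i \in \mathcal{H}^q_{\bm{j}}\}|$, the laws $P_\ast^n$ and $P^{\bm{j},n}$ form a Gaussian shift experiment on the $N_{\bm j}$ responses in $\mathcal{H}^q_{\bm j}$ with mean gap $2\etaIncreasingConstant/q^\etaIncreasingExponent$, so the Neyman--Pearson lemma guarantees that any test $\phi$ with $\mathbb{P}_{P^{\bm{j}}}(\phi=1 \mid \sample_X) \leq \tilde\alpha$ obeys $\mathbb{P}_{P_\ast}(\phi=0 \mid \sample_X) \geq \Phi\bigl(\Phi^{-1}(1-\tilde\alpha) - 2\sqrt{N_{\bm j}}\etaIncreasingConstant/(q^\etaIncreasingExponent\sigma)\bigr)$. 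Applying this to $\phi = \mathbbm{1}\{\hat A \cap \mathcal{H}^q_{\bm{j}} \neq \emptyset\}$---after a Markov step promoting the unconditional bound $\alpha$ to a conditional bound $2\alpha$ on an event of probability at least $1/2$, a multiplicative Chernoff bound giving $N_{\bm j} \leq 2n\mu_q(\mathcal{H}^q_{\bm{j}})$ with high probability, and the elementary tail bound $\Phi^{-1}(1-2\alpha) \geq \sqrt{\log_+(1/(5\alpha))/2}$ valid for $\alpha \leq 1/4$---yields $\mathbb{P}_{P_\ast}\bigl(\hat A \cap \mathcal{H}^q_{\bm{j}} = \emptyset\bigr) \geq 1/4$ provided that $n\mu_q(\mathcal{H}^q_{\bm{j}}) \etaIncreasingConstant^2/(q^{2\etaIncreasingExponent}\sigma^2) \leq (1/13)\log_+(1/(5\alpha))$.

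Since $\mu_q(\mathcal{H}^q_{\bm{j}}) = q^{-d}$ and $|\mathbb{W}_{q,d}| \geq q^{d-1}/d$, summing over the disjoint antichain cubes, all of which lie in $\superLevelSet{\tau}{\eta_{P_\ast}}$, gives
\[
\mathbb{E}_{P_\ast}\bigl\{\mu_q\bigl(\superLevelSet{\tau}{\eta_{P_\ast}} \setminus \hat A\bigr)\bigr\} \geq \frac{1}{4} \sum_{\bm{j} \in \mathbb{W}_{q,d}} \mu_q(\mathcal{H}^q_{\bm{j}}) \cdot \mathbb{P}_{P_\ast}\bigl(\hat A \cap \mathcal{H}^q_{\bm{j}} = \emptyset\bigr) \geq \frac{1}{16 d q}.
\]
Optimising by taking $q$ to be the smallest positive integer with $q^{2\etaIncreasingExponent+d} \geq 13 n\etaIncreasingConstant^2/(\sigma^2 \log_+(1/(5\alpha)))$---the lower bound on $n$ in the hypothesis ensures $q \geq 2$, which controls the rounding loss when passing back to $1/q$---and substituting produces the claimed bound, with the explicit constant $1/(40d)$ absorbing the rounding and all multiplicative factors above. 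The main obstacle is the transfer step: Pinsker's inequality would only produce $\sqrt{\log(1/\alpha)}$ rather than $\log(1/\alpha)$ in the rate, so obtaining the correct $\alpha$-dependence requires the sharper Neyman--Pearson comparison for Gaussian shift alternatives combined with the elementary tail bound for $\Phi^{-1}$; managing the random sample sizes $N_{\bm j}$ adds bookkeeping but is routine.
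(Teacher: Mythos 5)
Your proposal follows essentially the same route as the paper: a Le Cam–type two-point argument on the antichain cubes $\{\mathcal{H}_{\bm{j}}^q : \bm{j} \in \mathbb{W}_{q,d}\}$, perturbing $\eta$ on a single cube at a time, converting Type~I error control under the alternative into a constant lower bound on $\mathbb{P}_{P_*}(\hat A \cap \mathcal{H}_{\bm{j}}^q = \emptyset)$ via a Neyman--Pearson comparison for the Gaussian shift, a Chernoff bound to control the random number of observations $N_{\bm{j}}$, and Markov's inequality to pass from an unconditional Type~I bound to a conditional one. The paper packages exactly this transfer step as Corollary~\ref{Cor:GaussianTesting2} (built on Lemma~\ref{Lemma:GaussianTesting}), and the verification $P_S \in \mathcal{P}'$ is Lemma~\ref{lemma:antichain_distributions_properties}. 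The choice $q = \lceil\{13n\lambda^2/(\sigma^2\log_+(1/(5\alpha)))\}^{1/(2\gamma+d)}\rceil$ and the final summation over the antichain are also the same.

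There is, however, a concrete gap in the transfer step as you wrote it. You rely on the tail bound $\Phi^{-1}(1-2\alpha) \geq \sqrt{\log_+(1/(5\alpha))/2}$ ``valid for $\alpha \leq 1/4$.'' This is false for $\alpha$ in a substantial part of the range: at $\alpha = 1/4$ the left-hand side is $\Phi^{-1}(1/2) = 0$ while the right-hand side equals $\sqrt{1/2}$, and at $\alpha = 1/5$ the left-hand side is $\Phi^{-1}(3/5) \approx 0.25$ while the right-hand side is again $\sqrt{1/2}$. Consequently the Neyman--Pearson step does not give you the claimed probability $1/4$ for moderate $\alpha$. The paper avoids this by splitting Corollary~\ref{Cor:GaussianTesting2} into two cases: for $\alpha \leq 1/10$ it uses a Gordon-type Mills-ratio bound to show $\Phi^{-1}(1-5\alpha/2) \geq z_\alpha = 1.6^{-1/2}\log^{1/2}(1/(2\alpha))$ and applies Neyman--Pearson, whereas for $\alpha \in (1/10, 2/3]$ it falls back to Pinsker's inequality, which is entirely adequate because $\log_+(1/(5\alpha))$ is then $O(1)$ and there is no rate to lose. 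Your argument needs this same case split (or a different valid quantile inequality on the restricted range), and with the resulting weaker probability constant ($1/20$ rather than $1/4$) you would land on precisely the paper's $1/(40d)$.
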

\begin{proof}
Suppose first that 
\[
n \geq \frac{\sigma^2}{13\etaIncreasingConstant^2}\log_+\Bigl(\frac{1}{5\alpha}\Bigr),
\]
so that 
\begin{align}
q := \Biggl\lceil \biggl\{\frac{13n\etaIncreasingConstant^2}{\sigma^2\log_+\bigl(1/(5\alpha)\bigr)}\biggr\}^{1/(2\etaIncreasingExponent + d)}\Biggr\rceil \leq 2\biggl\{\frac{13n\etaIncreasingConstant^2}{\sigma^2\log_+\bigl(1/(5\alpha)\bigr)}\biggr\}^{1/(2\etaIncreasingExponent + d)}.
\label{Eq:lowerBound_alpha_qChoice}    
\end{align}
Let $\mathbb{W}_{q,d} \subseteq [q]^d$ be an antichain with $|\mathbb{W}_{q,d}| \geq q^{d-1}/d$.  For each $S \subseteq \mathbb{W}_{q,d}$, let $P_S$ denote the joint distribution of $(X,Y)$, where $X \sim \mathrm{Unif}\bigl([0,1]^d\bigr) =: \mu$ and $Y|X \sim \mathcal{N}\bigl(\eta_S(X),\sigma^2\bigr)$, with $\eta_S$ defined by \eqref{def:regFuncDef}. Then, by Lemma~\ref{lemma:antichain_distributions_properties}, $P_S \in \mathcal{P}'$ for every $S\subseteq \mathbb{W}_{q,d}$. For ease of notation, we write $S^* := \mathbb{W}_{q,d}$ and $S^*_{-\bm{j}} := \mathbb{W}_{q,d}\setminus\{\bm{j}\}$ for $\bm{j} \in \mathbb{W}_{q,d}$, so that $\mu\bigl(\superLevelSet{\tau}{\eta_{S^*}}\setminus\superLevelSet{\tau}{\eta_{S^*_{-\bm{j}}}}\bigr) = \mu(\mathcal{H}_{\bm{j}}^q) = 1/q^d$ for all $\bm{j} \in \mathbb{W}_{q,d}$. % $\mathbb{S} := \{S \subseteq \mathbb{W}_{q,d}: |S| = w_{q,d} - 1\}$ \red{(MM: not super happy with notation.)}, so that $\mu\bigl(\superLevelSet{\tau}{\eta_*}\setminus\superLevelSet{\tau}{\eta_S}\bigr) = 1/q^d$ for all $S\in\mathbb{S}$. Hence, writing  \in \mathbb{S}$ for any $$
% Hence, denoting by $\bigl(\bm{j}^{(k)}: k \in [w_{q,d}]\bigr)$ an arbitrary ordering of the elements of $\mathbb{W}_{q,d}$, we have for any $A \in \mathcal{B}(\R^d)$
% \[
% \mu\bigl(\superLevelSet{\tau}{\eta_*}\setminus A\bigr) \geq \sum_{k=1}^{w_{q,d}} \frac{1}{q^d}\one_{\{A \cap \mathcal{H}_{\bm{j}^{(k)}}^q = \emptyset\}} \geq \sum_{k=1}^{w_{q,d}} \frac{1}{q^d}\one_{\{A \subseteq \superLevelSet{\tau}{\eta_{S_k}}\}},
% \]
% where $S_k := \mathbb{W}_{q,d}\setminus\{\bm{j}^{(k)}\} \in \mathbb{S}$.
Hence, for any Borel set $A\subseteq \R^d$,
\begin{align}
\mu\bigl(\superLevelSet{\tau}{\eta_{S^*}}\setminus A\bigr) \geq \sum_{\bm{j} \in S^*} \frac{1}{q^d}\one_{\{A \cap \mathcal{H}_{\bm{j}}^q = \emptyset\}}.% \geq \sum_{\bm{j} \in S^*} \frac{1}{q^d}\one_{\{A \subseteq \superLevelSet{\tau}{\eta_{S_{-\bm{j}}^*}}\}}. 
\label{eq:lower_bound_error_additivity}
\end{align}
Note that by the upper bound on $q$ in~\eqref{Eq:lowerBound_alpha_qChoice} and the lower bound on $n$ in the statement of the proposition we have
\[
\frac{n}{q^d} \geq \frac{n^{2\etaIncreasingExponent/(2\etaIncreasingExponent + d)}}{2^d}\cdot \Bigl\{\frac{\sigma^2 \log_+\bigl(1/(5\alpha)\bigr)}{13\etaIncreasingConstant^2}\Bigr\}^{d/(2\etaIncreasingExponent + d)} \geq 8.
\]
Moreover, by~\eqref{Eq:lowerBound_alpha_qChoice},
\[
\Delta := \frac{2\etaIncreasingConstant}{q^\etaIncreasingExponent} \leq \frac{\sigma}{\sqrt{3.2n/q^d}}\log_+^{1/2}\Bigl(\frac{1}{5\alpha}\Bigr)
\]
Fix $\bm{j} \in S^*$ and a data-dependent selection set $\hat{A} \in \hat{\mathcal{A}}_n(\tau, \alpha, \mathcal{P}')$.  Now define $\psi_{\hat{A}}(\sample) := \one_{\{\hat{A}(\sample) \cap \mathcal{H}_{\bm{j}}^q \neq \emptyset\}}$, which satisfies $\mathbb{P}_{P_{S^*_{-\bm{j}}}}\bigl(\psi_{\hat{A}}(\sample) = 1\bigr) \leq \alpha$.   Then, by Corollary~\ref{Cor:GaussianTesting2} with $t = \tau - \etaIncreasingConstant/q^\etaIncreasingExponent$ and $p = 1/q^d$, we have
\[
\mathbb{P}_{P_S^*}\bigl(\hat{A}(\sample) \cap \mathcal{H}_{\bm{j}}^q = \emptyset \bigr) = \mathbb{P}_{P_S^*}\bigl(\psi_{\hat{A}}(\sample) = 0 \bigr) \geq \frac{1}{20}.
\]
In combination with~\eqref{eq:lower_bound_error_additivity}, we deduce that 
\begin{align*}
    \sup_{P\in\mathcal{P}'} \inf_{\hat{A}\in \hat{\mathcal{A}}_n(\tau,\alpha,\mathcal{P}')} \mathbb{E}_P\bigl\{\mu\bigl(\superLevelSet{\tau}{\eta} \setminus \hat{A}(\sample)\bigr)\bigr\} &\geq  \inf_{\hat{A}\in \hat{\mathcal{A}}_n(\tau,\alpha,\mathcal{P}')} \mathbb{E}_{P_{S^*}}\bigl\{\mu\bigl(\superLevelSet{\tau}{\eta_{S^*}} \setminus \hat{A}(\sample)\bigr)\bigr\}\\
    &\geq  \inf_{\hat{A}\in \hat{\mathcal{A}}_n(\tau,\alpha,\mathcal{P}')} \frac{1}{q^d}\sum_{\bm{j} \in \mathbb{W}_{q,d}}\mathbb{P}_{P_{S^*}}\bigl\{\hat{A}(\sample) \cap \mathcal{H}_{\bm{j}}^q = \emptyset\bigr\} \\
    &\geq \frac{|\mathbb{W}_{q,d}|}{20q^d} \geq \frac{1}{40d}\biggl\{\frac{\sigma^2}{13n\etaIncreasingConstant^2}\log_+\Bigl(\frac{1}{5\alpha}\Bigr)\biggr\}^{1/(2\etaIncreasingExponent + d)}.
\end{align*}
Finally, if
\[
n < \frac{\sigma^2}{13\etaIncreasingConstant^2}\log_+\Bigl(\frac{1}{5\alpha}\Bigr),
\]
then
\[
\frac{1}{40d}\biggl\{\frac{\sigma^2}{13n\etaIncreasingConstant^2}\log_+\Bigl(\frac{1}{5\alpha}\Bigr)\biggr\}^{1/(2\etaIncreasingExponent + d)} \geq \frac{1}{40d},
\]
as required.
\end{proof}

\begin{lemma}\label{lemma:antichain_distributions_properties}
For any $d, q\in \N$, $\tau \in \R$, $\sigma, \etaIncreasingExponent, \etaIncreasingConstant > 0$, $\densityConstant > 1$, an antichain $\mathbb{W}_{q,d} \subseteq [q]^d$ and $S\subseteq \mathbb{W}_{q,d}$, we have that $P_S\equiv P_{S, \mathbb{W}_{q,d}, q, \sigma, \tau, \etaIncreasingExponent, \etaIncreasingConstant}$ defined as in the proof of Proposition~\ref{prop:lowerBound_alpha_multivariate} satisfies $P_S \in \distributionClassNonDecreasingRegressionFunction[d] \cap \distributionClassMultivariateCondition[d]$.
\end{lemma}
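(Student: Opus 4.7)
The plan is to verify each defining property of the two classes separately: monotonicity and conditional sub-Gaussianity for membership in $\distributionClassNonDecreasingRegressionFunction[d]$, and the two density/growth conditions of Definition~\ref{def:multivariateAssumption} for membership in $\distributionClassMultivariateCondition[d]$. Conditional sub-Gaussianity with variance parameter $\sigma^2$ is immediate from the Gaussian conditional distribution, and the density-type condition (i) is immediate from the uniform marginal on $[0,1]^d$: for any $x \in [0,1]^d$ and $r \in (0,1]$, one has $r^d \leq \mu\bigl(\closedSupNormMetricBall{x}{r}\bigr) \leq (2r)^d$, and since $\theta > 1$ this yields both $\theta^{-1} r^d \leq \mu\bigl(\closedSupNormMetricBall{x}{r}\bigr)$ and $\mu\bigl(\closedSupNormMetricBall{x}{r}\bigr) \leq \theta (2r)^d$. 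So the substantive content is in the other two parts.

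For monotonicity, I would argue by case analysis on which of the three types of region (some $\mathcal{H}_{\bm{j}}^q$ with $\bm{j} \in \mathbb{W}_{q,d}$, $\mathcal{H}_\infty^q$, or $\mathcal{H}_{-\infty}^q$) contains $x$ and $x'$ when $x \preccurlyeq x'$. The key observation is an index-comparison lemma: if $x \in \mathcal{H}_{\bm{j}}^q$, $x' \in \mathcal{H}_{\bm{j}'}^q$ and $x \preccurlyeq x'$, then coordinate-wise $j_\ell - 1 \leq q x_\ell \leq q x'_\ell < j'_\ell$, whence $\bm{j} \preccurlyeq \bm{j}'$; since $\mathbb{W}_{q,d}$ is an antichain, this forces $\bm{j} = \bm{j}'$ and so $\eta_S(x) = \eta_S(x')$. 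Analogously, if $x \in \mathcal{H}_\infty^q$ so that $x \succcurlyeq x_0$ for some $x_0 \in \mathcal{H}_{\bm{j}'}^q$ and $x' \succcurlyeq x$, then any hypercube $\mathcal{H}_{\bm{j}}^q$ containing $x'$ must satisfy $\bm{j}' \preccurlyeq \bm{j}$ by the same comparison, so $\bm{j} = \bm{j}'$; but then both $x$ and $x'$ would lie in $\mathcal{H}_{\bm{j}'}^q$, contradicting $x \in \mathcal{H}_\infty^q$. Hence $x' \in \mathcal{H}_\infty^q$. The remaining cases (e.g.\ $x \in \mathcal{H}_{-\infty}^q$ where $\eta_S$ already takes its minimum value, or $x$ in a hypercube and $x' \in \mathcal{H}_\infty^q$) are then immediate from $\lambda/q^\gamma \leq \lambda$.

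For condition (ii), I would fix $x \in \superLevelSet{\tau}{\eta_S} \cap [0,1]^d$ and $r \in (0,1]$ and split into three subcases. If $x \in \mathcal{H}_\infty^q$, then $x^* := x$ gives $\eta_S(x^*) = \tau + \lambda \geq \tau + \lambda r^\gamma$ since $r \leq 1$. If $x \in \mathcal{H}_{\bm{j}}^q$ with $\bm{j} \in S$ and $r \leq 1/q$, then again $x^* := x$ works because $\eta_S(x) = \tau + \lambda/q^\gamma \geq \tau + \lambda r^\gamma$. The remaining subcase is $x \in \mathcal{H}_{\bm{j}}^q$ with $\bm{j} \in S$ and $r > 1/q$: here I would take $x^* := x + (1/q)\bm{1}_d$, so that $\|x^* - x\|_\infty = 1/q < r$, and verify that $x^* \in \mathcal{H}_\infty^q$. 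For this, note that each coordinate satisfies $j_\ell/q \leq x^*_\ell < (j_\ell + 1)/q$, so if $x^*$ lay in any $\mathcal{H}_{\bm{j}'}^q$ then necessarily $\bm{j}' = \bm{j} + \bm{1}_d$; but this would contradict the antichain property of $\mathbb{W}_{q,d}$. Since $x^* \succcurlyeq x \in \mathcal{H}_{\bm{j}}^q$ but $x^* \notin \mathcal{H}_{\bm{j}}^q$, it follows that $x^* \in \mathcal{H}_\infty^q$, and hence $\eta_S(x^*) = \tau + \lambda \geq \tau + \lambda r^\gamma$.

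The main obstacle throughout is handling the case analysis cleanly using the antichain property; the key device in both the monotonicity argument and the ``escape upward'' construction for condition (ii) is the observation that any chain of hypercube indices collapses to a single element of $\mathbb{W}_{q,d}$, and that shifting by $(1/q)\bm{1}_d$ moves strictly above every hypercube in the antichain. Once these geometric facts are in hand, each individual case becomes a short calculation.
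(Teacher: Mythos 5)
Your proof is correct and follows essentially the same route as the paper's: sub-Gaussianity and the density condition are immediate, monotonicity is shown by case analysis on the partition $\{\mathcal{H}^q_{\bm{j}}\}_{\bm{j} \in \mathbb{W}_{q,d} \cup \{\pm\infty\}}$ using the antichain property, and Definition~\ref{def:multivariateAssumption}\emph{(ii)} is verified by escaping upward into $\mathcal{H}_\infty^q$. The only cosmetic differences are that you organize the monotonicity cases around an index-comparison lemma rather than the paper's grouping of low-value regions, and for condition \emph{(ii)} with $r > 1/q$ you shift by $(1/q)\bm{1}_d$ rather than the paper's $r\bm{1}_d$; both land in $\mathcal{H}_\infty^q$ within the ball $\closedSupNormMetricBall{x}{r}$, so the argument goes through identically.
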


\begin{proof}
Fix $S \subseteq \mathbb{W}_{q,d}$.  We first prove that $P_S \in \distributionClassNonDecreasingRegressionFunction[d]$.  Since the sub-Gaussianity condition is satisfied, it suffices to show that $\eta_S$ is coordinate-wise increasing in $\R^d$.  To this end, first note that for $x_0 \in \bigcup_{\bm{j} \in (\mathbb{W}_{q,d} \cup \{-\infty\}) \setminus S}\mathcal{H}_{\bm{j}}^q$ and $x_1 \succcurlyeq x_0$, we have $\eta_S(x_0) \leq \eta_S(x_1)$ since $\eta_S(x_0) = \inf_{x\in\R^d} \eta_S(x)$. Next, consider the case $x_0 \in \bigcup_{\bm{j} \in S} \mathcal{H}_{\bm{j}}^q$ and let $\bm{j}_0 \in S$ be such that $x_0 \in \mathcal{H}_{\bm{j}_0}^q$.  If $x_1 \succcurlyeq x_0$, then either $x_1 \in \mathcal{H}_{\bm{j}_0}^q$, in which case $\eta_S(x_0) = \eta_S(x_1)$, or $x_1 \in \R^d\setminus \mathcal{H}_{\bm{j}_0}^q$, in which case $x_1 \in \mathcal{H}_\infty^q$ so that $\eta_S(x_0) = \tau + \etaIncreasingConstant/q^\etaIncreasingExponent \leq \tau + \etaIncreasingConstant = \eta_S(x_1)$.  Finally, suppose that $x_0 \in \mathcal{H}_\infty^q$ and find $\bm{j}_0 = (j_{0,1},\ldots,j_{0,d}) \in \mathbb{W}_{q,d}$ and $x' \preccurlyeq x_0$ such that $x_0 \in \R^d \setminus \mathcal{H}_{\bm{j}_0}^q$ and $x' \in \mathcal{H}_{\bm{j}_0}^q$. The fact that $\bm{j}_0 - \bm{1}_d \preccurlyeq q\cdot x' \preccurlyeq q\cdot x_0$ in conjunction with $x_0 = (x_{0,1}, \ldots, x_{0,d})^\top \in \R^d \setminus \mathcal{H}_{\bm{j}_0}^q$ means that there exists $\ell_0 \in [d]$ such that $j_{0,\ell_0} \leq q\cdot x_{0,\ell_0}$. Thus, for any $x_1 \succcurlyeq x_0$, it follows that $x_1 \in \R^d \setminus \mathcal{H}_{\bm{j}_0}^q$. Moreover, $x' \preccurlyeq x_0 \preccurlyeq x_1$, so that $x_1 \in \mathcal{H}_\infty^q$, whence $\eta_S(x_0) = \eta_S(x_1)$. This establishes that $P_S \in \distributionClassNonDecreasingRegressionFunction[d]$.

We now show that $P_S \in \distributionClassMultivariateCondition$, which requires us to verify the conditions in Definition~\ref{def:multivariateAssumption}\emph{(i)} and~\emph{(ii)}. We start by showing \emph{(i)}.  Indeed, for any $x_0 \in [0,1]^d = \support(\mu)$ and $r \in (0,1]$, we have $r^d \leq \mu\bigl(\closedSupNormMetricBall{x_0}{r}\bigr) \leq (2r)^d$. This establishes the condition in Definition~\ref{def:multivariateAssumption}\emph{(i)}.

It remains to show that Definition~\ref{def:multivariateAssumption}\emph{(ii)} is satisfied.  Note that $\superLevelSet{\tau}{\eta_S} = \bigcup_{\bm{j}\in S} \mathcal{H}_{\bm{j}}^q \cup \mathcal{H}_\infty^q$.  Suppose first that $x_0 \in \mathcal{H}_{\bm{j}_0}^q$ for some $\bm{j}_0 \in S$.  If $r \leq 1/q$, then $\tau + \etaIncreasingConstant \cdot r^\etaIncreasingExponent \leq \tau + \etaIncreasingConstant/q^\etaIncreasingExponent = \eta_S(x_0)$.    On the other hand, if $r \in (1/q, 1]$, then $x_1 := x_0 + r\cdot \bm{1}_d \succcurlyeq (\bm{j}_0 - \bm{1}_d)/q + r\cdot \bm{1}_d \succcurlyeq \bm{j}_0/q$, so that $x_1 \in \mathcal{H}_\infty^q$. Since $x_1 \in \closedSupNormMetricBall{x_0}{r}$ and $\eta_S(x_1) = \tau + \etaIncreasingConstant \geq \tau + \etaIncreasingConstant\cdot r^\etaIncreasingExponent$ for all $r\in (0, 1]$ the claim is shown for $x_0 \in \bigcup_{\bm{j}\in S} \mathcal{H}_{\bm{j}}^q$.  Now, suppose $x_0 \in \mathcal{H}_\infty^q$. Then, similarly to before, $\eta_S(x_0) = \tau + \etaIncreasingConstant \geq \tau + \etaIncreasingConstant\cdot r^\etaIncreasingExponent$ for all $r\in (0, 1]$. This establishes that $P_S \in \distributionClassMultivariateCondition[d]$ and hence completes the proof.
\end{proof}

Our second construction proceeds similarly, but we now also vary the marginal distribution. 
\begin{prop}\label{prop:lowerBoundParametric}
    Let $d \in \N$, $\tau \in \R$, $\sigma, \etaIncreasingExponent, \etaIncreasingConstant > 0$ and $\densityConstant > 1$. Then, writing  $\mathcal{P}' := \distributionClassNonDecreasingRegressionFunction[d] \cap \distributionClassMultivariateCondition[d]$, we have for any $n\in\N$ and $\alpha \in (0, 1/4]$ that
    \[
    \inf_{\hat{A} \in \hat{\mathcal{A}}_n(\tau, \alpha, \mathcal{P}')} \sup_{P \in \mathcal{P}'} \mathbb{E}_P \bigl\{\marginalDistribution\bigl(\superLevelSet{\tau}{\regressionFunction}\setminus \hat{A}(\sample)\bigr)\bigr\} \geq \frac{1}{352 \cdot d \cdot 4^{1/d}} \cdot  \frac{1}{n^{1/d}}.
    \]
\end{prop}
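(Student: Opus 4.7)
The plan is to adapt the antichain-based construction of Proposition~\ref{prop:lowerBound_alpha_multivariate} so that the alternative distributions become indistinguishable from the reference because of small marginal mass, rather than because of a small signal-to-noise ratio. Under the reference $P_*$ a specific hypercube $\mathcal{H}_{\bm{j}}^q$ carries $\mu$-mass and lies inside the $\tau$-superlevel set, whereas under an alternative $P^{\bm{j}}$ this mass is translated (by $-\bm{1}_d$) to a region strictly below $\mathcal{H}_{\bm{j}}^q$ in the coordinatewise order and the regression function on $\mathcal{H}_{\bm{j}}^q$ is lowered below $\tau$. The two joint laws then differ only on a set of total measure $\sim 1/q^d$, so with $q\asymp n^{1/d}$ the product-measure total variation stays bounded below $1/4$, while the antichain contributes a total $\mu$-mass of order $|\mathbb{W}_{q,d}|/q^d\asymp 1/q\asymp n^{-1/d}$, which is the claimed rate.

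Concretely, I would set $q := \lceil (4n)^{1/d}\rceil$, so that $q^d\geq 4n$ and $q\leq 2\cdot 4^{1/d}\cdot n^{1/d}$, and fix a maximum antichain $\mathbb{W}_{q,d}\subseteq [q]^d$ with $|\mathbb{W}_{q,d}|\geq q^{d-1}/d$. Let $\mu_*$ be the uniform distribution on $[0,1]^d$, define $\eta_*:\R^d\to\R$ by $\eta_*(x):=\tau+\etaIncreasingConstant$ on $\bigcup_{\bm{j}\in\mathbb{W}_{q,d}}\mathcal{H}_{\bm{j}}^q\cup \mathcal{H}_\infty^q$ and $\eta_*(x):=\tau-\etaIncreasingConstant$ otherwise, and let $P_*$ be the joint distribution of $(X,Y)$ with $X\sim \mu_*$ and $Y\mid X\sim \mathcal{N}(\eta_*(X),\sigma^2)$; arguments parallel to those of Lemma~\ref{lemma:antichain_distributions_properties} (but with the simpler jump $\etaIncreasingConstant$ in place of $\etaIncreasingConstant/q^{\etaIncreasingExponent}$) yield $P_*\in\mathcal{P}'$. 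For each $\bm{j}\in \mathbb{W}_{q,d}$, define $\mu^{\bm{j}}$ to be Lebesgue measure on $([0,1]^d\setminus\mathcal{H}_{\bm{j}}^q)\cup(\mathcal{H}_{\bm{j}}^q - \bm{1}_d)$ (a probability measure by additivity of volume) and set $\eta^{\bm{j}}:=\eta_*$ off $\mathcal{H}_{\bm{j}}^q$ with $\eta^{\bm{j}}\equiv\tau-\etaIncreasingConstant$ on $\mathcal{H}_{\bm{j}}^q$. Since $\mathcal{H}_{\bm{j}}^q-\bm{1}_d\subseteq [-1,0)^d$ lies strictly below $\mathcal{H}_{\bm{j}}^q$ in the coordinatewise order, and since the antichain property rules out $\mathcal{H}_{\bm{j}'}^q\preccurlyeq \mathcal{H}_{\bm{j}}^q$ for $\bm{j}'\neq \bm{j}$, lowering $\eta$ on $\mathcal{H}_{\bm{j}}^q$ preserves monotonicity of $\eta^{\bm{j}}$; after verifying that $P^{\bm{j}}\in \mathcal{P}'$, the crucial consequence is $\mathcal{H}_{\bm{j}}^q\cap\superLevelSet{\tau}{\eta^{\bm{j}}}=\emptyset$.

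The remainder is short. A direct computation from the joint densities gives $\TV(P_*,P^{\bm{j}})=\mu_*(\mathcal{H}_{\bm{j}}^q)=1/q^d$, since $P_*$ and $P^{\bm{j}}$ agree off $\bigl(\mathcal{H}_{\bm{j}}^q\cup(\mathcal{H}_{\bm{j}}^q-\bm{1}_d)\bigr)\times\R$ with each of the two pieces contributing $1/q^d$ to $\int|dP_*-dP^{\bm{j}}|$, and tensorization gives $\TV\bigl(P_*^n, (P^{\bm{j}})^n\bigr)\leq n/q^d\leq 1/4$. For any $\hat{A}\in\hat{\mathcal{A}}_n(\tau,\alpha,\mathcal{P}')$, Type~I error control under $P^{\bm{j}}$ forces $\mathbb{P}_{P^{\bm{j}}}(\hat{A}\cap\mathcal{H}_{\bm{j}}^q=\emptyset)\geq 1-\alpha\geq 3/4$, and a two-point Le~Cam inequality then yields $\mathbb{P}_{P_*}(\hat{A}\cap\mathcal{H}_{\bm{j}}^q=\emptyset)\geq 1/2$. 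Since the cubes $\{\mathcal{H}_{\bm{j}}^q\}_{\bm{j}\in\mathbb{W}_{q,d}}$ are pairwise disjoint subsets of $\superLevelSet{\tau}{\eta_*}$,
\[
\sup_{P\in\mathcal{P}'}\mathbb{E}_P\bigl\{\mu\bigl(\superLevelSet{\tau}{\eta}\setminus\hat{A}\bigr)\bigr\} \geq \mathbb{E}_{P_*}\bigl\{\mu_*\bigl(\superLevelSet{\tau}{\eta_*}\setminus\hat{A}\bigr)\bigr\} \geq \sum_{\bm{j}\in\mathbb{W}_{q,d}}\frac{1}{2q^d}\geq \frac{1}{2dq}\geq \frac{1}{4d\cdot 4^{1/d}\cdot n^{1/d}},
\]
which matches the claim up to a constant factor absorbed into the prefactor $1/352$.

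The main technical obstacle is verifying Definition~\ref{def:multivariateAssumption}(i) for $\mu^{\bm{j}}$ uniformly in $\theta>1$: at a point $x\in\superLevelSet{\tau}{\eta^{\bm{j}}}\cap\support(\mu^{\bm{j}})$ lying in an antichain cube $\mathcal{H}_{\bm{j}'}^q$ that is adjacent to the excluded cube $\mathcal{H}_{\bm{j}}^q$, or near the boundary of $[0,1]^d$, the ball $\closedSupNormMetricBall{x}{r}$ with $r\in(0,1]$ can lose substantial volume to $\mathcal{H}_{\bm{j}}^q$, and one must establish the lower bound $\mu^{\bm{j}}\bigl(\closedSupNormMetricBall{x}{r}\bigr)\geq r^d/\theta$. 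Here the antichain property is essential: since $\bm{j}$ and $\bm{j}'$ disagree in at least two coordinates with opposite signs, the intersection $\closedSupNormMetricBall{x}{r}\cap\mathcal{H}_{\bm{j}}^q$ projects into at most two intervals of length $\leq r$ and $d-2$ intervals of length $\leq 2r$, bounding the lost volume by $2^{d-2}r^d$ and recovering the density lower bound with constants depending only on $d$.
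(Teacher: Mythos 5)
Your overall architecture is correct and parallels the paper's: you shift the marginal mass of a single hypercube $\mathcal{H}_{\bm{j}}^q$ to its translate $\mathcal{H}_{\bm{j}}^q - \bm{1}_d$ while lowering $\eta$ on $\mathcal{H}_{\bm{j}}^q$, so that $P_*$ and $P^{\bm{j}}$ are close in TV but disagree on whether $\mathcal{H}_{\bm{j}}^q$ belongs to the superlevel set; indeed your $\mu^{\bm{j}}$ coincides with the paper's $\mu_S$ when $S = \mathbb{W}_{q,d}\setminus\{\bm{j}\}$. Where you genuinely differ from the paper: you replace the jump $\pm\etaIncreasingConstant/q^{\etaIncreasingExponent}$ with the larger jump $\pm\etaIncreasingConstant$, which makes Definition~\ref{def:multivariateAssumption}\emph{(ii)} trivially satisfied, and you run a $2|\mathbb{W}_{q,d}|$-point Le~Cam argument rather than the paper's Assouad argument on the full hypercube of subsets $S \subseteq \mathbb{W}_{q,d}$. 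Both simplifications are valid, and your two-point argument actually gives a noticeably better constant ($1/(4d\cdot 4^{1/d})$ versus $1/(352 d \cdot 4^{1/d})$), since you never have to pass through Markov on the Hamming distance or intersect with the Type~I event.

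However, your sketch of why Definition~\ref{def:multivariateAssumption}\emph{(i)} holds is not correct as stated, and this is a genuine gap. You bound the volume lost to $\mathcal{H}_{\bm{j}}^q$ by $2^{d-2}r^d$ and conclude a density lower bound "with constants depending only on $d$." Two problems. First, the proposition must hold for \emph{every} $\densityConstant > 1$, including $\densityConstant$ arbitrarily close to $1$, so a $d$-dependent constant in the lower bound does not suffice — you need $\mu^{\bm{j}}\bigl(\closedSupNormMetricBall{x}{r}\bigr) \geq r^d$ (constant exactly $1$), otherwise the constructed $P^{\bm{j}}$ leaves the class $\distributionClassMultivariateCondition[d]$ when $\densityConstant$ is small. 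Second, even for a generous $\densityConstant$ your subtraction argument breaks down near the boundary of $[0,1]^d$: the available volume $\Lebesgue\bigl(\closedSupNormMetricBall{x}{r}\cap[0,1]^d\bigr)$ can be as small as $r^d$, and $r^d - 2^{d-2}r^d \leq 0$ for $d \geq 2$. The right argument — the one the paper's Lemma~\ref{lemma:antichain_distributions_properties_varyingMarginal} uses — is not to subtract the lost volume but to exhibit a closed subcube of side length $r$ contained in $\closedSupNormMetricBall{x}{r}\cap J_{\bm{j}}$: pick a coordinate $\ell$ where (by the antichain property or because $x\in\mathcal{H}_\infty^q$) moving in the appropriate direction away from $\mathcal{H}_{\bm{j}}^q$ is possible, and use that degree of freedom together with $r \leq 1$ to place a side-$r$ cube simultaneously inside $[0,1]^d$, inside $\closedSupNormMetricBall{x}{r}$, and disjoint from $\mathcal{H}_{\bm{j}}^q$. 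Replacing your volume-subtraction argument with this subcube argument closes the gap and the remainder of your proof goes through.
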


\begin{proof}
Let $q:= \lceil (4 n )^{1/d}\rceil$ and let $\mathbb{W}_{q,d} \subseteq [q]^d$ be an antichain with $|\mathbb{W}_{q,d}| \geq q^{d-1}/d$. For each $S \subseteq \mathbb{W}_{q,d}$, we define a Borel subset $J_S$ of $\R^d$ by
\begin{align*}
J_S:= \biggl\{ [0,1]^d \setminus \biggl( \bigcup_{ \bm{j} \in \mathbb{W}_{q,d} \setminus S} \mathcal{H}^q_{\bm{j}} \biggr) \biggr\} \cup \biggl\{ \bigcup_{ \bm{j} \in \mathbb{W}_{q,d} \setminus S}  (\mathcal{H}^q_{\bm{j}} - \bm{1}_d)\biggr\},
\end{align*}
where $\bm{1}_d \in \R^d$ denotes the all-ones vector; see the right-hand panel in Figure~\ref{fig:lowerBoundConstructions} for an illustration.  Note that $\Lebesgue(J_S)=1$, so we can define a Borel probability measure $\mu_S$ on $\R^d$ by $\mu_S(A) := \Lebesgue(A\cap J_S)$ for Borel sets $A \subseteq \R^d$.  Now let $P_S$ denote the joint distribution of $(X,Y)$, where $X \sim \mu_S$ and $Y|X \sim \mathcal{N}\bigl(\eta_S(X),\sigma^2\bigr)$, with $\eta_S$ defined by \eqref{def:regFuncDef}.  Then $P_S \in \distributionClassNonDecreasingRegressionFunction[d]\cap \distributionClassMultivariateCondition[d]$ by Lemma~\ref{lemma:antichain_distributions_properties_varyingMarginal}. Given $\hat{A} \in \hat{\mathcal{A}}_n(\tau, \alpha, \mathcal{P}')$, let 
\begin{align*}
\hat{S}:= 
 \bigl\{ \bm{j} \in \mathbb{W}_{q,d} : \hat{A}\cap \mathcal{H}_{\bm{j}}^q \neq \emptyset\bigr\}.
\end{align*}
Note that for each $S_0$, $S_1 \subseteq \mathbb{W}_{q,d}$ with $|S_0\triangle S_1|=1$, we have by Lemma~\ref{lemma:TVProductMeasure}\emph{(b)} that
\begin{align*}
\mathrm{TV}\bigl(P_{S_0}^n,P_{S_1}^n\bigr) \leq n \cdot\mathrm{TV}\bigl(P_{S_0},P_{S_1}\bigr) = \frac{n}{q^d}.
\end{align*}
Thus, by Assouad's lemma again, there exists $S_* \subseteq \mathbb{W}_{q,d}$ such that
\begin{align*}
\E_{P_{S_*}}\bigl( |\hat{S}\triangle S_*|\bigr) \geq \frac{|\mathbb{W}_{q,d}|}{2} \cdot \biggl( 1-\frac{n}{ q^d }\biggr) \geq \frac{3|\mathbb{W}_{q,d}|}{8},
\end{align*}
by the choice of $q$.  Hence, writing $Z := |\hat{S}\triangle S_*|/|\mathbb{W}_{q,d}|$ and $E := \{Z \geq 1/11\}$, we have
\begin{align*}
\Prob_{P_{S_*}}(E) = \frac{11}{10}\biggl\{\mathbb{P}_{S_*}(E) + \frac{1}{11}\mathbb{P}_{S_*}(E^c) - \frac{1}{11}\biggr\} \geq \frac{11}{10}\biggl\{\mathbb{E}_{P_{S_*}}(Z\mathbbm{1}_{E}) + \mathbb{E}_{P_{S_*}}(Z\mathbbm{1}_{E^c}) - \frac{1}{11}\biggr\} \geq \frac{5}{16}.
%\E_{P_{S_*}}\biggl( \frac{|\hat{S}\triangle S_*|}{|\mathbb{W}_{q,d}|}\biggr)
%\frac{5}{16}. 
\end{align*}
Now $\Prob_{P_{S_*}}(\hat{S}\subseteq S_*) \geq 3/4$ because $\alpha \in (0,1/4]$, so
\begin{align*}
\Prob_{P_{S_*}}\biggl( \frac{|S_* \setminus \hat{S}|}{|\mathbb{W}_{q,d}|}\geq \frac{1}{11}  \biggr) &\geq \Prob_{P_{S_*}}\bigl( E \cap \{\hat{S} \subseteq S_*\} \bigr) \geq \Prob_{P_{S_*}}(E) - \Prob_{P_{S_*}}\bigl(\{\hat{S} \nsubseteq S_*\}\bigr) \geq \frac{1}{16}. 
\end{align*}
Thus,  %}\redsout{Hence, if we take $q:= \lceil (8 n )^{1/d}\rceil$, then $\E_{P_{S_*}}\bigl( |\hat{S}\triangle S_*|/|\mathbb{W}_{q,d}|\bigr) \geq 3/8$, and as before,} %Since $\alpha \in (0,1/4]$, we have $\Prob_{P_{S_*}}\left(\hat{S}\subseteq S_* \right) \geq 3/4$ and hence,
%\begin{align*}
%\Prob_{P_{S_*}}\biggl( \frac{|S_* \setminus \hat{S}|}{|\mathbb{W}_{q,d}|}\geq \frac{1}{11}  \biggr)\geq \frac{1}{16}. 
%\end{align*}
%Thus, \red{proceeding in the same fashion as in the proof of Proposition~\ref{prop:lowerBoundNonparametric},}
\begin{align*}
\E_{P_{S_*}}\bigl\{ \mu\bigl(\superLevelSet{\eta_{S_*}}{\tau}\setminus \hat{A}\bigr) \bigr\} \geq \frac{|\mathbb{W}_{q,d}|}{176 \cdot q^d}  \geq \frac{1}{176 \cdot d \cdot  q}\geq  \frac{1}{352\cdot d \cdot 4^{1/d}} \cdot  \frac{1}{n^{1/d}},
\end{align*}
as required.
\end{proof}

\begin{lemma}\label{lemma:antichain_distributions_properties_varyingMarginal}
For any $d, q\in \N$, $\tau \in \R$, $\sigma, \etaIncreasingExponent, \etaIncreasingConstant > 0$, $\densityConstant > 1$, an antichain $\mathbb{W}_{q,d} \subseteq [q]^d$ and $S\subseteq \mathbb{W}_{q,d}$, we have that $P_S\equiv P_{S, \mathbb{W}_{q,d}, q, \sigma, \tau, \etaIncreasingExponent, \etaIncreasingConstant}$ defined as in the proof of Proposition~\ref{prop:lowerBoundParametric} satisfies $P_S \in \distributionClassNonDecreasingRegressionFunction[d] \cap \distributionClassMultivariateCondition[d]$.
\end{lemma}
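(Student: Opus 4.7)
The plan is to verify in turn the conditions defining membership in $\distributionClassNonDecreasingRegressionFunction[d] \cap \distributionClassMultivariateCondition[d]$. The conditional distribution of $Y$ given $X$ under $P_S$ is $\mathcal{N}\bigl(\eta_S(X),\sigma^2\bigr)$, which is sub-Gaussian with variance parameter $\sigma^2$, and the coordinate-wise monotonicity of $\eta_S$ was already established in Lemma~\ref{lemma:antichain_distributions_properties} (since $\eta_S$ is exactly the same function there), so $P_S \in \distributionClassNonDecreasingRegressionFunction[d]$. For Definition~\ref{def:multivariateAssumption}(ii), I would first observe that for any $\bm{j} \in \mathbb{W}_{q,d} \setminus S$ and $y \in \mathcal{H}^q_{\bm{j}} - \bm{1}_d$, every coordinate of $y$ is strictly negative; since every $x' \in \bigcup_{\bm{j}'\in \mathbb{W}_{q,d}} \mathcal{H}^q_{\bm{j}'}$ has non-negative coordinates, $y$ cannot satisfy $x' \preccurlyeq y$. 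Hence $y \in \mathcal{H}^q_{-\infty}$ and $\eta_S(y) = \tau - \lambda/q^\gamma < \tau$, so $\superLevelSet{\tau}{\eta_S} \cap \support(\mu_S) \subseteq [0,1]^d$. The argument from Lemma~\ref{lemma:antichain_distributions_properties} then transfers verbatim: for $x \in \mathcal{H}^q_{\bm{j}_0}$ with $\bm{j}_0 \in S$, either $\eta_S(x) = \tau + \lambda/q^\gamma \geq \tau + \lambda r^\gamma$ (when $r \leq 1/q$) or $x + r \bm{1}_d \in \mathcal{H}^q_\infty$ (when $r > 1/q$), and for $x \in \mathcal{H}^q_\infty$ one has $\eta_S(x) = \tau + \lambda$.

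The substance of the proof lies in Definition~\ref{def:multivariateAssumption}(i). The upper bound $\mu_S(\closedSupNormMetricBall{x}{r}) \leq \theta(2r)^d$ is immediate, since $\mu_S$ has Lebesgue density bounded by~$1$. For the lower bound, fix $x \in \superLevelSet{\tau}{\eta_S} \cap \support(\mu_S) \subseteq [0,1]^d$ and $r \in (0,1]$, and write
\[
\mu_S\bigl(\closedSupNormMetricBall{x}{r}\bigr) \geq \Lebesgue\bigl(\closedSupNormMetricBall{x}{r} \cap [0,1]^d\bigr) - \Lebesgue\Bigl(\closedSupNormMetricBall{x}{r} \cap \textstyle\bigcup_{\bm{j} \in \mathbb{W}_{q,d} \setminus S}\mathcal{H}^q_{\bm{j}}\Bigr),
\]
with the first term at least $r^d$ by the worst-case corner configuration on $[0,1]^d$. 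The principal obstacle, and the hard part of the argument, is to upper bound the subtracted term by a fraction of $r^d$ depending only on $d$. Two ingredients drive this: since $\eta_S \equiv \tau - \lambda/q^\gamma$ on every removed cube whereas $\eta_S(x) \geq \tau$, the point $x$ lies outside the interior of all removed cubes; and the antichain property of $\mathbb{W}_{q,d}$ sharply limits how many removed cubes can cluster around $x$. Concretely, Sperner's theorem applied to the Boolean lattice of the (at most) $2^d$ cells meeting a single grid vertex shows that at most $\binom{d}{\lfloor d/2 \rfloor}$ of them can lie in $\mathbb{W}_{q,d}$, handling the regime $r \leq 1/q$; for $r > 1/q$, a generalised Sperner bound $|\mathbb{W}_{q,d} \cap [k]^d| = O(k^{d-1})$ together with the grid-counting estimate that $\closedSupNormMetricBall{x}{r}$ meets at most $(2\lceil rq\rceil + 1)^d$ cells shows that the removed volume inside the ball is at most a constant multiple of $r^{d-1}/q \leq r^d$. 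A short case split on $r$ against $1/q$ then yields $\mu_S(\closedSupNormMetricBall{x}{r}) \geq r^d/\theta_d$ for a constant $\theta_d$ depending only on $d$, which (absorbed into the given $\theta$) completes the verification.
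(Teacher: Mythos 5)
Your verification of $\distributionClassNonDecreasingRegressionFunction[d]$-membership and of Definition~\ref{def:multivariateAssumption}\emph{(ii)} tracks the paper, and the preliminary observation that $\superLevelSet{\tau}{\eta_S} \cap \support(\mu_S) \subseteq [0,1]^d$ is a correct and useful remark. For the density lower bound in Definition~\ref{def:multivariateAssumption}\emph{(i)}, however, you depart from the paper: the paper simply exhibits a point $x_1 \in \closedSupNormMetricBall{x_0}{r} \cap J_S$ with $\closedSupNormMetricBall{x_1}{r/2} \subseteq \closedSupNormMetricBall{x_0}{r} \cap J_S$, which immediately gives $\mu_S\bigl(\closedSupNormMetricBall{x_0}{r}\bigr) \geq \Lebesgue\bigl(\closedSupNormMetricBall{x_1}{r/2}\bigr) = r^d$, whereas you subtract the removed volume from $\Lebesgue\bigl(\closedSupNormMetricBall{x}{r}\cap[0,1]^d\bigr) \geq r^d$ and bound the subtraction by a Sperner/grid-counting argument.

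The genuine gap is your final sentence. You arrive at $\mu_S\bigl(\closedSupNormMetricBall{x}{r}\bigr) \geq r^d / \theta_d$ for a dimensional constant $\theta_d > 1$ and then assert that $\theta_d$ can be ``absorbed into the given $\theta$.'' It cannot: $\densityConstant$ is universally quantified in the lemma while $P_S$ does not depend on $\densityConstant$, so you must verify $\densityConstant^{-1} r^d \leq \mu_S\bigl(\closedSupNormMetricBall{x}{r}\bigr)$ for the particular $\densityConstant > 1$ handed to you. For $1 < \densityConstant < \theta_d$ the required lower bound $\densityConstant^{-1} r^d$ strictly exceeds your $r^d/\theta_d$, so your estimate does not close the statement. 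Any multiplicative dimensional loss is fatal here, and this is exactly what the paper's inscribed-ball argument is designed to avoid: it yields $r^d$ with constant $1$, which works for every $\densityConstant > 1$. Two further points your sketch glosses over: Sperner's theorem as you invoke it controls only the cells incident to a single grid vertex, whereas $\closedSupNormMetricBall{x}{r}$ with $r$ of order $1/q$ can straddle several vertices; and your $r > 1/q$ grid-counting estimate makes the removed volume a small fraction of $r^d$ only once $r \gg 1/q$, leaving an uncovered intermediate regime. So even the weaker $r^d/\theta_d$ bound needs more work than you have given, quite apart from the constant issue.
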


\begin{proof}
Since the regression function $\eta_S$ associated to $P_S$ is identical to that in Proposition~\ref{prop:lowerBound_alpha_multivariate}, we follow the same steps as in the proof of Lemma~\ref{lemma:antichain_distributions_properties} to show that $P_S \in \distributionClassNonDecreasingRegressionFunction$ and that the condition in Definition~\ref{def:multivariateAssumption}\emph{(ii)} is satisfied. It remains to show that $P_S$ has the property specified in Definition~\ref{def:multivariateAssumption}\emph{(i)}. Indeed, for $J_S$ as in Proposition~\ref{prop:lowerBoundParametric}, any $x_0 \in \support(\mu_S)$ and any $r \in (0, 1]$, we have $\mu_S\bigl(\closedSupNormMetricBall{x_0}{r}\bigr) = \Lebesgue\bigl(\closedSupNormMetricBall{x_0}{r} \cap J_S\bigr) \leq \Lebesgue\bigl(\closedSupNormMetricBall{x_0}{r}\bigr) = (2r)^d$.  Moreover there exists $x_1 \in \closedSupNormMetricBall{x_0}{r} \cap J_S$ such that 
$\closedSupNormMetricBall{x_0}{r} \cap J_S \supseteq \closedSupNormMetricBall{x_1}{r/2}$, so $\Lebesgue\bigl(\closedSupNormMetricBall{x_0}{r} \cap J_S\bigr) \geq \Lebesgue\bigl(\closedSupNormMetricBall{x_1}{r/2}\bigr) \geq r^d$, as required. 
\end{proof}
We are now in a position to prove Theorem~\ref{Thm:LowerBound}.
\begin{proof}[Proof of Theorem~\ref{Thm:LowerBound}]
Let $c := 1/(1408 \cdot d\cdot 32^{1/d}\cdot 13^{1/(2\etaIncreasingExponent + d)})$. Suppose first that
\[
    n < (8\cdot 2^d)^{(2\etaIncreasingExponent + d)/(2\etaIncreasingExponent)}\biggl(\frac{13\etaIncreasingConstant^2}{\sigma^2\log_+\bigl(1/(5\alpha)\bigr)}\biggr)^{d/(2\etaIncreasingExponent)},
\]
so that
\[
\frac{1}{704\cdot d\cdot 4^{1/d}} \cdot \frac{1}{n^{1/d}} > c\biggl(\frac{\sigma^2}{n\etaIncreasingConstant^2}\log_+\Bigl(\frac{1}{5\alpha}\Bigr)\biggr)^{1/(2\etaIncreasingExponent + d)}.
\]
By Proposition~\ref{prop:lowerBoundParametric}, we have
\begin{align*}
    \inf_{\hat{A} \in \hat{\mathcal{A}}_n(\tau, \alpha, \mathcal{P}')} \sup_{P \in \mathcal{P}'} \mathbb{E}_P \bigl\{\marginalDistribution\bigl(\superLevelSet{\tau}{\regressionFunction}\setminus \hat{A}(\sample)\bigr)\bigr\} 
    &\geq \frac{1}{352\cdot d\cdot 4^{1/d}}\cdot \frac{1}{n^{1/d}} \\
    &\geq \frac{1}{704\cdot d\cdot 4^{1/d}}\cdot \frac{1}{n^{1/d}} + \frac{c}{n^{1/d}}\\
    &\geq c\biggl[1 \wedge \biggl\{\biggl(\frac{\sigma^2}{n\etaIncreasingConstant^2}\log_+\Bigl(\frac{1}{5\alpha}\Bigr)\biggr)^{1/(2\etaIncreasingExponent + d)} + \frac{1}{n^{1/d}}\biggr\}\biggr].
\end{align*}
We may therefore suppose that 
\[
    n \geq (8\cdot 2^d)^{(2\etaIncreasingExponent + d)/(2\etaIncreasingExponent)}\biggl(\frac{13\etaIncreasingConstant^2}{\sigma^2\log_+\bigl(1/(5\alpha)\bigr)}\biggr)^{d/(2\etaIncreasingExponent)}.
\]
Then, by Propositions~\ref{prop:lowerBound_alpha_multivariate} and~\ref{prop:lowerBoundParametric}, we have
\begin{align*}
    \inf_{\hat{A} \in \hat{\mathcal{A}}_n(\tau, \alpha, \mathcal{P}')} \sup_{P \in \mathcal{P}'} \mathbb{E}_P \bigl\{\marginalDistribution\bigl(&\superLevelSet{\tau}{\regressionFunction}\setminus \hat{A}(\sample)\bigr)\bigr\} \\
    &\geq \frac{1}{40d} \biggl\{\frac{\sigma^2}{13n\etaIncreasingConstant^2}\log_+\Bigl(\frac{1}{5\alpha}\Bigr)\wedge 1\biggr\}^{1/(2\etaIncreasingExponent + d)} \vee  \frac{1}{352\cdot d\cdot 4^{1/d} \cdot n^{1/d}} \\
    &\geq \frac{1}{40d}\wedge \biggl\{\frac{1}{80d}\biggl(\frac{\sigma^2}{13n\etaIncreasingConstant^2}\log_+\Bigl(\frac{1}{5\alpha}\Bigr)\biggr)^{1/(2\etaIncreasingExponent + d)} +  \frac{1}{704\cdot d\cdot 4^{1/d} \cdot n^{1/d}}\biggr\} \\
    &\geq c\biggl[1 \wedge \biggl\{\biggl(\frac{\sigma^2}{n\etaIncreasingConstant^2}\log_+\Bigl(\frac{1}{5\alpha}\Bigr)\biggr)^{1/(2\etaIncreasingExponent + d)} + \frac{1}{n^{1/d}}\biggr\}\biggr],
\end{align*}
as required.
\end{proof}

\subsection{Proofs from Section~\ref{Sec:Extensions}}\label{sec:extension_proofs}

\begin{proof}[Proof of Lemma~\ref{lemma:validLocalPValNormality}]
We condition on $\sampleX$ throughout this proof.  Let $\Theta_0 := (-\infty,\tau)^{n(x)} \times (0,\infty)$ and $P \in \mathcal{P}_{\mathrm{N},d}(\sigma_*)$.  Write $\varphi(\cdot; a, \sigma^2)$ for the density function of the $\mathcal{N}(a, \sigma^2)$ distribution.  We fix $k \in [n(x)]$ and initially operate on the event $\bigl\{\max_{j \in [k]} Y_{(j)}(x) > \tau\bigr\} = \{\hat{\sigma}^2_{0,k} > 0\}$. We claim that $\bigl((Y_{(j)}(x) \wedge \tau)_{j\in [k]}, \hat{\sigma}^2_{0,k}\bigr)$ then maximises the conditional likelihood $L(t, \sigma^2) := \prod_{j=1}^k \varphi\bigl(Y_{(j)}(x); t_j, \sigma^2\bigr)$ over $\Theta_{0,k} := (-\infty, \tau]^k\times [0,\infty)$, where $L(t, 0) := \lim_{\sigma \searrow 0} L(t, \sigma^2) = 0$ for $t\in (-\infty, \tau]^k$. To see this, note first that $L(t, 0) = 0 < \sup_{\sigma > 0} L(t, \sigma^2)$ for $t\in (-\infty, \tau]^k$, so any maximiser must be contained in $\Theta_{0,k}^0 := (-\infty, \tau]^k\times (0,\infty)$.  Moreover, for any $(t_1,\ldots,t_{j-1},t_{j+1},\ldots,t_k,\sigma^2) \in (-\infty,\tau]^{k-1} \times (0,\infty)$, the unique maximiser $t_j^0$ of $t_j \mapsto L(t,\sigma^2)$ satisfies $t_j^0 = Y_j(x) \wedge \tau$.  It therefore suffices to maximise $\sigma^2 \mapsto L(t^0,\sigma^2)$ over $(0,\infty)$, where $t^0 := (t_1^0,\ldots,t_k^0)^\top \in (-\infty,\tau]^k$, and the unique maximiser is given by $\sigma_0^2 := \hat{\sigma}^2_{0,k}$.

Hence, writing $t_j^* := \regressionFunction\bigl(X_{(j)}(x)\bigr)$ for $j \in [n(x)]$, when $\hat{\sigma}^2_{0,k} > 0$, we have for $k \in [n(x)]$ that
\begin{align*}
\frac{1}{\Bar{p}^k_\tau(x, \sample)} &= \frac{(2\pi)^{-k/2}\prod_{j=1}^k \hat{\sigma}^{-1}_{1,j-1}\exp\bigl\{-\bigl(Y_{(j)}(x) - \bar{Y}_{1,j-1}\bigr)^2/\bigl(2\hat{\sigma}^2_{1,j-1}\bigr)\bigr\}}{(2\pi\hat{\sigma}^2_{0,k})^{-k/2}\exp\bigl\{-\sum_{j=1}^k\bigl(Y_{(j)}(x) - (Y_{(j)}(x) \wedge \tau)\bigr)^2/\bigl(2\hat{\sigma}^2_{0,k}\bigr)\bigr\}} \\
&= \frac{\prod_{j=1}^k \varphi\bigl(Y_{(j)}(x); \Bar{Y}_{1, j-1}, \hat{\sigma}^2_{1,j-1}\bigr)}{\sup_{(t, \sigma^2) \in \Theta_{0,k}} \prod_{j=1}^k \varphi\bigl(Y_{(j)}(x); t_j, \sigma^2\bigr)} \leq \frac{\prod_{j=1}^k \varphi\bigl(Y_{(j)}(x); \Bar{Y}_{1, j-1}, \hat{\sigma}^2_{1,j-1}\bigr)}{\prod_{j=1}^k \varphi\bigl(Y_{(j)}(x); t_j^*, \sigma_*^2\bigr)} =: \Lambda_k.
\end{align*}
We now claim that the process given by $(\Lambda_k)_{k\in [n(x)]}$ defines a martingale with respect to the filtration $(\mathcal{F}_j)_{j \in \{0\} \cup [n(x)]}$, where $\mathcal{F}_0$ is the trivial $\sigma$-algebra and where $\mathcal{F}_j$ denotes the $\sigma$-algebra generated by $\bigl(Y_{(\ell)}(x)\bigr)_{\ell \in [j]}$, with $\mathbb{E}(\Lambda_1|\sampleX) = 1$.  To see this, observe that
\begin{align*}
\mathbb{E}(\Lambda_{k+1} \mid \mathcal{F}_{k}, \sampleX) &= \Lambda_{k}\mathbb{E}\biggl( \frac{\varphi\bigl(Y_{(k+1)}(x); \Bar{Y}_{1, k}, \hat{\sigma}^2_{1,k}\bigr)}{\varphi\bigl(Y_{(k+1)}(x); t_{k+1}^*, \sigma^2_*\bigr)}\biggm| \mathcal{F}_{k}, \sampleX\biggr)\\
&= \Lambda_{k}\int_{-\infty}^\infty \frac{\varphi\bigl(y; \Bar{Y}_{1, k}, \hat{\sigma}^2_{1,k}\bigr)}{\varphi\bigl(y; t_{k+1}^*, \sigma^2_*\bigr)}\cdot \varphi\bigl(y; t_{k+1}^*, \sigma^2_*\bigr) \, dy\\ 
&= \Lambda_{k}\int_{-\infty}^\infty \varphi\bigl(y; \Bar{Y}_{1, k}, \hat{\sigma}^2_{1,k}\bigr) \, dy = \Lambda_{k}    
\end{align*}
for $k\in [n(x) - 1]$.  Hence, by Ville's inequality \citep{ville1939etude}, for any $\alpha \in (0, 1)$,
%\begin{align*}
%    \mathbb{P}\bigl(\Bar{p}_\tau(x, \sample) \leq \alpha \bigm| \sampleX \bigr) = \mathbb{P}\Bigl(\min_{k\in [n(x)]} \Bar{p}^k_\tau(x, \sample) \leq \alpha \Bigm| \sampleX \Bigr) \leq \mathbb{P}\Bigl(\max_{k \in [n(x)]} \Lambda_k \geq 1/\alpha \Bigm| \sampleX\Bigr) \leq \alpha,
%\end{align*}
%as required. \purple{(MM: I think the final display is correct, but a bit less transparent than the following longer version:
\begin{align*}
    \mathbb{P}\bigl(\Bar{p}_\tau(x, \sample) \leq \alpha \bigm| \sampleX \bigr) &= \mathbb{P}\biggl(\bigcup_{k\in [n(x)]} \Bigl\{ \Bar{p}^k_\tau(x, \sample) \leq \alpha, \hat{\sigma}^2_{0,k} > 0 \Bigr\}\Bigm| \sampleX \biggr)\\
    &= \mathbb{P}\biggl(\bigcup_{k \in [n(x)]}\Bigl\{ \Lambda_k \geq 1/\alpha, \hat{\sigma}^2_{0,k} > 0\Bigr\} \Bigm| \sampleX\biggr)\\
    &\leq \mathbb{P}\Bigl(\max_{k \in [n(x)]} \Lambda_k \geq 1/\alpha \Bigm| \sampleX\Bigr) \leq \alpha,
\end{align*}
as required.
\end{proof}

We prove Lemma~\ref{lemma:validLocalPValClassification} by establishing the generalisation given by Lemma~\ref{lemma:validLocalPValClassificationGeneralisation} below.  This latter result is stated for more general $p$-values that we now define.  
\begin{defn}\label{defn:pValue_binary_generalised}
    In the setting of Definition~\ref{defn:pValue}, let $\nu$ be a measure supported on $[\tau,1]$, let $\check{S}_k := \sum_{j = 1}^k Y_{(j)}(x)$ and define
    \begin{align*}
 \check{p}_{\tau,\nu}(x, \sample) := 1 \wedge \min_{k \in [n(x)]} \frac{\tau^{\check{S}_k}(1-\tau)^{n-\check{S}_k}}{\int_{[\tau,1]} t^{\check{S}_k}(1-t)^{n - \check{S}_k}\, d\nu(t)}.
    \end{align*}
\end{defn}
If we take $\nu$ to be the $\mathrm{Unif}[\tau,1]$ distribution in Definition~\ref{defn:pValue_binary_generalised}, then we recover the $p$-values from Definition~\ref{defn:pValue_binary} that are employed in Lemma~\ref{lemma:validLocalPValClassification}.
\begin{lemma}\label{lemma:validLocalPValClassificationGeneralisation}
Let $\tau \in \R$, let $\nu$ be a measure supported on $[\tau, 1]$ and let $P$ be a distribution on $\R^d \times [0,1]$ with regression function $\eta$. Fix $x \in \R^d$ and suppose that $\eta(x') \leq \tau$ for all $x' \preccurlyeq x$.  Given $\sample = \bigl((X_1,Y_1),\ldots,(X_n,Y_n)\bigr) \sim P^n$, we have $\Prob_P\bigl\{\check{p}_{\tau,\nu}(x,\sample) \leq \alpha | \sample_X\bigr\} \leq \alpha$ for all $\alpha \in (0,1)$.   
\end{lemma}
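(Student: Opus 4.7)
The plan is to follow the universal-inference strategy used in the proof of Lemma~\ref{lemma:validLocalPValStouffer}, but to replace the sub-Gaussian partial-sum supermartingale by a Bayesian mixture of Bernoulli likelihood ratios indexed by $\nu$. Throughout, I would condition on $\sampleX$, note that the expression defining $\check{p}_{\tau,\nu}$ is invariant under rescaling $\nu$ by a positive constant, and hence assume without loss of generality that $\nu$ is a probability measure on $[\tau,1]$. Let $\mathcal{F}_0$ denote the $\sigma$-algebra generated by $\sampleX$ and $\mathcal{F}_k$ the one generated by $\mathcal{F}_0$ together with $\bigl(Y_{(j)}(x)\bigr)_{j \in [k]}$.

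For each $t \in [\tau,1]$, the process I would introduce is
\[
M_k(t) := \prod_{j=1}^k \Bigl(\frac{t}{\tau}\Bigr)^{Y_{(j)}(x)} \Bigl(\frac{1-t}{1-\tau}\Bigr)^{1 - Y_{(j)}(x)}, \qquad M_0(t) := 1.
\]
The crux of the argument will be to show that, for every fixed $t \geq \tau$, the process $\bigl(M_k(t)\bigr)_{k \in \{0\} \cup [n(x)]}$ is a nonnegative $(\mathcal{F}_k)$-supermartingale under $P$. Since $X_{(j)}(x) \preccurlyeq x$, the hypothesis of the lemma gives $p_j := \eta\bigl(X_{(j)}(x)\bigr) \leq \tau$, so the increment bound reduces to verifying that $\E_P\bigl[(t/\tau)^Y((1-t)/(1-\tau))^{1-Y}\bigr] \leq 1$ for any $[0,1]$-valued $Y$ with mean $p \leq \tau$. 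I would handle this by observing that $y \mapsto (t/\tau)^y((1-t)/(1-\tau))^{1-y}$ is convex on $[0,1]$ (being an exponential of a linear function), bounding it above by the chord joining its values at $y = 0$ and $y=1$, and then noting that the resulting affine upper bound $p \mapsto (1-p)\cdot\frac{1-t}{1-\tau} + p\cdot\frac{t}{\tau}$ equals $1$ at $p = \tau$ and has non-negative derivative $(t-\tau)/\{\tau(1-\tau)\}$ in $p$ whenever $t \geq \tau$.

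With the fibrewise supermartingale property secured, Fubini's theorem gives that the mixture
\[
\bar M_k := \int_{[\tau,1]} M_k(t)\, d\nu(t)
\]
is itself a nonnegative $(\mathcal{F}_k)$-supermartingale with $\bar M_0 = 1$, and a direct calculation identifies $\bar M_k$ with the reciprocal of the $k$-th term in the minimum defining $\check{p}_{\tau,\nu}(x,\sample)$. Ville's maximal inequality applied to $(\bar M_k)$ then delivers
\[
\Prob_P\bigl\{\check{p}_{\tau,\nu}(x,\sample) \leq \alpha \bigm| \sampleX\bigr\} = \Prob_P\Bigl\{\max_{k \in [n(x)]} \bar M_k \geq 1/\alpha \Bigm| \sampleX\Bigr\} \leq \alpha,
\]
as required.

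The main obstacle I anticipate is the convexity/chord step: one needs to pin down the direction of the inequality uniformly over $t \in [\tau,1]$ and over all admissible $p_j \in [0,\tau]$, which is exactly what the monotonicity in $p$ of the affine upper bound provides. Once that is in place, the rest is routine bookkeeping, mirroring the structure of the proof of Lemma~\ref{lemma:validLocalPValStouffer} with ``additive sub-Gaussian supermartingale plus time-uniform LIL'' replaced by ``multiplicative Bernoulli likelihood ratio plus Ville's inequality''.
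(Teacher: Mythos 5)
Your proposal mirrors the paper's proof essentially step for step: the Bernoulli likelihood-ratio mixture $\bar{M}_k$ is exactly the process $\Lambda_k$ appearing in the paper's argument, and the paper establishes the supermartingale property by pushing the conditional expectation inside the $\nu$-integral via Fubini and invoking Lemma~9 of Garivier and Capp\'{e} (2011), which is precisely your chord bound $\E(\lambda^Y)\leq (1-p)+p\lambda$ for $[0,1]$-valued $Y$ with mean $p$. Whether one proves the fibrewise supermartingale property for each $t$ first and then mixes, or mixes first and verifies the increment bound directly, is purely cosmetic. One slip worth flagging: $\check{p}_{\tau,\nu}$ is \emph{not} invariant under rescaling $\nu$ by a positive constant --- replacing $\nu$ by $c\nu$ multiplies the unclipped ratio inside the minimum by $1/c$ --- so your ``WLOG $\nu$ is a probability measure'' step is unjustified as stated. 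This is harmless, however, because the lemma (and the paper's own proof, whose application of Ville's inequality to a supermartingale with initial value $\nu([\tau,1])$ actually yields $\nu([\tau,1])\alpha$ rather than $\alpha$) implicitly requires $\nu([\tau,1])\leq 1$; simply make that a hypothesis rather than deriving it from a false invariance.
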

\begin{proof}[Proof of Lemma~\ref{lemma:validLocalPValClassificationGeneralisation} (and hence Lemma~\ref{lemma:validLocalPValClassification})]
Fix $k\in\N$. For $t \in [0,1]$ and $z_1, \ldots, z_k \in \{0,1\}$, write $L_k(t;z_1, \ldots, z_k) := t^{s_k}(1-t)^{k - s_k}$, where $s_k := \sum_{j=1}^k z_j$, for the likelihood function of an independent sample of~$k$ Bernoulli random variables with success probability $t$. Further, let $\Bar{L}_{k,\nu}(z_1, \ldots, z_k) := \int_{[\tau,1]}  L_k(t;z_1, \ldots, z_k) \, d\nu(t)$.  Finally, let $(Z_j)_{j \in \N}$ be a sequence of independent $[0,1]$-valued random variables with $\tilde{\tau}_j:=\mathbb{E}(Z_j) \leq  \tau$.  We claim that the likelihood ratio sequence $\bigl(\Lambda_k(Z_1,\ldots, Z_k)\bigr)_{k \in \mathbb{N}}$ given by
\[
\Lambda_k(Z_1,\ldots, Z_k) := \frac{\Bar{L}_{k,\nu}(Z_1, \ldots, Z_k)}{L_k(\tau;Z_1, \ldots, Z_k)} = \int_{[\tau,1]} \frac{t^{\check{S}_k}(1-t)^{k - \check{S}_k}}{\tau^{\check{S}_k}(1-\tau)^{k - \check{S}_k}} \, d\nu(t)
\]
defines a non-negative super-martingale with respect to the filtration $(\mathcal{F}_k)_{k \in \N_0}$, where $\mathcal{F}_0$ denotes the trivial $\sigma$-algebra and $\mathcal{F}_k := \sigma(Z_1, \ldots, Z_k)$ for $k \in \N$. Indeed, by Fubini's theorem 
\begin{align*}
\mathbb{E}\bigl( &\Lambda_k(Z_1,\ldots, Z_k) \mid \mathcal{F}_{k-1}\bigr) = \int_{[\tau,1]} \mathbb{E}\biggl\{\frac{t^{\sum_{j=1}^k Z_j}(1-t)^{k-\sum_{j=1}^k Z_j} }{\tau^{\sum_{j=1}^k Z_j}(1-\tau)^{k-\sum_{j=1}^k Z_j}} \biggm| \mathcal{F}_{k-1}\biggr\} \, d\nu(t)\\
&=    \int_{[\tau,1]} \bigg( \frac{t^{\sum_{j=1}^{k-1} Z_j}(1-t)^{(k-1)-\sum_{j=1}^{k-1} Z_j} }{\tau^{\sum_{j=1}^{k-1} Z_j}(1-\tau)^{(k-1)-\sum_{j=1}^{k-1} Z_j}} \cdot \frac{1-t}{1-\tau} \cdot \mathbb{E}\biggl\{ \biggl( \frac{t(1-\tau)}{\tau(1-t)} \bigg)^{Z_k} \biggm| \mathcal{F}_{k-1} \biggr\} \bigg) \, d\nu(t)\\
& \leq \int_{[\tau,1]} \bigg( \frac{t^{\sum_{j=1}^{k-1} Z_j}(1-t)^{(k-1)-\sum_{j=1}^{k-1} Z_j} }{\tau^{\sum_{j=1}^{k-1} Z_j}(1-\tau)^{(k-1)-\sum_{j=1}^{k-1} Z_j}} \cdot \biggl\{ \tilde{\tau}_k \cdot \frac{t}{\tau}+ (1-\tilde{\tau}_k) \cdot \frac{1-t}{1-\tau} \biggr\} \biggr) \, d\nu(t) \\
& \leq \int_{[\tau,1]} \bigg( \frac{t^{\sum_{j=1}^{k-1} Z_j}(1-t)^{(k-1)-\sum_{j=1}^{k-1} Z_j} }{\tau^{\sum_{j=1}^{k-1} Z_j}(1-\tau)^{(k-1)-\sum_{j=1}^{k-1} Z_j}} \biggr) \, d\nu(t) = \Lambda_{k-1}(Z_1,\ldots, Z_{k-1}),
\end{align*}
where we have applied \citet[Lemma 9]{garivier2011kl} in the first inequality.  Now let $(Z_j)$ be an independent sequence of independent $[0,1]$-valued random variables so that $Z_j$ has the same distribution as the conditional distribution of $Y_{(j)}(x)$ given $\mathcal{D}_X$ for $j \in [n(x)]$, and $Z_j = 0$ almost surely for $j > n(x)$.  We conclude by Ville's inequality \citep{ville1939etude} that
\begin{align*}
    \mathbb{P}_P\biggl(\check{p}_{\tau,\nu}(x, \sample) \leq \alpha \biggm | \sampleX \biggr) 
    &= \mathbb{P}_P\biggl(\max_{k\in [n(x)]} \Lambda_k\bigl(Y_{(1)}(x),\ldots, Y_{(k)}(x)\bigr) \geq \frac{1}{\alpha} \biggm| \sample_X\biggr) \\
    &\leq \mathbb{P}\biggl(\sup_{k\in\N}\Lambda_k(Z_1,\ldots, Z_k) \geq \frac{1}{\alpha}\biggr) \leq \alpha,
\end{align*}
for any $\alpha \in (0,1)$, as required.
\end{proof}

\begin{proof}[Proof of Lemma~\ref{lemma:validLocalPValConditionalQuantile}] Let $\tilde{Y}_i := \one_{\{Y_i >\tau\}}$ for $i\in [n]$. Suppose that $x' \preccurlyeq x$, so that $\zeta_{\theta}(x') \leq \zeta_{\theta}(x)<\tau$.  Then $\mathbb{P}_P\bigl(Y_i \leq \tau | X_i = x'\bigr)\geq \theta$ and so
\begin{align*}
\mathbb E_P(\tilde{Y}_i\mid X_i = x') = \mathbb P_P(Y_i > \tau\mid X_i = x') \leq 1- \theta.
\end{align*}
By Hoeffding's lemma, the conditional distribution of $\tilde{Y}_i - \mathbb E_P(\tilde{Y}_i| X_i)$ given $X_i$ is sub-Gaussian with variance parameter $1/4$. The first result now follows by Lemma \ref{lemma:validLocalPValStoufferGeneralised}, and the second follows similarly from Lemma~\ref{lemma:validLocalPValClassificationGeneralisation}.
\end{proof}

\section{Further simulation results}\label{Sec:FurtherSimulations}

\subsection{Further performance comparisons}
\label{Sec:FurtherSimulations_FurtherComparisons}
To expand on the simulations in Section~\ref{sec:simulations}, we illustrate the performance of our procedure on eight more regression functions, which are presented in Table~\ref{table:sim_regression_functions_appendix} and illustrated in Figure~\ref{fig:sim_regression_functions_2D_appendix}.  Other than the choice of $f$, the simulations were carried out in identical fashion to that described in Section~\ref{sec:simulations}, and the results are illustrated in Figures~\ref{Fig:gtond2},~\ref{Fig:gtond3} and~\ref{Fig:gtond4}.

\begin{table}[htbp]
    \centering  
    \begin{tabular}{c|c| c| c }
        Label & Function $f$ & $\tau$ & $\etaIncreasingExponent(P)$\\ \hline
        (g) & $\exp\bigl(\sum_{j=1}^d x^{(j)}\bigr)$ & $\frac{e^{d/2} - 1}{e^d - 1}$ & $1$\\
        (h) & $\{1 + \exp\bigl(-4\cdot \sum_{j=1}^d (x^{(j)}-0.5)\bigr)\}^{-1}$ & $1/2$ & $1$\\
        (i) & $\sum_{j=1}^d \bigl(x^{(j)}\bigr)^3$ & $\frac{1}{d}\Bigl(\frac{\Gamma(1 + d/3)}{2\Gamma(4/3)^d}\Bigr)^{3/d}$ & $1$\\
        (j) & $\sum_{j=1}^d \bigl(x^{(j)} - 1\bigr)^3$ & $1 - \frac{1}{d}\Bigl(\frac{\Gamma(1 + d/3)}{2\Gamma(4/3)^d}\Bigr)^{3/d}$ & $1$\\
        (k) & $\sum_{j=1}^d \lceil 6\cdot x^{(j)}\rceil /6$ & $7/12$ & $0$\\
        (l) & $\sqrt{x^{(1)}} + x^{(2)}$ & $0.584$ & $1$\\

        (m) & $\bigl(\sum_{j=1}^d (x^{(j)} - 0.5)\bigr)^{1/3}$ & $1/2$ & $1/3$\\
        (n) & $\bigl(\sum_{j=1}^d (x^{(j)} - 0.5)\bigr)^{3}$ & $1/2$ & $3$
    \end{tabular}
    \caption{Definition of the functions used in the simulations. Here, $x = (x^{(1)}, \ldots, x^{(d)})^\top\in [0,1]^d$.  For the regression function $\eta$ given by $\eta(x) := \bigl(f(x) - f(0)\bigr)/\bigl(f(\bm{1}_d) - f(0)\bigr)$, we have $\marginalDistribution\bigl(\superLevelSet{\tau}{\eta}\bigr) = 1/2$, except for cases (i), (j), (k), where $\marginalDistribution\bigl(\superLevelSet{\tau}{\eta}\bigr) \approx 1/2$ for the considered values of $d$.}
    \label{table:sim_regression_functions_appendix}
\end{table}

\begin{figure}[hbtp]
    \centering
    \includegraphics[width = 0.85\textwidth]{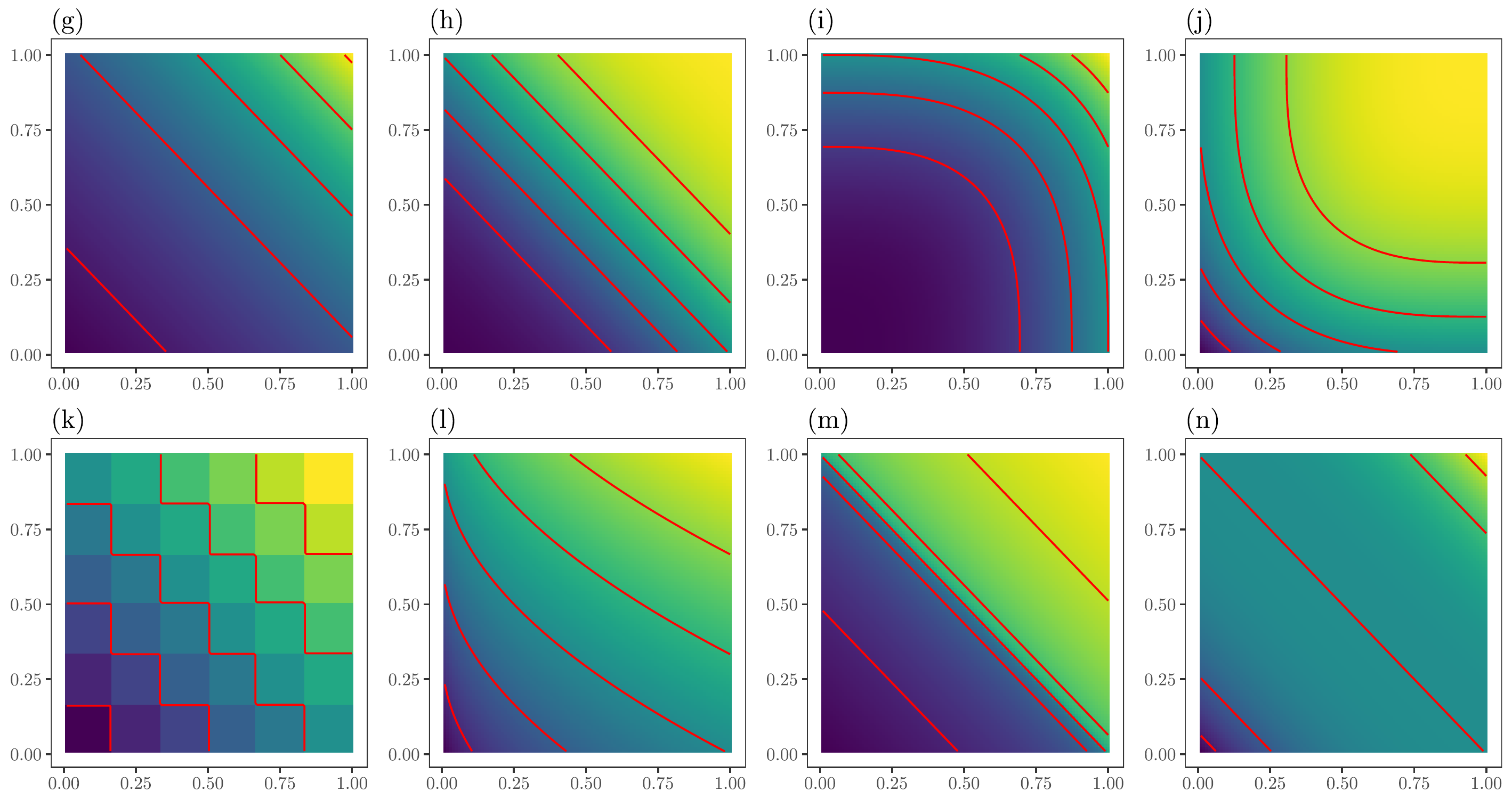}
    \caption{For $d= 2$, the contour lines (red) of the regression functions corresponding to the functions $f$ in Table~\ref{table:sim_regression_functions_appendix} at the levels $k/6$ for $k\in [5]$ are shown. The fill colour indicates the function value at the respective position from $0$ (purple) to $1$ (yellow).}
    \label{fig:sim_regression_functions_2D_appendix}
\end{figure}

\begin{figure}
    \centering
    \includegraphics[width = 0.98\textwidth]{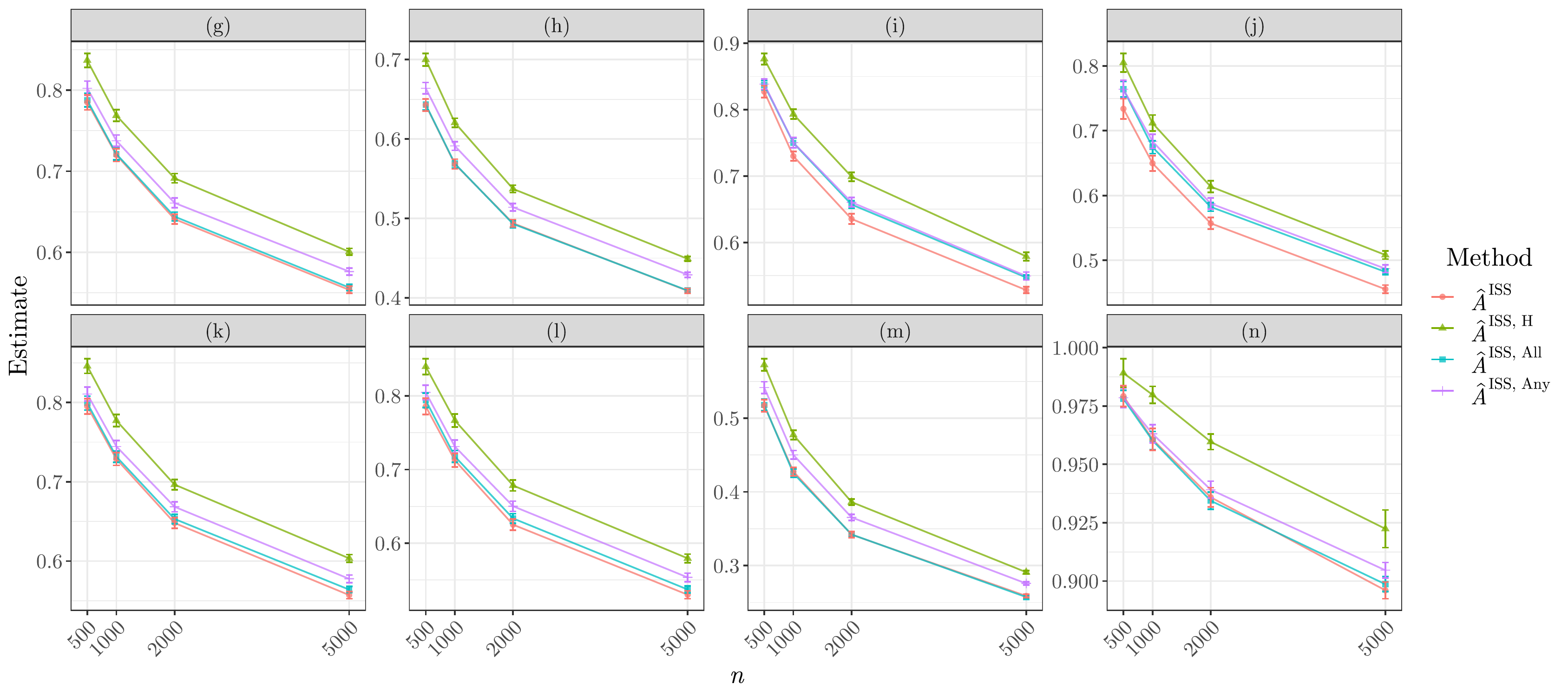}
    \caption{Estimates of $\mathbb{E}\bigl\{\mu\bigl(\superLevelSet{\tau}{\eta} \setminus \hat{A}\bigr)\bigr\}$ for $d = 2$ and $\sigma = 1/4$.}
    \label{Fig:gtond2}
\end{figure}

\begin{figure}
    \centering
    \includegraphics[width = 0.98\textwidth]{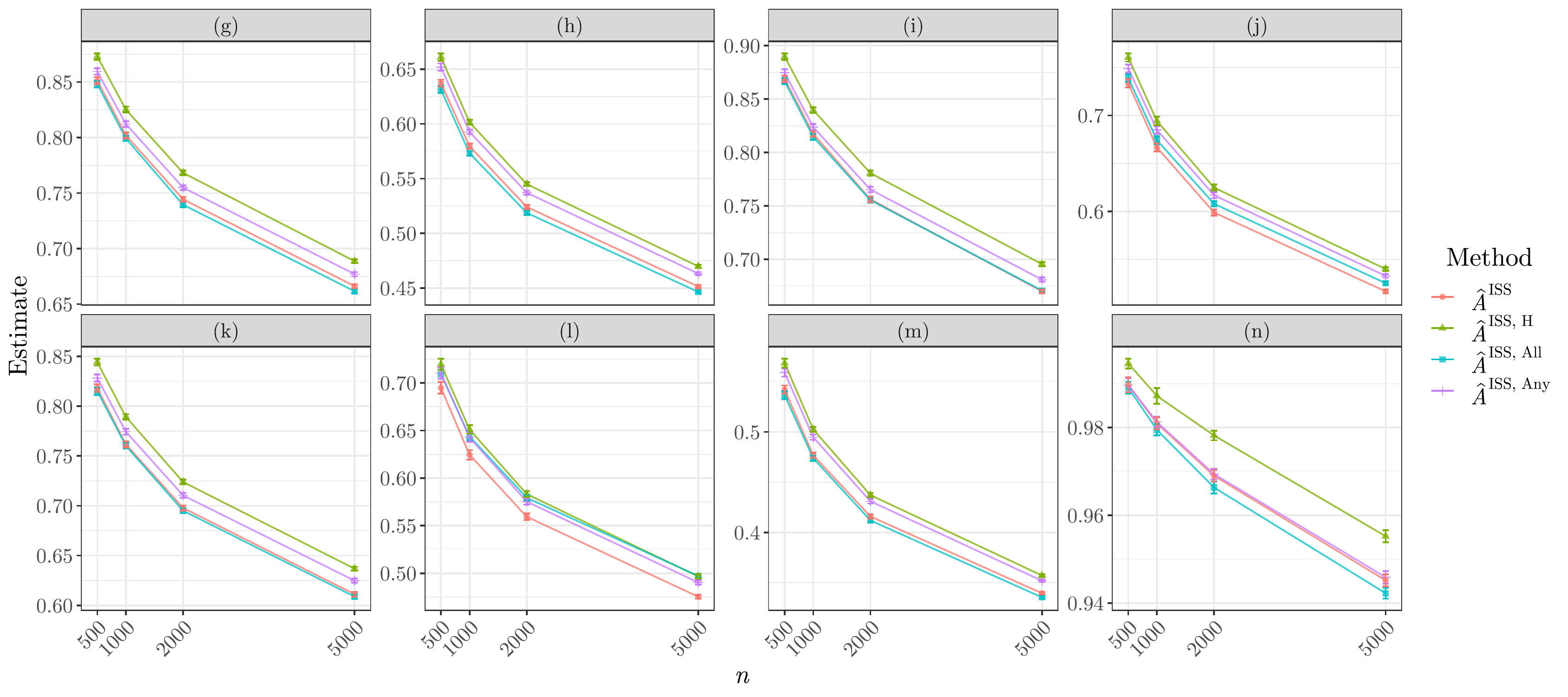}
    \caption{Estimates of $\mathbb{E}\bigl\{\mu\bigl(\superLevelSet{\tau}{\eta} \setminus \hat{A}\bigr)\bigr\}$ for $d = 3$ and $\sigma = 1/16$.}
    \label{Fig:gtond3}
\end{figure}

\begin{figure}
    \centering
    \includegraphics[width = 0.98\textwidth]{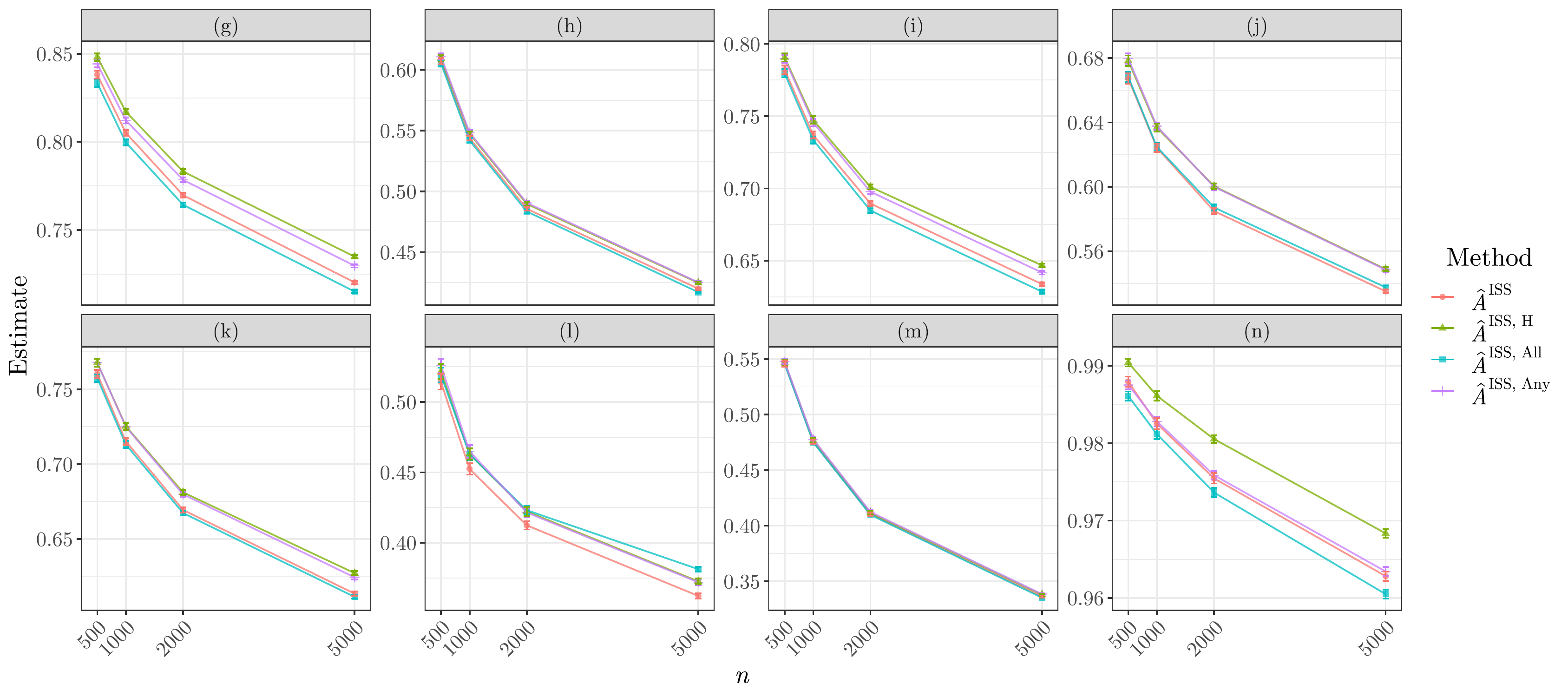}
    \caption{Estimates of $\mathbb{E}\bigl\{\mu\bigl(\superLevelSet{\tau}{\eta} \setminus \hat{A}\bigr)\bigr\}$ for $d = 4$ and $\sigma = 1/64$.}
    \label{Fig:gtond4}
\end{figure}

We now consider a comparison of $\hat{A}^{\mathrm{ISS}}$ with two possible procedures based on sample splitting. Indeed, a natural approach to combining our $p$-values is to employ a fixed sequence testing procedure \citep{hsu1999stepwise,westfall2001optimally} as we do in the univariate setting in Section \ref{sec:theory_power}.  However, since our sequence must be specified independently of the data used for testing, and there is no canonical total ordering in the multivariate case, sample splitting offers a potential way forward.  We consider two such procedures: in the first, denoted $\hat{A}^{\mathrm{Split}}$, we use the first half of the data to compute $p$-values at each of our~$n$ data points.  These are then ordered from smallest to largest, and this determines the ordering for our fixed sequence testing based on $p$-values computed on the second half of the data.  The second procedure, denoted $\hat{A}^{\mathrm{Split},\mathrm{OR}}$, discards the first half of the data and instead uses an oracle ordering of the data points using the underlying knowledge of the regression function; the second stage of the procedure is then identical to $\hat{A}^{\mathrm{Split}}$.  Results comparing $\hat{A}^{\mathrm{ISS}}$ with $\hat{A}^{\mathrm{Split}}$ and $\hat{A}^{\mathrm{Split},\mathrm{OR}}$ are presented in Figures~\ref{Fig:gtond2Split} and~\ref{Fig:gtond4Split}, which indicate that both of these sample-splitting variants have considerably worse empirical performance than $\hat{A}^{\mathrm{ISS}}$.  This is perhaps surprising given the impressive numerical results for sample splitting in conjunction with fixed sequence testing reported by \cite{angelopoulos2021learn}.  However, the performance of sample-splitting approaches is highly dependent on the procedure used to determine the ordering of the hypotheses from the first split of the data. Even exact knowledge of the regression function may be insufficient to determine an ordering with high conditional power, as the distribution of the $p$-values on the second half of the data also depends on the marginal distribution of the covariates.  This is reflected in the fact that $\hat{A}^{\mathrm{Split},\mathrm{OR}}$ has worse performance than $\hat{A}^{\mathrm{Split}}$ in some cases, especially when the regression function depends only on a strict subset of the $d$ variables, such as case (l) in Figure~\ref{Fig:gtond4Split}.

\begin{figure}
    \centering
    \includegraphics[width = 0.98\textwidth]{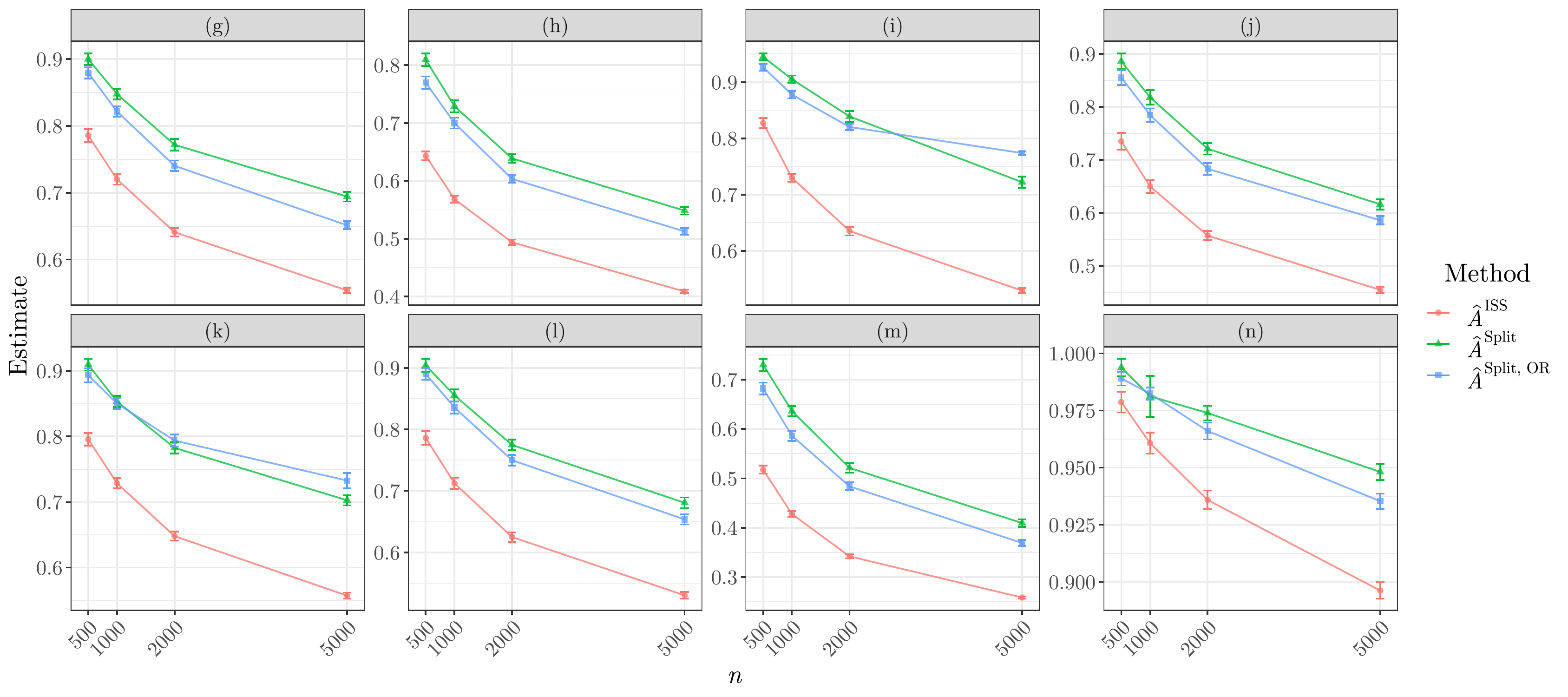}
    \caption{Estimates of $\mathbb{E}\bigl\{\mu\bigl(\superLevelSet{\tau}{\eta} \setminus \hat{A}\bigr)\bigr\}$ for $d = 2$ and $\sigma = 1/4$.}
    \label{Fig:gtond2Split}
\end{figure}

\begin{figure}
    \centering
    \includegraphics[width = 0.98\textwidth]{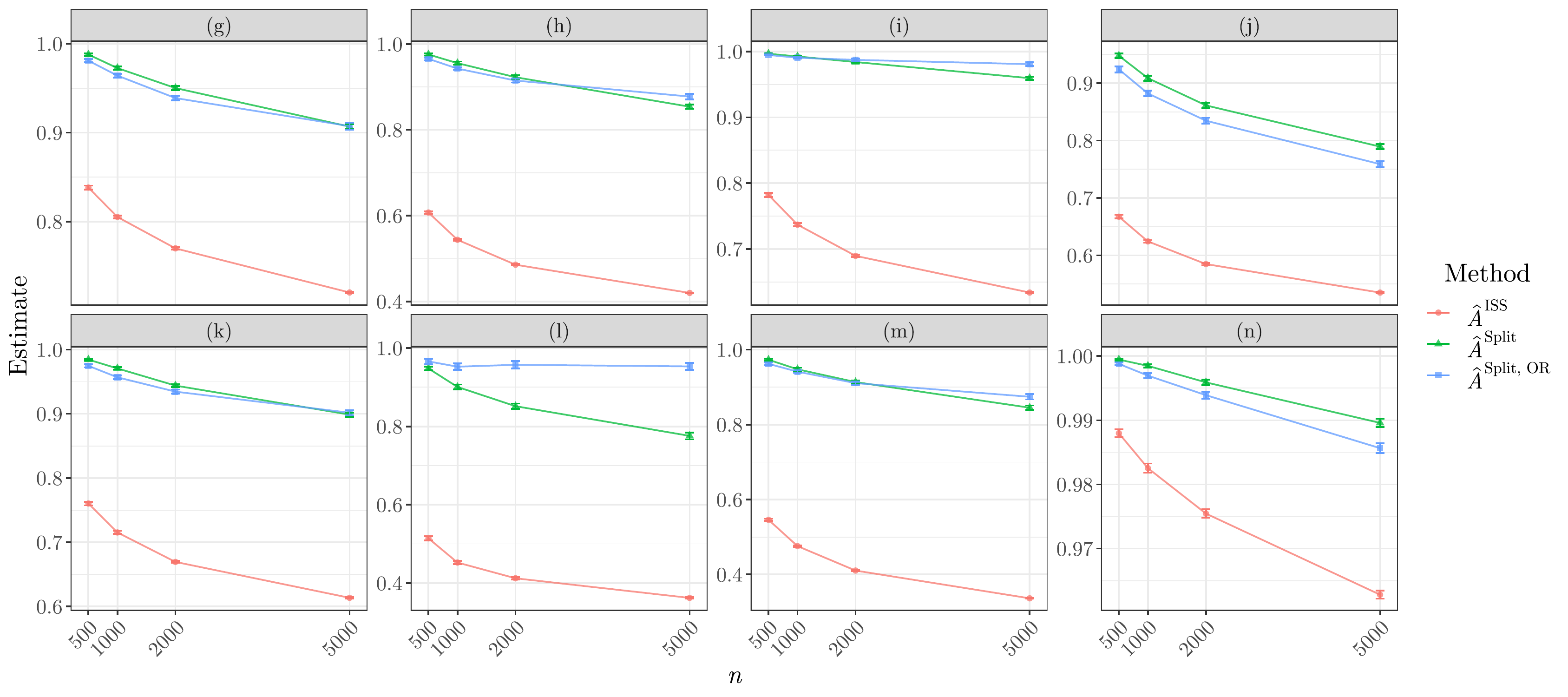}
    \caption{Estimates of $\mathbb{E}\bigl\{\mu\bigl(\superLevelSet{\tau}{\eta} \setminus \hat{A}\bigr)\bigr\}$ for $d = 4$ and $\sigma = 1/64$.}
    \label{Fig:gtond4Split}
\end{figure}

\subsection{Computation time}\label{Sec:FurtherSimulations_runtime}

In Figures~\ref{Fig:runtimed2} and~\ref{Fig:runtimed4}, we present the average computation time of the different procedures studied in Section~\ref{sec:simulations} for dimensions $d = 2,4$ and regression functions (g)--(n) above.  These reveal that the computation time varies quite substantially across the different regression functions but does not even necessarily increase at all with dimension.  These effects are related to the depth of the DAG induced by the observations as well as the power of the procedures in the different settings.
%$\frac{1}{1 + \exp\bigl(-4\cdot \sum_{j=1}^d (x^{(j)}-0.5)\bigr)}$

\begin{figure}
    \centering
    \includegraphics[width = 0.98\textwidth]{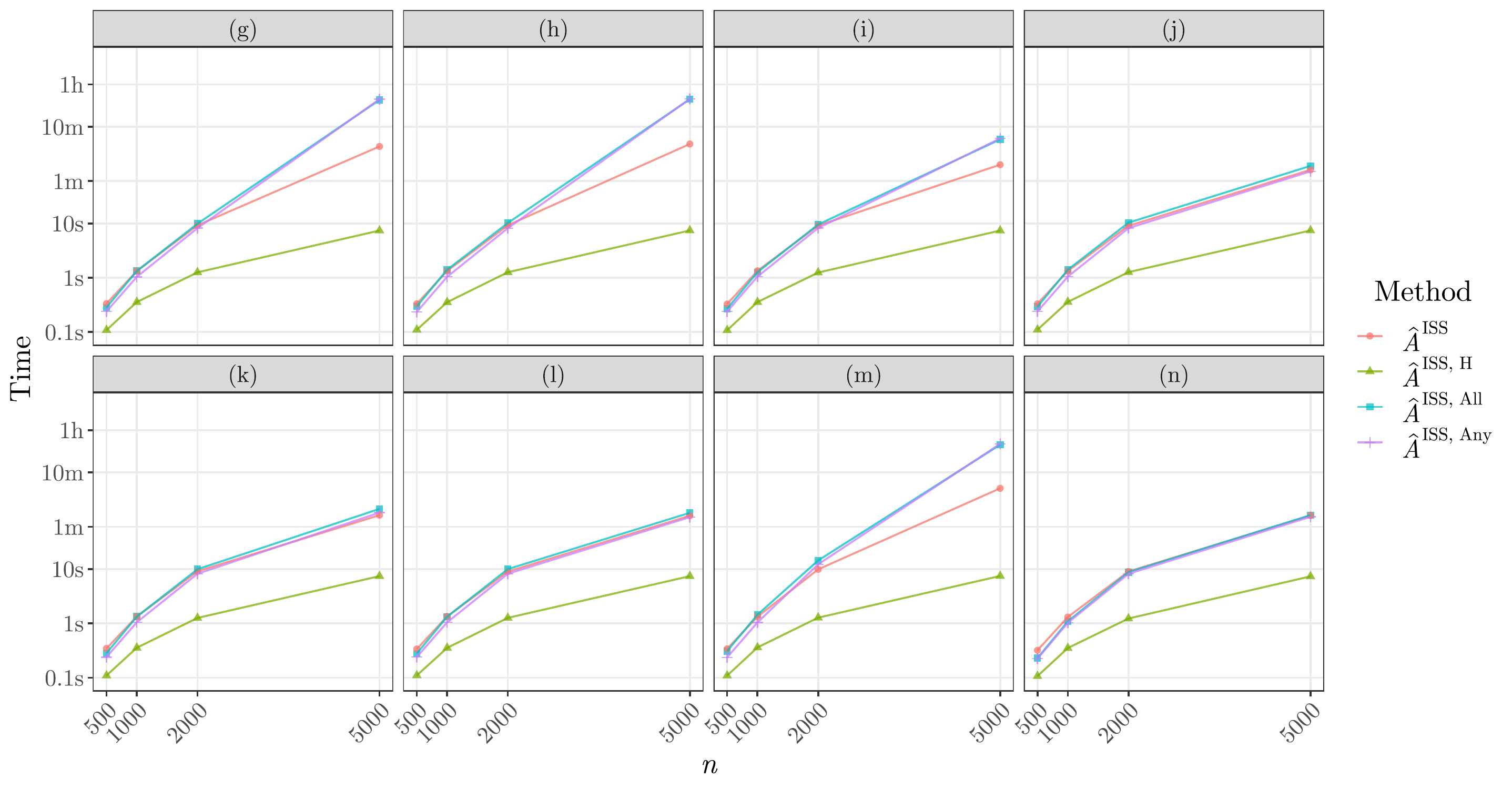}
    \caption{Computation time of the different estimates of $\mathbb{E}\bigl\{\mu\bigl(\superLevelSet{\tau}{\eta} \setminus \hat{A}\bigr)\bigr\}$ for $d = 2$ and $\sigma = 1/4$.}
    \label{Fig:runtimed2}
\end{figure}

\begin{figure}
    \centering
    \includegraphics[width = 0.98\textwidth]{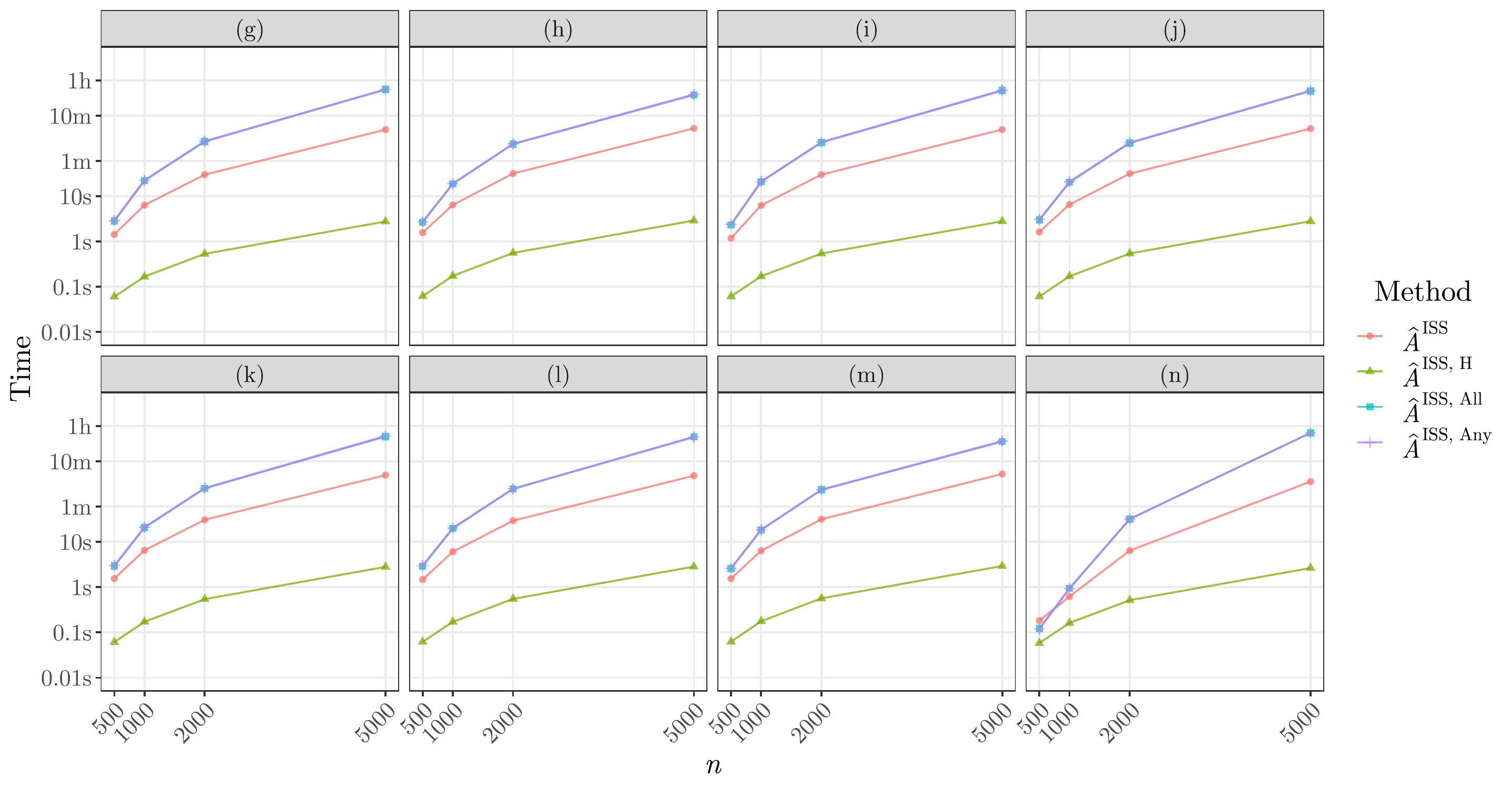}
    \caption{Computation time of the different estimates of $\mathbb{E}\bigl\{\mu\bigl(\superLevelSet{\tau}{\eta} \setminus \hat{A}\bigr)\bigr\}$ for $d = 4$ and $\sigma = 1/64$.}
    \label{Fig:runtimed4}
\end{figure}

\section{Comparison with procedure based on Meijer \& Goeman (2015)}
\label{sec:MG_appendix}
As discussed in Section~\ref{Sec:Methodology}, our proposed procedure consists of two steps; calculating $p$-values to test whether the regression function $\eta$ exceeds the threshold $\tau$ at $m$ given points and then controlling the FWER over $\distributionClassNonDecreasingRegressionFunction$ through a DAG testing procedure.  For the second step, an alternative approach would be to use the algorithm introduced by \cite{meijer2015multiple}. Indeed, the empirical results in Section~\ref{sec:simulations} suggest that such a procedure can work well in some cases.  However, we show in this section that it fails to attain the optimal worst-case regret over $\distributionClassNonDecreasingRegressionFunction\cap\distributionClassMultivariateCondition$.  

\subsection{Description of procedure}\label{sec:MG_procedure}

The iterative algorithm of \citet{meijer2015multiple} is an application of the sequential rejection principle \citep{goeman2010sequential} to hypotheses indexed by elements of $I = [m]$, for some $m\in \N$, that are a priori arranged as a DAG $G = (I, E)$\footnote{Indeed, in order to fit the more general notion of Definition~\ref{def:graphMultipilictyProcedure}, we may assume that this DAG is weighted, although the weights will be irrelevant for the procedure.}.  Inputs to the algorithm include a fixed significance level $\alpha \in (0, 1)$, a vector $\bm{v} \in (0, \infty)^m$ and $(p_i)_{i\in I} \in (0, 1]^m$, with the latter thought of as a collection of $p$-values.  Any choice of $\bm{v}$ will correspond to a DAG testing procedure, as defined by Definition~\ref{def:graphMultipilictyProcedure}.  Each iteration of the procedure comprises three steps: the first assigns to each unrejected hypothesis (or, equivalently, the corresponding node) a proportion $\alpha_i$ of the $\alpha$-budget; the second step rejects any hypothesis $i \in I$ for which $p_i \leq \alpha_i$; and the third rejects all ancestors of rejected hypotheses.  The procedure terminates if no new rejections are made in the second step of an iteration or if every hypothesis has been rejected, and hence takes at most $m$ iterations.  In more detail, in the first step, the $\alpha$-budget is split among the unrejected leaf nodes in proportion to the corresponding elements of $\bm{v}$. These budgets are then propagated from the leaf nodes towards  currently unrejected ancestor nodes. \cite{meijer2015multiple} suggest two variants for this, which we enumerate by $\omega \in \{0, 1\}$ and call the \emph{all-parent variant} ($\omega = 0$) and the \emph{any-parent variant} ($\omega = 1$). In the all-parent variant of the procedure, a node's entire budget is evenly distributed among its unrejected parents (keeping nothing for itself), whereas in the any-parent variant, the budget that would go to rejected parents if it were evenly distributed among all parents simply stays at the node and only the remaining budget is evenly distributed among the unrejected parents. Importantly, the order in which the nodes pass their budgets to their parents follows a reverse topological ordering of $G$; all reverse topological orderings $\pi_G$ lead to the same output in Algorithm~\ref{algo:MG}, making the specific choice immaterial.  Thus, a node only distributes its budget once all of its descendants have distributed theirs.  Once this budget propagation has terminated, we move to the second step and reject all hypotheses whose $p$-value does not exceed the assigned budget. Finally, the third step is only relevant in the any-parent variant of the procedure, and rejecting the ancestors of nodes rejected at the second step does not increase the Type~I error rate when $G$ is $G_0$-consistent for a directed graph $G_0$ encoding all logical relationships between hypotheses (see Section~\ref{sec:theory_validity}).  A concise formal description of the \citet{meijer2015multiple} procedure, which outputs a set $\mathcal{R}_\alpha^{\mathrm{MG}, \omega, \bm{v}}(G,\bm{p})$ of rejected hypotheses, is given in Algorithm~\ref{algo:MG}. 

\cite{meijer2015multiple} prove that Algorithm~\ref{algo:MG} satisfies the two sufficient conditions for controlling the FWER described by \cite{goeman2010sequential}.  The DAG testing procedures $\mathcal{R}^{\mathrm{MG}, \omega, \bm{v}}$ for $\omega \in \{0, 1\}$ motivate the following selection sets\footnote{We deviate slightly from the notation in Section~\ref{sec:simulations}:  $\hat{A}^{\mathrm{ISS},\mathrm{All}}\equiv \hat{A}^{\mathrm{ISS},0}$ and $\hat{A}^{\mathrm{ISS},\mathrm{Any}}\equiv \hat{A}^{\mathrm{ISS},1}$.}
\begin{align*}
\hat{A}^{\mathrm{ISS}, \omega} &\phantom{:}\equiv
\MGselectionset \\
&:= \bigl\{ x \in \R^d : \! X_{i_0} \preccurlyeq x\text{ for some }i_0 \in \rejectionSet_{\alpha}^{\mathrm{MG}, \omega, \bm{v}}\bigl(\mathcal{G}_{\mathrm{W}}(\sample_{X,m}),\bigl(\hat{p}_{\sigma,\tau}(X_i,\sample)\bigr)_{i \in [m]}\bigr)\bigr\}.
\end{align*}
Indeed, by a proof analogous to that of Theorem~\ref{thm:validSelectionSet}, we have $\mathbb{P}_P\bigl(\MGselectionset \subseteq \superLevelSet{\tau}{\eta} | \sample_X\bigr) \geq 1 - \alpha$ whenever $P \in \distributionClassNonDecreasingRegressionFunction$.  However, the budget propagation mechanism in the first step of each iteration has an important drawback: if $I_0 \subseteq I$ is such that there exists $i_* \in I$ with $\{i_*\} = \ch_G(i)$ for all $i\in I_0$, then the sum of the budgets assigned to the nodes in $I_0$ can never exceed the budget that passes through node $i_*$. Moreover, the same conclusion holds for ancestors of nodes in $I_0$ that do not have descendants belonging to an antichain with $i_*$. Intuitively, this can make $i_*$ a bottleneck in the sense that the potentially large number of hypotheses $I_0$ may each only receive a fraction of the budget propagated through $i_*$.

% Each $p$-value is then compared to the assigned $\alpha$-budget, which, if any new rejections are made, triggers a new iteration. The $\alpha$-budget propagation is itself iterative too and happens at every step of the outer iteration just described. Specifics are given in Algorithm~\ref{algo:MG}, but the main idea is first to distribute the $\alpha$-budget among the leaf nodes of the DAG and then, starting with the leaf nodes and at each step of this inner iteration, distribute a node's $\alpha$-budget among its unrejected parents. 

% \newcommand\mycommfont[1]{\ttfamily\textcolour{blue}{#1}}
% \SetCommentSty{mycommfont}
% \DontPrintSemicolon

\begin{algorithm}[H]
    \small
    \SetAlgoLined
        \textbf{Input: } $\omega \in \{0, 1\}$, $\alpha \in (0, 1)$, $m\in\N$, a weighted DAG $G = ([m], E, \bm{w})$, $\bm{p} = (p_i)_{i \in [m]} \in (0,1]^m$, $\bm{v} = (v_i)_{i \in [m]} \in (0, \infty)^{m}$\;
        $\pi_G \leftarrow $ a topological ordering of $G$\;
        
        $R^\omega_0 \leftarrow \emptyset$\;
        
        \For{$\ell \in [m]$}{
        
           $S_\mathrm{L} \leftarrow L(G) \setminus R^\omega_{\ell - 1}$ \tcp{$S_\mathrm{L}$ is the set of currently unrejected leaf nodes} 
            $v_i^* \leftarrow v_i/\bigl(\sum_{i'\in S_\mathrm{L}} v_{i'}\bigr)$ for all $i\in S_\mathrm{L}$\; 
            $\alpha_{\ell, 0}^\omega(i)\leftarrow \mathbbm{1}_{\{i \in S_\mathrm{L}\}}\cdot \alpha \cdot v^*_i$ for all $i\in [m]$\;
            \tcp{iteratively distribute the $\alpha$-budget:}
            \For{$k \in [m]$}{
            $i \leftarrow \pi_G^{-1}(k)$ \tcp{iterate through the nodes in order} 
            $R_\mathrm{P} \leftarrow \pa_G(i) \cap R^\omega_{\ell - 1}$ \tcp{currently rejected parents of node $i$}
            $S_\mathrm{P} \leftarrow \pa_G(i) \setminus R^\omega_{\ell - 1}$ \tcp{currently unrejected parents of node $i$}
            
            \uIf{$S_\mathrm{P} \neq \emptyset$}{
            
                \uIf{$\omega = 0$}{
                \tcp{evenly distribute the entire $\alpha$-budget among nodes in $S_\mathrm{P}$:}
                $\alpha_{\ell, k}^\omega(j) \leftarrow \alpha_{\ell, k-1}^\omega(j) + \frac{\alpha_{\ell, {k-1}}^\omega(i)}{|S_\mathrm{P}|}$ for all $j \in S_\mathrm{P}$\;
                \tcp{no $\alpha$-budget remains in the node $i$ itself:}
                $\alpha_{\ell, k}^\omega(i) \leftarrow 0$\;
                } \Else {
                \tcp{divide $\alpha$-budget among $R_\mathrm{P} \cup S_\mathrm{P}$, but only distribute to $S_\mathrm{P}$:}
                $\alpha_{\ell, k}^\omega(j) \leftarrow \alpha_{\ell, k-1}^\omega(j) + \frac{\alpha_{\ell, {k-1}}^\omega(i)}{|\pa_G(i)|}$ for all $j \in S_\mathrm{P}$\;
                \tcp{keep the $\alpha$-budget that would go to nodes in $R_\mathrm{P}$ in node $i$:}
                $\alpha_{\ell, k}^\omega(i) \leftarrow |R_\mathrm{P}| \cdot \frac{\alpha_{\ell, {k-1}}^\omega(i)}{|\pa_G(i)|}$\;
                }
                
            } \Else {
                
                $\alpha_{\ell, k}^\omega(i) \leftarrow \alpha_{\ell, k - 1}^\omega(i)$\;
                    
            }
            }
            
            $\alpha_{\ell}^\omega(i) \leftarrow \alpha_{\ell, m}^\omega(i)$ for all $i \in [m]$\;
            \tcp{reject nodes based on the final distribution of the $\alpha$-budget:}
            $N \leftarrow \{i \in [m]: p_i \leq \alpha_{\ell}^\omega(i)\}$\;
            \If{$\omega = 1$}{$N\leftarrow N \cup \bigcup_{i\in N} \an_G(i)$\;}
            \If{$N = \emptyset$}{
            $R^\omega_{m} \leftarrow R^\omega_{\ell - 1}$\;
            \textbf{break}\;
            }
            $R^\omega_\ell \leftarrow R^\omega_{\ell - 1} \cup N$\;
        }
        \KwResult{The set of rejected hypotheses $\mathcal{R}^{\mathrm{MG}, \omega, \bm{v}}_\alpha(G, \bm{p}) := R^\omega_{m}$}
        
        \caption{The \cite{meijer2015multiple} one-way logical relation DAG testing procedure $\mathcal{R}^{\mathrm{MG}}$.}
        \label{algo:MG}
\end{algorithm}

\subsection{Sub-optimal worst-case performance}\label{sec:negativeMG_theory}

The following proposition illustrates that using the \cite{meijer2015multiple} procedure in our setting leads to a sub-optimal worst-case rate, as seen by comparison with the upper bound for $\hat{A}^{\mathrm{ISS}}$ established in Theorem~\ref{thm:powerBound}.
\begin{prop}\label{prop:negative_res_MG_finite_sample} Let $d \geq 2$, $\tau \in \R$, $\sigma,\etaIncreasingExponent, \etaIncreasingConstant > 0$,  $\densityConstant \in [2^d,\infty)$, $\alpha \in (0, 1/4]$ and $\omega \in \{0, 1\}$. There exists $c > 0$, depending only on $d$, $\alpha$, $\sigma$, $\etaIncreasingConstant$ and $\etaIncreasingExponent$, such that for every $n \in \N$,
\begin{align*}
    %\sup_{P\in\mathcal{P}_{0}(\tau, \sigma, \etaIncreasingExponent, \etaIncreasingConstant,\densityConstant)}  &\E_P\bigl\{ \marginalDistribution\bigl(\superLevelSet{\tau}{\regressionFunction}\setminus\hat{A}_{\sigma,\tau,\alpha,m}(\sample,\rejectionSet^{\mathrm{MG}, \omega}_\alpha)\bigr)\bigr\} \\
    %&\geq \frac{c}{n^{1/(2\etaIncreasingExponent + d + 1)}} \cdot \biggl[\frac{1}{(\log_+ n)^{2/d}} \wedge \Bigl\{\log_+\Bigl(\frac{m}{\alpha}\Bigr) + \log_+ \log n\Bigr\} ^{1/(2\etaIncreasingExponent+d)}\biggr] \\
    %&= \frac{c}{n^{1/(2\etaIncreasingExponent + d + 1)}(\log_+ n)^{2/d}},
    \min_{m\in [n]} \sup_{P\in \mathcal{P}'} \inf_{\bm{v} \in (0, \infty)^m} \E_P\bigl\{ \marginalDistribution\bigl(\superLevelSet{\tau}{\regressionFunction}\setminus \MGselectionset\bigr)\bigr\} \geq \frac{c}{n^{1/(2\etaIncreasingExponent + d + 1)}(\log_+ n)^{2/d}},
\end{align*}
where $\mathcal{P}' := \distributionClassNonDecreasingRegressionFunction \cap \distributionClassMultivariateCondition$. %\red{(MM: could also state the result in such a way that we retain the log-terms from our positive result at the cost of e.g.\ going from $n^{1/(2\etaIncreasingExponent + d + 1)}$ to $n^{1/(2\etaIncreasingExponent + d + 1/2)}$.)}
\end{prop}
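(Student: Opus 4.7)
The plan is to construct, for each $m \in [n]$ and $q \in \N$ (to be tuned), a distribution $P_q \in \mathcal{P}'$ on which $\MGselectionsetNoV$ incurs large regret irrespective of $\bm{v}$.  Following Propositions~\ref{prop:lowerBound_alpha_multivariate} and~\ref{prop:lowerBoundParametric}, I would take $\mu$ to be uniform on $[0,1]^d$ (which satisfies Definition~\ref{def:multivariateAssumption}\emph{(i)} since $\theta \geq 2^d$), fix a maximum antichain $\mathbb{W}_{q,d} \subseteq [q]^d$ with $|\mathbb{W}_{q,d}| \geq q^{d-1}/d$, and let $\eta_q$ be the regression function defined by~\eqref{def:regFuncDef}.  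An argument analogous to Lemma~\ref{lemma:antichain_distributions_properties} then shows that $P_q \in \mathcal{P}'$, so the task reduces to lower-bounding $\E_{P_q}\bigl\{\mu\bigl(\superLevelSet{\tau}{\eta_q} \setminus \MGselectionsetNoV\bigr)\bigr\}$.

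The key step is a flow-based constraint on the MG $\alpha$-budget.  At each iteration of Algorithm~\ref{algo:MG}, the restarted leaf budget $\alpha v_i^*$ is propagated upwards in such a way that, in both variants of the procedure, the total budget distributed across any antichain cut of currently unrejected nodes is exactly $\alpha$.  Once the cells in $\mathcal{H}_\infty^q$ have been rejected (which occurs with high probability thanks to the signal $+\lambda$ there), the procedure is forced to peel each antichain cell $\mathcal{H}_{\bm{j}}^q$ downwards from its maximum, because every data point in such a cell has parents both in $\mathcal{H}_\infty^q$ and at higher positions within $\mathcal{H}_{\bm{j}}^q$ itself.  Combining conservation of the budget with a pigeonhole argument that is uniform in $\bm{v}$ and $\omega$, I would show that a constant fraction of the antichain cells receive aggregate budget of at most $2\alpha/|\mathbb{W}_{q,d}|$ across all iterations at which their interior nodes can possibly be rejected, so that the rejected points inside each such cell are confined to a shell near its maximum corner.

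To conclude the power analysis, I would combine this budget constraint with concentration of $\hat{p}_{\sigma,\tau}$.  A Paley--Zygmund-type lower bound on the supermartingale $(S_k)$ from Definition~\ref{defn:pValue} shows that with constant probability, $\min_{k \in [n(X_i)]} \hat{p}_{\sigma,\tau}(X_i, \sample) \geq \exp\bigl(-Cn\lambda^2/(q^{2\gamma+d}\sigma^2)\bigr)$ whenever $X_i$ lies in the interior of $\mathcal{H}_{\bm{j}}^q$.  Combining these estimates, a constant fraction of the antichain cells have their lower corners uncovered by $\MGselectionsetNoV$, contributing expected regret of order $|\mathbb{W}_{q,d}| \cdot q^{-d} \gtrsim 1/q$.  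Matching the $p$-value lower bound against the diluted MG budget and optimising over $q$ produces the exponent $1/(2\gamma+d+1)$, and the factor $(\log_+ n)^{2/d}$ arises from probabilistic control of the number of Pareto-maximal points within each antichain cell of the uniform sample.

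The principal obstacle is making the shell-depth argument fully quantitative, which requires a careful coupling between the iterative rejection order of Algorithm~\ref{algo:MG} and the random geometry of $\sample_{X,m}$ inside each antichain cell.  This is considerably more delicate than the purely combinatorial flow argument, and the any-parent variant adds further complication because a node may retain budget when some of its parents are already rejected, so an inductive control across iterations is required.  Finally, uniformity of the bound over $\bm{v}$ demands verifying that no concentration of weights can circumvent the fundamental flow conservation across the antichain layer separating $\mathcal{H}_\infty^q$ from the hypercubes in $\bigcup_{\bm{j} \in \mathbb{W}_{q,d}} \mathcal{H}_{\bm{j}}^q$.
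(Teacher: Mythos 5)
Your proposed construction does not yield the claimed rate, and the discrepancy is not a matter of tightening constants --- it is fundamental. With the antichain hypercubes from Propositions~\ref{prop:lowerBound_alpha_multivariate}--\ref{prop:lowerBoundParametric}, a uniform marginal, and budget allocated to $|\mathbb{W}_{q,d}| \approx q^{d-1}$ cells, the budget reaching each cell is only polynomially small, of order $\alpha/q^{d-1}$. Balancing this against the $p$-value bound $\exp\bigl(-n\lambda^2/(q^{2\gamma+d}\sigma^2)\bigr)$ forces $q \asymp (n/\log n)^{1/(2\gamma + d)}$, so the regret you would extract is of order $(\log n/n)^{1/(2\gamma+d)}$. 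That exponent is $1/(2\gamma+d)$, not $1/(2\gamma+d+1)$, and it matches the regret achieved by $\hat{A}^{\mathrm{ISS}}$ in Corollary~\ref{Cor:PowerBound} up to polylogarithmic factors. Hence your construction would show, at best, that the Meijer--Goeman procedure is \emph{as good as} the optimal procedure up to logs --- it cannot exhibit the strict sub-optimality that Proposition~\ref{prop:negative_res_MG_finite_sample} asserts.

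The missing ingredient is the \emph{exponential} dilution of the $\alpha$-budget, which polynomial ``flow conservation over an antichain cut'' cannot produce. The paper constructs a distribution whose marginal places atoms at $2q$ points $z_{\bm{j}}$ forming a ladder in the first two coordinates, together with an absolutely continuous box $A$ sitting above them (Figure~\ref{fig:negativeMG_constructed_distribution}). At each atom, the regression function sits far below $\tau$, guaranteeing the corresponding hypotheses are not rejected. The induced DAG then has a \emph{single} leaf node, and every directed path from a node in $A$ to that leaf must pass through a specific bottleneck node $i_q$; moreover, the two-column structure forces the budget to halve at each of the $q-1$ rungs of the ladder, so the total budget ever available inside $A$ is at most $\alpha/2^{q-1}$ (Lemma~\ref{lemma:negativeMG_problematic_setup}). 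The $p$-values in the boundary shell of $A$ then only need to exceed $\alpha/2^{q-1}$ rather than $\alpha/q^{d-1}$, and this exponential threshold is what shifts the exponent to $1/(2\gamma+d+1)$ after balancing against the constraint $q \lesssim m/\log m$ from Lemma~\ref{lemma:negativeMG_chernoff}. Neither your flow-conservation argument nor your claim that each cell receives at most $2\alpha/|\mathbb{W}_{q,d}|$ reflects this mechanism; indeed, with a uniform marginal and generic sample geometry there is no bottleneck node and no exponential decay, which is precisely why the paper must engineer a custom atomic marginal rather than reuse the antichain constructions from Section~\ref{sec:lowerBound}.
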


\begin{figure}
    \centering
    \begin{subfigure}[b]{0.45\textwidth}
    \resizebox{\textwidth}{!}{
        \begin{tikzpicture}
        %Styles
        \tikzstyle{black circle}=[fill=black, draw=black, shape=circle, minimum size=2pt,inner sep=2pt]
        \tikzstyle{black dot}=[fill=black, draw=black, shape=circle, minimum size=0.75pt,inner sep=0pt]
        
        % Rescale to [0, 10]^2 from standard unit square
        % Frame
        \draw[gray!80] (-1.5, -0.5) rectangle (10.5, 10.5);
        
        % Superlevelset
        \draw [draw = black, fill = Cerulean!40] (0, 5) rectangle (5,10);
        \node (1) at (2, 5.35) {\footnotesize $A = \superLevelSet{\tau}{\regressionFunction} \cap \support(\marginalDistribution)$};
        \draw [draw = black, fill = Cerulean!80] (1, 6) rectangle (5,10);
        \node (2) at (3, 6.35) {\footnotesize $A_\epsilon := \superLevelSet{\tau + \epsilon}{\regressionFunction} \cap \support(\marginalDistribution)$};
        
        % suppose q = 5:
        \foreach \jOne in {1, 2}{
            \foreach \jTwo in {1, 2}{
                \node [style=black circle, label = 180:{\large $z_{(\jOne, \jTwo)}$}] (\jOne\jTwo) at ({10 * (\jOne - 1)}, {6 * (\jTwo - 1) / 10}) {};
            }
            
            % do jTwo == 5 manually for label q
            \node [style=black circle, label = 180:{\large $z_{(\jOne, q)}$}] (\jOne5) at ({10 * (\jOne - 1)}, {6 * (5 - 1) / 10}) {};
            
            % add dots in between
            \foreach \jTwo in {3, 3.5, 4}{
                \node [style=black dot] (\jOne\jTwo) at ({10 * (\jOne - 1)}, {6 * (\jTwo - 1) / 10}) {};
            }
        }
        
    \end{tikzpicture}
    }
    \subcaption{The support of $\marginalDistribution$ and the shape of the $\tau$- and $(\tau+\epsilon)$-superlevel set of $\regressionFunction$ for $\epsilon \in [0, \etaIncreasingConstant/2^\etaIncreasingExponent]$ in the case $d = 2$.}
    \label{fig:negativeMG_unitsquare_subfigure}
    \end{subfigure}\hspace{0.5cm}%
    \begin{subfigure}[b]{0.45\textwidth}
    \resizebox{\textwidth}{!}{
    \begin{tikzpicture}[shorten >= 2pt, shorten <= 2pt, ->,x = 1cm, y = 1cm]
        %Styles
        \tikzstyle{vertex}=[circle,fill=black!10,minimum size=12pt,inner sep=0pt, font = \scriptsize]
        \tikzstyle{dot}=[circle,fill=black!10,minimum size=2pt,inner sep=0pt]
        \tikzstyle{dark dot}=[circle,fill=black!30,minimum size=2pt,inner sep=0pt]
        \tikzstyle{vertex superlevelset}=[circle,fill=black!10,minimum size=12pt,inner sep=0pt, font = \scriptsize]
        \tikzstyle{dot superlevelset}=[circle,fill=black!10,minimum size=2pt,inner sep=0pt, font = \scriptsize]
        \tikzstyle{fitellipse}=[rotate fit = 45, fill = black!30, ellipse, inner sep = 3pt]
        
        % Rescale to [0, 10]^2 from standard unit square
        % Frame
        \draw[gray!80] (-1.5, -0.75) rectangle (10.5, 10.25);
        
        % Nodes
        % Min node of each block
        \node [vertex] (1_a) at (0, 0) {$i_{\mathrm{L}}$};
        \node [vertex] (2_a) at (1, 2) {$\cdot$};
        \node [vertex] (3_a) at (2, 5) {$\cdot$};
        \node [vertex] (4_a) at (6, 1) {$\cdot$};
        \node [vertex] (5_a) at (7, 3) {$\cdot$};
        \node [vertex] (6_a) at (8, 6) {$\cdot$};
        
        % ellipsis blocks
        \node [dark dot] (left_middle) at (2, 4) {};
        \node [dark dot] (left_upper) at (2, 4.2) {};
        \node [dark dot] (left_lower) at (2, 3.8) {};

        \node [dark dot] (right_middle) at (8, 5) {};
        \node [dark dot] (right_upper) at (8, 5.2) {};
        \node [dark dot] (right_lower) at (8, 4.8) {};
        
        % complete each block
        \foreach \block in {1,...,6}{
        \node [dot] (\block_b) at ($ (\block_a) + (0.5, 0.5) $) {};
        \node [dot, below left = 0.1cm of \block_b] (\block_lower) {};
        \node [dot, above right = 0.1cm of \block_b] (\block_upper) {};
        \node [vertex] (\block_c) at ($ (\block_a) + (1,1) $) {$\cdot$};
        }
        
        % label max node of each block
        \node [vertex] (1_c_label) at (1_c) {$i_1$};
        \node [vertex] (2_c_label) at (2_c) {$i_2$};
        \node [vertex] (3_c_label) at (3_c) {$i_q$};
        
        \node [vertex] (6_c_label) at (6_c) {$i_2^*$};        
        
        % edges within each blocks
        \foreach \block in {1,...,6}{
            \draw [black] (\block_c) -- (\block_upper);
            \draw [black] (\block_lower) -- (\block_a);
        }
        
        % edges between blocks 
        \foreach \from/\to in {2_a/1_c, left_lower/2_c, 5_a/2_c, 3_a/left_upper, 6_a/3_c, 4_a/1_c, 5_a/4_c, right_lower/5_c, 6_a/right_upper}{
            \draw [black] (\from) -- (\to);
        }

        % superlevelset nodes
        \node [vertex superlevelset] (lower_1) at (3, 8.5) {$\cdot$};
        \node [vertex superlevelset] (lower_2) at ($(lower_1) +(1, -1)$) {$\cdot$};
        \node [vertex superlevelset] (upper_1) at ($(lower_1) + (1, 1)$) {$\cdot$};
        \node [vertex superlevelset] (upper_2) at ($(lower_2) + (1, 1)$) {$\cdot$};

        % superlevelset dots
        \node [dot superlevelset] (lower_dot_middle) at ($(lower_1) + (0.5, -0.5)$) {}; 
        \node [dot superlevelset] (lower_dot_left) at ($(lower_dot_middle) + (-0.15, 0.15)$) {}; 
        \node [dot superlevelset] (lower_dot_right) at ($(lower_dot_middle) + (0.15, -0.15)$) {}; 
        
        \node [dot superlevelset] (upper_dot_middle) at ($(upper_1) + (0.5, -0.5)$) {}; 
        \node [dot superlevelset] (upper_dot_left) at ($(upper_dot_middle) + (-0.15, 0.15)$) {}; 
        \node [dot superlevelset] (upper_dot_right) at ($(upper_dot_middle) + (0.15, -0.15)$) {}; 
        
        \node [dot superlevelset] (left_dot_middle) at ($(lower_1) + (0.5, 0.5)$) {}; 
        \node [dot superlevelset] (left_dot_lower) at ($(left_dot_middle) + (-0.15, -0.15)$) {}; 
        \node [dot superlevelset] (left_dot_upper) at ($(left_dot_middle) + (0.15, 0.15)$) {}; 
        
        \node [dot superlevelset] (right_dot_middle) at ($(lower_2) + (0.5, 0.5)$) {}; 
        \node [dot superlevelset] (right_dot_lower) at ($(right_dot_middle) + (-0.15, -0.15)$) {}; 
        \node [dot superlevelset] (right_dot_upper) at ($(right_dot_middle) + (0.15, 0.15)$) {}; 
        
        \node [dot superlevelset] (center_dot) at ($(lower_1) + (1, 0)$) {}; 
        \node [dot superlevelset] (center_dot_upperleft) at ($(center_dot) + (0, 0.15)$) {}; 
        \node [dot superlevelset] (center_dot_upperright) at ($(center_dot) + (0.15, 0)$) {}; 
        \node [dot superlevelset] (center_dot_lowerright) at ($(center_dot) + (0, -0.15)$) {}; 
        \node [dot superlevelset] (center_dot_lowerleft) at ($(center_dot) + (-0.15, 0)$) {}; 
        
        % superlevel set edges
        \foreach \to/\from in {3_c/lower_1, 3_c/lower_2, lower_1/left_dot_lower, lower_1/center_dot_lowerleft, lower_2/center_dot_lowerright, lower_2/right_dot_lower, left_dot_upper/upper_1, center_dot_upperleft/upper_1, center_dot_upperright/upper_2, right_dot_upper/upper_2} {
        \draw [black] (\from) -- (\to);
        }

        % block outlines
        \begin{pgfonlayer}{background}
        \foreach \block/\jOne/\jTwo in {1/1/1, 2/1/2, 3/1/q, 4/2/1, 5/2/2, 6/2/q}{
        \node [rotate fit = 45, fit = (\block_a)(\block_b)(\block_c), label = {$I_{(\jOne, \jTwo)}$}, shape = rectangle, fill = black!20, inner sep = 6pt, rounded corners = 0.2cm] {};
        }
        \node [rotate fit = 45, fit = (lower_1)(lower_2)(upper_1)(upper_2), shape = rectangle, fill = black!20, inner sep = 6pt, label = {$I_A$}, rounded corners = 0.2cm] {};
        \end{pgfonlayer}

    \end{tikzpicture}
    }
    \caption{The DAG induced by the points in $\sample_{X,m}$ in Lemma~\ref{lemma:negativeMG_problematic_setup} for any $d \geq 2$. Note $i_q = i_1^*$. See also the proof of Lemma~\ref{lemma:negativeMG_problematic_setup} for notation.}
    \label{fig:negativeMG_DAG_subfigure}
    \end{subfigure}    
    
    \caption{Illustration of the probability distribution defined in Section~\ref{sec:negativeMG_theory} and the resulting induced graph.  The construction demonstrates a problematic consequence of the bottleneck effect described at the end of Section~\ref{sec:MG_procedure}: in order to identify the superlevel set, we need to reject nodes in $I_A$, but unless rejections have been made in previous iterations of Algorithm~\ref{algo:MG}, the combined budget of the nodes in $I_A$ cannot exceed what is passed through $i_q$, which will be very little, as most is propagated towards $i_2^*$.}
    \label{fig:negativeMG_constructed_distribution}
\end{figure}

The main idea of the proof of Proposition~\ref{prop:negative_res_MG_finite_sample} is to construct a distribution in $\distributionClassNonDecreasingRegressionFunction \cap \distributionClassMultivariateCondition$, for which the \citet{meijer2015multiple} algorithm propagates little budget to points in the $\tau$-superlevel set of the regression function $\regressionFunction$.  This distribution, which belongs to $\distributionClassNonDecreasingRegressionFunction \cap \distributionClassMultivariateCondition$ (Lemma~\ref{lemma:negativeMG_distribution}), is illustrated in Figure~\ref{fig:negativeMG_constructed_distribution}.  It consists of $q$ pairs of atoms, where the regression function $\eta$ is well below $\tau$, as well as an absolutely continuous component, where $\eta$ is at least $\tau$ (see Figure~\ref{fig:negativeMG_unitsquare_subfigure}).  The probability masses at each atom are sufficiently large to ensure that, with high probability, we see at least one observation at each of them (Lemma~\ref{lemma:negativeMG_chernoff}).  On this high probability event, the observations therefore induce the DAG illustrated in  Figure~\ref{fig:negativeMG_DAG_subfigure}.  Moreover, the regression function at $i_2^*$ and $i_q$ is sufficiently below $\tau$ that the corresponding $p$-values exceed $\alpha$ with high probability (Lemma~\ref{lemma:negativeMG_grid_pvalues}).  At the same time, the marginal distribution and regression function on the set $A$ in Figure~\ref{fig:negativeMG_unitsquare_subfigure} are chosen so that all of the $p$-values corresponding to points in $A \setminus A_\epsilon$ exceed $\alpha/2^{q-1}$ with high probability (Lemmas~\ref{lemma:negativeMG_chernoff_superlevelset} and~\ref{lemma:negativeMG_uniform}).  But, as we argue in Lemma~\ref{lemma:negativeMG_problematic_setup}, the budget propagation of the \citet{meijer2015multiple} procedure means that a budget of at most  $\alpha/2^{q-1}$ is passed into $A$.  It then follows that with high probability, we can only reject hypotheses corresponding to points in $A_\epsilon$, and in that case the corresponding data-dependent selection set returned will omit $A \setminus A_\epsilon$.  These ideas establish the result when $n$ and $m$ are sufficiently large; when $n$ is small, we can apply our earlier bound in Theorem~\ref{Thm:LowerBound} and when $m$ is small we can apply Proposition~\ref{prop:lowerBound_m}, which provides a lower bound for the worst-case performance of any data-dependent selection set that returns the upper hull of $m$ observations.

To begin our construction, let $A := [0, 1/2] \times [1/2, 1] \times [0,1/2]^{d-2}$ and $q \in \mathbb{N}$.  For $\bm{j} = (j_1, j_2)^\top \in \{1,2\}\times [q]$, define 
\begin{align}
    z_{\bm{j}} := \biggl(j_1-1, \frac{j_2}{2q} - 1, 0\ldots, 0\biggr)^\top \in \mathbb{R}^d.
\end{align}
For $d \geq 2$ and $q \in \N$, let $\mu_q$ denote the distribution on $\R^d$ satisfying:
\begin{itemize}
    \item[$\bullet$] $\mu_q(\{z_{\bm{j}}\}) = (2^d - 1)/(2^{d+1}q)$ for all $\bm{j} \in \{1, 2\} \times [q]$;
    \item[$\bullet$] $\mu_q(A) = 1/2^d$;
    \item[$\bullet$] $X | X\in A \sim \mathrm{Unif}(A)$ when $X \sim \mu_q$.
\end{itemize}
Thus $\sum_{\bm{j} \in \{1,2\}\times [q]} \mu_q(\{z_{\bm{j}}\}) = 1 - 1/2^d$ and $\mu_q(B \cap A) = \Lebesgue(B \cap A)$ for any Borel set $B \subseteq \R^d$.  Write $x_A := (0, 1/2, 0, \ldots, 0)^\top \in \R^d$, so that $x_A \in A$ and $x \succcurlyeq x_A$ for all $x\in A$. For $q\in\N$, $M >0$, $\tau \in \R$, $\etaIncreasingExponent, \etaIncreasingConstant > 0$ define $\eta_{q,M}\equiv \eta_{q, M, \tau, \etaIncreasingConstant, \etaIncreasingExponent}: \R^d \rightarrow \R$ by
\begin{align*}
\eta_{q,M}(x^{(1)},\ldots,x^{(d)}) := 
\begin{cases} 
\tau + \etaIncreasingConstant \cdot \min_{j \in [d]} \bigl(x^{(j)} - x_A^{(j)} \bigr)^\etaIncreasingExponent &\quad\text{ if } x\succcurlyeq x_A\\
\tau - M &\quad\text{ otherwise},
\end{cases}    
\end{align*}
where $x_A^{(j)}$ denotes the $j$th coordinate of $x_A$.  Finally, for $q\in\N$, $M >0$, $\sigma > 0$, $\tau \in \R$, $\etaIncreasingExponent, \etaIncreasingConstant > 0$, let $P_{q,M} \equiv P_{q,M,\sigma,\tau, \etaIncreasingConstant,\etaIncreasingExponent}$ denote any joint distribution of $(X, Y)$ such that $X$ has marginal distribution $\mu_q$, and $Y| X \sim \mathcal{N}\bigl(\eta_{q,M}(X), \sigma^2\bigr)$.
\begin{lemma}\label{lemma:negativeMG_distribution}
    For $d \geq 2$, $\tau \in \R$, $\sigma, \etaIncreasingExponent, \etaIncreasingConstant > 0$, $q \in \N$ and $M > 0$, we have $P_{q,M} \equiv P_{q,M,\sigma,\tau,\etaIncreasingConstant,\etaIncreasingExponent} \in \distributionClassNonDecreasingRegressionFunction \cap \distributionClassMultivariateCondition$ for all $\densityConstant \geq 2^d$.
\end{lemma}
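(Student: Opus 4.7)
The plan is to verify three things: (i) $\eta_{q,M}$ is coordinate-wise increasing (together with the Gaussian noise giving the $\mathcal{P}_{\mathrm{Mon},d}(\sigma)$ membership immediately); (ii) the density-type two-sided bound in Definition~\ref{def:multivariateAssumption}(i) holds with $\theta = 2^d$; and (iii) the growth condition in Definition~\ref{def:multivariateAssumption}(ii). The first is a routine case split: for $x \preccurlyeq x'$, if $x \succcurlyeq x_A$ then $x' \succcurlyeq x_A$ and $\min_j(x^{(j)} - x_A^{(j)})^\gamma \leq \min_j(x'^{(j)} - x_A^{(j)})^\gamma$, so $\eta_{q,M}(x) \leq \eta_{q,M}(x')$; if $x \not\succcurlyeq x_A$, then either $x' \not\succcurlyeq x_A$ as well, so both values equal $\tau - M$, or $x' \succcurlyeq x_A$ and $\eta_{q,M}(x) = \tau - M \leq \tau \leq \eta_{q,M}(x')$.

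For (ii) and (iii) the crucial geometric observation is that $\superLevelSet{\tau}{\eta_{q,M}} \cap \support(\mu_q) = A$: every atom $z_{\bm{j}}$ satisfies $z_{\bm{j}}^{(2)} = j_2/(2q) - 1 \leq -1/2 < 1/2 = x_A^{(2)}$, so $z_{\bm{j}} \not\succcurlyeq x_A$ and $\eta_{q,M}(z_{\bm{j}}) = \tau - M < \tau$, whereas every $x \in A$ satisfies $x \succcurlyeq x_A$, giving $\eta_{q,M}(x) \geq \tau$. So in (ii) and (iii) we may fix $x \in A$ and $r \in (0,1]$.

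The main (mildly delicate) step is the density bound in (ii). Because every atom has second coordinate at most $-1/2$ and $x^{(2)} \geq 1/2$, the supremum-norm distance from $x$ to any atom is at least $1$, so for $r < 1$ the ball $\closedSupNormMetricBall{x}{r}$ contains no atoms and $\mu_q\bigl(\closedSupNormMetricBall{x}{r}\bigr) = \Lebesgue\bigl(\closedSupNormMetricBall{x}{r} \cap A\bigr)$. The upper bound $\Lebesgue(\closedSupNormMetricBall{x}{r} \cap A) \leq (2r)^d$ is immediate; at $r = 1$ the crude estimate $\mu_q\bigl(\closedSupNormMetricBall{x}{1}\bigr) \leq 1 \leq 2^d = \theta \leq \theta (2r)^d$ finishes the upper bound. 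For the lower bound, note that $A$ has side length $1/2$ in every coordinate, so the smallest value of $\Lebesgue(\closedSupNormMetricBall{x}{r} \cap A)$ over $x \in A$ is attained at a corner of $A$ and equals $\min(r, 1/2)^d \geq (r/2)^d = r^d/\theta$ when $\theta = 2^d$ (using $r \leq 1$).

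For (iii), we simply exhibit a point of $\superLevelSet{\tau + \lambda r^\gamma}{\eta_{q,M}}$ inside $\closedSupNormMetricBall{x}{r}$: take $x' := x + r \cdot \bm{1}_d$, so that $\|x' - x\|_\infty = r$ and $x'^{(j)} - x_A^{(j)} = (x^{(j)} - x_A^{(j)}) + r \geq r$ for every $j \in [d]$, hence $\eta_{q,M}(x') = \tau + \lambda \min_j (x'^{(j)} - x_A^{(j)})^\gamma \geq \tau + \lambda r^\gamma$. No real obstacle is present; the only point demanding attention is keeping track of the atom contribution to $\mu_q\bigl(\closedSupNormMetricBall{x}{r}\bigr)$, which as above is entirely absent for $r<1$ by the vertical separation between $A$ and the support atoms.
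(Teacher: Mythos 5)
Your proof is correct and follows essentially the same approach as the paper: check monotonicity of $\eta_{q,M}$ by a case split, observe that $\superLevelSet{\tau}{\eta_{q,M}} \cap \support(\mu_q) = A$, and verify the two density inequalities and the growth condition on $A$. Your handling of the case split for the upper density bound is in fact a small improvement on the paper's: by noting that the supremum-norm separation between $A$ and the atoms is at least $1$, you get $\mu_q\bigl(\closedSupNormMetricBall{x}{r}\bigr) = \Lebesgue\bigl(\closedSupNormMetricBall{x}{r} \cap A\bigr) \leq (2r)^d$ for all $r < 1$ and only use the crude bound at $r = 1$, whereas the paper reserves the crude bound for $r \geq 1/4^d$, which as written does not yield $1 \leq \densityConstant(2r)^d$ for $d \geq 2$ and $\densityConstant = 2^d$ (it appears to be a typo for $r \geq 1/4$).
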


\begin{proof}
    We first prove that $P_{q,M} \in \distributionClassNonDecreasingRegressionFunction$. Since the sub-Gaussianity condition is satisfied by construction, it suffices to show that $\eta_{q,M}$ is coordinate-wise increasing on $\R^d$. Whenever $x_0 \not\succcurlyeq x_A$, we have $\eta_{q,M}(x_0) = \inf_{x\in\R^d} \eta_{q,M}(x)$.  On the other hand, for $x_0, x_1 \in \R^d$ with $x_A \preccurlyeq x_0 \preccurlyeq x_1$, we have 
    \[
    \eta_{q,M}(x_0) = \tau + \lambda \cdot \min_{j\in [d]}\bigl(x_0^{(j)} - x_A^{(j)}\bigr)^\etaIncreasingExponent \leq \tau + \lambda \cdot \min_{j\in [d]}\bigl(x_1^{(j)} - x_A^{(j)}\bigr)^\etaIncreasingExponent = \eta_{q,M}(x_1),
    \]
    as required.

    We now show that $P_{q,M} \in \distributionClassMultivariateCondition$ and start by establishing that the condition in Definition~\ref{def:multivariateAssumption}\emph{(i)} is satisfied. For any $x \in \superLevelSet{\tau}{\eta} \cap \support(\mu_q)= A$ and $r \in (0, 1]$, we have $\mu_q\bigl(\closedSupNormMetricBall{x}{r}\bigr) \geq \bigl(r \wedge (1/2)\bigr)^d \geq (r/2)^d$.  For $r \geq 1/4^d$, we have $\mu_q\bigl(\closedSupNormMetricBall{x}{r}\bigr) \leq 1 \leq \theta \cdot (2r)^d$, so let $r \in (0, 1/4^d)$. We then have for any $x\in A$ that $\closedSupNormMetricBall{x}{r} \cap \bigcup_{\bm{j}\in \{1,2\}\times [q]} \{z_{\bm{j}}\} = \emptyset$ and hence $\mu_q\bigl(\closedSupNormMetricBall{x}{r}\bigr) = \mu_q\bigl(\closedSupNormMetricBall{x}{r} \cap A\bigr) \leq (2r)^d$.  Finally, for Definition~\ref{def:multivariateAssumption}\emph{(ii)}, observe that for any $x \in \superLevelSet{\tau}{\eta} \cap \support(\mu_q) = A$ and $r\in (0, 1]$, we have $x_0 := x + r\bm{1}_d \in \closedSupNormMetricBall{x}{r}$ satisfies $x_0 - x_A \succcurlyeq r\bm{1}_d$ and hence $\eta_{q,M}(x_0) \geq \tau + \etaIncreasingConstant r^\etaIncreasingExponent$, as required.
\end{proof}

\begin{lemma}\label{lemma:negativeMG_chernoff}
Fix $d \geq 2$, $\delta \in (0,1)$, positive integers $m \leq n$ and $q \leq \bigl\lfloor \frac{m}{32\log_+(m/\delta)} \bigr\rfloor$.  If $\sample_{X,m} = \bigl(X_1, \ldots, X_m\bigr) \sim \mu_q^m$, and we define $\Omega_1 := \bigcap_{\bm{j} \in \{1, 2\} \times [q]} \bigl\{\{z_{\bm{j}}\} \cap \sample_{X, m} \neq \emptyset\bigr\}$, then
$\mathbb{P}_{\mu_q}(\Omega_1^c) \leq \delta/4$.
\end{lemma}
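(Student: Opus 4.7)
The plan is to reduce the statement to a routine Chernoff-plus-union-bound calculation. First I would lower-bound the mass at each atom: for $d \geq 2$, we have $(2^d - 1)/2^{d+1} \geq 3/8$, so $\mu_q(\{z_{\bm{j}}\}) \geq 3/(8q)$ for every $\bm{j} \in \{1,2\}\times[q]$.

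Next, for a fixed $\bm{j}$, the event $\{z_{\bm{j}}\} \cap \sample_{X,m} = \emptyset$ has probability $(1 - \mu_q(\{z_{\bm{j}}\}))^m \leq \exp(-3m/(8q))$, using the standard inequality $1 - t \leq e^{-t}$. Taking a union bound over the $2q$ atoms,
\[
\mathbb{P}_{\mu_q}(\Omega_1^c) \leq 2q \exp\Bigl(-\frac{3m}{8q}\Bigr).
\]
From the assumption $q \leq \lfloor m / (32 \log_+(m/\delta)) \rfloor$, we have $3m/(8q) \geq 12 \log_+(m/\delta)$. It remains to check that $2q \exp(-12 \log_+(m/\delta)) \leq \delta/4$, which follows easily by noting $2q \leq m \leq e^{\log_+(m/\delta)} / \delta \cdot \delta$ so that $2q \cdot (m/\delta)^{-12} \leq \delta/4$, using $q \leq m$ and the crude bound $12 \log_+(m/\delta) \geq \log(8q/\delta)$ (which holds with plenty of slack for all $m, q \geq 1$ and $\delta \in (0,1)$).

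There is no real obstacle here — this is a textbook coupon-collector-style estimate, and the only care needed is bookkeeping of the constants $32$ and $3/8$ to ensure the budget $12 \log_+(m/\delta)$ absorbs the $\log(8q/\delta)$ term from the union bound.
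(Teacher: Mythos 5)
Your proof is correct and follows essentially the same route as the paper: lower-bound $\mu_q(\{z_{\bm j}\})$ by $3/(8q)$, exponentially bound the probability of missing each atom, and finish with a union bound over the $2q$ atoms. The only cosmetic difference is that you use the direct bound $(1-p)^m\le e^{-mp}$ for the probability of an empty atom, whereas the paper applies the multiplicative Chernoff bound to conclude that the empirical count at each atom is at least half its mean (and hence at least one); your version is in fact slightly tighter and more transparent. One small remark on your last bookkeeping step: the fact that the constraint $1\le q\le\lfloor m/(32\log_+(m/\delta))\rfloor$ forces $m\ge 32$ (and hence $m/\delta>e$) is what makes the final comparison clean, and it is worth stating explicitly rather than relying on $12\log_+(m/\delta)\ge\log(8q/\delta)$ ``with plenty of slack.''
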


\begin{proof}
For $\bm{j} \in \{1, 2\} \times [q]$, let
\[
\Omega_{1, \bm{j}} := \biggl\{\frac{1}{m}\sum_{i=1}^m \mathbbm{1}_{\{X_i = z_{\bm{j}}\}} \geq \frac{2^d - 1}{2^{d+2}q}\biggr\}.
\]
Then by the multiplicative Chernoff bound \citep[][Theorem~2.3(c)]{mcdiarmid1998concentration}, the fact that $(2^d - 1)/2^{d+4} \geq 1/32$ and the choice of $q$, we have
\[
\mathbb{P}_{\mu_q}\biggl(\bigcup_{\bm{j} \in \{1, 2\} \times [q]} \Omega^c_{1, \bm{j}}\biggr) \leq 2q \cdot \exp\biggl(-\frac{2^d - 1}{2^{d+4}q} \cdot m\biggr) \leq \frac{m}{4}\cdot \exp\Bigl\{-\log_+\Bigl(\frac{m}{\delta}\Bigr)\Bigr\} \leq \frac{\delta}{4}.
\]
Moreover, 
\[
\frac{(2^d-1)m}{2^{d+2}q} \geq \frac{m}{8q} \geq 4\log_+(m/\delta) \geq 1,
\]
whence
\[
\mathbb{P}_{\mu_q}\bigl(\Omega_1^c\bigr) = \mathbb{P}_{\mu_q}\biggl(\bigcup_{\bm{j} \in \{1, 2\} \times [q]} \biggl\{\frac{1}{m}\sum_{i=1}^m \mathbbm{1}_{\{X_i = z_{\bm{j}}\}} = 0\biggr\}\biggr) \leq \mathbb{P}_{\mu_q}\biggl(\bigcup_{\bm{j} \in \{1, 2\} \times [q]} \Omega_{1,\bm{j}}^c\biggr) \leq \frac{\delta}{4},
\]
as required.
\end{proof}

\begin{lemma}\label{lemma:negativeMG_grid_pvalues} Fix $d \geq 2$, $\alpha \in (0,1)$, $\delta \in (0,1]$, $n \in \N$, $m \in [n]$, $\tau \in \R$, $\sigma, \etaIncreasingExponent, \etaIncreasingConstant > 0$, $q\in\N$ and $M \geq 1.7\sigma\sqrt{\log(41.6/\delta)}$.  Let $\sample = \bigl((X_1,Y_1),\ldots,(X_n,Y_n)\bigr) \sim P_{q, M}^n$, and suppose that $\bigl\{i \in [m]:X_i = z_{(j,q)}\bigr\} \neq \emptyset$ for $j \in \{1, 2\}$.  Write $i^*_j := \max\{i \in [m]: X_i = z_{(j,q)}\}$ for $j \in \{1,2\}$, and let 
\[
\Omega_2 := \bigcap_{j=1}^2 \bigl\{\hat{p}_{\sigma, \tau} (X_{i^*_j}, \sample) > \alpha \bigr\}.
\]
Then $\Prob_{P_{q,M}}\bigl(\Omega_2^c | \sample_X\bigr) \leq \delta/4$.
\end{lemma}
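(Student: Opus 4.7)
}
The plan is to leverage the large negative drift of the supermartingale $(S_k)$ at the atoms $z_{(1,q)}, z_{(2,q)}$. Fix $j \in \{1,2\}$ and set $x = z_{(j,q)} = X_{i^*_j}$. First I would observe that, under the construction of $\mu_q$, every atom $z_{\bm{j}'} \preccurlyeq x$ (and there are no points of $A$ below $x$) lies outside the set $\{x' : x' \succcurlyeq x_A\}$, so $\eta_{q,M}(z_{\bm{j}'}) = \tau - M$. Consequently, conditional on $\sample_X$, the concomitants $Y_{(1)}(x), \ldots, Y_{(n(x))}(x)$ are independent $\mathcal{N}(\tau - M, \sigma^2)$. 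Writing $Z_{k'} := \bigl(Y_{(k')}(x) - (\tau - M)\bigr)/\sigma$ and $\tilde S_k := \sum_{k'=1}^k Z_{k'}$, the relevant supermartingale decomposes as
\[
S_k \;=\; \tilde S_k - kM/\sigma,
\]
with $(Z_{k'})$ an i.i.d.\ $\mathcal{N}(0,1)$ sequence conditionally on $\sample_X$.

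Next I would translate the event $\{\hat p_{\sigma,\tau}(x, \sample) \leq \alpha\}$ into a crossing event for $\tilde S_k$. Unwinding Definition~\ref{defn:pValue} shows $\hat p_{\sigma,\tau}(x,\sample) \leq \alpha$ is equivalent to the existence of some $k \in [n(x)]$ with $(S_k \vee 0)^2 \geq u_\alpha(k)^2$, where $u_\alpha(k)$ is the time-uniform boundary from Lemma~\ref{lemma:howard_uniform_bound}\emph{(a)}, so (for non-negative $u_\alpha(k)$, and otherwise via the stricter one-point form of the threshold at $k=1$) this is equivalent to
\[
\exists\, k \in [n(x)] : \ \tilde S_k \;\geq\; u_\alpha(k) + kM/\sigma.
\]
Applying the union bound over $k$ and the standard Gaussian tail $\mathbb P(\tilde S_k \geq t) \leq e^{-t^2/(2k)}$, the crude bound $(u_\alpha(k) + kM/\sigma)^2 \geq (kM/\sigma)^2$ (valid since both summands are non-negative) gives
\[
\mathbb{P}_{P_{q,M}}\bigl(\hat p_{\sigma,\tau}(X_{i^*_j}, \sample) \leq \alpha \,\big|\, \sample_X\bigr) \;\leq\; \sum_{k=1}^\infty \exp\!\left(-\frac{kM^2}{2\sigma^2}\right) \;=\; \frac{e^{-M^2/(2\sigma^2)}}{1 - e^{-M^2/(2\sigma^2)}}.
\]

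Finally, I would feed the assumption $M \geq 1.7\sigma\sqrt{\log(41.6/\delta)}$ into this bound. It yields $e^{-M^2/(2\sigma^2)} \leq (\delta/41.6)^{1.445}$, which for $\delta \in (0,1]$ is comfortably below $1/2$, so the geometric-series factor $1/(1 - e^{-M^2/(2\sigma^2)}) \leq 2$ applies. A short computation then verifies $2 e^{-M^2/(2\sigma^2)} \leq \delta/8$ under the stated assumption on $M$. A union bound over $j \in \{1,2\}$ furnishes $\mathbb P_{P_{q,M}}(\Omega_2^c \mid \sample_X) \leq \delta/4$, as required. The only mildly delicate point is the book-keeping with the constants (particularly the $k=1$ case, where $\log\log(2k)$ is negative and $u_\alpha(k)$ has to be interpreted directly from the $p$-value formula); everything else reduces to routine Gaussian-tail estimates.
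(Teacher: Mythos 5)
Your proof is correct and achieves the required bound, but by a genuinely different route from the paper's. The paper again invokes the time-uniform boundary of Lemma~\ref{lemma:howard_uniform_bound}\emph{(a)}, now at level $\delta/8$: after checking that $\sup_{k\ge 1} u_{\delta/8}(k)/k \le 1.7\sqrt{0.2 + 0.72\log(41.6/\delta)} \le M/\sigma$, it concludes that on the high-probability event the supermartingale satisfies $S_k = \tilde S_k - kM/\sigma \le u_{\delta/8}(k) - kM/\sigma \le 0$ simultaneously for all $k$, so that $\hat p_{\sigma,\tau}(X_{i_j^*},\sample) = 1$ exactly. You instead bypass the uniform-boundary machinery by a direct union bound over $k$: using the Gaussian tail and the crude inequality $u_\alpha(k) + kM/\sigma \ge kM/\sigma$ (valid since $u_\alpha(k) > 0$ for every $\alpha \in (0,1)$ and $k \ge 1$, so the parenthetical caveat about $k=1$ is unnecessary), you reduce the failure probability to the geometric sum $\sum_{k\ge1} e^{-kM^2/(2\sigma^2)}$, which the hypothesis $M \ge 1.7\sigma\sqrt{\log(41.6/\delta)}$ pins below $\delta/8$ after a routine constant check. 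The trade-off is clean: the paper's argument gives the stronger conclusion $\hat p = 1$ with high probability and reuses a lemma already central to the validity analysis, whereas yours is more elementary and self-contained, exploiting the fact that the drift $M/\sigma$ is so large that a crude per-time-step tail bound summed geometrically already clears the bar. Either suffices for the lemma as stated.
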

\begin{proof} %\red{(MM, Oct 12: I have updated the proof and in doing so realised that the choice of $M$ is a bit overkill, as we actually let all $p$-values at grid-points be equal to $1$. This might seem a bit odd, but has the advantage of a (now) very short proof and an $M$ that does not depend on $\alpha$ (i.e.~assumes the worst case $\alpha = 1$). It would indeed be sufficient to choose something like $M \approx \sqrt{\log(\alpha/\delta)}$.)}
%We again let $\Tilde{u}_{\delta'}(\ell) := \sqrt{\frac{2.1}{0.72}\ell \bigl\{ 0.72\log(5.2/\delta') + \log \log (2\ell)\bigr\}}$ for $\delta' \in (0, 1)$ and $\ell \in \N$ as in the proof of Theorem~\ref{thm:powerBound_1d}. Fix $k\in [K]$ and note that $\hat{p}_{\sigma, \tau} (X_{i_k}, \sample) > \alpha$ if and only if $\sum_{i\in \mathcal{I}_r(X_{i_k})} (Y_i - \tau)/\sigma < \tilde{u}_\alpha \bigl(|\mathcal{I}_r(X_{i_k})|\bigr)$ for all $r > 0$.  
For $x \in \R^d$ and $r > 0$, define $\mathcal{I}_r(x) := \{i \in [n]: X_i \preccurlyeq x, \|X_i - x\|_\infty \leq r\}$.  Fix $j \in \{1,2\}$, and note that %By Lemma~\ref{lemma:howard_uniform_bound}, we have
\begin{align*}
\frac{\sigma}{|\mathcal{I}_r(X_{i^*_j})|} \cdot u_{\delta/8}
\bigl(|\mathcal{I}_r(X_{i^*_j})|\bigr) &\leq 1.7\sigma\sqrt{0.2 + 0.72\log(41.6/\delta)} \leq 1.7\sigma \sqrt{\log(41.6/\delta)} \leq M
\end{align*}
for all $r > 0$.  It follows by Lemma~\ref{lemma:howard_uniform_bound}\emph{(a)} that, with probability at least $1 - \delta/8$ given $\sampleX$, we have simultaneously for all $r > 0$ that
\begin{align*}
\sum_{i\in \mathcal{I}_r(X_{i^*_j})} \frac{Y_i - \tau}{\sigma} &= \sum_{i\in \mathcal{I}_r(X_{i^*_j})} \frac{Y_i - (\tau-M)}{\sigma} - |\mathcal{I}_r(X_{i^*_j})|\cdot \frac{M}{\sigma} \\
&\leq u_{\delta/8}
\bigl(|\mathcal{I}_r(X_{i^*_j})|\bigr) - |\mathcal{I}_r(X_{i^*_j})|\cdot \frac{M}{\sigma} \leq 0,
%&< \tilde{u}_\alpha \bigl(|\mathcal{I}_r(X_{i_k})|\bigr).
\end{align*}
so that $\hat{p}_{\sigma, \tau} (X_{i^*_j}, \sample) = 1$, and thus in particular $\hat{p}_{\sigma, \tau} (X_{i^*_j}, \sample) > \alpha$. Hence, the result follows by a union bound over $j\in\{1,2\}$.
% or, equivalently, if and only if
% \begin{align}
% \sum_{i\in \mathcal{I}_r(X_{i_k})} \frac{Y_i - (\tau-M)}{\sigma} < \tilde{u}_\alpha \bigl(|\mathcal{I}_r(X_{i_k})|\bigr) + |\mathcal{I}_r(X_{i_k})|\cdot \frac{M}{\sigma} \label{eq:negativeMG_grid_pvalues_equivalent}
% \end{align}
% for all $r > 0$. By Lemma~\ref{lemma:howard_uniform_bound}, $\sum_{i\in \mathcal{I}_r(X_{i_k})} \bigl(Y_i - (\tau-M)\bigr)/\sigma \leq u_{\delta/(4m)}
% \bigl(|\mathcal{I}_r(X_{i_k})|\bigr)$ for all $r > 0$ with conditional probability at least $1 - \delta/(4m)$. Hence, Equation~\eqref{eq:negativeMG_grid_pvalues_equivalent} holds with conditional probability at least $1 - \delta/(4m)$ for all $r > 0$ if 
% \[
% u_{\delta/(4m)}
% \bigl(|\mathcal{I}_r(X_{i_k})|\bigr) < \tilde{u}_\alpha \bigl(|\mathcal{I}_r(X_{i_k})|\bigr) + |\mathcal{I}_r(X_{i_k})|\cdot \frac{M}{\sigma}
% \]
% holds for all $r > 0$, which is in turn implied by $M > 1.7\sigma \sqrt{\log(20.8m/\delta)}$, since
% \begin{align*}
% 1.7\sigma \sqrt{\log(20.8m/\delta)} &\geq 
% 1.7\sigma\sqrt{0.2 + 0.72\log(20.8m/\delta)} \\
% &\geq 1.7\sigma\sqrt{\frac{\log\log(2|\mathcal{I}_r(X_{i_k})|) + 0.72\log(20.8m/\delta)}{|\mathcal{I}_r(X_{i_k})|}} \\
% &= u_{\delta/(4m)}
% \bigl(|\mathcal{I}_r(X_{i_k})|\bigr) \frac{\sigma}{|\mathcal{I}_r(X_{i_k})|}.
% \end{align*}
% Hence, the result follows by a union bound.
\end{proof}

\begin{lemma}\label{lemma:negativeMG_chernoff_superlevelset}
Fix $d \geq 2$, $\alpha \in (0,1)$, $\delta \in (0,1/4]$, $n \in \N$, $\sigma, \etaIncreasingExponent, \etaIncreasingConstant > 0$, $s \in (0, 1/2]$ and $q\in \N$. Let $\sample_X = (X_1, \ldots, X_n)\sim \mu_q^n$ and let $B_j := x_A + [0, 1/2]^{j-1} \times [0, s] \times [0, 1/2]^{d-j}$ for $j \in [d]$. Denote $w_{n,m,\delta} := 173.13 \bigl(\log_+\log n + \log_+(m/\delta)\bigr)$. If
\[
\frac{8}{3n}\log\Bigl(\frac{4d}{\delta}\Bigr) \leq \frac{s}{2^{d-1}} \leq \frac{\sigma^2}{2n\etaIncreasingConstant^2 s^{2\etaIncreasingExponent}}\bigl(0.72q - w_{n,m,\delta}\bigr)
\]
then, writing
\[
\Omega_3 := \bigcap_{j\in[d]} \biggl\{|\sample_X \cap B_j| < \frac{\sigma^2}{\etaIncreasingConstant^2 s^{2\etaIncreasingExponent}}\bigl(0.72q - w_{n,m,\delta}\bigr) \biggr\},
\]
we have $\Prob_{\mu_q}(\Omega_3^c) \leq \delta/4$.
\end{lemma}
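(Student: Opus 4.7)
The plan is to reduce the lemma to a straightforward multiplicative Chernoff / Bernstein argument for each coordinate direction, and then to union-bound over the $d$ slabs $B_1,\ldots,B_d$.

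First I would compute $\mu_q(B_j)$ for each $j \in [d]$. Since $s \leq 1/2$, each $B_j$ is contained in $A$ (geometrically, $B_j$ is the $s$-thick slab of $A$ adjacent to $x_A$ perpendicular to the $j$-th coordinate axis), and the definition of $\mu_q$ gives $\mu_q(B_j) = \Lebesgue(B_j) = (1/2)^{d-1} \cdot s = s/2^{d-1}$. Consequently $Z_j := |\sample_X \cap B_j|$ is $\mathrm{Binomial}\bigl(n, s/2^{d-1}\bigr)$ with mean $\mu := ns/2^{d-1}$, and the random variables $Z_1,\ldots,Z_d$ (while not independent) are identically distributed.

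Next I would observe that the upper hypothesis in the statement,
\[
\frac{s}{2^{d-1}} \leq \frac{\sigma^2}{2n\etaIncreasingConstant^2 s^{2\etaIncreasingExponent}}\bigl(0.72q - w_{n,m,\delta}\bigr),
\]
rewritten as $\mu \leq \frac{1}{2} \cdot \frac{\sigma^2}{\etaIncreasingConstant^2 s^{2\etaIncreasingExponent}}(0.72q - w_{n,m,\delta})$, says precisely that the threshold in the definition of $\Omega_3$ is at least $2\mu$. It therefore suffices to bound $\mathbb{P}(Z_j \geq 2\mu)$ for each $j$, and the final union bound contributes a factor of $d$.

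For the tail bound, I would invoke Bernstein's inequality for sums of independent Bernoulli random variables (equivalently, a multiplicative Chernoff bound; see e.g.~Theorem~2.3(c) of \citet{mcdiarmid1998concentration}) in the form
\[
\mathbb{P}(Z_j \geq \mu + t) \leq \exp\biggl(-\frac{t^2}{2\mu + 2t/3}\biggr), \qquad t \geq 0.
\]
Taking $t = \mu$ gives $\mathbb{P}(Z_j \geq 2\mu) \leq \exp(-3\mu/8)$. The lower hypothesis on $s/2^{d-1}$ in the statement is $\mu \geq \frac{8}{3}\log(4d/\delta)$, which yields $\mathbb{P}(Z_j \geq 2\mu) \leq \delta/(4d)$. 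A union bound over $j \in [d]$ then gives $\mathbb{P}_{\mu_q}(\Omega_3^c) \leq \delta/4$, as required.

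The proof is essentially mechanical once one sees the two inequalities in the hypothesis as (i) ``threshold is at least twice the mean'' and (ii) ``the mean is large enough for a Bernstein bound to beat $\delta/(4d)$''; the only place where one needs to be slightly careful is matching the constants $8/3$ and $3/8$, which is precisely why the Bernstein form of the Chernoff bound (rather than the cruder $\exp(-\mu/3)$ multiplicative variant) is used.
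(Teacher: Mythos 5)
Your proof is correct and follows essentially the same route as the paper's: compute $\mu_q(B_j) = s/2^{d-1}$, observe that the upper hypothesis makes the threshold at least twice the mean $\mu = ns/2^{d-1}$, apply the Bernstein-form multiplicative Chernoff bound to get $\exp(-3\mu/8)$, use the lower hypothesis to bound this by $\delta/(4d)$, and union bound over $j\in[d]$. (The paper cites Theorem~2.3(b) of \citet{mcdiarmid1998concentration} rather than 2.3(c), but you wrote down the correct Bernstein-type inequality, so this is immaterial.)
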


\begin{proof} Fix any $j\in [d]$ and note that $\mu_q(B_j) = s/2^{d-1}$. By the upper bound on $s$, a multiplicative Chernoff bound \citep[][Theorem~2.3(b)]{mcdiarmid1998concentration} and the lower bound on $s$, we have
\begin{align*}
\Prob_{\mu_q} \Bigl(|\sample_X \cap B_j| \geq \frac{\sigma^2}{\etaIncreasingConstant^2 s^{2\etaIncreasingExponent}}\bigl(0.72q - w_{m,n,\delta}\bigr)\Bigr)
&\leq \Prob_{\mu_q} \Bigl(|\sample_X \cap B_j| \geq \frac{n s}{2^{d-2}}\Bigr) \\
&\leq \exp\Bigl(-\frac{3ns}{8\cdot 2^{d-1}}\Bigr) \leq \frac{\delta}{4d}.
\end{align*}
The result therefore follows by a union bound.
\end{proof}

\begin{lemma}\label{lemma:negativeMG_uniform}
Fix $d \geq 2$, $\alpha \in (0,1)$, $\delta \in (0,1]$, $n \in \N$,  $m \in [n]$, $\tau \in \R$, $\sigma, \etaIncreasingExponent, \etaIncreasingConstant > 0$, $q\in \N$, $M > 0$ and $s>0$. Let $\sample = \bigl((X_1,Y_1),\ldots,(X_n,Y_n)\bigr) \sim P_{q,M}^n$, and let $1 \leq i_1 < \ldots < i_K \leq m$ be such that $\{i_1,\ldots,i_K\} := \bigl\{i \in [m]:X_i \in \superLevelSet{\tau}{\eta_{q,M}}\setminus\superLevelSet{\tau + \lambda s^\gamma}{\eta_{q,M}}\bigr\}$.  Denote further $w_{n,m,\delta} := 173.13 \bigl(\log_+\log n + \log_+(m/\delta)\bigr)$ as in Lemma~\ref{lemma:negativeMG_chernoff_superlevelset}. If $q \geq w_{n,m,\delta}/0.72 \geq 3\log(5.2\cdot 8 \cdot m/\delta) + 2\log\log(2n)$ and
\[
\max_{k \in [K]} \bigl|\{i \in [n]: X_i \in A, X_i \preccurlyeq X_{i_k}\}\bigr| \leq \frac{\sigma^2}{\etaIncreasingConstant^2 s^{2\etaIncreasingExponent}} \bigl(0.72q - w_{n,m,\delta}\bigr)
\]
then writing 
\[
\Omega_4 := \bigcap_{k\in[K]}\Bigl\{\hat{p}_{\sigma, \tau}(X_{i_k}, \sample) > \frac{\alpha}{2^{q-1}}\Bigr\},
\]
we have $\Prob_{P_{q,M}}\bigl(\Omega_4^c | \sample_X\bigr) \leq \delta/4$.
\end{lemma}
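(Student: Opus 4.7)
The plan is to work conditionally on $\sampleX$, establish the tail bound on $\hat{p}_{\sigma, \tau}(X_{i_k}, \sample)$ pointwise for each $k \in [K]$, and conclude by a union bound over $K \leq m$ indices. First I would record the structural observation that for any $x \in A$, exactly the grid atoms $\{z_{(1, j_2)}: j_2 \in [q]\}$ lie $\preccurlyeq x$ (since $z_{(2, j_2)}^{(1)} = 1 > 1/2 \geq x^{(1)}$), and each such atom has sup-norm distance at least $1$ from $x$, driven by the second coordinate of $z_{(1,j_2)}$, which lies in $[1/(2q) - 1, -1/2]$. Since $A$ has sup-norm diameter $1/2$, the nearest-neighbour ordering used to build $S_\ell \equiv S_\ell(X_{i_k}, \sigma, \tau, \sample)$ places the $N_A(k) := |\{i \in [n]: X_i \in A,\, X_i \preccurlyeq X_{i_k}\}|$ sample points from $A \cap \{X_i \preccurlyeq X_{i_k}\}$ strictly before any grid atom in the list.

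Next I would decompose $S_\ell = T_\ell + D_\ell$, where
\[
T_\ell := \sigma^{-1}\sum_{j=1}^\ell \bigl(Y_{(j)}(X_{i_k}) - \eta_{q,M}(X_{(j)}(X_{i_k}))\bigr), \qquad D_\ell := \sigma^{-1}\sum_{j=1}^\ell \bigl(\eta_{q,M}(X_{(j)}(X_{i_k})) - \tau\bigr),
\]
so that $T_\ell$ is a sum of $\ell$ iid standard normals conditional on $\sampleX$, and $D_\ell$ is a deterministic drift. By the NN structure and the fact that $X_{i_k} \in \superLevelSet{\tau}{\eta_{q,M}} \setminus \superLevelSet{\tau + \lambda s^\gamma}{\eta_{q,M}}$, each of the first $N_A(k)$ summands of $D_\ell$ lies in $[0, \lambda s^\gamma/\sigma)$ while the remaining summands equal $-M/\sigma$. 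Hence $D_\ell \leq \min(\ell, N_A(k))\,\lambda s^\gamma/\sigma \leq \sqrt{\ell N_A(k)}\,\lambda s^\gamma/\sigma \leq \sqrt{\ell(0.72q - w_{n,m,\delta})}$, using $\min(a,b) \leq \sqrt{ab}$ together with the hypothesis on $N_A(k)$. Applying Lemma~\ref{lemma:howard_uniform_bound}\emph{(a)} to $T_\ell$ at significance level $\delta/(4m)$ then yields, on a conditional probability $1 - \delta/(4m)$ event, $T_\ell \leq u_{\delta/(4m)}(\ell)$ for every $\ell \in [n(X_{i_k})]$.

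On this event, combining with the drift bound and the elementary inequality $(a+b)^2 \leq 2a^2 + 2b^2$ yields $(S_\ell \vee 0)^2 \leq 2 u_{\delta/(4m)}(\ell)^2 + 2\ell(0.72q - w_{n,m,\delta})$. It then remains to show that this upper bound is strictly less than $u_{\alpha/2^{q-1}}(\ell)^2$, which is equivalent to $\hat{p}_{\sigma, \tau}(X_{i_k}, \sample) > \alpha/2^{q-1}$. After dividing by $2.0808\,\ell$ and invoking $\log\log(2\ell) \leq \log_+\log(2n)$, this reduces to an elementary inequality in $q$, $w_{n,m,\delta}$, $\log(m/\delta)$, $\log(1/\alpha)$ and $\log_+\log n$, and is the main obstacle I expect to face. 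The apparent linear-in-$q$ contributions from the two sides---$2 \cdot 0.72 q/2.0808$ on the left against $(q-1)\log 2$ on the right---nearly cancel, so essentially all of the slack must come from the term $2 w_{n,m,\delta}/2.0808$; the unusually large constant $173.13$ in $w_{n,m,\delta}$ is tuned precisely so that this slack dominates the $1.44\log(20.8 m/\delta) + 2\log_+\log(2n)$ terms on the left, while the strengthened consequence $q \geq 3\log(5.2 \cdot 8\, m/\delta) + 2\log\log(2n)$ of the hypothesis absorbs any residual logarithms. Once the pointwise claim is established, a union bound over $k \in [K] \subseteq [m]$ delivers $\Prob_{P_{q,M}}(\Omega_4^c \mid \sampleX) \leq K \delta/(4m) \leq \delta/4$, as required.
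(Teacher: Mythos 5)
Your plan is correct and takes a slightly different route from the paper's. The paper splits $\sum_{i\in\mathcal{I}_r(X_{i_k})}(Y_i - \tau)/\sigma$ into a part over $\mathcal{I}^A_r(X_{i_k})$ centred at the upper bound $\tau + \etaIncreasingConstant s^\etaIncreasingExponent$ on $\eta_{q,M}$ there and a part over $\mathcal{I}^{A^c}_r(X_{i_k})$ centred at the exact mean $\tau - M$, applies Lemma~\ref{lemma:howard_uniform_bound}\emph{(a)} separately to each at level $\delta/(8K)$, and combines with $\sqrt{a}+\sqrt{b}\leq\sqrt{2(a+b)}$ together with $|\mathcal{I}^A_r|\leq|\mathcal{I}_r|$. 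You instead write $S_\ell = T_\ell + D_\ell$ with exactly centred noise and a deterministic drift, apply the LIL bound once per index $k$ at level $\delta/(4m)$, and combine with $(a+b)^2 \leq 2(a^2+b^2)$. Your geometric observation that, for $X_{i_k} \in A$, every point of $\{X_i \in A : X_i \preccurlyeq X_{i_k}\}$ precedes every grid atom $z_{(1,j_2)}$ in the nearest-neighbour ordering (sup-norm distance at least $1$ versus $\diamSup(A) = 1/2$) is correct, and it makes the drift $D_\ell \leq \min(\ell, N_A(k))\etaIncreasingConstant s^\etaIncreasingExponent/\sigma$ coincide with the paper's $\etaIncreasingConstant s^\etaIncreasingExponent|\mathcal{I}^A_r|/\sigma$, so the two packagings are essentially equivalent, each placing the inevitable factor of $2$ (respectively $\sqrt{2}$) in a different spot. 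The final arithmetic you flag as the remaining obstacle does go through: after dividing the required inequality by $\ell$, the $q$-terms $2\cdot 0.72 q$ and $2.0808(q-1)\log 2$ nearly cancel; the net coefficients of $\log_+\log n$ and of $\log_+(m/\delta)$ are roughly $346.26 - 2.89 > 0$ and $346.26 - 4.16 > 0$ respectively; and since $\alpha < 1$ gives $2.0808\log(5.2/\alpha) > 1.45$, the surviving constant is also positive, so the left-hand side is strictly negative while the right-hand side is strictly positive. Two points to tidy in a full write-up: dispose explicitly of the vacuous case $K = 0$ (as the paper does), and note that your union bound at level $\delta/(4m)$ over $K\leq m$ indices indeed yields at most $\delta/4$, though it is marginally looser than the paper's $\delta/(8K)$ allocation.
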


\begin{proof}
When $K=0$, i.e.~$\bigl\{i \in [m]:X_i \in \superLevelSet{\tau}{\eta_{q,M}}\setminus\superLevelSet{\tau + \lambda s^\gamma}{\eta_{q,M}}\bigr\} = \emptyset$, then $\Omega_4^c = \emptyset$ and there is nothing to prove, so assume that $K \in [m]$. For $x \in \R^d$ and $r > 0$, define $\mathcal{I}_r(x) := \{i \in [n]: X_i \preccurlyeq x, \|X_i - x\|_\infty \leq r\}$, $\mathcal{I}^A_r(x) := \{i \in \mathcal{I}_r(x): X_i \in A\}$ and accordingly $\mathcal{I}^{A^c}_r(x) := \{i \in \mathcal{I}_r(x): X_i \notin A\}$.  Fix any $k \in [K]$ and note first that by assumption,
\[
|\mathcal{I}^A_r(X_{i_k})| \leq \bigl|\{i \in [n]: X_i \in A, X_i \preccurlyeq X_{i_k}\}\bigr| \leq \frac{\sigma^2}{\etaIncreasingConstant^2 s^{2\etaIncreasingExponent}}\bigl(0.72q - w_{n,m,\delta}\bigr)
\]
for all $r > 0$.  Hence 
\begin{align}
\label{Eq:utildeminusu}
    \frac{\etaIncreasingConstant^2 s^{2\etaIncreasingExponent}}{\sigma^2}\bigl|\mathcal{I}^A_r(X_{i_k})\bigr|^2 &\leq \bigl|\mathcal{I}^A_r(X_{i_k})\bigr| \cdot  \bigl(0.72q - w_{n,m,\delta}\bigr) \nonumber \\
    &\leq \frac{|\mathcal{I}_r(X_{i_k})|}{2}\biggl\{2.0808\cdot q \cdot \log 2 -6\cdot 1.7^2\cdot\log_+\log n \nonumber \\
    &\hspace{6.5cm} - 4\cdot 1.7^2\cdot 0.72\cdot 5.2 \cdot 8 \cdot \log_+\Bigl(\frac{m}{\delta}\Bigr)\biggr\} \nonumber \\
    &\leq \frac{|\mathcal{I}_r(X_{i_k})|}{2}\biggl\{2.0808\log\Bigl(\frac{5.2}{\alpha} \cdot 2^{q-1}\Bigr) + \Bigl(1.7^2 - 4 \cdot 1.7^2\Bigr)\log\log\bigl(2|\mathcal{I}_r(X_{i_k})|\bigr)\nonumber \\
    &\hspace{6.5cm} - 4\cdot 1.7^2\cdot 0.72\cdot \log\Bigl(\frac{5.2 \cdot 8 \cdot K}{\delta}\Bigr)\biggr\} \nonumber \\
    &\leq |\mathcal{I}_r(X_{i_k})|\biggl\{\sqrt{1.7^2\cdot 0.72\log\Bigl(\frac{5.2}{\alpha} \cdot 2^{q-1}\Bigr) + 1.7^2 \log\log\bigl(2|\mathcal{I}_r(X_{i_k})|\bigr)} \nonumber \\
    &\hspace{1.5cm}- \sqrt{2 \cdot 1.7^2 \log\log\bigl(2|\mathcal{I}_r(X_{i_k})|\bigr) + 2\cdot 1.7^2\cdot 0.72\cdot \log\Bigl(\frac{5.2 \cdot 8 \cdot K}{\delta}\Bigr)}\biggr\}^2 \nonumber \\
    &= \Bigl(u_{\alpha/2^{q-1}} \bigl(|\mathcal{I}_r(X_{i_k})|\bigr) - \sqrt{2} \cdot u_{\delta/(8K)}\bigl(|\mathcal{I}_r(X_{i_k})|\bigr)\Bigr)^2,
\end{align}
where in the final inequality, we used the fact that $(a-2b)/2 \leq \bigl(\sqrt{a} - \sqrt{b}\bigr)^2$ for $a,b \geq 0$.  By Lemma~\ref{lemma:howard_uniform_bound}\emph{(a)}, with probability at least $1 - \delta/(4K)$ conditional on $\sampleX$, we have simultaneously for all $r > 0$ that
\begin{align*}
    \sum_{i\in\mathcal{I}_r(X_{i_k})} \frac{Y_i - \tau}{\sigma} &\leq \sum_{i\in\mathcal{I}^A_r(X_{i_k})} \frac{Y_i - (\tau + \etaIncreasingConstant s^\etaIncreasingExponent)}{\sigma} + \sum_{i\in\mathcal{I}^{A^c}_r(X_{i_k})}  \frac{Y_i - (\tau - M)}{\sigma} + \frac{\etaIncreasingConstant s^\etaIncreasingExponent}{\sigma}|\mathcal{I}^A_r(X_{i_k})| \\
    &< u_{\delta/(8K)}\bigl(|\mathcal{I}^A_r(X_{i_k})|\bigr) + u_{\delta/(8K)}\bigl(|\mathcal{I}^{A^c}_r(X_{i_k})|\bigr) + \frac{\etaIncreasingConstant s^\etaIncreasingExponent}{\sigma}|\mathcal{I}^A_r(X_{i_k})| \\
    &\leq \sqrt{2}\cdot  u_{\delta/(8K)}\bigl(|\mathcal{I}_r(X_{i_k})|\bigr) +  \frac{\etaIncreasingConstant s^\etaIncreasingExponent}{\sigma}|\mathcal{I}^A_r(X_{i_k})| \\
    &\leq u_{\alpha/2^{q-1}} \bigl(|\mathcal{I}_r(X_{i_k})|\bigr),
\end{align*}
where the third inequality follows from that fact that $\sqrt{a} + \sqrt{b} \leq \sqrt{2}\cdot \sqrt{a + b}$ for all $a, b \geq 0$, and the fourth follows from~\eqref{Eq:utildeminusu} and the fact that $u_{\alpha/2^{q-1}} \bigl(|\mathcal{I}_r(X_{i_k})|\bigr) \geq \sqrt{2} \cdot u_{\delta/(8K)}\bigl(|\mathcal{I}_r(X_{i_k})|\bigr)$ since $q \geq 3\log(5.2\cdot 8 \cdot m/\delta) + 2\log\log(2n)$.  But
\[
\biggl\{\sum_{i\in\mathcal{I}_r(X_{i_k})} \frac{Y_i - \tau}{\sigma} < u_{\alpha/2^{q-1}} \bigl(|\mathcal{I}_r(X_{i_k})|\bigr)\biggr\} = \bigl\{\hat{p}_{\sigma, \tau}(X_{i_k}, \sample) > \alpha/2^{q-1}\bigr\},
\]
so the result follows by a union bound over $k\in[K]$.  
\end{proof}
%In our next lemma, which concerns the \citet{meijer2015multiple} procedure $\mathcal{R}^{\mathrm{MG}, \omega, \bm{v}}$, we omit the corresponding superscript and write $\mathcal{R}^{\mathrm{MG}, \omega}$ \red{and the corresponding selection set as $\MGselectionsetNoV$}. Note that in the important case where $|L(G)| = 1$, we have $\mathcal{R}^{\mathrm{MG}, \omega}_\alpha(G, \bm{p}) = \mathcal{R}^{\mathrm{MG}, \omega, \bm{v}}_\alpha(G, \bm{p})$ for all possible choices of $\bm{v}$.
\begin{lemma} \label{lemma:negativeMG_problematic_setup}
Fix $m, q \in \mathbb{N}$ and suppose that $\mathcal{D}_{X,m} = \{X_i:i \in [m]\} \subseteq \bigl\{z_{\bm{j}}:\bm{j} \in \{1,2\} \times [q]\bigr\} \cup A$ with $\{z_{\bm{j}}\} \cap \sample_{X, m} \neq \emptyset$ for all $\bm{j} \in \{1, 2\} \times [q]$.  Fix $\alpha \in (0,1)$ and let $(p_i)_{i \in [m]} \in (0, 1]^m$ be such that $\min_{j\in\{1,2\}} p_{i^*_j} > \alpha$, where $i^*_j := \max\{i\in[m]: X_i = z_{(j, q)}\}$. Then for $\omega \in \{0, 1\}$ and $\bm{v} \in (0,\infty)^m$, we have
\[
\mathcal{R}^{\mathrm{MG}, \omega, \bm{v}}_\alpha\bigl(\mathcal{G}_{\mathrm{W}}(\sample_{X,m}), (p_i)_{i \in [m]}\bigr) \cap \{i \in [m]: p_i > \alpha/2^{q - 1} \text{ and } X_i \in A\} = \emptyset.
\]
\end{lemma}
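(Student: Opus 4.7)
The plan is to analyze how the MG procedure distributes the $\alpha$-budget through the induced DAG $G=\mathcal{G}(\sample_{X,m})$ and to show that the budget entering the $A$-component is throttled to at most $\alpha/2^{q-1}$, so that $A$-observations with $p$-value exceeding this threshold cannot accumulate enough budget to be rejected. First I would describe the structure of $G$: since every $A$-observation $a$ satisfies $a\succcurlyeq x_A=(0,1/2,0,\ldots,0)^\top$ while $z_{(1,q)}=(0,-1/2,0,\ldots,0)^\top\prec x_A$, each $A$-observation lies strictly above $z_{(1,q)}$ in the partial order, whereas column-2 grid points (whose first coordinate equals $1$) are incomparable to every $A$-observation. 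Consequently the grid points split into two chains $z_{(1,1)}\prec\cdots\prec z_{(1,q)}=i_1^*$ and $z_{(2,1)}\prec\cdots\prec z_{(2,q)}=i_2^*$, linked by edges $(z_{(2,j)},z_{(1,j)})$ for each $j\in[q]$, with index-based tie-breaking producing a linear sub-chain at each grid position; the only leaves in $L(G)$ are the largest-index observations at $z_{(1,1)}$, since every other observation has a descendant (column-1 observations via lower column-1 points, column-2 observations via column-1, and $A$-observations via $z_{(1,q)}$).

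The central step is the budget bound: in every iteration $\ell$ of Algorithm~\ref{algo:MG} and for either $\omega\in\{0,1\}$, the final budget $\alpha_\ell^\omega(i)$ is at most $\alpha/2^{q-1}$ for every $i$ with $X_i\in A$. I would prove this via three observations. First, no grid-node observation is ever rejected: $i_1^*$ and $i_2^*$ have $p$-values exceeding $\alpha$ while their budgets are bounded by the conserved total $\alpha$; intermediate column nodes have final budget $0$ because they pass their incoming budget to unrejected parents; and in the any-parent variant the ancestors added to the rejection set after an $A$-node is directly rejected must themselves lie in $A$, since any ancestor of an $A$-observation has second coordinate at least $1/2$ and hence cannot be a grid point. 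Second, the column-1 chain therefore remains intact throughout, and an induction along the chain shows that the budget at $z_{(1,j)}$ just before it is processed is at most $\alpha/2^{j-1}$: at each step at most half of the budget continues up column~1, since the remainder is diverted to $z_{(2,j)}$ (which is never rejected and whose budget never returns to column~1 because budget only flows upward). Third, because the sole conduit for budget into the $A$-component is $z_{(1,q)}$'s distribution among its $A$-minimal parents, the total budget reaching $A$ in any iteration is bounded by $\alpha/2^{q-1}$, which forces each individual $A$-node's budget to satisfy the same bound.

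Putting these together, any $A$-observation $i$ with $p_i>\alpha/2^{q-1}$ satisfies $p_i>\alpha_\ell^\omega(i)$ for every $\ell$, so $i$ is never directly rejected in either variant, which gives the lemma immediately for $\omega=0$. For $\omega=1$ the rejection set is further enlarged by ancestors of directly-rejected nodes; since this enlargement stays within $A$ and the directly-rejected $A$-nodes all have $p$-value at most $\alpha/2^{q-1}$, I would conclude with a short induction on $\ell$ verifying that no $A$-observation with $p_i>\alpha/2^{q-1}$ is introduced by this ancestor-addition step. The main obstacle will be the chain-halving induction, where careful bookkeeping is required to handle the tie-broken sub-chains at each grid position and to verify that the any-parent variant's retention rule (which keeps a fraction $|R_P|/|\pa|$ of an intermediate node's budget at that node) does not accidentally inflate the budget flowing up the column-1 chain above the halving rate; the fact that both column parents of any intermediate column-1 node are never rejected is exactly what prevents such inflation.
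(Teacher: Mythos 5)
Your proposal follows essentially the same strategy as the paper: a combined induction over iterations in which no grid node is ever rejected, a chain-halving argument shows that the budget at $i_q$ equals $\alpha/2^{q-1}$, and hence the budget assigned to any $A$-node is at most $\alpha/2^{q-1}$. Two corrections. First, a notational slip: the unique leaf of the induced DAG is $\min I_{(1,1)}$, not the largest-index observation at $z_{(1,1)}$, because under the tie-breaking in Definition~\ref{def:induced_graph} the larger index at a coinciding location is the parent of the smaller. Second, and more substantively, the ``short induction'' you propose for the $\omega=1$ ancestor-addition step would not close the argument: the set $\bigl\{i\in I_A:p_i\le\alpha/2^{q-1}\bigr\}$ is not in general closed under taking $G$-ancestors, so ancestor-addition really can introduce $A$-observations with $p_i>\alpha/2^{q-1}$.

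For a concrete failure take $q=1$, $d=2$, $m=5$, with $X_1=z_{(1,1)}$, $X_2=z_{(2,1)}$, $X_3=(0.1,0.5)^\top$, $X_4=(0.2,0.6)^\top$, $X_5=(0.3,0.55)^\top$, so that $X_3\preccurlyeq X_4$ and $X_3\preccurlyeq X_5$ while $X_4,X_5$ are incomparable roots, with $\pa_G(1)=\{2,3\}$ and $\pa_G(3)=\{4,5\}$; set $p_1=p_2=p_5=1$ and $p_3=p_4=\alpha/8$. In iteration one, node $4$ receives budget $\alpha/4\ge p_4$ and is rejected. In iteration two, node $3$ receives $\alpha/2$ from node $1$ and, because $4\in\pa_G(3)\cap R_1$, the any-parent retention rule leaves node $3$ with final budget $\alpha/4\ge p_3$; thus $3$ is directly rejected, and ancestor-addition then rejects $5\in\an_G(3)$ despite $p_5=1>\alpha=\alpha/2^{q-1}$ and $X_5\in A$. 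You should be aware that the paper's own proof has the same issue: the displayed sufficient condition for $P(\ell_0+1)$ covers only direct rejections and is silent on $\omega=1$ ancestor-addition. In the application within Proposition~\ref{prop:negative_res_MG_finite_sample} the argument still goes through, because any $A$-descendant of a boundary observation is itself a boundary observation and therefore also has $p>\alpha/2^{q-1}$ on $\Omega_4$; but that structural feature of the $p$-values is not available at the level of generality of the lemma as stated, so the $\omega=1$ case requires either a weaker conclusion (restricted to directly rejected $A$-nodes) or an additional hypothesis on $(p_i)_{i\in I_A}$.
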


\begin{proof}
Let $R_0^\omega := \emptyset$, and for $\ell \in [m]$ and $k \in [m]\cup \{0\}$, let $R_\ell^\omega$, $\alpha^\omega_\ell$ and $\alpha_{\ell, k}^\omega$ be as in Algorithm~\ref{algo:MG}. Furthermore, for ease of notation, let $I_A := \{i\in [m]: X_i \in A\}$, $I_{\bm{j}} := \{i\in [m]: X_i = z_{\bm{j}}\}$ for $\bm{j} \in \{1, 2\} \times [q]$ and $I_{A^c} := \{i\in [m]: X_i \notin A\} = \bigcup_{\bm{j}\in \{1, 2\} \times [q]} I_{\bm{j}}$; see Figure~\ref{fig:negativeMG_DAG_subfigure}.  For each $\ell \in [m]\cup \{0\}$ let $P(\ell)$ denote the proposition that
\begin{align*}
%R_\ell^\omega \cap \biggl( \biggl\{i \in [m]: X_i \in \bigcup_{\bm{j}\in [q]^2}\{z_{\bm{j}}\}\biggr\}\cup  \{i \in [m]: p_i \geq \alpha (1/2)^{q - 1} \text{ and } X_i \in A\}\biggr) =\emptyset.
%\end{align*}
%\red{
%\begin{align*}
R_\ell^\omega \cap \Bigl(\bigl\{i \in I_A: p_i > \alpha (1/2)^{q - 1}\bigr\} \cup I_{A^c} \Bigr) =\emptyset.
\end{align*}
%}
Since $\mathcal{R}^{\mathrm{MG}, \omega, \bm{v}}_\alpha\bigl(\mathcal{G}_{\mathrm{W}}(\sample_{X,m}), (p_i)_{i \in [m]}\bigr) = R_m^\omega$, the result will follow if $P(\ell)$ is true for all $\ell \in [m]\cup \{0\}$, and we prove this by induction on $\ell$. First, note that $P(0)$ is true since $R^\omega_0 = \emptyset$.  Now, fix any $\ell_0 \in [m-1]\cup \{0\}$ such that $P(\ell_0)$ holds true.  In particular, this means that no hypothesis corresponding to a node in $I_{A^c}$ has been rejected in the first $\ell_0$ steps of the algorithm.  For $j \in [q]$, define $i_j := \max I_{(1, j)}$, so that $i_q = i_1^*$, and let $G := \mathcal{G}_{\mathrm{W}}(\sample_{X,m})$.  Since $\mathrm{pa}_G(i) \subseteq I_{A^c}$ whenever $i \in I_{A^c} \setminus \{i_1^*\}$, we have that $P(\ell_0 + 1)$ is true if  $i_1^*, i_2^* \notin R^\omega_{\ell_0 + 1}$ and $\alpha^\omega_{\ell_0+1}(i)\leq \alpha/2^{q-1}$ for all $i \in I_A$.  Regarding the first of these conditions, we have $\sum_{i\in[m]} \alpha^\omega_\ell(i) = \alpha$ for all $\ell \in [m]\cup \{0\}$, so in particular $\alpha^\omega_{\ell_0 + 1}(i_j^*) \leq \alpha < p_{i_j^*}$ for $j \in \{1,2\}$ and hence $i_1^*, i_2^* \notin R^\omega_{\ell_0 + 1}$.  For the second condition, note that by definition, $L(G) = \{i_{\mathrm{L}}\}$ with $i_{\mathrm{L}} := \min I_{(1,1)}$.  There exists exactly one directed path from $i_j$ to $i_{\mathrm{L}}$, unless $j = 1$ and $|I_{(1,1)}| = 1$.  Thus, for any $j\in [q]$, it follows that $k_j := \min\bigl\{k \in [m]\cup\{0\}:  \alpha_{\ell_0+1, k}^\omega (i_j) > 0\bigr\}$ is the maximiser of $k \mapsto \alpha_{\ell_0+1, k}^\omega (i_j)$ over $k \in [m]\cup\{0\}$. We now claim that $\alpha_{\ell_0 + 1, k_j}^\omega (i_j) = \alpha/2^{j-1}$ and show this by another induction, this time on $j \in [q]$. First, each node in $I_{(1,1)}\setminus \{i_1\}$ has exactly one parent and this parent is itself contained in $I_{(1,1)}$, so that $\alpha_{\ell_0 + 1, k_1}^\omega (i_1) = \alpha_{\ell_0 + 1, 0}^\omega (i_{\mathrm{L}}) = \alpha$ for any $\bm{v}$.  If $q=1$, this establishes the claim; otherwise, fix $j_0 \in [q-1]$ for which $\alpha_{\ell_0 + 1, k_{j_0}}^\omega (i_{j_0}) = \alpha/2^{j_0-1}$. By construction, $\pa_G(i_{j_0}) = \{\min I_{(1, j_0+1)}, \min I_{(2, j_0)}\}$ and each node in $I_{(1, j_0+1)} \setminus \{i_{j_0+1}\}$ has again exactly one parent, which is contained in $I_{(1, j_0+1)}$, while at the same time each node in $I_{(1, j_0+1)}\setminus \{\min I_{(1,j_0+1)}\}$ has exactly one child, which is also contained in $I_{(1, j_0+1)}$.  Hence $\alpha_{\ell_0 + 1, k_{j_0+1}}^\omega (i_{j_0+1}) = \alpha_{\ell_0 + 1, k_{j_0}}^\omega(i_{j_0})/2 = \alpha(1/2)^{j_0}$, which completes the induction on $j \in [q]$.  Since for any $i \in I_A$, any directed path in $G$ from $i$ to $i_{\mathrm{L}}$ necessarily contains $i_q$, we deduce that
\[
\max_{i\in I_A} \alpha_{\ell_0+1}^\omega(i) \leq \sum_{i\in I_A} \alpha_{\ell_0+1}^\omega(i) \leq \alpha_{\ell_0+1, k_q }^\omega(i_q) = \frac{\alpha}{2^{q-1}},
\]
which completes the induction on $\ell \in [m]\cup\{0\}$ and hence the proof. 
\end{proof}
For $m\in[n]$, let $\hat{\mathcal{A}}^{\mathrm{U}}_{n,m}(\tau, \alpha, \mathcal{P}) \subseteq \hat{\mathcal{A}}_n(\tau, \alpha, \mathcal{P})$ denote the subfamily of data-dependent selection sets that control the Type I error at level $\alpha$ over $\mathcal{P}$ and for which $\hat{A}(\sample)$ is almost surely the upper hull of a subset of $\sample_{X,m}$.  Thus, for example, $\MGselectionset \in \hat{\mathcal{A}}^{\mathrm{U}}_{n,m}\bigl(\tau, \alpha, \distributionClassNonDecreasingRegressionFunction\bigr)$ and $\hat{A}^{\mathrm{ISS}}_{\sigma, \tau, \alpha, m}(\sample) \in \hat{\mathcal{A}}^{\mathrm{U}}_{n,m}\bigl(\tau, \alpha, \distributionClassNonDecreasingRegressionFunction\bigr)$.  %$\in \mathcal{U}(\sample_{X,m})$ $P$-almost surely for any $P\in\distributionClassNonDecreasingRegressionFunction \cap \distributionSharplyIncreasingRegressionFunction \cap \distributionDensityCondition$.
\begin{prop}\label{prop:lowerBound_m}
Fix $d\in\N$, $\alpha \in (0, 1/4]$, $n\in\N$, $m\in [n]$, $\tau \in \R$, $\sigma, \etaIncreasingExponent, \etaIncreasingConstant > 0$ and $\densityConstant > 1$. There exists $c \in (0, 1)$, depending only on $d$, such that
\[
\sup_{P \in \mathcal{P}'} \inf_{\hat{A}\in\hat{\mathcal{A}}^{\mathrm{U}}_{n,m}(\tau, \alpha, \mathcal{P}')} \E_P\bigl\{\mu\bigl(\superLevelSet{\tau}{\eta}\setminus \hat{A}(\sample)\bigr)\bigr\} \geq c\cdot \frac{1}{m^{1/d}},
\]
where $\mathcal{P}' := \distributionClassNonDecreasingRegressionFunction \cap \distributionClassMultivariateCondition$.
\end{prop}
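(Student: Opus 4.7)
The plan is to construct a single distribution $P^\ast \in \mathcal{P}'$ under which the expected $\mu$-mass of the $\tau$-superlevel set not covered by $\mathrm{UpperHull}(\sample_{X,m})$ is already of order $m^{-1/d}$. The key structural observation is that for any $\hat{A} \in \hat{\mathcal{A}}^{\mathrm{U}}_{n,m}(\tau,\alpha,\mathcal{P}')$, the almost-sure inclusion $\hat{A}(\sample) \subseteq \mathrm{UpperHull}(\sample_{X,m})$ holds by definition, so
\[
\mu\bigl(\superLevelSet{\tau}{\eta} \setminus \hat{A}(\sample)\bigr) \geq \mu\bigl(\superLevelSet{\tau}{\eta} \setminus \mathrm{UpperHull}(\sample_{X,m})\bigr)
\]
pointwise. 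Consequently the Type I constraint plays no role in the lower bound, and it suffices to exhibit $P^\ast$ such that the right-hand side's expectation is at least $c/m^{1/d}$.

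For the construction, I fix $q := \lceil C_0\,m^{1/d}\rceil$ for a constant $C_0 = C_0(d)$ to be chosen, and pick a maximal antichain $\mathbb{W}_{q,d} \subseteq [q]^d$ with $|\mathbb{W}_{q,d}| \geq q^{d-1}/d$ positioned so that the upper closure $L^+ := \bigcup_{\bm{j} \in \mathbb{W}_{q,d}} \{x \in \R^d : x \succcurlyeq p_{\bm{j}}\}$, where $p_{\bm{j}} := (\bm{j} - \bm{1}_d)/q$, satisfies $V_L := \Lebesgue(L^+ \cap [0,1]^d) \geq 1/\theta$. Writing $C_{\bm{j}} := \prod_{\ell=1}^d [p_{\bm{j}}^{(\ell)}, p_{\bm{j}}^{(\ell)} + 1/q]$, I take $\mu$ to be uniform on $L^+ \cap [0,1]^d$, set $\eta(x) := \tau + \lambda$ for $x \in L^+$ and $\eta(x) := \tau - \lambda$ otherwise, and define $P^\ast$ as the joint law of $(X, \eta(X) + \sigma Z)$ with $X \sim \mu$ and $Z \sim \mathcal{N}(0,1)$ independent. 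Since $L^+$ is an upper set, $\eta$ is coordinate-wise increasing, and a short verification of ball-mass bounds confirms $P^\ast \in \distributionClassMultivariateCondition$. Condition~\ref{def:multivariateAssumption}(ii) is immediate because $\eta \equiv \tau + \lambda$ on $L^+$, so $x \in B_\infty(x, r) \cap \superLevelSet{\tau + \lambda r^\gamma}{\eta}$ for all $r \in (0,1]$.

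The heart of the argument is an antichain-coverage lemma: almost surely, if some $X_i \in \sample_{X,m}$ satisfies $X_i \preccurlyeq x$ for some $x \in C_{\bm{j}}$, then $X_i \in C_{\bm{j}}$. Indeed, such $X_i$ obeys $X_i \preccurlyeq p_{\bm{j}} + \bm{1}_d/q$, and $X_i \in \support(\mu) \subseteq L^+$ forces $X_i \succcurlyeq p_{\bm{k}}$ for some $\bm{k} \in \mathbb{W}_{q,d}$, whence $\bm{k} \preccurlyeq \bm{j} + \bm{1}_d$ componentwise; the antichain property then implies either $\bm{k} = \bm{j}$ (so $X_i \in C_{\bm{j}}$) or else $X_i^{(\ell_0)} = \bm{j}^{(\ell_0)}/q$ exactly for some coordinate $\ell_0$, which is a $\mu$-null event. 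Hence $C_{\bm{j}}$ is left entirely uncovered when $\sample_{X,m} \cap C_{\bm{j}} = \emptyset$, and
\[
\mu\bigl(L^+ \setminus \mathrm{UpperHull}(\sample_{X,m})\bigr) \geq \sum_{\bm{j} \in \mathbb{W}_{q,d}} \mu(C_{\bm{j}})\,\mathbbm{1}\bigl\{\sample_{X,m} \cap C_{\bm{j}} = \emptyset\bigr\} \quad \text{a.s.}
\]
Taking expectations, using $\mu(C_{\bm{j}}) = 1/(q^d V_L)$, $|\mathbb{W}_{q,d}| \geq q^{d-1}/d$, and $(1-x)^m \geq e^{-2mx}$ for $x \in (0, 1/2]$, and choosing $C_0$ so that $q^d V_L \geq 2m$, collapses the exponential to $e^{-1}$ and yields a lower bound of the form $e^{-1}/(d\,q\,V_L) \asymp m^{-1/d}$ with a constant depending only on $d$.

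The main obstacle will be the antichain-coverage lemma: the combinatorial case analysis for $\bm{k} \neq \bm{j}$ requires a careful verification that the exceptional set (where some coordinate of $X_i$ equals a fixed rational multiple of $1/q$) carries no $\mu$-mass in any dimension $d$. A secondary issue is choosing the antichain so that the density condition in Definition~\ref{def:multivariateAssumption}(i) holds uniformly in $\theta > 1$: for $\theta$ bounded away from $1$ a standard antidiagonal-type antichain gives $V_L \geq 1/\theta$ directly, while for $\theta$ close to $1$ one positions the antichain nearer the origin so that $L^+$ fills out $[0,1]^d$ more completely, at the cost of a slightly smaller (but still $\Theta(q^{d-1})$) cardinality.
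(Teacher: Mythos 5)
Your proposal is correct in its main ideas but takes a genuinely different route from the paper's proof. Both arguments hinge on the same antichain construction from Section~\ref{sec:lowerBound_proofs}, and both exploit the fact that an upper hull of observations cannot reach into a hypercube $C_{\bm{j}}$ that contains no observation — but they get there differently. The paper keeps $\mu = \mathrm{Unif}([0,1]^d)$ (so that membership in $\distributionClassMultivariateCondition$ is automatic via Lemma~\ref{lemma:antichain_distributions_properties}), and then must invoke the Type~I error constraint: some observations lie \emph{below} the antichain (in $\mathcal{H}^q_{-\infty}$), and if these were allowed into the upper hull they could cover empty cells $\mathcal{H}^q_{\bm{j}}$; the paper handles this by restricting to the event $\{\hat{A}(\sample) \subseteq \superLevelSet{\tau}{\eta_*}\}$, which has probability at least $3/4$ since $\alpha \leq 1/4$, and then runs a two-event probabilistic argument. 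You instead restrict the support of $\mu$ to lie entirely at or above the antichain, so that \emph{every} observation is forced into $\superLevelSet{\tau}{\eta}$ and your antichain-coverage lemma applies almost surely to all of $\sample_{X,m}$; the Type~I constraint then becomes irrelevant, and you can take expectations directly without conditioning on a high-probability event. Your coverage lemma is correct: from $\bm{k} - \bm{1}_d \preccurlyeq qX_i \preccurlyeq \bm{j}$ with $\bm{k} \neq \bm{j}$, the antichain property forces $qX_i^{(\ell_1)} = j_{\ell_1}$ in some coordinate $\ell_1$, a $\mu$-null event for your absolutely continuous marginal. The trade-off is the membership verification: whereas the paper gets Definition~\ref{def:multivariateAssumption}(i) for free from $\mathrm{Unif}([0,1]^d)$, your marginal is uniform on $L^+ \cap [0,1]^d$, and the lower ball-mass bound $\mu\bigl(\closedSupNormMetricBall{x}{r}\bigr) \geq \densityConstant^{-1}r^d$ at boundary points of $L^+$, together with maintaining $V_L \geq 1/\densityConstant$ and $|\mathbb{W}_{q,d}| = \Theta(q^{d-1})$ simultaneously for all $\densityConstant > 1$, requires more care than "a short verification" — you flag this yourself, but it is the place where a fully detailed write-up of your route would have to do real work that the paper's route avoids. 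Modulo that verification, your direct-expectation computation (with $q^dV_L \asymp m$ so that $(1-1/(q^dV_L))^m$ is bounded below) delivers the required $\Theta(m^{-1/d})$ bound with a constant depending only on $d$, matching the paper's conclusion.
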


\begin{proof}
For $q \in \N$, let the antichain $\mathbb{W}_{q,d}$, hypercubes $\mathcal{H}_{\bm{j}}^q$ for $\bm{j} \in \mathbb{W}_{q,d}$ as well as $P_S$, $\eta_S$ for $S\subseteq \mathbb{W}_{q,d}$ be defined as in Section~\ref{sec:lowerBound_proofs} and let $\mu := \mathrm{Unif}\bigl([0,1]^d\bigr)$. For ease of notation, we write $P_* := P_S$ and $\eta_* := \eta_S$ when $S = \mathbb{W}_{q,d}$. Note first that for any $\hat{A} \in \hat{\mathcal{A}}^{\mathrm{U}}_{n,m}(\tau, \alpha, \mathcal{P}')$ we have on $\{\hat{A}(\sample) \subseteq \superLevelSet{\tau}{\eta_*}\}$ that
\[
\hat{S} := \{\bm{j} \in \mathbb{W}_{q,d}:\hat{A}(\sample)\cap \mathcal{H}_{\bm{j}}^q \neq \emptyset\} \subseteq \{\bm{j} \in \mathbb{W}_{q,d}: \sample_{X,m} \cap \mathcal{H}_{\bm{j}}^q \neq \emptyset\} =: \Tilde{S}.
\]
Now, for any $q\in\N$ and $\hat{A} \in \hat{\mathcal{A}}^{\mathrm{U}}_{n,m}(\tau, \alpha, \mathcal{P}')$, we have $\mu\bigl(\superLevelSet{\tau}{\eta_*}\setminus\hat{A}(\sample)\bigr) \geq  |\mathbb{W}_{q,d}\setminus \hat{S}|/{q^d}$ and $|\mathbb{W}_{q,d}|\geq q^{d-1}/d$, so that
\begin{align}
\E_{P_*}\bigl\{\mu\bigl(\superLevelSet{\tau}{\eta_*}\setminus \hat{A}(\sample)\bigr)\bigr\} \geq \frac{|\mathbb{W}_{q,d}|}{q^d} \cdot \E_{P_*}\biggl(\frac{|\mathbb{W}_{q,d}\setminus \hat{S}|}{|\mathbb{W}_{q,d}|}\biggr) \geq \frac{1}{d\cdot q} \cdot \E_{P_*}\biggl(\frac{|\mathbb{W}_{q,d}\setminus \hat{S}|}{|\mathbb{W}_{q,d}|}\biggr).
\label{eq:lowerBound_m}    
\end{align}
On the other hand, 
\begin{align*}
    \E_{P_*}\bigl(|\mathbb{W}_{q,d}\setminus \Tilde{S}|\bigr) = \E_{P_*}\biggl(\sum_{\bm{j} \in \mathbb{W}_{q,d}} \one_{\{\mathcal{H}_{\bm{j}}^q \cap \sample_{X,m} = \emptyset\}}\biggr) = |\mathbb{W}_{q,d}| \Bigl(1 - \frac{1}{q^d}\Bigr)^m. %\geq \frac{q^{d-1}}{d}  \Bigl(1 - \frac{1}{q^d}\Bigr)^m, 
\end{align*}
Hence, when setting $q = \lceil (2m)^{1/d} \rceil$ and writing $S_* := \mathbb{W}_{\lceil(2m)^{1/d}\rceil, d}$, we find that $\E_{P_*}\bigl(|S_*\setminus \Tilde{S}| / |S_*| \bigr) \geq \bigl(1 - 1/(2m)\bigr)^m \geq 1/2$, so that $\Prob_{P_*}\bigl(|S_*\setminus \Tilde{S}| / |S_*|  \geq 1/5 \bigr) \geq 3/8$. Since $\Prob_{P_*}\bigl(\hat{A}(\sample) \subseteq \superLevelSet{\tau}{\eta_*}\bigr) \geq 3/4$ as $\alpha \in (0, 1/4]$, it follows that 
\begin{align*}
    \Prob_{P_*}\biggl(\frac{|S_*\setminus \hat{S}|}{|S_*|} \geq \frac{1}{5}\biggr) &\geq \Prob_{P_*}\biggl(\biggl\{\frac{|S_*\setminus \Tilde{S}|}{|S_*|} \geq \frac{1}{5}\biggr\}\cap \bigl\{\hat{A}(\sample) \subseteq \superLevelSet{\tau}{\eta_*}\bigr\}\biggr)\\
    &\geq \Prob_{P_*}\biggl(\frac{|S_*\setminus \Tilde{S}|}{|S_*|} \geq \frac{1}{5}\biggr) - \Prob_{P_*} \bigl(\hat{A}(\sample) \nsubseteq \superLevelSet{\tau}{\eta_*}\bigr) \geq \frac{3}{8} - \frac{1}{4} \geq \frac{1}{8}.
\end{align*}
Hence, $\E_{P_*}\bigl(|S_*\setminus \hat{S}|/ |S_*|\bigr) \geq 1/40$. This holds uniformly over $\hat{A} \in \hat{\mathcal{A}}^{\mathrm{U}}_{n,m}(\tau, \alpha, \mathcal{P}')$, since~$\Tilde{S}$ does not depend on the specific choice of $\hat{A}$. Combining this with~\eqref{eq:lowerBound_m} for the specified choice of $q$, we have 
\begin{align*}
    \sup_{P \in \mathcal{P}'} \inf_{\hat{A} \in \hat{\mathcal{A}}^{\mathrm{U}}_{n,m}(\tau, \alpha, \mathcal{P}')} \E_P\bigl\{\bigl(\superLevelSet{\tau}{\eta}\setminus \hat{A}(\sample)\bigr)\bigr\} &\geq \inf_{\hat{A} \in \hat{\mathcal{A}}^{\mathrm{U}}_{n,m}(\tau, \alpha, \mathcal{P}')} \E_{P_*}\bigl\{\mu\bigl(\superLevelSet{\tau}{\eta_*}\setminus \hat{A}(\sample)\bigr)\bigr\} \\
    &\geq \frac{1}{2d(2m)^{1/d}} \cdot\inf_{\hat{A} \in \hat{\mathcal{A}}^{\mathrm{U}}_{n,m}(\tau, \alpha, \mathcal{P}')}  \E_{P_*}\biggl(\frac{|S_*\setminus \hat{S}|}{|S_*|}\biggr) \\
    &\geq \frac{1}{80\cdot2^{1/d}d}\cdot \frac{1}{m^{1/d}},
\end{align*}
which yields the result with $c = 1/(80\cdot2^{1/d}d)$.
\end{proof}
We are now in a position to prove Proposition~\ref{prop:negative_res_MG_finite_sample}.
\begin{proof}[Proof of Proposition~\ref{prop:negative_res_MG_finite_sample}] Fix $\delta \in (0, 1)$. To begin with, we consider cases arising when either $n$ or $m$ are small, before the main part of the proof deals with $m$ and $n$ sufficiently large. First, suppose that 
\[
n <  \exp\Bigl(\frac{\sigma^2}{\etaIncreasingConstant^2}\cdot 2^{2\etaIncreasingExponent - 13}\Bigr) \vee \biggl\{2^{d+1} \Bigl(\frac{\etaIncreasingConstant^2}{\sigma^2}\Bigr)^{1/(2\etaIncreasingExponent + d)}\log\Bigl(\frac{4d}{\delta}\Bigr)\biggr\}^{(2\etaIncreasingExponent + d + 1)/(2\etaIncreasingExponent + d)}  \vee 2^{(2\etaIncreasingExponent+d+1)/d}.
\]
Then, by Theorem~\ref{Thm:LowerBound}, there exists $c_1'(\delta) \equiv c_1'(\delta,\alpha, d,  \sigma, \etaIncreasingConstant, \etaIncreasingExponent) > 0$ such that
\begin{align*}
\sup_{P\in \mathcal{P}'}  \E_P\bigl\{ \marginalDistribution \bigl(\superLevelSet{\tau}{\regressionFunction}\setminus \MGselectionset\bigr)\bigr\} \geq c'_1(\delta).
\end{align*}
%\red{(Explicitly: $c_1'(\delta) := \frac{1}{352\cdot d \cdot 16^{1/(2\etaIncreasingExponent + d)}} \cdot  \Bigl(\frac{\sigma^2}{\etaIncreasingConstant^2 \cdot (\alpha \wedge \delta) \cdot \exp(\sigma^2/\etaIncreasingConstant^2\cdot 2^{2\etaIncreasingExponent - 14})} \Bigr)^{1/(2\etaIncreasingExponent + d)}$.)}
%\orange{Secondly, by a similar argument in combination with Lemma~\red{REF! MMM rule (i)}, we see that if 
%\[
%n^{2\etaIncreasingExponent/(2\etaIncreasingExponent + d)} < 2^{d+1+16/(2\etaIncreasingExponent + 1)}\log\Bigl(\frac{4d}{\delta}\Bigr)\Bigl(1\vee\frac{ \etaIncreasingConstant^2}{\sigma^2}\Bigr)^{1/(2\etaIncreasingExponent + 1)}\log_+^{2/(2\etaIncreasingExponent + 1)}\Bigl(\frac{n}{\alpha \wedge \delta}\Bigr),
%\]
%then there exists $c_2'(\delta) \equiv c_2'(\delta, \alpha, d, \sigma, \etaIncreasingConstant, \etaIncreasingExponent)$ not depending on $n$ such that
%\begin{align*}
%\sup_{P\in\mathcal{P}_{0}(\tau, \sigma, \etaIncreasingExponent, \etaIncreasingConstant,\densityConstant)}  \E_P\bigl\{ \marginalDistribution \bigl(\superLevelSet{\tau}{\regressionFunction}\setminus\hat{A}_{\sigma,\tau,\alpha,m}(\sample,\rejectionSet^{\mathrm{MG}, \omega}_\alpha)\bigr)\bigr\} &\geq  \frac{1}{352\cdot d \cdot 16^{1/(d+2\etaIncreasingExponent)}} \cdot  \Bigl(\frac{\sigma^2}{\etaIncreasingConstant^2 n} \Bigr)^{1/(2\etaIncreasingExponent + d)} \\
%&\geq \Bigl(\frac{c_2'(\delta)}{\log_+(n)}\Bigr)^{1/(\etaIncreasingExponent(2\etaIncreasingExponent + 1))}.
%\end{align*}}
Second, if $m < 2^{15}(1\vee \etaIncreasingConstant^2/\sigma^2)^{(d-1)/(2\etaIncreasingExponent + d)} \cdot n^{d/(2\etaIncreasingExponent + d + 1)}\log_+^2\bigl(n/(\alpha \wedge \delta)\bigr)$, then we have by the proof of Proposition~\ref{prop:lowerBound_m} that there exists $c_2'(\delta) \equiv c_2'(\delta, \alpha, d, \sigma, \etaIncreasingConstant, \etaIncreasingExponent) > 0$ such that
\[
\sup_{P\in \mathcal{P}'}  \E_P\bigl\{ \marginalDistribution \bigl(\superLevelSet{\tau}{\regressionFunction}\setminus \MGselectionset\bigr)\bigr\} \geq \frac{1}{80\cdot2^{1/d}d m^{1/d}} \geq \frac{c'_2(\delta)}{n^{1/(2\etaIncreasingExponent + d + 1)}(\log_+ n)^{2/d}}.
\]
Hence, we may suppose for the remainder of the proof that 
\[
m \geq 2^{15}\Bigl(1\vee \frac{\etaIncreasingConstant^2}{\sigma^2}\Bigr)^{(d-1)/(2\etaIncreasingExponent + d)} \cdot n^{d/(2\etaIncreasingExponent + d + 1)}\log_+^2\Bigl(\frac{n}{\alpha \wedge \delta}\Bigr) \geq n^{d/(2\etaIncreasingExponent + d + 1)}
\] %\red{(MM: in doing so, we implicitly also assume large $n$ --- can we thus remove one of the conditions on $n$ following next?)} 
and 
\begin{align*}
n &\geq  \exp\Bigl(\frac{\sigma^2}{\etaIncreasingConstant^2}\cdot 2^{2\etaIncreasingExponent - 13}\Bigr) \vee \biggl\{2^{d+1} \Bigl(\frac{\etaIncreasingConstant^2}{\sigma^2}\Bigr)^{1/(2\etaIncreasingExponent + d)}\log\Bigl(\frac{4d}{\delta}\Bigr)\biggr\}^{(2\etaIncreasingExponent + d + 1)/(2\etaIncreasingExponent + d)} \vee 2^{(2\etaIncreasingExponent+d+1)/d}.
%&\orange{\qquad \vee \biggl\{2^{d+1+16/(2\etaIncreasingExponent + 1)}\log\Bigl(\frac{4d}{\delta}\Bigr)\Bigl(1\vee\frac{ \etaIncreasingConstant^2}{\sigma^2}\Bigr)^{1/(2\etaIncreasingExponent + 1)}\log_+^{2/(2\etaIncreasingExponent + 1)}\Bigl(\frac{n}{\alpha \wedge \delta}\Bigr)\biggr\}^{(2\etaIncreasingExponent + d)/(2\etaIncreasingExponent)}}.
\end{align*}
Write 
\[
\rho_0 := \frac{1}{\log n} \log\biggl(\frac{m}{2^{15} (1\vee \etaIncreasingConstant^2/\sigma^2)^{(d-1)/(2\etaIncreasingExponent+d)}\log^2\bigl(n/(\alpha\wedge \delta)\bigr)}\biggr)
\]
and $\rho := (1-\rho_0)\cdot(2\etaIncreasingExponent + d)/(2\etaIncreasingExponent + 1)$. By our assumption on $m$, we have $\rho_0 \geq d/(2\etaIncreasingExponent + d + 1)$ and hence $\rho \leq (2\etaIncreasingExponent + d)/(2\etaIncreasingExponent + d + 1)$.  Moreover, by definition of $\rho_0$ we have that
\begin{align*}
q := \bigg\lceil 484\Bigl(1\vee \frac{\etaIncreasingConstant^2}{\sigma^2}\Bigr)^{(d-1)/(2\etaIncreasingExponent + d)}\cdot n^{\rho_0} \log\Bigl(\frac{n}{\alpha \wedge \delta}\Bigr)\bigg\rceil &\leq 2^9 \Bigl(1\vee \frac{\etaIncreasingConstant^2}{\sigma^2}\Bigr)^{(d-1)/(2\etaIncreasingExponent + d)}\cdot n^{\rho_0} \log\Bigl(\frac{n}{\alpha \wedge \delta}\Bigr) \\
&\phantom{:}\leq \frac{m}{64\log\bigl(n/(\alpha \wedge \delta)\bigr)} \leq \bigg\lfloor \frac{m}{32\log(m/\delta)} \bigg\rfloor.
\end{align*}
Next, let
\[
s := \biggl(\frac{2\sigma^2}{n^\rho \etaIncreasingConstant^2}\log\Bigl(\frac{n}{\alpha \wedge\delta}\Bigr)\biggr)^{1/(2\etaIncreasingExponent+d)} \geq \biggl(\frac{\sigma^2}{n^\rho \etaIncreasingConstant^2}\biggr)^{1/(2\etaIncreasingExponent + d)}.
\]
Note also that
\begin{align*}
s &\leq \biggl(\frac{2\sigma^2}{ \etaIncreasingConstant^2}\biggr)^{1/(2\etaIncreasingExponent+d)} n^{-(1 - \rho_0)/(2\etaIncreasingExponent+1)} \log^{1/(2\gamma+1)}\Bigl(\frac{n}{\alpha \wedge\delta}\Bigr) \\
&= \biggl(\frac{2\sigma^2}{ \etaIncreasingConstant^2}\biggr)^{1/(2\etaIncreasingExponent+d)} \biggl\{\frac{m/n}{2^{15} (1\vee \etaIncreasingConstant^2/\sigma^2)^{(d-1)/(2\etaIncreasingExponent+d)}\log^2\bigl(n/(\alpha\wedge \delta)\bigr)} \cdot \log\Bigl(\frac{n}{\alpha \wedge\delta}\Bigr)\biggr\}^{1/(2\etaIncreasingExponent+1)} \\
&\leq 2^{1/(2\etaIncreasingExponent+d) - 15/(2\etaIncreasingExponent+1)} \biggl(\frac{(\sigma^2/ \etaIncreasingConstant^2)^{(2\etaIncreasingExponent + 1)/(2\etaIncreasingExponent + d)}}{( \etaIncreasingConstant^2/\sigma^2)^{(d-1)/(2\etaIncreasingExponent+d)}\log n}\biggr)^{1/(2\etaIncreasingExponent+1)}\\
&\leq 2^{1/(2\etaIncreasingExponent+d) - 15/(2\etaIncreasingExponent+1)} \biggl(\frac{\sigma^2}{\etaIncreasingConstant^2\log n}\biggr)^{1/(2\etaIncreasingExponent+1)} \leq 2^{1/(2\etaIncreasingExponent+d) - 15/(2\etaIncreasingExponent+1) - (2\etaIncreasingExponent - 13)/(2\etaIncreasingExponent + 1)} \leq 1/2.
\end{align*}
Writing $w_{n,m,\delta} := 173.13 \bigl(\log_+\log n + \log_+(m/\delta)\bigr) \leq 173.13 \cdot 2\log\bigl(n/(\alpha\wedge\delta)\bigr)$, we claim that 
\begin{align}\label{eq:sLowerUpperBoundsClaim}
\frac{8}{3n}\log\Bigl(\frac{4d}{\delta}\Bigr) \leq \frac{s}{2^{d-1}} \leq \frac{\sigma^2}{2n\etaIncreasingConstant^2 s^{2\etaIncreasingExponent}}\bigl(0.72q - w_{n,m,\delta}\bigr).
\end{align}
To see this, first note for the lower bound that
\begin{align*}
    ns &\geq n\biggl(\frac{\sigma^2}{n^\rho \etaIncreasingConstant^2}\biggr)^{1/(2\etaIncreasingExponent + d)} \geq \biggl(\frac{\sigma^2}{\etaIncreasingConstant^2}\biggr)^{1/(2\etaIncreasingExponent + d)} n^{(2\etaIncreasingExponent + d)/(2\etaIncreasingExponent + d + 1)} \geq 2^{d+1} \log\Bigl(\frac{4d}{\delta}\Bigr) \geq 2^{d-1}\cdot \frac{8}{3}\log\Bigl(\frac{4d}{\delta}\Bigr).
\end{align*}
As for the upper bound, note first that since $n^{\rho_0} \geq 2$ and the lower bound on $s$,
\begin{align*}
0.72q &\geq 
    \biggl\{173.13 +  \biggl(\frac{\etaIncreasingConstant^2}{\sigma^2}\biggr)^{(d-1)/(2\etaIncreasingExponent + d)}n^{\rho_0}\biggr\}\cdot 2\log\Bigl(\frac{n}{\alpha \wedge\delta}\Bigr) \geq w_{n,m,\delta} + \frac{4n^{1-\rho}}{(2s)^{d-1}}\log\Bigl(\frac{n}{\alpha \wedge\delta}\Bigr).
\end{align*}
Hence
\[
2\log\Bigl(\frac{n}{\alpha \wedge\delta}\Bigr) \leq \frac{(2s)^{d-1}}{2n^{1-\rho}}(0.72q - w_{n,m,\delta}).
\]
This yields that
\begin{align*}
    \frac{s}{2^{d-1}} = \frac{1}{2^{d-1} s^{2\etaIncreasingExponent + d - 1}} \cdot  \frac{2\sigma^2}{n^\rho \etaIncreasingConstant^2} \log\Bigl(\frac{n}{\alpha \wedge\delta}\Bigr)
    \leq \frac{1}{2n} \cdot  \frac{\sigma^2}{ \etaIncreasingConstant^2 s^{2\etaIncreasingExponent}} \bigl(0.72q - w_{n,m,\delta}\bigr),
\end{align*}
thus establishing the claim \eqref{eq:sLowerUpperBoundsClaim}. 

Now fix $M := 1.7\sigma\sqrt{\log(41.6/\delta)}$.  By Lemma~\ref{lemma:negativeMG_chernoff}, we have $\Prob_{P_{q,M}}\bigl(\Omega_1\bigr) \geq 1 - \delta/4$ for $\Omega_1$ defined as in that lemma. For $\Omega_2$ defined as in Lemma~\ref{lemma:negativeMG_grid_pvalues}, it then holds that $\Prob_{P_{q,M}}\bigl(\Omega_2|\sample_X\bigr) \geq 1- \delta/4$ on $\Omega_1$ by that same lemma.  Let $(B_j)_{j\in[d]}$ and $\Omega_3$ be as in Lemma~\ref{lemma:negativeMG_chernoff_superlevelset}, so that $\Prob_{P_{q,M}}\bigl(\Omega_3\bigr) \geq 1- \delta/4$. Moreover, let $1\leq i_1 < \ldots < i_K \leq m$ be such that $\{i_1, \ldots, i_K\} := \{i\in [m]: X_i \in \superLevelSet{\tau}{\eta_{q,M}}\setminus\superLevelSet{\tau + \etaIncreasingConstant s^\etaIncreasingExponent}{\eta_{q,M}}\}$ as in Lemma~\ref{lemma:negativeMG_uniform}.  Note that $\superLevelSet{\tau + \lambda s^\gamma}{\eta_{q,M}} = \{x\in\R^d: x \succcurlyeq (0, 1/2, 0,\ldots,0)^\top + s\cdot \bm{1}_d\}$, so that $B_1,\ldots, B_d$ define a covering of $\superLevelSet{\tau}{\eta_{q,M}}\setminus \superLevelSet{\tau + \etaIncreasingConstant s^\etaIncreasingExponent}{\eta_{q,M}}$. Hence, for every $k \in [K]$, there exists $j_k \in [d]$ such that $X_{i_k} \in B_{j_k}$ and
\[
\bigl|\{i \in [n]: X_i \in A, X_i \preccurlyeq X_{i_k}\}\bigr| \leq \bigl|\{i\in [n]: X_i \in B_{j_k}\}\bigr|,
\]
so that %by Lemma~\ref{lemma:negativeMG_chernoff_superlevelset}, the event
\[
\Omega_3 \subseteq \bigcap_{k\in[K]} \biggl\{\bigl|\{i \in [n]: X_i \in A, X_i \preccurlyeq X_{i_k}\}\bigr| \leq \frac{\sigma^2}{\etaIncreasingConstant^2 s^{2\etaIncreasingExponent}} \bigl(0.72q - w_{n,m,\delta}\bigr)\biggr\} =: \Omega_3^*.
\]
%satisfies $\Prob_{P_{q,M}}\bigl(\Omega_1^*\bigr) \geq 1- \delta/4$.
Moreover, we have by Lemma~\ref{lemma:negativeMG_uniform} that 
the set $\Omega_4$ defined therein satisfies $\Prob_{P_{q,M}}\bigl(\Omega_4 | \sample_X\bigr) \geq 1 - \delta/4$ on $\Omega_3^*$.  Hence, by Lemma~\ref{lemma:negativeMG_problematic_setup}, we have on $\Omega_1 \cap \Omega_2 \cap \Omega_3 \cap \Omega_4$ that $\MGselectionset \subseteq \superLevelSet{\tau+\etaIncreasingConstant s^\etaIncreasingExponent}{\eta_{q,M}}$, so
\begin{align*}
\mu_q\bigl(\superLevelSet{\tau}{\eta_{q,M}}\setminus \MGselectionset\bigr) &= \frac{1}{2^d} - \mu_q\bigl(\MGselectionset\bigr)\\
&\geq \frac{1}{2^d} - \mu_q\bigl(\superLevelSet{\tau + \lambda s^\gamma}{\eta_{q,M}}\bigr) = \frac{1- (1-2s)^d}{2^d} \geq \frac{s}{2^{d-1}},
\end{align*}
where the final inequality uses the fact that $s \leq 1/2$. It follows that 
\begin{align*}
    \Prob_{P_{q,M}}\biggl\{ \mu_q\bigl(\superLevelSet{\tau}{\regressionFunction}\setminus\MGselectionset\bigr) \geq \frac{s}{2^{d-1}}\biggr\} %\Bigl(\frac{\sigma^2}{n^\rho \etaIncreasingConstant^2}\Bigl\{\log_+\Bigl(\frac{m}{\alpha \wedge\delta}\Bigr) + \log_+ \log n\Bigr\} \Bigr)^{1/(2\etaIncreasingExponent+d)}  \\
    \geq \Prob_{P_{q,M}}\bigl(\Omega_1 \cap \Omega_2 \cap \Omega_3 \cap \Omega_4\bigr) \geq 1 - \delta.
\end{align*}
Setting $\delta = 1/2$, we see that there exists $c_3 > 0$, depending only on $d$, $\sigma$, $\etaIncreasingConstant$ and $\etaIncreasingExponent$, such that
\begin{align*}
    \E_{P_{q,M}}\bigl\{ \mu_q\bigl(\superLevelSet{\tau}{\regressionFunction}\setminus\MGselectionset\bigr)\bigr\} &\geq \frac{s}{2^d}% \cdot  \biggl(\frac{\sigma^2}{n^\rho \etaIncreasingConstant^2}\biggl\{\log_+\Bigl(\frac{2m}{\alpha}\Bigr) + \log_+ \log n\biggr\} \biggr)^{1/(2\etaIncreasingExponent+d)}\\
    \geq \frac{c_3}{n^{1/(2\etaIncreasingExponent + d + 1)}},
\end{align*}
so that the result follows for $c := c_1'(1/2) \wedge c'_2(1/2) \wedge c_3$. %\red{(MM: throw away log-factors in the final step - could also just chose $s$ smaller to begin with and relax conditions on $n$ a bit?)}
\end{proof}

\section{Auxiliary Results}\label{Sec:Auxiliary}
\begin{lemma}[\citealp{howard2021uniform}]\label{lemma:howard_uniform_bound}
Let $(Z_j)_{j \in \mathbb{N}}$ be a sequence of independent, sub-Gaussian random variables with variance parameter 1. 

\begin{enumerate}[(a)] 
\item For any $\alpha \in (0, 1)$, 
    \[
\Prob\biggl(\bigcup_{k=1}^\infty \biggl\{\sum_{j=1}^k Z_j \geq u_\alpha(k)\biggr\}\biggr) \leq \alpha,
\]
where $u_\alpha(k) := 1.7 \sqrt{k \bigl\{\log \log (2k)+0.72 \log(5.2/\alpha)\bigr\}}$. 

% \begin{lemma}[See discussion after Theorem 1 in \cite{howard2021uniform}]\label{lemma:howard_uniform_bound}
% Let $(Z_j)_{j \in \mathbb{N}}$ be a sequence of independent, sub-Gaussian random variables with parameter 1.  Then for any $\alpha \in (0,1)$,
% \[
% \Prob\biggl(\bigcup_{k=1}^\infty \biggl\{\sum_{j=1}^k Z_j \geq u_\alpha(k)\biggr\}\biggr) \leq \alpha,
% \]
% where $u_\alpha(k) := 1.7 \sqrt{k \bigl\{\log \log (2k)+0.72 \log(5.2/\alpha)\bigr\}}$. \red{Or use: $\mathrm{NM}_{\alpha,\rho}(k) := \sqrt{2(k+\rho)\log\bigl(\frac{1}{2\alpha}\sqrt{\frac{k+\rho}{\rho}} + 1\bigr)}$}
% \end{lemma}
\item For any $\alpha \in (0, 1)$ and $\rho > 0$, 
    \[
\Prob\biggl(\bigcup_{k=1}^\infty \biggl\{\sum_{j=1}^k Z_j \geq u^{\mathrm{NM}}_{\alpha, \rho}(k)\biggr\}\biggr) \leq \alpha,
\]
where $u^{\mathrm{NM}}_{\alpha, \rho}(k) := \sqrt{2(k+\rho)\log\Bigl(\frac{1}{2\alpha}\sqrt{\frac{k+\rho}{\rho}} + 1\Bigr)}$. 
\end{enumerate}
\end{lemma}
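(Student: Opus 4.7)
Both bounds are specialisations of general time-uniform inequalities established by \citet{howard2021uniform}, so the plan is to check the parameterisations and then invoke their results.  The common engine is Ville's inequality applied to an exponential supermartingale built from $S_k := \sum_{j=1}^k Z_j$: the sub-Gaussian assumption with variance parameter $1$ implies that, for each $\lambda \in \R$, the process $M_k^\lambda := \exp(\lambda S_k - \lambda^2 k / 2)$ is a non-negative supermartingale starting at $1$.

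For part~(b), I would follow Robbins' method of mixtures with a (one-sided) centred Gaussian prior on $\lambda$ of variance $1/\rho$.  By Fubini's theorem the mixture $\bar{M}_k := \int M_k^\lambda\,\pi_\rho(d\lambda)$ is again a non-negative supermartingale with mean $1$, and completing the square in the exponent delivers a closed-form expression of the shape $\sqrt{\rho/(k+\rho)}\exp\bigl\{S_k^2/(2(k+\rho))\bigr\}$, modulated by a one-sided correction that accounts for restricting the prior to $\lambda \geq 0$.  Applying Ville's inequality at level $\alpha$ and inverting the resulting quadratic-in-$S_k$ inequality produces precisely the stated boundary $u^{\mathrm{NM}}_{\alpha,\rho}$, with the ``$+1$'' inside the logarithm tracking the one-sided truncation of the mixture.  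This is the normal-mixture boundary in \citet[Proposition~1 and Corollary~2]{howard2021uniform}.

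For part~(a), the finite law-of-the-iterated-logarithm boundary $u_\alpha$ is produced by the ``stitching'' construction of Section~3 of \citet{howard2021uniform}: partition $\N$ into geometric epochs $E_j = \{2^j,\ldots,2^{j+1}-1\}$, apply a Chernoff-type exponential bound within each epoch with $\lambda$ tuned to the epoch scale, and combine the epochs by a union bound with polynomially-decaying weights $\alpha_j \propto \alpha/\{(j+1)(j+2)\}$, which sum to at most $\alpha$.  The $\sqrt{k\log\log(2k)}$ shape emerges from the per-epoch optimisation, while the specific numerical constants $1.7$, $0.72$ and $5.2$ come from the concrete tuning of the stitched boundary in \citet[Theorem~1]{howard2021uniform}.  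The only obstacle is bookkeeping of these constants against the parameterisation used there; no fresh probabilistic input is needed beyond Ville's inequality and the method of mixtures.
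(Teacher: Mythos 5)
The paper does not prove this lemma; it simply cites \citet{howard2021uniform}, and Lemma~\ref{lemma:howard_uniform_bound} is a restatement of specific instantiations of their boundaries. So there is no in-paper proof to compare against. That said, your sketch correctly identifies the machinery: part (b) is the one-sided normal-mixture boundary (Robbins' method of mixtures applied to the exponential supermartingale $M_k^\lambda = \exp(\lambda S_k - \lambda^2 k/2)$, with a Gaussian prior of variance $1/\rho$ and the ``$+1$'' inside the logarithm arising from the one-sided truncation of the mixture), and part (a) is a stitched boundary built by partitioning time into geometric epochs, optimising a Chernoff bound within each epoch, and distributing the error budget over epochs with summable weights, which produces the $\sqrt{k\log\log(2k)}$ shape and the explicit constants $1.7$, $0.72$, $5.2$ after tuning. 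Both rest on Ville's inequality and nothing more, exactly as you say; the only remaining work (which you flag and which is real) is reconciling the numerical constants with the specific stitching parameters chosen in \citet{howard2021uniform}, but this is arithmetic rather than a conceptual gap.
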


The following simple lemma on testing Gaussian distributions is used in the proof of one of our minimax lower bounds (Proposition~\ref{prop:lowerBound_alpha_multivariate}).

\begin{lemma}
\label{Lemma:GaussianTesting}
Fix $\alpha \in (0,1/4]$, $\sigma > 0$, $n \in \mathbb{N}$ and $\Delta \in \bigl(0,\frac{\sigma}{\sqrt{1.6n}}\log^{1/2}\bigl(\frac{1}{2\alpha}\bigr)\bigr]$.  Let $P_0 = \mathcal{N}(0,\sigma^2)$ and $P_1 = \mathcal{N}(\Delta,\sigma^2)$, and let $Z_1,\ldots,Z_n \stackrel{\mathrm{iid}}{\sim} P$ for some $P \in \{P_0,P_1\}$.  If $\psi:\mathbb{R}^n \rightarrow \{0,1\}$ is a Borel measurable function satisfying $\mathbb{P}_{P_0}\bigl(\psi(Z_1,\ldots,Z_n) = 1\bigr) \leq \alpha$, then $\mathbb{P}_{P_1}\bigl(\psi(Z_1,\ldots,Z_n) = 1\bigr) \leq 1/2$.
\end{lemma}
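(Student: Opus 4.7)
The plan is to reduce to the Neyman--Pearson test and then verify a tail bound for the Gaussian quantile function. Write $\bar{Z}_n := n^{-1}\sum_{i=1}^n Z_i$, which is sufficient for testing $P_0$ against $P_1$, with $\bar{Z}_n \sim \mathcal{N}(0,\sigma^2/n)$ under $P_0$ and $\bar{Z}_n \sim \mathcal{N}(\Delta,\sigma^2/n)$ under $P_1$. Since
\[
\frac{dP_1^n}{dP_0^n}(\mathbf{z}) = \exp\Bigl(\frac{n\Delta\bar{z}_n}{\sigma^2} - \frac{n\Delta^2}{2\sigma^2}\Bigr)
\]
is strictly increasing in $\bar{z}_n$, the Neyman--Pearson lemma identifies the uniformly most powerful level-$\alpha$ test as $\psi^{\ast}(\mathbf{z}) := \mathbbm{1}\bigl\{\bar{z}_n \geq \sigma z_{1-\alpha}/\sqrt{n}\bigr\}$, where $z_{1-\alpha} := \Phi^{-1}(1-\alpha)$. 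Hence for any Borel $\psi$ satisfying $\mathbb{P}_{P_0}(\psi = 1) \leq \alpha$,
\[
\mathbb{P}_{P_1}\bigl(\psi(Z_1,\ldots,Z_n) = 1\bigr) \;\leq\; \mathbb{P}_{P_1}(\psi^{\ast} = 1) \;=\; 1 - \Phi\bigl(z_{1-\alpha} - \sqrt{n}\Delta/\sigma\bigr).
\]

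Next, since $\Phi(0) = 1/2$ and $\Phi$ is increasing, the conclusion reduces to verifying $z_{1-\alpha} \geq \sqrt{n}\Delta/\sigma$. By the hypothesis on $\Delta$, it is enough to show $z_{1-\alpha}^2 \geq \log(1/(2\alpha))/1.6$ for every $\alpha \in (0,1/4]$; equivalently, writing $z = z_{1-\alpha}$ and noting $z \geq z_{3/4} := \Phi^{-1}(3/4)$ in this regime, it suffices to establish the Gaussian lower tail bound
\[
1 - \Phi(z) \;\geq\; \tfrac{1}{2}\exp(-1.6\, z^2) \qquad \text{for all } z \geq z_{3/4}.
\]

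The main work of the proof is this last inequality. I would set $G(z) := 1.6\, z^2 + \log\bigl(2(1-\Phi(z))\bigr)$ and show $G \geq 0$ on $[z_{3/4},\infty)$. A direct numerical evaluation gives $G(z_{3/4}) = 1.6\, z_{3/4}^2 - \log 2 \approx 0.728 - 0.693 > 0$, handling the left endpoint. For monotonicity, the Mills-type inequality $\phi(z)/(1-\Phi(z)) < z + 1/z$ (valid for all $z > 0$) yields
\[
G'(z) \;=\; 3.2\, z - \frac{\phi(z)}{1-\Phi(z)} \;>\; 2.2\, z - 1/z,
\]
which is non-negative precisely when $z \geq 1/\sqrt{2.2}$. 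A numerical check confirms $z_{3/4} > 1/\sqrt{2.2}$, so $G$ is strictly increasing on $[z_{3/4},\infty)$ and therefore $G(z) \geq G(z_{3/4}) > 0$ throughout, as required.

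The main obstacle is the sharpness of this calibration: the constants $1/1.6$ in the hypothesis and the threshold $\alpha \leq 1/4$ are essentially tight for this argument, since both boundary inequalities $1.6\, z_{3/4}^2 > \log 2$ and $z_{3/4} > 1/\sqrt{2.2}$ hold only with small slack. One must therefore verify them numerically rather than by a purely symbolic manipulation; everything else is a routine Neyman--Pearson reduction combined with the classical Mills-ratio bound.
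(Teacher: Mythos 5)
Your proof is correct and follows essentially the same path as the paper's: a Neyman--Pearson reduction to the Gaussian likelihood ratio test, followed by a lower-tail bound $1-\Phi(z) \geq \tfrac{1}{2}e^{-1.6z^2}$ verified by showing that $G(z) := 1.6z^2 + \log\bigl(2(1-\Phi(z))\bigr)$ is increasing (via the Gordon/Mills-ratio inequality, the same one the paper cites) and nonnegative at a base point. Your $G$ is exactly $\log\bigl(2\tilde{G}(z)\bigr)$ for the paper's $\tilde{G}(z) := (1-\Phi(z))e^{1.6z^2}$, and your increasingness threshold $1/\sqrt{2.2}$ is precisely the paper's $\sqrt{5/11}$.

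The one substantive difference, and it is in your favour, is the choice of base point. You verify $G \geq 0$ on $[z_{3/4},\infty)$, using only that $z_{1-\alpha} = \Phi^{-1}(1-\alpha) \geq z_{3/4}$ whenever $\alpha \leq 1/4$; that constraint holds automatically. The paper instead applies its tail bound at $z_\alpha := 1.6^{-1/2}\log^{1/2}\bigl(1/(2\alpha)\bigr)$ and asserts $z_\alpha \geq \sqrt{5/11}$, which in fact fails for $\alpha \in \bigl(\tfrac{1}{2}e^{-8/11}, \tfrac{1}{4}\bigr]$: at $\alpha = 1/4$ one has $z_\alpha \approx 0.658 < 0.674 \approx \sqrt{5/11}$. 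Your base point sidesteps that boundary issue entirely, and the two numerical checks you flag ($z_{3/4} > 1/\sqrt{2.2}$ with margin $\approx 3\times 10^{-4}$, and $1.6\,z_{3/4}^2 - \log 2 \approx 0.035 > 0$) both go through. So your closing remark that the constants $1.6$ and $1/4$ are essentially tight for this strategy is well taken, and your version is in fact a slightly more careful rendering of the paper's own argument.
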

\begin{proof}
Write $\Phi$ and $\phi$ for the standard normal distribution and density function respectively. By \cite{gordon1941values},
\[
1 - \Phi(z) > \frac{\phi(z)}{z + 1/z} \geq \frac{\phi(z)}{3.2 z}
\]
for all $z \geq \sqrt{5/11}$.  Hence for all $z \geq \sqrt{5/11}$, we have $G(z) := \bigl(1 - \Phi(z)\bigr)e^{1.6z^2} > 0$ and 
\[
G'(z) = 3.2ze^{1.6z^2}\Bigl(1 - \Phi(z) - \frac{\phi(z)}{3.2z}\Bigr) > 0,
\]
so that $1 - \Phi(z) \geq G(\sqrt{5/11})\exp(-1.6z^2)$.  Since $G(\sqrt{5/11}) \geq 1/2$, it follows with $z_\alpha := 1.6^{-1/2} \log^{1/2}\bigl(\frac{1}{2\alpha}\bigr) \geq \sqrt{5/11}$ that $1 - \Phi(z_\alpha) \geq \alpha$. By the Neyman--Pearson lemma, for any Borel measurable function $\psi:\mathbb{R}^n \rightarrow \{0,1\}$ satisfying $\mathbb{P}_{P_0}\bigl(\psi(Z_1,\ldots,Z_n) = 1\bigr) \leq \alpha$, we deduce that
\begin{align*}
\mathbb{P}_{P_1}\bigl(\psi(Z_1,\ldots,Z_n) = 1\bigr) &\leq \mathbb{P}_{P_1}\bigl(n^{1/2}\bar{Z} > \sigma\Phi^{-1}(1-\alpha)\bigr) = 1 - \Phi\biggl(\Phi^{-1}(1-\alpha) - \frac{n^{1/2}\Delta}{\sigma}\biggr) \\
&\leq 1 - \Phi\Bigl(\Phi^{-1}(1-\alpha) - z_\alpha\Bigr) \leq \frac{1}{2},
\end{align*}
as required.
\end{proof}
%\red{(MM, re Cor~\ref{Cor:GaussianTesting2}: I was wrong, can't hope to get $\alpha$ up to $1/4$ with this proof; for conditional Type I error bounded above by $c\cdot \alpha$ (e.g. $c = 5/2$), the power in \eqref{ineq:condAppGaussTest2} is at least $c\alpha$ and hence we need to have in the last step that $1/c + c\alpha < 1$, which can only be satisfied for $\alpha < 1/4$. Guaranteeing power arbitrarily close to $c\alpha$ in \eqref{ineq:condAppGaussTest2} would require a vanishingly small value of $z_\alpha$ in Lemma~\ref{Lemma:GaussianTesting}, for which the proof of Lemma~\ref{Lemma:GaussianTesting} only holds for the factor that's currently $1.6$ very large --- so I do think we can get aribtrarily close, but won't be able to set $1/4$ as upper bound. So maybe better to just keep as is.)}

\begin{corollary}
    \label{Cor:GaussianTesting2} Suppose that $\alpha \in (0,2/3]$, $\sigma > 0$, $t \in \R$, $n \in \mathbb{N}$, $p \in [8/n,1]$ and $\Delta \in \bigl(0,\frac{\sigma}{\sqrt{3.2np}}\log_+^{1/2}\bigl(\frac{1}{5\alpha}\bigr)\bigr]$, and let $S\subseteq [0,1]^d$ be a Borel set. Let $P_0$ and $P_1$ denote Borel probability distributions over random pairs $(X,Y)$ taking values in $[0,1]^d \times \R$. For $\omega \in \{0,1\}$, let $P^\omega_X$ denote the corresponding marginal distribution over $X$ and for $x \in [0,1]^d$ let $P^\omega_{Y|X=x}$ denote the corresponding conditional distribution of $Y$ given $X=x$.  Assume that $P^0_X = P^1_X$ and $p = P^0_X(S)=P^1_X(S)$, and that for all $x \in [0,1]^d \setminus S$ we have $P^0_{Y|X=x}=P^1_{Y|X=x}$.  Suppose further that $P^\omega_{Y|X=x}=\mathcal{N}(t+\omega\cdot\Delta,\sigma^2)$ for $\omega \in \{0,1\}$ and $x \in S$.  Let $\sample = \bigl((X_1,Y_1),\ldots,(X_n,Y_n)\bigr) \sim P^n$ for some $P \in \{P_0,P_1\}$.  If $\psi:([0,1]^d \times \mathbb{R})^n \rightarrow \{0,1\}$ is a Borel measurable function satisfying $\mathbb{P}_{P_0}\bigl(\psi(\sample) = 1\bigr) \leq \alpha$, then $\mathbb{P}_{P_1}\bigl(\psi(\sample) =0\bigr) \geq 1/20$.
\end{corollary}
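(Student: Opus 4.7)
The plan is to combine two complementary arguments depending on whether $\log_+\bigl(1/(5\alpha)\bigr)$ equals its floor value of $1$.  The unifying idea is that the signal lies only on $S$: since $P^0_{Y|X=x} = P^1_{Y|X=x}$ for $x \notin S$, only the observations $(Y_i)_{i:X_i \in S}$ are informative.  I will let $K := \sum_{i=1}^n \mathbbm{1}_{\{X_i \in S\}}$, which is $\mathrm{Bin}(n,p)$ under both $P_0^n$ and $P_1^n$ since the marginals on $X$ agree.

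In the first regime, where $\alpha \geq 1/(5e)$, I plan to apply Pinsker's inequality directly.  Here $\log_+\bigl(1/(5\alpha)\bigr) = 1$, so $\Delta \leq \sigma/\sqrt{3.2np}$.  Conditioning on $X$ gives $\mathrm{KL}(P_0,P_1) = p\Delta^2/(2\sigma^2)$, whence tensorisation and Pinsker yield $\mathrm{TV}(P_0^n,P_1^n) \leq \sqrt{np\Delta^2/(4\sigma^2)} \leq 1/\sqrt{12.8}$.  Therefore $\mathbb{P}_{P_1}(\psi = 0) \geq 1 - \alpha - \mathrm{TV}(P_0^n,P_1^n) \geq 1/3 - 1/\sqrt{12.8}$, which by direct numerical check exceeds $1/20$.

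In the second regime, where $\alpha < 1/(5e)$, I plan to reduce to Lemma~\ref{Lemma:GaussianTesting} by conditioning on $\mathcal{D}_X$.  Let $\mathcal{F}_S$ be the $\sigma$-algebra generated by $\mathcal{D}_X$ together with $(Y_i)_{i : X_i \in S}$, and define the randomised test $\tilde\psi := \mathbb{E}_{P_0^n}[\psi \mid \mathcal{F}_S]$.  Since the conditional law of $(Y_i)_{i : X_i \notin S}$ given $\mathcal{F}_S$ is identical under $P_0^n$ and $P_1^n$, we have $\mathbb{E}_{P_\omega^n}[\tilde\psi] = \mathbb{E}_{P_\omega^n}[\psi]$ for both $\omega \in \{0,1\}$.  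Conditional on $\mathcal{D}_X$, $\tilde\psi$ is a test of $\mathcal{N}(t,\sigma^2)^{\otimes K}$ against $\mathcal{N}(t+\Delta,\sigma^2)^{\otimes K}$.  Setting $\alpha(\mathcal{D}_X) := \mathbb{E}_{P_0^n}[\tilde\psi \mid \mathcal{D}_X]$ and noting $\mathbb{E}[\alpha(\mathcal{D}_X)] \leq \alpha$, Markov's inequality gives $\mathbb{P}\bigl(\alpha(\mathcal{D}_X) > 5\alpha/2\bigr) \leq 2/5$, while a multiplicative Chernoff bound (using $np \geq 8$) gives $\mathbb{P}(K > 2np) \leq e^{-np/3} \leq e^{-8/3}$.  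Hence $\Omega^* := \{\alpha(\mathcal{D}_X) \leq 5\alpha/2\} \cap \{K \leq 2np\}$ has probability at least $3/5 - e^{-8/3} > 1/2$.  On $\Omega^*$, the conditional level satisfies $5\alpha/2 < 1/(2e) < 1/4$, and
\[
\Delta \leq \frac{\sigma}{\sqrt{3.2np}}\log^{1/2}\bigl(1/(5\alpha)\bigr) \leq \frac{\sigma}{\sqrt{1.6K}}\log^{1/2}\bigl(1/(2\cdot 5\alpha/2)\bigr),
\]
so Lemma~\ref{Lemma:GaussianTesting}, applied conditionally on $\mathcal{D}_X$ with level $5\alpha/2$, yields conditional type~II error at least $1/2$.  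Integrating, $\mathbb{P}_{P_1}(\psi = 0) \geq \mathbb{E}_{P_1^n}\bigl[(1-\tilde\psi)\mathbf{1}_{\Omega^*}\bigr] \geq \tfrac{1}{2}\mathbb{P}(\Omega^*) \geq 1/4$.

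The main subtlety will be that $\tilde\psi$ takes values in $[0,1]$ rather than $\{0,1\}$, whereas Lemma~\ref{Lemma:GaussianTesting} is stated for non-randomised tests; however, its Neyman--Pearson-based proof via Gordon's bound carries over to randomised tests verbatim, so this is not a genuine obstacle.  I expect the tightest numerical check to be in Case~1, where for $\alpha = 2/3$ one obtains $1/3 - 1/\sqrt{12.8} \approx 0.054$, only just above the target $1/20$, indicating that the constants $3.2$ and $5$ appearing in the statement are carefully tuned.
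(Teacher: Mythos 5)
Your proof is correct and takes essentially the same route as the paper: a case split between a Pinsker bound for large $\alpha$ and a conditional reduction to Lemma~\ref{Lemma:GaussianTesting} for small $\alpha$, handled via Markov's inequality on the conditional size and a multiplicative Chernoff bound on the number of observations landing in $S$. Your explicit sufficiency reduction via $\tilde\psi := \mathbb{E}_{P_0^n}[\psi \mid \mathcal{F}_S]$ is a somewhat cleaner presentation of a step the paper handles implicitly by observing that the conditional Radon--Nikodym derivative factors as a product of Gaussian likelihood ratios over the indices with $X_i \in S$; the minor differences in case boundary ($1/(5e)$ vs.\ $1/10$) and Chernoff constant ($e^{-np/3}$ vs.\ $e^{-3np/8}$) do not affect validity, and your observation that the Neyman--Pearson step in Lemma~\ref{Lemma:GaussianTesting} extends verbatim to $[0,1]$-valued tests is needed in both arguments.
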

\begin{proof} First suppose that $\alpha \in (0,1/10]$ and define Borel subsets $A_0$, $A_1 \subseteq ([0,1]^d)^n$ by 
\begin{align*}
A_0 &:=\biggl\{(x_i)_{i\in [n]} \in ([0,1]^d)^n:\sum_{i=1}^n \one_{\{x_i \in S\}}\leq 2np\biggr\}, \\
A_1 &:=\biggl\{(x_i)_{i\in [n]} \in ([0,1]^d)^n:\mathbb{P}_{P_0}\bigl\{\psi(\sample) = 1 \bigm| (X_i)_{i \in [n]}=(x_i)_{i\in [n]}\bigr\} \leq \frac{5\alpha}{2}\biggr\}.
\end{align*}
Now, for $(x_i)_{i \in [n]} \in ([0,1]^d)^n$, the Radon–Nikodym derivative of the conditional distribution of $\sample \sim P_1^n$ given $(X_i)_{i \in [n]}=(x_i)_{i \in [n]}$ with respect to the corresponding conditional distribution for $\sample \sim P_0^n$ is equal to the product of $\sum_{i=1}^n \one_{\{x_i \in S\}}$ likelihood ratios between two univariate Gaussians with common variance $\sigma^2$ and mean differing by $\Delta$. Hence, for $(x_i)_{i \in [n]} \in A_0 \cap A_1$, an application of Lemma~\ref{Lemma:GaussianTesting} with $\sum_{i=1}^n \one_{\{x_i \in S\}} \leq 2np$ in place of $n$ and $5\alpha/2$ in place of $\alpha$ yields
\begin{align}\label{ineq:condAppGaussTest2}
\mathbb{P}_{P_1}\bigl\{\psi(\sample) = 1|(X_i)_{i \in [n]}=(x_i)_{i\in [n]}\bigr\} \leq \frac{1}{2}.
\end{align}  Moreover, by the multiplicative Chernoff bound \citep[Theorem~2.3(b)]{mcdiarmid1998concentration}, we have 
\begin{align*}
\mathbb{P}_{P_1}\bigl\{(X_i)_{i \in [n]}\notin A_0\bigr\} \leq e^{-3np/8} \leq \frac{1}{20}.
\end{align*}
Moreover, by Markov's inequality we have $\mathbb{P}_{P_1}\{(X_i)_{i \in [n]}\notin A_1\} \leq 2/5$. Combining with \eqref{ineq:condAppGaussTest2} yields $\mathbb{P}_{P_1}\bigl(\psi(\sample) = 1\bigr) \leq 19/20$, as required.

Now suppose that $\alpha \in (1/10,2/3]$. By Pinsker's inequality we have
\begin{align*}
\mathrm{TV}\left( P_0^n, P_1^n\right) &\leq \sqrt{\frac{n}{2} \cdot \mathrm{KL}(P_0,P_1)} = \sqrt{\frac{n p}{2} \cdot \mathrm{KL}\bigr(\mathcal{N}(t,\sigma^2),\mathcal{N}(t+\Delta,\sigma^2)\bigr)} \leq  \frac{\sqrt{np}\cdot \Delta}{2 \sigma} \leq \frac{1}{2\sqrt{3.2}}.  
\end{align*}
Hence, for any Borel measurable function satisfying $\mathbb{P}_{P_0}\bigl(\psi(\sample) = 1\bigr) \leq \alpha \leq 2/3$, we have
\begin{align*}
\mathbb{P}_{P_1}\bigl(\psi(\sample) =0\bigr) & \geq \mathbb{P}_{P_0}\bigl(\psi(\sample) =0\bigr)- \mathrm{TV}\left( P_0^n, P_1^n\right) \geq 1-\alpha - \frac{1}{2\sqrt{3.2}}  \geq \frac{1}{20},
\end{align*}
as required.
\end{proof}

\begin{lemma}\label{lemma:TVProductMeasure} 
\begin{enumerate}[(a)]
\item Let $P, Q$ denote probability measures on a measurable space $(\mathcal{X},\mathcal{A})$.  Then
\[
\mathrm{TV}(P,Q) = \inf_{(X,Y) \sim (P,Q)} \mathbb{P}(X \neq Y),
\]
where the infimum is taken over all pairs of random variables $X \sim P$ and $Y \sim Q$ defined on the same probability space, and where the infimum is attained.
\item For $i \in [n]$, let $P_i, Q_i$ denote probability measures on a measurable space $(\mathcal{X}_i,\mathcal{A}_i)$, and let $P := \times_{i=1}^n P_i$ and $Q := \times_{i=1}^n Q_i$ denote the corresponding product measures.  Then
\[
\mathrm{TV}(P,Q) \leq \sum_{i=1}^n \mathrm{TV}(P_i,Q_i).
\]
\end{enumerate}
\end{lemma}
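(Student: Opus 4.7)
The plan is to prove part (a) first (the maximal coupling characterisation of total variation) and then obtain part (b) as a short consequence by constructing an independent product of maximal couplings. Both steps are classical, but I will spell out the route I would take.

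For part (a), I would first establish the inequality $\mathrm{TV}(P,Q)\le \mathbb{P}(X\ne Y)$ for \emph{any} coupling $(X,Y)$ with $X\sim P$ and $Y\sim Q$. The reason is that for any measurable $A\subseteq \mathcal{X}$,
\begin{equation*}
P(A)-Q(A)=\mathbb{P}(X\in A)-\mathbb{P}(Y\in A)\le \mathbb{P}(X\in A,Y\notin A)\le \mathbb{P}(X\ne Y),
\end{equation*}
and taking the supremum over $A$ gives one direction. The matching direction requires exhibiting an attaining coupling. I would work with a common dominating measure $\mu := P+Q$, with densities $p := dP/d\mu$ and $q := dQ/d\mu$, and set $r := p\wedge q$ and $c := \int r\,d\mu = 1-\mathrm{TV}(P,Q)$. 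Provided $c\in(0,1)$, I would define the maximal coupling by flipping an independent Bernoulli with success probability $c$: on success, draw $Z\sim \mu$ with density $r/c$ and set $X=Y=Z$; on failure, draw $X$ and $Y$ \emph{independently} from the renormalised residual densities $(p-r)/(1-c)$ and $(q-r)/(1-c)$. A direct computation (integrating against a test function on $\mathcal{X}$) confirms $X\sim P$ and $Y\sim Q$, and by construction $\mathbb{P}(X=Y)\ge c=1-\mathrm{TV}(P,Q)$, so $\mathbb{P}(X\ne Y)\le \mathrm{TV}(P,Q)$. The degenerate cases $c\in\{0,1\}$ are handled trivially (use the diagonal coupling if $P=Q$, and an arbitrary independent coupling if $P\perp Q$). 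I expect the main obstacle to be checking measurability and disintegration carefully on a general measurable space, but this is standard once $\mu$, $p$, $q$ are fixed.

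For part (b), I would invoke part (a) on each coordinate: for each $i\in[n]$, let $(X_i,Y_i)$ be a maximal coupling attaining $\mathbb{P}(X_i\ne Y_i)=\mathrm{TV}(P_i,Q_i)$, and take these pairs mutually independent across $i$ on a product probability space. Independence gives $(X_1,\ldots,X_n)\sim P$ and $(Y_1,\ldots,Y_n)\sim Q$. A union bound yields
\begin{equation*}
\mathbb{P}\bigl((X_1,\ldots,X_n)\ne (Y_1,\ldots,Y_n)\bigr)\le \sum_{i=1}^n \mathbb{P}(X_i\ne Y_i)=\sum_{i=1}^n \mathrm{TV}(P_i,Q_i),
\end{equation*}
and applying the easy direction of part (a) to this coupling of $P$ and $Q$ gives the claim. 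There is no real obstacle here; the only subtlety is ensuring the coordinatewise maximal couplings can be realised on a common product space, which is immediate.
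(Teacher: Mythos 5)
Your proposal is correct and follows essentially the same route as the paper: the easy direction via $P(A)-Q(A)\leq\mathbb{P}(X\neq Y)$, the maximal coupling built from densities $p,q$ with respect to $P+Q$ using the overlap density $p\wedge q$ and independently sampled residuals, and then part (b) by an independent product of maximal couplings plus a union bound. The only cosmetic differences are that you explicitly flag the degenerate cases $c\in\{0,1\}$ and use the form $(p-r)/(1-c)$ for the residual densities, which coincides with the paper's $(p-q)\mathbbm{1}_{\{p>q\}}/\mathrm{TV}(P,Q)$.
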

\begin{proof}
    \emph{(a)} Let $X \sim P$ and $Y \sim Q$ be defined on the same probability space.  Then for any $A \in \mathcal{A}$, \[ P(A) - Q(A) = \mathbb{P}(X \in A) - \mathbb{P}(Y \in A) \leq \mathbb{P}(X \in A, Y \notin A) \leq \mathbb{P}(X \neq Y). \] Similarly, $Q(A) - P(A) \leq \mathbb{P}(X \neq Y)$, so since $A \in \mathcal{A}$ was arbitrary, we see that $\mathrm{TV}(P,Q) \leq \mathbb{P}(X \neq Y)$.  This bound holds for all couplings of $X \sim P$ and $Y \sim Q$, so \[ \mathrm{TV}(P,Q) \leq \inf_{(X,Y) \sim (P,Q)} \mathbb{P}(X \neq Y). \] To see that this bound is in fact an equality, and that the infimum is attained, let $p, q$ denote the respective densities of $P$ and $Q$ with respect to $P+Q$.  We construct a coupling of $X \sim P$ and $Y \sim Q$ as follows: with probability $1 - \mathrm{TV}(P, Q)$, sample $X = Y$ from a distribution having density $(p \wedge q)/\bigl(1-\mathrm{TV}(P, Q)\bigr)$ with respect to $P+Q$, and otherwise sample $X$ and~$Y$ independently from distributions having respective densities $(p-q)\mathbbm{1}_{\{p > q\}}/\mathrm{TV}(P, Q)$ and $(q-p)\mathbbm{1}_{\{q > p\}}/\mathrm{TV}(P,Q)$ with respect to $P+Q$. The fact that the given expressions are indeed densities with respect to $P+Q$ follows because $\mathrm{TV}(P,Q) = \frac{1}{2} \int_{\mathcal{X}} |p-q| \, d(P+Q)$.  We then have for any $A \in \mathcal{A}$ that \[ \mathbb{P}(X \in A) = \int_A (p \wedge q) \, d(P+Q) + \int_A (p-q)\mathbbm{1}_{\{p > q\}} \, d(P+Q) = \int_A p \, d(P+Q) = P(A), \] and similarly $\mathbb{P}(Y \in A) = Q(A)$.  Thus $X \sim P$ and $Y \sim Q$, and since $\mathbb{P}(X \neq Y) \leq \mathrm{TV}(P, Q)$, the result follows.\medskip

    \noindent \emph{(b)} By \emph{(a)}, there exist independent pairs $(X_1, Y_1), \ldots, (X_n, Y_n)$ with $X_i \sim P_i$, $Y_i \sim Q_i$ and $\mathbb{P}(X_i \neq Y_i) = \mathrm{TV}(P_i,Q_i)$ for $i \in [n]$.  Then $X := (X_1, \ldots, X_n) \sim P$ and $Y := (Y_1, \ldots, Y_n) \sim Q$, and by a union bound,
    \[
         \mathrm{TV}(P, Q) \leq \mathbb{P}(X \neq Y) = \mathbb{P}\biggl(\bigcup_{i=1}^n \{X_i \neq Y_i\}\biggr) \leq \sum_{i=1}^n \mathbb{P}(X_i \neq Y_i) = \sum_{i=1}^n \mathrm{TV}(P_i, Q_i).
    \]
\end{proof}

\begin{lemma}\label{lemma:log_logplus_facts} The following inequalities hold:
\begin{enumerate}[(i)]
    \item $\log_+ (x y) \leq y\cdot\log_+ x$ for $x > 0$ and $y \geq 1$.
    \item $\log_+(x^a)\leq a\cdot\log_+ x$ for $x>0$ and $a\geq 1$.
    \item $\log(x y)\leq y\cdot\log x$ for $x \geq 2$ and $y \geq 2$.
    \item $\log(x y)\leq y\cdot \log x$ for $x\geq e$, $y\geq 1$.
    \item $\log_+(x y)\leq\log_+ x + \log_+ y$ for $x,y>0$.     
\end{enumerate}
\end{lemma}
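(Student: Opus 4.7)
The plan is to prove each inequality separately using elementary case analysis driven by the definition $\log_+ x = \log(x \vee e)$ and the basic fact $\log y \leq y - 1$ for $y > 0$.

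I would handle (iv) first since it will be used in (i). Writing the claim as $\log y \leq (y-1)\log x$ for $x \geq e$, $y \geq 1$, the bound $\log y \leq y - 1$ combined with $\log x \geq 1$ gives $(y-1)\log x \geq y - 1 \geq \log y$. For (iii), I would note that it suffices to show $g(y) := (y-1)\log 2 - \log y \geq 0$ on $[2, \infty)$; since $g(2) = 0$ and $g'(y) = \log 2 - 1/y \geq \log 2 - 1/2 > 0$ for $y \geq 2$, monotonicity finishes the argument.

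For (i), I would split into three cases. If $xy \leq e$, then $\log_+(xy) = 1 \leq y \cdot 1 \leq y \log_+ x$ since $\log_+ \geq 1$. If $xy > e$ and $x \leq e$, then $\log_+ x = 1$ and I must check $\log(xy) \leq y$; writing $\log(xy) = \log x + \log y \leq 1 + (y-1) = y$ does this. The remaining case $x > e$, $y \geq 1$ is precisely (iv). The statement (ii) follows by a similar case split: if $x \geq e$ then $x^a \geq e$ for $a \geq 1$, so both sides equal $a \log x$; if $x < e$ then $\log_+ x = 1$ and the RHS is $a$, and I verify the LHS is at most $a$ by noting either $x^a \leq e$ (giving LHS $= 1$) or $x^a > e$ (giving LHS $= a \log x \leq a$ since $x < e$).

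Finally, for (v), when $xy \leq e$ we have $\log_+(xy) = 1 \leq 2 \leq \log_+ x + \log_+ y$, and when $xy > e$ we use $\log_+(xy) = \log x + \log y \leq \log_+ x + \log_+ y$ by the pointwise inequality $\log \leq \log_+$. There is no real obstacle here; the only mild subtlety is that $\log_+$ is constant on $(0, e]$, which necessitates the case splits above so that we do not accidentally bound $\log x$ by $\log_+ x$ in a direction that would cost a factor.
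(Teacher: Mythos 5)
Your proof is correct and takes essentially the same elementary route as the paper: case splits governed by whether the relevant arguments exceed $e$, combined with the basic bound $\log y \leq y-1$, and a monotonicity argument for (iii). The only noteworthy difference is (ii), where the paper avoids cases entirely via the one-liner $\log_+(x^a)=\log(x^a\vee e)\leq\log(x^a\vee e^a)=a\log_+ x$, and in (i) the paper collapses your first two cases by bounding $\log_+(xy)\leq\log_+(ey)=1+\log y\leq y$ directly; your versions are equally valid.
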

\begin{proof} 
\emph{(i)} Suppose first that $x \in (0, e)$. Then, $y\log_+ x = y \geq 1 + \log y = \log_+(ey) \geq \log_+(xy)$. If on the other hand $x \geq e$, then $xy \geq e$ and the result is a consequence of \emph{(iv)} below. \medskip

\noindent \emph{(ii)} We have $\log_+(x^a) = \log(x^a \vee e) \leq \log(x^a \vee e^a) = a \log_+ x$. \medskip

\noindent \emph{(iii)} As $z \mapsto (z-1)/\log z$ is increasing for $z > 1$, we have $(y-1)/\log y \geq 1/\log 2$. Thus $\log(xy) \leq \log x + (y-1)\log 2 \leq y \log x$. \medskip

\noindent \emph{(iv)} Since $\log x \geq 1$, we have $\log(xy) \leq \log x + (y-1) \leq y\log x$. \medskip
 
\noindent \emph{(v)} We have $\log_+(xy) = \log(xy \vee e) \leq \log\bigl((x\vee e)(y\vee e)\bigr) = \log_+ x + \log_+ y$.
\end{proof}

\textbf{Acknowledgements:} The research of TIC was supported by Engineering and Physical Sciences Research Council (EPSRC) New Investigator Award EP/V002694/1. The research of RJS was supported by EPSRC Programme grant EP/N031938/1 and ERC Advanced Grant 101019498.

\bibliography{mybib}
\bibliographystyle{apalike}

\end{document}